\newcommand{\eps}{\varepsilon}
\newcommand{\sgn}{\mathop{\rm sgn}\nolimits}
\newcommand{\argmin}{\mathop{\rm arg\min}}
\newcommand{\maxnew}{\operatornamewithlimits{\mathstrut \rm \max}}
\newcommand{\infnew}{\operatornamewithlimits{\mathstrut \rm \inf}}
\numberwithin{equation}{section}
\newtheorem{thm}{Theorem}[section]
\newtheorem{lem}[thm]{Lemma}
\newtheorem{rem}[thm]{Remark}
\newtheorem{prop}[thm]{Proposition}
\newtheorem{cor}[thm]{Corollary}
\newtheorem{assump}[thm]{Assumption}
\newtheorem{defi}[thm]{Definition}
\newcommand{\Var}{\operatorname{Var}}
\newcommand{\Cov}{\operatorname{Cov}}
\newcommand{\E}{\mathbb{E}}
\renewcommand{\P}{\mathbb{P}}
\newcommand{\Risk}{\operatorname{Risk}}
\newcommand{\B}{\mathbb{B}}
\newcommand{\R}{\mathbb{R}}
\newcommand{\N}{\mathbb{N}}
\newcommand{\X}{\mathbb{X}}
\newcommand{\mA}{\mathcal{A}}
\newcommand{\mC}{\mathcal{C}}
\newcommand{\mD}{\mathcal{D}}
\newcommand{\mE}{\mathcal{E}}
\newcommand{\mF}{\mathcal{F}}
\newcommand{\mH}{\mathcal{H}}
\newcommand{\mI}{\mathcal{I}}
\newcommand{\mM}{\mathcal{M}}
\newcommand{\mN}{\mathcal{N}}
\newcommand{\mO}{\mathcal{O}}
\newcommand{\mP}{\mathcal{P}}
\newcommand{\mQ}{\mathcal{Q}}
\newcommand{\mX}{\mathcal{X}}
\newcommand{\frakr}{{\mathfrak r}}
\newcommand{\frakm}{{\mathfrak m}}
\newcommand{\Ind}{\mathbf{1}}
\newcommand{\be}{\mathbf{e}}
\newcommand{\bv}{\mathbf{v}}
\newcommand{\bx}{\mathbf{x}}
\newcommand{\bX}{\mathbf{X}}
\newcommand{\by}{\mathbf{y}}
\newcommand{\bbeta}{\bm{\beta}}
\newcommand{\footremember}[2]{%
   \footnote{#2}
    \newcounter{#1}
    \setcounter{#1}{\value{footnote}}%
}
\newcommand{\footrecall}[1]{%
    \footnotemark[\value{#1}]%
} 
\begin{document}


\title{Robust-to-outliers square-root LASSO, \\ simultaneous inference with a MOM approach}

\author{%
    Gianluca Finocchio\footremember{utwente}{University of Twente, Enschede, the Netherlands.}\footremember{trailer}{Research supported by TOP grant and a Vidi grant from the Dutch science organization (NWO).},
    \, Alexis Derumigny\footremember{tudelft}{Department of Applied Mathematics, Delft University of Technology, Delft, the Netherlands.}%
    \, and Katharina Proksch\footrecall{utwente}
}

\date{\today}
\maketitle

\begin{abstract}
    We consider the least-squares regression problem with unknown noise variance, where the observed data points are allowed to be corrupted by outliers.
    Building on the median-of-means (MOM) method introduced by Lecue and Lerasle~\cite{lecue2020robustML} in the case of known noise variance, we propose a general MOM approach for simultaneous inference of both the regression function and the noise variance, requiring only an upper bound on the noise level.
    Interestingly, this generalization requires care due to regularity issues that are intrinsic to the underlying convex-concave optimization problem.
    In the general case where the regression function belongs to a convex class, we show that our simultaneous estimator achieves with high probability the same convergence rates and a similar risk bound as if the noise level was unknown, as well as convergence rates for the estimated noise standard deviation.
    
    In the high-dimensional sparse linear setting, our estimator yields a robust analog of the square-root LASSO. Under weak moment conditions, it jointly achieves with high probability the minimax rates of estimation $s^{1/p} \sqrt{(1/n) \log(p/s)}$ for the $\ell_p$-norm of the coefficient vector, and the rate $\sqrt{(s/n) \log(p/s)}$ for the estimation of the noise standard deviation. Here $n$ denotes the sample size, $p$ the dimension and $s$ the sparsity level. We finally propose an extension to the case of unknown sparsity level $s$, providing a jointly adaptive estimator $(\widetilde \beta, \widetilde \sigma, \widetilde s)$. It simultaneously estimates the coefficient vector, the noise level and the sparsity level, with proven bounds on each of these three components that hold with high probability.
\end{abstract}

\textbf{Keywords:} Median-of-means, robustness, simultaneous adaptivity, unknown noise variance, minimax rates, sparse linear regression, high-dimensional statistics.

\textbf{MSC 2020:} Primary: 62G35, 62J07; Secondary: 62C20, 62F35.

\section{Introduction}\label{sec.intro}

We consider the statistical learning problem of predicting a real random variable $Y$ by means of an explanatory variable $\bX$ belonging to some measurable space $\mX.$ Given a dataset $\mD$ of observations and a function class $\mF,$ the goal is to choose a function $\widehat f\in\mF$ in such a way that $\widehat f(\bX)$ approximates $Y$ as well as possible.
In particular, we study the problem of predicting $Y$ with the mean-squared loss, which corresponds to the estimation of an \textit{oracle function} $f^* \in \argmin_{f \in \mF} \E[(Y-f(\bX))^2].$
This setting has been formalized by~\cite{lecue2020robustML} in the context of robust machine learning. In this framework, one observes a (possibly) contaminated dataset consisting of \textit{informative observations} (sometimes called \textit{inliers}), and \textit{outliers}. The statistician does not know which data points are corrupted and nothing is usually assumed about the outliers, however one expects the informative observations to be sufficient to solve the problem at hand if the number of outliers is not too large.
Even when the inliers are a sample of i.i.d. observations with finite second-moment, such a corrupted dataset can break naive estimators even in the simplest of problems: a single big outlier can push  an empirical average towards infinity when estimating the mean of a real random variable.
A much better choice of estimator in the presence of outliers is the so-called median-of-means, which is constructed as follows: given a partition of the dataset into some number $K$ of blocks, one computes the empirical average relative to each block, and then takes the median of all these empirical averages. The resulting object is robust to $K/2$ outliers and has good performance even when the underlying distribution has no second moment, see \cite[Section 4.1]{devroye2016sub}. Some of the key ideas behind the median-of-means construction can be traced back to the work on stochastic optimization~\cite{NemYud1985,Levin2005}, sampling from large discrete structures~\cite{Jerrum1986}, and sketching algorithms~\cite{Alon1999}.

Our work builds on the MOM method introduced in~\cite{lecue2020robustML}, which solves the least-squares problem by implementing a convex-concave optimization of a suitable functional.
In the sparse linear case, this problem can be rewritten as the estimation of $\bbeta^*$ in the model $Y=\bX^T \bbeta^* + \zeta$ for some noise $\zeta$, where $\mF_{s^*} = \{ \bx \mapsto \bx^T \bbeta: \bbeta \in \R^d,\ |\bbeta|_0\leq s^*\}$ for some sparsity level $s^*>0$ and $|\bbeta|_0$ is the number of non-zero components of $\bbeta$.
The MOM-LASSO method~\cite{lecue2020robustML} yields there a robust version of the LASSO estimator, which is known to be minimax optimal, see \cite{bellec2017, bellec2018,bellec2016}, but its optimal penalization parameter has to be proportional to the noise standard deviation $\sigma^*.$
However, in practical applications this noise level $\sigma^*$ is often unknown to the statistician, and, as a consequence, it may be difficult to apply the MOM-LASSO.
We extend this MOM approach to the case of unknown noise variance and highlight the challenges that arise from this formulation of the problem.
The main contribution of our paper is the choice of a new functional in the convex-concave procedure that yields, in the sparse linear case, a robust version of the square-root LASSO introduced in~\cite{belloni2010}, which was shown to be minimax optimal by~\cite{derumigny2018improved}, while its penalization parameter does not require knowledge of $\sigma^*.$ Interestingly, intuitive and seemingly innocuous choices of functional end up requiring too restrictive assumptions, such as a known (or estimated) lower bound $\sigma_{-}>0$ on the noise standard deviation as in \cite{derumigny2019thesis}, whereas in this article, we only require a known (or estimated) upper bound $\sigma_{+}.$

Our main results deal with the simultaneous estimation of the oracle function $f^*$ and standard deviation $\sigma^*$ of the residual $\zeta := Y - f^*(\X)$.
In the high-dimensional sparse linear regression setting with unknown $\sigma^*$, if the sparsity level $s^*\leq d$ is known and the number of outliers is no more than $O(s^*\log(ed/s^*)),$ we prove that our MOM achieves the optimal rates of estimation of $\bbeta^*$ using a number of blocks $K$ of order $O(s^*\log(ed/s^*)).$
We also prove that our estimator of the noise standard deviation satisfies $|\widehat\sigma_{K,\mu} - \sigma^* | \lesssim \sigma_+ \sqrt{\frac{s^*}{n} \log \left( \frac{ed}{s^*} \right)}$ with high probability, improving the rates compared to the previous best estimator $\hat \sigma$, see ~\cite[Corollary 2]{belloni2014pivotal}, which satisfies $|\hat \sigma^2 - \sigma^2| \lesssim \sigma^*{}^2 \Big( \frac{s^* \log(n \vee d \log n)}{n} + \sqrt{\frac{s^* \log(d \, \vee \, n)}{n}} + \frac{1}{\sqrt{n}} \Big)$ whenever the noise has a finite fourth moment. Note that these rates for the estimation of $\sigma^*$ derived in \cite{belloni2014pivotal} correspond to a different penalty level than the one used in \cite{derumigny2018improved} that allows to derive optimal rates for the estimation of $\bbeta^*$.
A related paper is~\cite{comminges2018adaptive}, which studies optimal noise level estimation for the sparse Gaussian sequence model.

Since the sparsity level may be unknown in practice, we provide an aggregated adaptive procedure based on Lepski's method, that is, we first infer an estimated sparsity $\widetilde s$ and then an estimated number of blocks $\widetilde K$ of order $O(\widetilde s\log(ed/\widetilde s)).$
We show that the resulting adaptive estimator $(\widetilde\bbeta, \widetilde\sigma, \widetilde s)$ attains the minimax rates for the estimation of $\bbeta^*$ while still being adaptive to the unknown noise variance $\sigma^2$ and selecting a sparse model ($\widetilde s \leq s^*$) with high probability.

\begin{table}[thb]
    \centering
    \resizebox{\textwidth}{!}{%
    \begin{tabular}{c|cccc}
        Estimator & Rate on $\bbeta$ & Adapt. to $s$ &
        Rate and adapt. to $\sigma^*$ & Robustness \\
        \hline
        Lasso & Optimal \cite{bellec2018} & - & - & - \\
        Aggreg. Lasso & Optimal \cite{bellec2018} & Yes & - & - \\
        Square-root Lasso & Optimal \cite{derumigny2018improved} & - & Yes, complicated rate \cite{belloni2014pivotal} & - \\
        Aggreg. Square-root Lasso & Optimal \cite{derumigny2018improved} & Yes & Yes, but no rate & - \\
        MOM-Lasso & Optimal \cite{lecue2020robustML} & - & - & Yes \\
        Aggreg. MOM-Lasso & Optimal \cite{lecue2020robustML} & Yes & - & Yes \\
        \textbf{Robust SR-Lasso} & Optimal (Th. \ref{thm.rates_AMOM_sigmaplus}) & - & $\sqrt{\frac{s^*}{n}\log\left(\frac{ed}{s^*} \right)}$ (Th. \ref{thm.rates_AMOM_sigmaplus}) & Yes \\
        \textbf{Aggreg. Robust SR-Lasso} & Optimal (Th. \ref{thm.MOM_sqrt_lasso_adaptive}) & Yes & $\sqrt{\frac{s^*}{n}\log\left(\frac{ed}{s^*} \right)}$ (Th. \ref{thm.MOM_sqrt_lasso_adaptive}) & Yes \\
    \end{tabular}
    }
    \caption{Comparison of estimators of sparse high-dimensional regressions and their main theoretical properties. Names in bold print refer to the new estimators that we propose in this article.}
    \label{tab:comp_esti}
\end{table}

In Table~\ref{tab:comp_esti}, we detail a comparison of the Lasso-type estimators and their different theoretical properties in this sparse high-dimensional regression framework.
The two new estimators that we propose solve the problem of minimax-optimal robust estimation of~$\bbeta$.
Even in the setting where no outliers are present, our estimators still improve the best-known bounds on the estimation of the noise variance $\sigma^{*2}$. Moreover, the second estimator $(\widetilde\bbeta, \widetilde \sigma, \widetilde s)$ attains the same rate of simultaneous estimation of $\bbeta^*$ and $\sigma^*$ adaptively to the sparsity level $s^*$.
Finally, the estimator $(\widetilde\bbeta, \widetilde \sigma)$ is robust to the same number of outliers as the estimator which uses the knowledge of the true sparsity level $s^*$.
For every $\sigma^* > 0$, let $\mP(\sigma^*)$ be a class of distributions of $(\bX,\zeta)$ such that the kurtosis of $\zeta$ is bounded, $\Var[\zeta]={\sigma^*}^2$ and $\bX$ is isotropic, satisfies a weak moment condition and is such that the weighted norms $L^1(\P_{\bX}),L^2(\P_{\bX}),$ and $L^4(\P_{\bX})$ are equivalent on $\R^d.$ We work with a dataset $\mD = (\bX_i,Y_i)_{i=1,\ldots,n}$ that might be contaminated by a set of outliers $(\bX_i,Y_i)_{i\in\mO}$ (for some $\mO \subset \{1,\ldots,n\}$) in the sense that, for $i \in \mO$, $(\bX_i,Y_i)$ is an arbitrary outlier while for $i \notin \mO$, $(\bX_i,Y_i)$ is i.i.d. distributed as $(\bX, Y)$. We denote by $\mD(N)$ the set of all possible modifications of $\mD$ by at most $N$ observations. 
To sum up, our joint estimator $(\widetilde\bbeta, \widetilde \sigma, \widetilde s)$ satisfies the following worst-case simultaneous deviation bound
\begin{align*}
    \infnew_{s^* = 1, \dots, s_+}
    &\inf_{ \scaleto{\begin{array}{c}
        \bbeta^* \in \mF_{s^*} \\ \sigma^* < \sigma_+
    \end{array} }{25pt} }
    \infnew_{P_{\bX,\zeta} \in \mP(\sigma^*)}
    P_{\beta^*,P_{\bX,\zeta}}^{\otimes n}
    \Big( \mA_{\sigma^*,\bbeta^*,s^*}(\mD) \Big)
    \geq 1 - \phi(s_+,d),
\end{align*}
where the event $\mA_{\sigma^*,\bbeta^*,s^*}(\mD)$ describes the performance of the aggregated estimator over a class of contaminations of the dataset $\mD$ by arbitrary outliers. Formally,
\begin{align*}
    \mA_{\sigma^*,\bbeta^*,s^*}(\mD)
    &:= \bigcap_{\mD' \in \mD \big(cs^*\log ( ed/s^*)\big)} 
    \mA_{\sigma^*}(\mD') \cap \mA_{\bbeta^*}(\mD') \cap \mA_{s^*}(\mD'),\\
    \mA_{\sigma^*}(\mD')
    &:= \left\{ \Big|\widetilde\sigma \big( \mD' \big) - \sigma^* \Big| \leq C \sigma_+ \sqrt{\frac{s^*}{n} \log \Big( \frac{ed}{s^*} \Big)}\right\}, \\
    \mA_{\bbeta^*}(\mD')
    &:= \left\{ \Big| \widetilde\bbeta \big( \mD' \big) - \bbeta^* \Big|_p
    \leq C \sigma_+  {s^*}^{1/p} \sqrt{\frac{1}{n} \log \Big( \frac{ed}{s^*} \Big)} \right\}, \\
    \mA_{s^*}(\mD')
    &:= \Big\{ \widetilde s \big( \mD' \big) \leq s^* \Big\},
\end{align*}
where $(\widetilde\bbeta(\mD'), \widetilde \sigma(\mD'), \widetilde s(\mD'))$ is the joint estimator obtained from the perturbed dataset $\mD'.$ Our method only requires the knowledge of the upper bounds $(\sigma_+, s_+)$,
where
$\mF_s$ is the set of $s$-sparse vectors, $|\cdot|_p$ is the $\ell_p$ norm, $\phi(s,d):=4 (\log_2(s)+1)^2 ( 2s / ed )^{C' s}$ for a universal constant $C' > 0$, the constants $c, C>0$ only depend on the class $\mP(\sigma^*)$, $|\mO|$ denotes the cardinality of the set $\mO$ and $P_{\beta^*,P_{\bX,\zeta}}$ is the distribution of $(\bX,Y)$ when $(\bX, \zeta) \sim P_{\bX,\zeta}$ and $Y=\bX^\top \bbeta^* + \zeta$.

The manuscript is organized as follows. In Section~\ref{sec.notation}, we introduce the main framework and notation, as well as the step-by-step construction of the MOM estimator.
In Section~\ref{sec.general_F} we present our results in the general situation of a convex class $\mF$ of regression functions. The results for the high-dimensional sparse linear regression framework are presented in Section~\ref{sec.high_dim_reg}.
In Section~\ref{sec.discussion} we discuss the contraction rates, the construction of the MOM estimator and some known results from the literature. The proofs are gathered in the appendix.

\section{Notation and framework}\label{sec.notation}

\subsection{General notation}

Vectors are denoted by bold letters, e.g. $\bx:=(x_1,\ldots,x_d)^\top.$ For $S\subseteq \{1,\ldots,d\},$ we write $|S|$ for the cardinality of $S.$ As usual, we define $|\bx|_p:= (\sum_{i=1}^d |\bx_i|^p)^{1/p},$ $|\bx|_\infty:= \max_i|\bx_i|,$ $|\bx|_0:= \sum_{i=1}^d \Ind(\bx_i \neq 0),$ where $\Ind$ is the indicator function and write $\|f\|_{L^p(D)}$ for the $L^p$ norm of $f$ on $D$. If there is no ambiguity concerning the domain $D,$ we also write $\|\cdot\|_p.$ We set $|\bx|_{2,n} := |\bx|_2/\sqrt{n}$ and, for a measure $\mu$ on $\R^d$ and a function $f$ in a class of functions $\mF,$ we define $\|f\|_{2,\nu} := \|f\|_{L^2(\nu)}.$ The expected value of a random variable $X$ with respect to a measure $P$ is denoted $PX.$ For two sequences $(a_n)_n$ and $(b_n)_n$ we write $a_n \lesssim b_n$ if there exists a constant $C$ such that $a_n \leq C b_n$ for all $n.$ Moreover, $a_n\asymp b_n$ means that $(a_n)_n\lesssim(b_n)_n$ and $(b_n)_n\lesssim(a_n)_n.$

\subsection{Mathematical framework}

The goal is to predict a square-integrable random variable $Y\in\R$ by means of an explanatory random variable $\bX,$ on a measurable space $\mX,$ and a dataset $\mD = \{(\bX_i,Y_i) \in\mX\times\R : i = 1,\ldots,n\}.$ Let $\P_{\bX}$ be the law of $\bX$ and $L^2(\P_{\bX})$ the corresponding weighted $L^2-$space. Let $\mF \subseteq L^2(\P_{\bX})$ be a convex class of functions from $\mX$ to $\R,$ so that, for any $f\in\mF,$ $\|f\|_{2,\bX}^2 := \int_\mX f(\bx)^2d\P_{\bX}(\bx)$ is finite. We consider the least-squares problem, which requires to minimize the \textit{risk} $\Risk(f) := \E[(Y-f(\bX))^2]$ among all possible predictions $f(\bX)$ for $Y,$ which in turn minimizes the variance of the residuals $\zeta_f := Y - f(\bX).$ The best predictor on $L^2(\P_{\bX})$ is the conditional mean $\overline f(\bX) = E[Y|\bX],$ which can only be computed when the joint distribution of $(\bX,Y)$ is given. Therefore, one solves the least-squares problem by estimating any \textit{oracle solution} 
\begin{align}\label{def.f*_F*}
    f^* \in \mF^* := \argmin_{f\in\mF} \E\big[(Y-f(\bX))^2\big],
\end{align}
which is unique, i.e. $\mF^*=\{f^*\},$ if the class $\mF\subseteq L^2(\P_{\bX})$ is closed (on top of being convex). The resulting representation is
\begin{align}\label{def.model}
    Y = f^*(\bX) + \zeta,\quad \zeta := Y - f^*(\bX),
\end{align}
where the residual $\zeta$ and $\bX$ may not be independent. 
\begin{assump}\label{ass.zeta_moments}
    We make the following assumptions on the residual $\zeta,$
    \begin{align} \label{def.zeta_moments}
        \E[\zeta] = 0, \quad
        \sigma^* := \E[\zeta^2]^{\frac{1}{2}} \leq \sigma_{+}, \quad
        \frakm^* := \E[\zeta^4]^{\frac{1}{4}} \leq \frakm_{+} := \sigma_{+}\kappa_{+}, \quad
        \kappa^* := \frac{\frakm^{*4}}{\sigma^{*4}} \leq \kappa_{+},
    \end{align}
with possibly unknown $\sigma^*,\frakm^*,\kappa^*$ and upper bounds $\sigma_{+},\kappa_{+}$ either given or estimated from the data. We use the convention that $\kappa^*=0$ if both $\sigma^*$ and $\frakm^*$ are zero.
\end{assump}
Without loss of generality we have $\sigma_{+} \leq \frakm_{+},$ since any upper bound on  $\frakm^{*}$ is also an upper bound on the standard deviation $\sigma^{*}.$ The requirement of a known upper bound on the fourth moment of the noise is natural when dealing with MOM procedures, this is in line with Assumption~3.1 in~\cite{lugosi2017regularization}. We aim at simultaneously estimating  $(f^*,\sigma^*)$ from the dataset $\mD,$ but the problem is made more difficult due to possible outliers in the observations.

\begin{assump}\label{ass.inlier_iid}
    We assume the dataset $\mD$ can be partitioned into an \textit{informative set} $\mD_{\mI}$ and an \textit{outlier set} $\mD_{\mO}$ satisfying the following. \vspace{-0.2cm}
    \begin{itemize}
        \item \textbf{Informative data.} We assume that the pairs $(\bX_i,Y_i)_{i\in\mI} =: \mD_{\mI}$ with $\mI \subseteq \{1,\ldots,n\}$ are independent and distributed as $(\bX,Y)$ in the regression model~\eqref{def.model}.
        
        \item \textbf{Outliers.} Nothing is assumed on the pairs $(\bX_i,Y_i)_{i\in\mO} =: \mD_{\mO}$ with $\mO \subseteq \{1,\ldots,n\}.$ They might be deterministic or even adversarial, in the sense that they might depend on the informative sample $(\bX_i,Y_i)_{i\in\mI}$ defined above, or on the choice of estimator.
    \end{itemize}
\end{assump}
The i.i.d. requirement on the informative data can be weakened, as in~\cite{lecue2020robustML}, by assuming that the observations $(\bX_i,Y_i)_{i \in \mI}$ are independent and, for all $i \in \mI$
\begin{align*}
    \E[(Y_i-f^*(\bX_i))(f-f^*)(\bX_i)] &= \E[(Y-f^*(\bX))(f-f^*)(\bX)],\\
    \E[(f-f^*)^2(\bX_i)] &= \E[(f-f^*)^2(\bX)].
\end{align*}
In other words, the distributions of $(\bX_i,Y_i)$ and $(\bX,Y)$ induce the same $L^2-$metric on the function space $\mF-f^*=\{f-f^*: f\in\mF\}.$

By construction, $\mI\cup\mO = \{1,\ldots,n\}$ and $\mI\cap\mO = \emptyset,$ but the statistician does not know whether any fixed index $i\in\{1,\ldots,n\}$ belongs to $\mI$ or $\mO.$ Otherwise, one could just remove this group from the dataset and perform the inference of the informative part. In order to achieve robust inference, we implement a median-of-means approach.

\textbf{The sparse linear case.} We highlight the special case when $\mX = \R^d,$ with a fixed dimension $d>0.$ For $\bbeta\in\R^d,$ set $f_{\bbeta}:\R^d\to\R$ the linear map $f_{\bbeta}(\bx)=\bx^\top\bbeta.$ For any $1 \leq s\leq d,$ we define
\begin{align*}
    \mF := \{f_{\bbeta} : \bbeta\in\R^d\},\quad \mF_s := \big\{ f_{\bbeta}\in\mF : \bbeta\in\R^d,\ |\bbeta|_0\leq s \big\},
\end{align*}
here $|\bbeta|_0$ is the number of non-zero entries of $\bbeta\in\R^d.$

\subsection{Convex-concave formulation} \label{sec.conv_conc}

We follow the formalization made in~\cite{lecue2020robustML}. For any function $f\in\mF,$ and any $(\bx,y)\in\mX\times\R,$ set $\ell_f(\bx,y) := (y-f(\bx))^2.$ In our setting we find
\begin{align*}
    f^* \in \argmin_{f\in\mF} \E\big[\ell_f(\bX,Y) \big],\quad \sigma^* = \E\big[\ell_{f^*}(\bX,Y) \big]^{\frac{1}{2}},
\end{align*}
since $\E[\ell_{f^*}(\bX,Y)] = \E[\zeta^2]$ is the risk of the oracle function $f^*.$ The oracle pair $(f^*,\sigma^*)$ is a solution of the convex-concave problem
\begin{align}\label{def.f*_minmax}
    f^* \in \argmin_{f\in\mF} \sup_{g\in\mF} \E\big[\ell_f(\bX,Y) - \ell_g(\bX,Y) \big],\quad \sigma^* = \E\big[\ell_{f^*}(\bX,Y) \big]^{\frac{1}{2}},
\end{align}
and the goal is to build an estimator $(\widehat f,\widehat\sigma)$ such that, with probability as high as possible, the quantities
\begin{align*}
    \Risk(\widehat f) - \Risk(f^*),\quad \|\widehat f-f^*\|_{2,\bX},\quad |\widehat\sigma - \sigma^*|,
\end{align*}
are as small as possible. The quantity $\Risk(\widehat f) - \Risk(f^*)$ is the \textit{excess risk}, whereas the quantity $\|\widehat f-f^*\|_{2,\bX}$ is the convergence rate in $L^2(\P_{\bX})-$norm of the random function $\widehat f$ to $f^*.$ Since $\widehat f$ is a function of the dataset $\mD,$ we always mean that the expectation is conditional on $\mD,$ i.e. $\|\widehat f-f^*\|_{2,\bX} = \E[(\widehat f - f^*)^2(\bX)|\mD].$ Finally, the quantity $|\widehat\sigma - \sigma^*|$ is the convergence rate of $\widehat\sigma$ to $\sigma^*.$ 

\subsection{Construction of the estimator}\label{sec.estimator}

The starting point of our approach is the regularized median-of-means (MOM) tournament introduced in \cite{lugosi2019regularization}, which has been proposed as a procedure to outperform the \textit{regularized empirical risk minimizer} (RERM)
\begin{align*}
    \widehat f_\lambda^{RERM} := \argmin_{f\in\mF}\left\{\frac{1}{n}\sum_{i=1}^n (Y_i - f(\bX_i))^2 + \lambda \|f\| \right\}, 
\end{align*}
with $\|\cdot\|$ a \textit{penalization} norm on the linear span of $\mF$ and $\lambda > 0$ a \textit{penalization parameter.} The penalization term reduces overfitting by assigning a higher cost to functions that are big with respect to $\|\cdot\|.$ The RERM estimator above is susceptible to outliers since it involves all the pairs $(\bX_i,Y_i)$ in the dataset $\mD,$ whereas replacing the empirical average by the corresponding median-of-means over a number of blocks leads to robustness. The MOM method in \cite{lecue2020robustML} builds directly on the theory of the MOM tournaments and it exploits the fact that $\widehat f_\lambda^{RERM}$ is computed by minimizing $n^{-1} \sum_{i=1}^n \ell_f(\bX_i,Y_i) + \lambda \|f\|.$ From this, the authors deal with the convex-concave equivalent
\begin{align*}
    \widehat f_\lambda^{RERM} := \argmin_{f\in\mF} \sup_{g\in\mF} \left\{\frac{1}{n}\sum_{i=1}^n \ell_f(\bX_i,Y_i) - \frac{1}{n}\sum_{i=1}^n \ell_g(\bX_i,Y_i) + \lambda (\|f\|-\|g\|) \right\}, 
\end{align*}
by replacing the empirical average $n^{-1} \sum_{i=1}^n \big(\ell_f(\bX_i,Y_i) - \ell_g(\bX_i,Y_i) \big)$ with the median-of-means over a chosen number of blocks. Our goal is to extend the scope of this procedure to the estimation of the unknown $\sigma^*.$ To this end, we modify the convex-concave RERM by replacing the functional $R(\ell_g,\ell_f) = \ell_f - \ell_g$ with a new $R_c(\ell_g,\chi,\ell_f,\sigma)$ that incorporates $\chi,\sigma\in I_{+} = (0,\sigma_{+}].$ This leads to a generalized empirical estimator
\begin{align*}
    (\widehat f_\mu, \widehat \sigma_\mu) := \argmin_{(f,\sigma)\in\mF\times I_{+}} \sup_{(g,\chi)\in\mF\times I_{+}} \left\{\frac{1}{n}\sum_{i=1}^n R_c\left(\ell_g(\bX_i,Y_i),\chi,\ell_f(\bX_i,Y_i),\sigma\right) + \mu (\|f\|-\|g\|) \right\}, 
\end{align*}
which we robustify using the MOM. The choice of the functional $R_c$ is crucial for the performance of the procedure and a main contribution of our paper is providing a suitable $R_c(\ell_g,\chi,\ell_f,\sigma),$ we refer to  Section~\ref{sec.discussion} for a detailed discussion motivating our choice.

We give the step-by-step construction of a family of MOM estimators for $(f^*,\sigma^*)$ from model~\eqref{def.f*_F*}--\eqref{def.zeta_moments}. We start with a preliminary definition.

\textbf{Quantiles.} For any $K\in\N,$ set $[K]=\{1,\ldots,K\}.$ For all $\alpha\in(0,1)$ and $\bx=(x_1,\ldots,x_K)\in\R^K,$ we call $\alpha-$\textit{quantile} of $\bx$ any element $Q_\alpha[\bx]$ of the set
\begin{align}\label{def.quantile_alpha}
    \mQ_\alpha[\bx] := \Big\{ u\in\R:\
    &\big|\{k = 1, \dots, K: x_k\geq u\} \big| \geq (1-\alpha)K, \nonumber \\
    &\text{and } \big|\{k = 1, \dots, K: x_k\leq u\} \big| \geq \alpha K \Big\}.
\end{align}
This means that $Q_\alpha[\bx]$ is a $\alpha-$\textit{quantile} of $\bx$ if at least $(1-\alpha)K$ components of $\bx$ are bigger than $Q_\alpha[\bx]$ and at least $\alpha K$ components of $\bx$ are smaller than $Q_\alpha[\bx].$ For all $t\in\R,$ we write $Q_\alpha[\bx]\geq t$ when there exists $J\subset[K]$ such that $|J|\geq(1-\alpha)K$ and, for all $k\in J,$ $x_k\geq t.$ We write $Q_\alpha[\bx]\leq t$ if there exists $J\subset[K]$ such that $|J|\geq \alpha K$ and, for all $k\in J,$ $x_k\leq t.$

\textbf{STEP 1. Partition of the dataset.} \\ 
Let $K \in \N$ be a fixed positive integer. Partition the dataset $\mD = \{(\bX_i,Y_i):i=1,\ldots,n\}$ into $K$ blocks $\mD_1,\ldots,\mD_K$ of size $n/K$ (assumed to be an integer). This corresponds to a partition of $\{1.\ldots,n\}$ into blocks $B_1,\ldots,B_K.$ 

\textbf{STEP 2. Local criterion.} \\
With $c>1$ and $f,g\in\mF,$ $\sigma,\chi\in\R_{+},$ define the functional
\begin{align}\label{def.R_functional}
    R_c(\ell_g,\chi,\ell_f,\sigma) := (\sigma-\chi)\bigg(1-2\frac{\ell_f+\ell_g}{(\sigma+\chi)^2} \bigg) + 2c\frac{\ell_f-\ell_g}{\sigma+\chi}.
\end{align}
Since $\ell_f(\bx,y) = (y-f(\bx))^2$ for all $(\bx,y)\in\mX\times\R,$ the latter definition induces the map $(\bx,y) \mapsto R_c(\ell_g(\bx,y),\chi,\ell_f(\bx,y),\sigma)$ over $(\bx,y)\in\mX\times\R.$ For each $k=[K],$ we define the \textit{criterion of} $(f,\sigma)$ \textit{against} $(g,\chi)$ \textit{on the block} $B_k$ as the empirical mean of the functional $R_c(\ell_g,\chi,\ell_f,\sigma)$ on that block, that is,
\begin{align}\label{def.local_crit}
    \P_{B_k} \Big( R_c(\ell_g,\chi,\ell_f,\sigma) \Big)
    := \frac{1}{|B_k|} \sum_{i \in B_k} 
    R_c\Big(\ell_g(\bX_i,Y_i),\chi,\ell_f(\bX_i,Y_i),\sigma\Big),
\end{align}
for all $(g,\chi,f,\sigma)\in\mF\times\R_{+}\times\mF\times\R_{+}.$ Here $|B_k| = n/K$ denotes the cardinality of $B_k.$

\textbf{STEP 3. Global criterion.} \\
For any $\alpha\in(0,1)$ and number of blocks $K,$ set
\begin{align*}
    Q_{\alpha,K} \Big[ R_c(\ell_g,\chi,\ell_f,\sigma) \Big] := Q_\alpha\Big[ \Big( \P_{B_k} \big( R_c(\ell_g,\chi,\ell_f,\sigma) \big)\Big)_{k\in[K]} \Big],
\end{align*}
the $\alpha-$quantile of the vector of local criteria defined in the previous step. For $\alpha=1/2$ we get the \textit{median}. We define the \textit{global criterion of} $(f,\sigma)$ \textit{against} $(g,\chi)$ as
\begin{align}\label{def.global_crit}
    MOM_K\Big( R_c(\ell_g,\chi,\ell_f,\sigma) \Big)
    := Q_{1/2,K} \Big[ R_c(\ell_g,\chi,\ell_f,\sigma) \Big],
\end{align}
for all $(g,\chi,f,\sigma)\in\mF\times\R_{+}\times\mF\times\R_{+}.$ With some norm $\|\cdot\|$ on the span of $\mF,$ we denote 
\begin{align}\label{def.T_functional}
     T_{K, \mu}(g, \chi, f, \sigma) := MOM_K\Big( R_c(\ell_g,\chi,\ell_f,\sigma) \Big) + \mu (\|f\| - \|g\|),
\end{align}
where $\mu>0$ is a tuning parameter, the functional $T_{K, \mu}$ is the penalized version of the global criterion.

\textbf{STEP 4. MOM estimator.} \\
With $\sigma_{+}$ the known upper bound in~\eqref{def.zeta_moments}, we define the \textit{MOM$-K$} estimator of $(f^*,\sigma^*)$ as
\begin{align}\label{def.MOM_est}
    (\widehat f_{K,\mu,\sigma_+}, \widehat\sigma_{K,\mu,\sigma_+})
    := \argmin_{f \in \mF,\ \sigma \leq \sigma_{+}} \,
    \maxnew_{g \in \mF,\ \chi \leq \sigma_{+}} T_{K, \mu}(g, \chi, f, \sigma),
\end{align}
where $T_{K, \mu}$ is the penalized functional in~\eqref{def.T_functional}. Furthermore, set 
\begin{align}\label{def.C_functional}
     \mC_{K,\mu} (f, \sigma) := \maxnew_{g \in \mF,\ \chi \leq \sigma_{+}} T_{K, \mu}(g, \chi, f, \sigma).
\end{align}
The estimator $(\widehat f_{K,\mu,\sigma_+},\widehat\sigma_{K,\mu,\sigma_+})$ only depends on the upper bound $\sigma_{+},$ the number $K$ of blocks and the tuning parameter $\mu.$

\section{Results for a general class \texorpdfstring{$\mF$}{F}} \label{sec.general_F}

We assume the following regularity condition on the function class $\mF$ and the inliers.
\begin{assump}\label{ass.main}
    There exist constants $\theta_0, \theta_1 > 1$ such that, for all $i \in \mI$ and $f \in \mF,$
    \begin{enumerate}
        \item $\|f-f^*\|_{2,\bX}^2 = \E[(f-f^*)^2(\bX_i)] \leq \theta_0^2 \E[|f-f^*|(\bX_i)]^2 = \theta_0^2 \|f - f^*\|_{1,\bX}^2.$
        \item $\|f-f^*\|_{4,\bX}^2 = \E[(f-f^*)^4(\bX_i)]^{1/2} \leq \theta_1^2 \E[(f-f^*)^2(\bX_i)] = \theta_1^2 \|f - f^*\|_{2,\bX}^2.$
    \end{enumerate}
\end{assump}
This assumption guarantees that the $L^1(\P_{\bX}),L^2(\P_{\bX}),L^4(\P_{\bX})-$norms are equivalent on the function space $\mF-f^*.$ The equivalence between $\|\cdot\|_{1,\bX}$ and $\|\cdot\|_{2,\bX}$ in the first condition matches Assumption 3 in~\cite{lecue2020robustML}. The equivalence between $\|\cdot\|_{2,\bX}$ and $\|\cdot\|_{4,\bX}$ in the second condition, together with the finiteness of fourth moment of the noise in Assumption~\ref{ass.zeta_moments}, helps controlling the dependence between $\zeta$ and $\bX;$ this also matches Assumption~3.1 in~\cite{lugosi2017regularization}. We do not necessarily assume that $\zeta$ is independent of $\bX,$ but the Cauchy-Schwarz inequality gives
\begin{align*}
    \|\zeta(f-f^*)\|_{2,\bX}^2 &= \E[\zeta^2(f-f^*)^2(\bX)] \\
    &\leq \E[\zeta^4]^{\frac{1}{2}} \E[(f-f^*)^4(\bX)]^{\frac{1}{2}} \\
    &\leq \theta_1^2 \frakm^{*2} \E[(f-f^*)^2(\bX)].
\end{align*}
The bound $\|\zeta(f-f^*)\|_{2,\bX}^2 \leq \theta_1^2 \frakm^{*2} \|f-f^*\|_{2,\bX}^2$ is Assumption 2 in~\cite{lecue2020robustML} with $\theta_m^2 = \theta_1^2 \frakm^{*2},$ whereas in our setting this is a consequence of Assumption~\ref{ass.zeta_moments} and Assumption~\ref{ass.main}.

\subsection{Complexity parameters} \label{sec.main_complexity}

With the introduction of MOM tournaments procedures, see \cite{lugosi2017regularization} and references therein, the authors have characterized the underlying geometric features that drive the performance of a learning method. For any $\rho>0,r>0,$ and $f\in\mF,$ we set
\begin{align*}
\begin{split}
    \B(f,\rho) := \big\{g\in\mF:\|g-f\|\leq\rho \big\},\quad \B_2(f,r) := \big\{g\in\mF:\|g-f\|_{2,\bX}\leq r \big\},
\end{split}
\end{align*}
respectively the $\|\cdot\|-$ball of radius $\rho$ and the $\|\cdot\|_{2,\bX}-$ball of radius $r,$ both centered around $f\in\mF.$ We denote by $\B(\rho)$ and $\B_2(r)$ the balls centered around zero. We define the \textit{regular ball around} $f^*$ of radii $\rho>0,r>0$ as
\begin{align*}
    \B(f^*,\rho,r) := \{f\in\mF: \|f-f^*\|\leq\rho,\ \|f-f^*\|_{2,\bX}\leq r \}.
\end{align*}
For any subset of inlier indexes $J\subseteq\mI,$ we define the \textit{standard empirical process on} $J$ as
\begin{align*}
    f \mapsto \P_J(f-f^*) := \frac{1}{|J|}\sum_{i\in J} (f-f^*)(\bX_i).
\end{align*}
Similarly, we define the \textit{quadratic empirical process on} $J$ and the \textit{multiplier empirical process on} $J$ as 
\begin{align*}
    f \mapsto \P_J\left((f-f^*)^2\right) &:= \frac{1}{|J|}\sum_{i\in J} (f-f^*)^2(\bX_i), \\
    f \mapsto \P_J\left(-2\zeta(f-f^*)\right) &:= -\frac{2}{|J|}\sum_{i\in J} \zeta_i (f-f^*)(\bX_i),
\end{align*}
where $\zeta_i = (Y_i-f^*(\bX_i)).$ These processes arise naturally when dealing with the \textit{empirical excess risk on} $J,$ which is
\begin{align*}
    \Risk_J(f) - \Risk_J(f^*) :&= \frac{1}{|J|}\sum_{i\in J} (Y_i - f(\bX_i))^2 - \frac{1}{|J|}\sum_{i\in J} (Y_i - f^*(\bX_i))^2 \\
    &= \frac{1}{|J|}\sum_{i\in J} (f-f^*)^2(\bX_i) -\frac{2}{|J|}\sum_{i\in J} \zeta_i (f-f^*)(\bX_i) \\
    &= \P_J\left((f-f^*)^2\right) + \P_J\left(-2\zeta(f-f^*)\right).
\end{align*}
The empirical processes defined above only involve observations that are not contaminated by outliers and we are interested in controlling them when the indexing function class is a regular ball $\B(f^*,\rho,r).$ 

Let $\xi_i$ be \textit{Rademacher variables}, that is, independent random variables uniformly distributed on $\{-1,1\},$ and independent from the dataset $\mD$. For any $r > 0$ and $\rho > 0$, consider the regular ball $\B(f^*, \rho, r)$ defined above. For every $\gamma_P, \gamma_Q, \gamma_M> 0,$ we define the \textit{complexity parameters}
\begin{align}
\begin{split}\label{def.complexity}
    r_P(\rho, \gamma_P) 
    &:= \inf \bigg\{ r > 0 : \sup_{ J \subset \mI, |J| \geq \frac{n}{2}} \E\bigg[ \sup_{f \in \B(f^*, \rho, r)}
    \Big|\frac{1}{|J|} \sum_{i \in J} \xi_i (f-f^*)(\bX_i)  \Big| \bigg] \leq \gamma_P r \bigg\}, \\
    r_Q(\rho, \gamma_Q) 
    &:= \inf \bigg\{ r > 0 : \sup_{ J \subset \mI, |J| \geq \frac{n}{2}} \E\bigg[ \sup_{f \in \B(f^*, \rho, r)}
    \Big|\frac{1}{|J|} \sum_{i \in J} \xi_i (f-f^*)^2(\bX_i)  \Big| \bigg] \leq \gamma_Q r^2 \bigg\}, \\
    r_M(\rho, \gamma_M) 
    &:= \inf \bigg\{ r > 0 : \sup_{ J \subset \mI, |J| \geq \frac{n}{2}} \E\bigg[ \sup_{f \in \B(f^*, \rho, r)}
    \Big|\frac{1}{|J|} \sum_{i \in J} \xi_i \zeta_i (f-f^*)(\bX_i)  \Big| \bigg] \leq \gamma_M r^2 \bigg\},
\end{split}
\end{align}
and let $r=r(\cdot, \gamma_P, \gamma_M)$ be a continuous non-decreasing function $r:\R_{+}\to\R_{+}$ depending on $\gamma_P, \gamma_M,$ such that
\begin{align}\label{def.r_complexity}
    r(\rho) \geq \max \big\{ r_P(\rho, \gamma_P), r_M(\rho, \gamma_M) \big\},
\end{align}
for every $\rho > 0.$ The definitions above depend on $f^*$ and require that $|\mI|\geq n/2$.  The function $r(\cdot)$ matches the one defined in Definition~3 in~\cite{lecue2020robustML}. We refer to Section~\ref{sec.discussion} for a detailed discussion on the role of complexity parameters, here we only mention that in the sub-Gaussian setting of \cite{lecue2013learning}, for some choice of $\gamma_P, \gamma_M,$ the quantity $r^*(\rho) = \max \{ r_P(\rho, \gamma_P), r_M(\rho, \gamma_M) \}$ is the minimax convergence rate over the function class $\B(f^*,\rho).$

\subsection{Sparsity equation} \label{sec.main_sparsity}

We follow the setup of \cite{lecue2020robustML}, that we restate here for convenience.

\textbf{Subdifferential.} Let $\mE$ be the vector space generated by $\mF$ and $\|\cdot\|$ a norm on $\mE.$ We denote by $(\mE^*, \|\cdot\|_*)$ the dual normed space of $(\mE, \|\cdot\|),$ that is, the space of all linear functionals $z^*$ from $\mE$ to $\R.$ The \textit{subdifferential} of $\|\cdot\|$ at any $f \in \mF$ is denoted by 
\begin{align*}
    (\partial\|\cdot\| )_f
    := \{ z^* \in \mE^* : \|f+h\| \geq \|f\| + z^*(h), \,
    \forall h \in \mE\}.
\end{align*}
The penalization term of the functional $T_{K,\mu}$ in Section~\ref{sec.estimator} is of the form $\mu(\|f\|-\|g\|),$ for $f,g\in\mF,$ and the subdifferential is useful in obtaining lower bounds for $\|f\|-\|f^*\|.$ For any $\rho > 0$ and complexity parameter $r(\rho)$ as in \eqref{def.r_complexity}, we denote $H_\rho = \{f\in\mF: \|f-f^*\| = \rho,\ \|f-f^*\|_{2,\bX} \leq r(\rho)\}.$ Furthermore, we set
\begin{align} 
\begin{split} \label{def.aux}
    \Gamma_{f^*}(\rho)
    &:= \bigcup_{f \in \mF: \, \|f-f^*\| \leq \rho/20}
    \big( \partial \|\cdot\| \big)_f, \\
    \Delta(\rho) &:= \infnew_{f \in H_\rho} \,
    \sup_{z^* \in \Gamma_{f^*}(\rho)} z^* (f - f^*).
\end{split}
\end{align}
The set $\Gamma_{f^*}(\rho)$ is the set of subdifferentials of all functions that are close to $f^*$ (no more than $\rho/20$) in penalization norm $\|\cdot\|.$ The quantity $\Delta(\rho)$ measures the smallest level $\Delta>0$ for which the chain $\|f\|-\|f^*\| \geq \Delta - \rho/20$ holds. In fact, if $f^{**}\in\mF$ is such that $\|f^*-f^{**}\|\leq\rho/20,$ then $\|f\|-\|f^*\| \geq \|f\| - \|f^{**}\| - \|f^{**}-f^*\| \geq z^* (f - f^{**}) - \rho/20,$ for any subdifferential $z^*\in (\partial \|\cdot\|)_{f^{**}}.$

\textbf{Sparsity equation.} The \textit{sparsity equation} and its smallest solution are
\begin{align}\label{def.sparsity_eq}
    \Delta(\rho) \geq \frac{4}{5}\rho,\quad \rho^* := \inf \left\{\rho>0: \Delta(\rho)\geq \frac{4\rho}{5} \right\},
\end{align}
if $\rho^*$ exists, the sparsity equation holds for any $\rho\geq\rho^*.$

\subsection{Main result in the general case}

We now present a result dealing with the simultaneous estimation of $(f^*,\sigma^*)$ by means of a family of MOM estimators constructed as in Section~\ref{sec.estimator}. Fix any constant $c>2$ in the definition on the functional $R_c$ in~\eqref{def.R_functional} and, with $\sigma_{+},\frakm_{+},\kappa_{+}$ the known bounds on the moments of the noise $\zeta=Y-f^*(\bX),$ set
\begin{align}
\begin{split}\label{def.constants}
    c_\mu &:= 200 (c+2) \kappa_{+}^{1/2}, \\
    \eps &:= \frac{c-2}{192 \, \theta_0^2 (c+2) \big(8 + 134 \, \kappa_{+}^{1/2} ((1+\frac{\sigma_{+}}{\sigma^*})\vee \frac{6}{5}) \big)}, \\
    c_\alpha^2 &:= \frac{3 (c-2)}{5 \theta_0^2},
\end{split}
\end{align}
and $\gamma_P = 1/(1488 \, \theta_0^2),$ $\gamma_M = \eps/744$ and $\gamma_Q = \eps/372.$ Let $\rho^*$ be the smallest solution of the sparsity equation in~\eqref{def.sparsity_eq} and $r(\cdot)$ any function such that $r(\rho) \geq \max\{r_P(\rho,\gamma_P),r_M(\rho,\gamma_M)\}$ as in~\eqref{def.r_complexity}.  Define $K^*$ as the smallest integer satisfying
\begin{align}\label{def.K*}
    K^* \geq \frac{n \eps^2 r^2(\rho^*)}{384 \, \theta_1^2\frakm^{*2}},
\end{align}
and, for any integer $K\geq K^*,$ also define $\rho_K$ as the implicit solution of
\begin{align}\label{eq.rhoK}
    r^2(\rho_K) = \frac{384 \, \theta_1^2 \frakm^{*2} K}{n \eps^2}.
\end{align}
\begin{assump}\label{ass.r2rho_rrho}
    We assume that there exists an absolute constant $c_r\geq1$ such that, for all $\rho>0,$ we have $r(\rho) \leq r(2\rho) \leq c_r r(\rho).$
\end{assump}
The role of the latter assumption is to simplify the statement of the main result. We are mainly interested in the sparse linear case, where this holds with $c_r = 2$ by construction of the function $r(\cdot),$ see Section~\ref{sec.disc_complexity_sparse}.

\begin{thm}\label{thm.main_theorem}
    With the notation above, let Assumptions~\ref{ass.zeta_moments}--\ref{ass.main} and Assumption~\ref{ass.r2rho_rrho} hold. With $C^2 := 384 \, \theta_1^2 c_r^2 c_\alpha^2 \kappa_{+}^{1/2},$ suppose that $n \eps^2 > 32 C^2$ and $|\mO| \leq n \eps^2 /(32C^2).$
    Then, for any integer $K \in \left[K^* \vee 32|\mO|,\ n\eps^2/C^2 \right],$
    and for every $\iota_\mu \in [1/4, 4],$ the MOM$-K$ estimator $(\widehat f_{K,\mu,\sigma_+}, \widehat\sigma_{K,\mu,\sigma_+})$ defined in \eqref{def.MOM_est} with $K$ blocks and penalization parameter
    \begin{align}\label{def.mu}
        \mu := \iota_\mu c_\mu \eps \frac{r^2(\rho_K)}{\frakm^* \rho_K},
    \end{align}
    satisfies, with probability at least $1 - 4 \exp( - K / 8920 ),$ for any possible $|\mO|$ outliers,
    \begin{align} 
        \|\widehat f_{K,\mu,\sigma_+} - f^* \|
        &\leq 2 \, \rho_K ,\quad
        \|\widehat f_{K,\mu,\sigma_+} - f^*\|_{2,\bX}
        \leq r(2 \rho_K)  ,\quad
        |\widehat\sigma_{K,\mu,\sigma_+} - \sigma^*|
        \leq c_\alpha r(2\rho_K), \label{eq.bound_thm} \\
        \begin{split}
            R(\widehat f_{K,\mu,\sigma_+})
            &\leq R(f^*) + \bigg(2 + 2 c_\alpha +
            \left(44 + 5 c_\mu \right)\eps + \frac{25 \kappa^{*1/2}}{8 \theta_1^2}\eps^2\bigg) r^2(2\rho_K) \\
            &\quad + 4 \, \theta_1^2\eps \left(r^2(2\rho_K) \vee r_Q^2(2\rho_K,\gamma_Q) \right).
            \label{eq.bound_excess_risk}
        \end{split}
    \end{align}
    
\end{thm}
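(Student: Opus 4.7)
The plan is to combine a median-of-means concentration argument on a high-probability event $\Omega$ with a convex-concave saddle-point analysis. Writing $\mC_{K,\mu}(f,\sigma) := \max_{g,\chi} T_{K,\mu}(g,\chi,f,\sigma)$, the MOM estimator is by definition a minimizer of $\mC_{K,\mu}$, so it suffices on $\Omega$ to (i) upper bound $\mC_{K,\mu}(f^*,\sigma^*)$ by a value of order $\mu\rho_K$ and (ii) show $\mC_{K,\mu}(f,\sigma) > \mC_{K,\mu}(f^*,\sigma^*)$ whenever $(f,\sigma)$ violates any of the three inequalities in \eqref{eq.bound_thm}. Together these two facts force $(\widehat f_{K,\mu,\sigma_+},\widehat\sigma_{K,\mu,\sigma_+})$ into the target neighborhood.

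\textbf{Good event.} I would define $\Omega$ as the intersection of four events, each requiring that strictly more than $K/2$ of the blocks $B_k$ satisfy a blockwise concentration bound, uniformly over the shell $\B(f^*, 2\rho_K, r(2\rho_K))$, for one of the processes appearing in $R_c$ and its first variation: the standard process $f-f^*$, the quadratic process $(f-f^*)^2$, the multiplier process $\zeta(f-f^*)$, and the centered $R_c$ itself around $(f^*,\sigma^*)$. Each event has probability at least $1 - \exp(-K/8920)$ by symmetrization and the definitions in \eqref{def.complexity}, blockwise Chebyshev inequalities exploiting the fourth-moment controls in Assumptions~\ref{ass.zeta_moments} and \ref{ass.main}, and a Hoeffding-type bound on the number of bad blocks (into which the $|\mO|$ outlier-contaminated blocks are absorbed thanks to $K \geq 32|\mO|$). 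A union bound yields $\P(\Omega) \geq 1 - 4\exp(-K/8920)$, and Assumption~\ref{ass.r2rho_rrho} allows the passage from the budget at scale $\rho_K$ to the statement at scale $2\rho_K$ at the cost of a factor $c_r$.

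\textbf{Saddle comparison.} To bound $\mC_{K,\mu}(f^*,\sigma^*)$, I would plug $f=f^*,\sigma=\sigma^*$ into \eqref{def.R_functional} and compute $\E[R_c(\ell_g,\chi,\ell_{f^*},\sigma^*)]$; using $\E[\zeta(g-f^*)(\bX)]=0$ and $\sigma^{*2}=\E\ell_{f^*}$, this expectation reduces to a concave function of $\chi$ whose maximum is bounded above by a multiple of $\|g-f^*\|_{2,\bX}^2/\sigma^*$, where the coefficient $c-2>0$ plays a decisive role. Combined with the concentration budget on $\Omega$ and $\mu(\|f^*\|-\|g\|) \leq \mu\rho_K/20$ over $\{\|g-f^*\|\leq \rho_K/20\}$, this yields $\mC_{K,\mu}(f^*,\sigma^*)\lesssim \mu\rho_K$. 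For the off-target lower bound, fix $(f,\sigma)$ violating \eqref{eq.bound_thm} and pick the witness $(g,\chi)=(f^*,\sigma^*)$. Expanding $R_c(\ell_{f^*},\sigma^*,\ell_f,\sigma)$ around $(f^*,\sigma^*)$ produces a non-negative quadratic form in $\|f-f^*\|_{2,\bX}$ and $|\sigma-\sigma^*|$, plus a linear multiplier term in $\zeta(f-f^*)$, plus the penalty $\mu(\|f\|-\|f^*\|)$. On $\Omega$ the multiplier MOM term is at most $\gamma_M r^2(2\rho_K)$ in absolute value; the penalty is bounded below via the sparsity equation $\Delta(\rho_K)\geq (4/5)\rho_K$ (valid since $\rho_K \geq \rho^*$) combined with peeling over the dyadic shells $\{\|f-f^*\|\in[2^j\rho_K, 2^{j+1}\rho_K]\}$. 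A three-way case analysis, according to which of the inequalities in \eqref{eq.bound_thm} is violated, then gives $\mC_{K,\mu}(f,\sigma)>\mC_{K,\mu}(f^*,\sigma^*)$; the constants $c_\mu, c_\alpha, \eps$ in \eqref{def.constants} and the choice of $\mu$ in \eqref{def.mu} are calibrated precisely so that each comparison closes with a strict gap.

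\textbf{Excess risk and main obstacle.} For \eqref{eq.bound_excess_risk}, I would use the decomposition $R(\widehat f) - R(f^*) = \|\widehat f - f^*\|_{2,\bX}^2 - 2\E[\zeta(\widehat f - f^*)(\bX)\mid\mD]$, bounding the quadratic term via the $L^2$ rate already obtained in \eqref{eq.bound_thm} and the multiplier term via the uniform control of the multiplier process over $\B(f^*,2\rho_K,r(2\rho_K))$; the additional $r_Q^2(2\rho_K,\gamma_Q)$ term arises when transferring the empirical quadratic process to the population one uniformly over the same ball. The main obstacle throughout is the off-target lower bound in the third paragraph: the weights $(\sigma+\chi)^{-1}$ and $(\sigma+\chi)^{-2}$ in $R_c$ depend nonlinearly on $(\sigma,\chi)$, which can be arbitrarily close to $0$ since only the upper bound $\sigma_+$ is available and no lower bound. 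One must therefore confirm that the quadratic form from the expansion of $R_c$ remains uniformly positive definite in $(f-f^*,\sigma-\sigma^*)$ for $\sigma\in(0,\sigma_+]$, and that the three constants $c_\alpha, c_\mu, \eps$ can simultaneously absorb the resulting cross-terms. This is the algebraic payoff of the specific form \eqref{def.R_functional} of $R_c$ and is what distinguishes the analysis from the known-$\sigma$ case of~\cite{lecue2020robustML}.
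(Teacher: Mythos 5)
Your overall route matches the paper's: construct a high-probability event $\Omega(K)$ from MOM/quantile concentration of the relevant empirical processes, upper-bound $\mC_{K,\mu}(f^*,\sigma^*)$, and show by a case analysis over $(g,\chi)$ that the max-player makes $T_{K,\mu}(\cdot,\cdot,f^*,\sigma^*)$ fall strictly below that threshold outside a small ball, so that the minimizer must lie inside it. Minor differences in bookkeeping are inconsequential: the paper slices $\mF\times I_+$ into nine regions indexed by $\|g-f^*\|\lessgtr c_\rho\rho_K$, $\|g-f^*\|_{2,\bX}\lessgtr r(c_\rho\rho_K)$, and $\chi\lessgtr\sigma^*\pm\alpha_{K,c_\rho}$ rather than a three-way split, and rescales via the homothety $f\mapsto f^*+\rho_K(g-f^*)/\|g-f^*\|$ rather than dyadic peeling, but both devices serve the same purpose and yours would work. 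The one mechanism you should state explicitly is how anti-symmetry \ref{prop.antysymmetry} plus $Q_{1/2}[\bx]\geq -Q_{1/2}[-\bx]$ (Lemma~\ref{lemma.quantile_prop}) actually force the conclusion: it is the chain $B_{1,1}\geq\mC_{K,\mu}(\widehat f,\widehat\sigma)\geq T_{K,\mu}(f^*,\sigma^*,\widehat f,\widehat\sigma)\geq -T_{K,\mu}(\widehat f,\widehat\sigma,f^*,\sigma^*)$, after which the slice-by-slice upper bounds on $T_{K,\mu}(g,\chi,f^*,\sigma^*)$ give $(\widehat f,\widehat\sigma)\in\mF_1^{(2)}$; this is logically equivalent to your item (ii) but it is exactly where the anti-symmetry property is cashed in.

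The excess-risk paragraph has a genuine gap. You propose to control the multiplier term $\E[-2\zeta(\widehat f-f^*)(\bX)\mid\mD]$ "via the uniform control of the multiplier process over $\B(f^*,2\rho_K,r(2\rho_K))$." Taken literally, this gives either the Cauchy–Schwarz bound $2\sigma^*\|\widehat f-f^*\|_{2,\bX}\leq 2\sigma^* r(2\rho_K)$, or the deviation bound $|(\P_J-\E)[\zeta(\widehat f-f^*)]|\lesssim\gamma_M r^2(2\rho_K)$, and neither pins the population term at the advertised order $r^2(2\rho_K)$: the first is larger by a factor $\sigma^*/r(2\rho_K)\gg 1$, and the second only transfers between the empirical and population multiplier means without controlling either. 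To close this you must feed the minimizing property of the estimator back in: one passes $\E[-2\zeta(\widehat f-f^*)(\bX)]\leq Q_{1/4,K}[-2\zeta(\widehat f-f^*)]+\alpha_M^2\leq Q_{1/4,K}[\ell_{\widehat f}-\ell_{f^*}]+\alpha_M^2$, re-expresses $\ell_{\widehat f}-\ell_{f^*}$ through $R_c(\ell_{f^*},\sigma^*,\ell_{\widehat f},\widehat\sigma)$ and $\Delta_c$, and invokes $T_{K,\mu}(f^*,\sigma^*,\widehat f,\widehat\sigma)\leq\mC_{K,\mu}(\widehat f,\widehat\sigma)\leq B_{1,1}\lesssim r^2(\rho_K)$. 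That step (Lemma~\ref{lemma.bound_P2zeta_f_fstar} in the paper) is where the constants $c_\mu,c_\alpha,\kappa^*$ reappear in \eqref{eq.bound_excess_risk}, and without it the risk bound does not follow from the rates in \eqref{eq.bound_thm} alone.
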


The proof of Theorem~\ref{thm.main_theorem} is given in Appendix~\ref{sec.proofs_main}. It provides theoretical guarantees for the MOM$-K$ estimator $(\widehat f_{K,\mu,\sigma_+}, \widehat\sigma_{K,\mu,\sigma_+})$: this estimator recovers $(f^*,\sigma^*),$ with high probability, whenever the number $K$ of blocks is chosen to be at least $K^* \vee 32|\mO|$ and at most $n\eps^2/C^2.$
Specifically, the random function $\widehat f_{K,\mu,\sigma_+}$ belongs to the regular ball $\B(f^*,2\rho_K,r(2\rho_K)),$ whereas the random standard deviation $\widehat\sigma_{K,\mu,\sigma_+}$ is at most $c_\alpha r(2\rho_K)$ away from $\sigma^*.$ The best achievable rates are obtained for $K=K^*$ when $|\mO| \leq K^*/32.$ Any estimator $(\widehat f_{K,\mu,\sigma_+}, \widehat\sigma_{K,\mu,\sigma_+})$ only depends on the penalization parameter $\mu,$ the number of blocks $K$ and the upper bound $\sigma_+,$ thus the result is mainly of interest when these quantities can be chosen without knowledge of $(f^*,\sigma^*).$ Our Theorem~\ref{thm.main_theorem} extends the scope of Theorem~1 in~\cite{lecue2020robustML} to the case of unknown noise variance. In the latter reference, the authors obtain the same convergence rates for a MOM$-K$ estimator $\widehat f_{K,\lambda}$ defined by using a penalization parameter $\lambda$ that we compare to our $\mu,$
\begin{align*}
    \lambda := 16 \eps \frac{r^2(\rho_K)}{\rho_K},\quad \mu := c_{\mu} \eps \frac{r^2(\rho_K)}{\frakm^* \rho_K},
\end{align*}
so that $\mu$ is proportional to $\lambda/\frakm^*.$ For the sparse linear case, \cite{lecue2020robustML} shows that the optimal choice is $\lambda \sim \frakm^* \sqrt{\log(ed/s^*)/n},$ which is proportional to the noise level $\sigma^*.$ This in turn guarantees that our penalization parameter can be chosen of the form $\mu \sim \sqrt{\log(ed/s^*)/n}$ to obtain the optimal rates, and that such a choice does not depend on the moments of the noise.

\section{The high-dimensional sparse linear regression} \label{sec.high_dim_reg}

\subsection{Results for known sparsity}

In this section, we will give non-asymptotic bounds that will hold adaptively and uniformly over a certain class of joint distributions for $(\bX, \zeta)$. We now define the class of interest $\mP_I$, parametrized by an interval $I$. This interval $I$ represents the set of possible values for the standard deviation $\sigma^*$ of the noise $\zeta$.

\begin{defi}[Class of distributions of interest] \label{def.distr_class}
    For $I \subset \R_+$, $\theta_0, \theta_1, c_0, L, \kappa_+ > 1$,
    let us define $\mP_I = \mP_I(\theta_0, \theta_1, c_0, L, \kappa_+)$ to be the class of distributions $P_{\bX,\zeta}$ on $\R^{d+1}$ satisfying:
    \begin{enumerate}
        \item The standard deviation $\sigma^*$ of $\zeta$ belongs to $I$ and the kurtosis of $\zeta$ is smaller than $\kappa_+$.
        
        \item For all $\bbeta \in \R^d$,
        $\E\big[(\bX^\top \bbeta)^2 \big]^{\frac{1}{2}}
        \leq \theta_0 \E\big[|\bX^\top \bbeta|\big],$ and
        $\E\big[(\bX^\top \bbeta)^4\big]^{\frac{1}{2}} \leq \theta_1^2 \E\big[(\bX^\top \bbeta)^2\big]$.
        
        \item $\bX$ is isotropic: for all $\bbeta\in\R^d,$
        $\|f_{\bbeta}\|_{2,\bX} := \E[(\bX^\top \bbeta)^2] = |\bbeta|_2,$ where $f_{\bbeta}(\bx) = \bx^\top\bbeta$.
        
        \item $\bX$ satisfies the weak moment condition: for all $1 \leq p \leq c_0 \log(ed),$ $1 \leq j \leq d,$
        $\E\big[|\bX^\top \be_j|^p \big]^{\frac{1}{p}}
        \leq L \sqrt{p} \, \E\big[|\bX^\top \be_j|^2 \big]^{\frac{1}{2}}.$
    \end{enumerate}
\end{defi}

The class $\mP_I$ only requires a finite fourth moment on $\zeta$, allowing it to follow heavy-tailed distributions. The weak moment condition only bounds moments of $\bX$ up to the order $\log(d)$, which is weaker than the sub-Gaussian assumption, see~\cite{lecue2013learning} and the references therein for a discussion and a list of examples.

\begin{defi}[Contaminated datasets]
    For a dataset $\mD = (\bx_i, y_i)_{i=1,\dots,n} \in \R^{(d+1) \times n}$ and for $N \in [n]$, we denote by $\mD(N)$ the set of all datasets $\mD' = (\bx'_i, y'_i)_{i=1,\dots,n} \in \R^{(d+1) \times n}$ that differ from $\mD$ by at most $N$ observations, i.e.
    \begin{align*}
        \mD(N) &:= \left\{\mD' \in \R^{(d+1) \times n} : \left|\mD \setminus \mD' \right| \leq N  \right\},
    \end{align*}
    where $\mD \setminus \mD'$ is defined as the difference between the (multi-)sets $\mD$ and $\mD'$, meaning that if there exists duplicated observations in $\mD$ that appear also in $\mD'$, they are removed from $\mD$ up to their multiplicities in $\mD'$.
    This encodes all the possible corrupted versions of $\mD$ by means of up to $N$ arbitrary outliers. 
\end{defi}

\begin{defi}
    Let $P_{\bbeta^*,P_{\bX,\zeta}}$ be the distribution of $(\bX,Y)$ when $(\bX, \zeta) \sim P_{\bX,\zeta}$ and $Y := \bX^\top \bbeta^* + \zeta$.
    
    In the following, we will use the minimax optimal rates of convergence defined for $p \in [1,2]$ by $\frakr_p := {s^*}^{1/p} \sqrt{(1/n) \log(ed/s^*)}$ and the allowed maximum number of outliers defined by $\frakr_\mO := s^*\log(ed/s^*) = n \frakr_2^2$.
\end{defi}

\begin{thm} \label{thm.rates_AMOM_sigmaplus}
    Assume that $\frakr_2 < 1$. For every $\theta_0, \theta_1, c_0, L, \kappa_+ > 1$, there exists universal constants $\widetilde c_1, \dots, \widetilde c_5 > 0$ such that for every $\sigma_+$ and for every $\iota_K, \iota_\mu \in [1/2, 2]^2$, setting
    \begin{align*}
        K = \lceil \iota_K \widetilde c_1 s^* \log(ed/s^*) \rceil , \quad
        \mu = \iota_\mu \widetilde c_2 \sqrt{\frac{1}{n} \log\left(\frac{e d}{s^*} \right)},
    \end{align*}
    the estimator $(\widehat\bbeta_{K,\mu,\sigma_+}, \widehat\sigma_{K,\mu,\sigma_+})$ satisfies
    \begin{align*}
        \inf_{ \scaleto{\begin{array}{c}
        P_{\bX,\zeta} \in \mP_{[0, \, \sigma_+]} \\
        \bbeta^* \in \mF_{s^*}
        \end{array} }{25pt} } \hspace{-0.3cm}
        \P_{\mD \sim P_{\beta^*,P_{\bX,\zeta}}^{\otimes n}}
        \Bigg( \hspace{0cm}
        &\sup_{
        \mD' \in \mD(\widetilde c_3 \frakr_\mO) }
        \hspace{0cm}
        \bigg\{ \frakr_2^{-1}
        \big| \widehat\sigma (\mD') - \sigma^* \big| \\
        &\hspace{2cm} \vee \hspace{-0.1cm}
        \sup_{p \in [1,2]} \frakr_p^{-1}
        \big| \widehat\bbeta (\mD') - \bbeta^* \big|_p \bigg\}
        \leq \widetilde c_4 \sigma_+ \Bigg)
        \geq 1 - 4 \Big( \frac{s^*}{ed} \Big)^{\widetilde c_5 s^*},
    \end{align*}
\end{thm}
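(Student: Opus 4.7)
The strategy is to specialize the general simultaneous MOM result (Theorem~\ref{thm.main_theorem}) to the sparse linear class $\mF = \{f_{\bbeta}: \bbeta \in \R^d\}$ equipped with the $\ell_1$ penalization norm $\|f_{\bbeta}\| := |\bbeta|_1$, and then translate the ensuing conclusions (stated in the $\|\cdot\|$ and $\|\cdot\|_{2,\bX}$ metrics) into $\ell_p$ bounds on $\widehat \bbeta - \bbeta^*$ for $p\in[1,2]$. The uniformity over $P_{\bX,\zeta}\in \mP_{[0,\sigma_+]}$, $\bbeta^*\in\mF_{s^*}$ and contaminations $\mD'\in\mD(\widetilde c_3 \frakr_\mO)$ will follow from the fact that the complexity parameters, $\rho^*$, $K^*$ and $\mu$ can all be chosen as universal constants (times $\sigma_+$) depending only on $(\theta_0,\theta_1,c_0,L,\kappa_+)$ and on $(n,d,s^*)$, and from the built-in outlier tolerance $|\mO|\leq K/32$ of Theorem~\ref{thm.main_theorem}.

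First, I would verify the hypotheses of Theorem~\ref{thm.main_theorem}. Properties (1)--(2) of Definition~\ref{def.distr_class} immediately give Assumption~\ref{ass.main} with the same $\theta_0,\theta_1$. Assumption~\ref{ass.zeta_moments} holds with $\frakm_+ = \sigma_+ \kappa_+^{1/4}$, since the kurtosis of $\zeta$ is bounded by $\kappa_+$. Next, the key technical step is to control the complexity parameters and to solve the sparsity equation. Under isotropy, the $\ell_1$ subdifferential and the weak moment condition (property (4) of Definition~\ref{def.distr_class}), one can invoke the now-standard small-ball/Rademacher bounds (see the sparse linear part of \cite{lecue2020robustML} and \cite{lugosi2019regularization}) to show that one can take
\begin{align*}
    r(\rho) \asymp \rho \sqrt{\frac{\log(ed)}{n}} \wedge \rho^{1/2} \cdot c(s^*)^{1/4},
\end{align*}
i.e.\ there exist universal constants (depending only on $\theta_0,\theta_1,c_0,L,\kappa_+$) such that $r_P$ and $r_M$ both scale this way; in particular Assumption~\ref{ass.r2rho_rrho} holds with $c_r = 2$. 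The sparsity equation~\eqref{def.sparsity_eq} for $|\cdot|_1$ then has smallest solution $\rho^* \asymp \sigma_+ s^* \sqrt{(1/n)\log(ed/s^*)} = \sigma_+ \frakr_1$, and evaluating $r$ at $\rho^*$ yields $r(\rho^*) \asymp \sigma_+ \frakr_2$. Plugging these into~\eqref{def.K*} gives $K^* \asymp s^* \log(ed/s^*)$, and by~\eqref{eq.rhoK} the implicit solution satisfies $\rho_K \asymp \sigma_+ \frakr_1$ and $r(\rho_K)\asymp \sigma_+ \frakr_2$ for any $K$ of order $K^*$, so that the choice $K = \lceil \iota_K \widetilde c_1 s^* \log(ed/s^*)\rceil$ in the interval $[K^* \vee 32|\mO|,\, n\eps^2/C^2]$ is admissible provided $\widetilde c_3$ is small enough and $\frakr_2 < 1$. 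The prescribed penalization $\mu = \iota_\mu \widetilde c_2 \sqrt{(1/n)\log(ed/s^*)}$ matches formula~\eqref{def.mu} up to constants, since $r^2(\rho_K)/(\frakm^* \rho_K) \asymp \sqrt{(1/n)\log(ed/s^*)}$ uniformly in $\sigma^* \in [0,\sigma_+]$ (the $\sigma^*$ and $\frakm^*$ cancel).

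With these ingredients, Theorem~\ref{thm.main_theorem} yields with probability at least $1 - 4\exp(-K/8920) \geq 1 - 4(s^*/ed)^{\widetilde c_5 s^*}$ that
\begin{align*}
    |\widehat\bbeta - \bbeta^*|_1 \leq 2\rho_K \lesssim \sigma_+ \frakr_1,
    \qquad
    |\widehat\bbeta - \bbeta^*|_2 \leq r(2\rho_K) \lesssim \sigma_+ \frakr_2,
    \qquad
    |\widehat\sigma - \sigma^*| \leq c_\alpha r(2\rho_K) \lesssim \sigma_+ \frakr_2,
\end{align*}
simultaneously for any contaminated $\mD' \in \mD(\widetilde c_3 \frakr_\mO)$, because the assumption $|\mO|\leq K/32$ of Theorem~\ref{thm.main_theorem} is exactly $|\mO| \leq \widetilde c_3 s^*\log(ed/s^*)$ after adjusting the constant. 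The interpolation inequality $|v|_p \leq |v|_1^{2/p-1}|v|_2^{2-2/p}$ valid for $p\in[1,2]$ then gives
\begin{align*}
    |\widehat\bbeta - \bbeta^*|_p
    \lesssim \sigma_+ \frakr_1^{2/p-1} \frakr_2^{2-2/p}
    = \sigma_+ {s^*}^{1/p} \sqrt{\frac{1}{n}\log\Big(\frac{ed}{s^*}\Big)}
    = \sigma_+ \frakr_p,
\end{align*}
so that a single universal constant $\widetilde c_4$ bounds all three normalized errors simultaneously, which is exactly the claim.

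\textbf{Main obstacle.} The conceptually delicate part is not Step~1 or the interpolation, but the verification that the complexity parameters $r_P, r_M$ and the solution $\rho^*$ of the sparsity equation behave as claimed uniformly over $P_{\bX,\zeta}\in \mP_{[0,\sigma_+]}$. This requires combining the small-ball method (to control the quadratic process under only the weak moment condition on $\bX$) with a careful Rademacher/multiplier argument for the $\ell_1$-ball that must not pay extra factors of $\sigma^*$ (so that the penalization $\mu$ is genuinely independent of the unknown noise level). This is where the full force of the design of $R_c$ in Section~\ref{sec.conv_conc} is needed, and where the present framework differs most from the MOM-LASSO analysis.
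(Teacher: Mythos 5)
Your overall plan — specialize Theorem~\ref{thm.main_theorem} to the sparse linear class with $\|\cdot\| = |\cdot|_1$, compute $\rho^*$ and $r(\rho^*)$ from the complexity bounds in Section~\ref{sec.disc_complexity_sparse}, then interpolate to get the $\ell_p$ rates — is the same route the paper takes (via the intermediate Theorem~\ref{thm.MOM_sqrt_lasso}). However, there is a concrete gap: you treat $\eps$ as a universal constant, while in the definition~\eqref{def.constants} the quantity $\eps$ depends on the \emph{unknown} ratio $\sigma_+/\sigma^*$, so $\eps^{-1}$ diverges as $\sigma^* \to 0$. This propagates in two places that your argument does not resolve.

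First, the penalization. From~\eqref{def.mu}, $\mu = \iota_\mu c_\mu \eps\, r^2(\rho_K)/(\frakm^*\rho_K)$. If $r$ is the complexity function from~\eqref{def.r_complexity_linear} (as you seem to intend), then $r^2(\rho_K)/(\frakm^*\rho_K)\asymp \sqrt{(1/n)\log(ed/s^*)}$ as you say, but then $\mu \asymp \eps\sqrt{(1/n)\log(ed/s^*)}$, which is not the $\sigma^*$-free $\mu$ in the statement. The paper resolves this by replacing $r$ with the $\eps$-rescaled $r_\eps$ of~\eqref{def.r_complexity_linear_eps} (which replaces $n$ by $n\eps^2$), which is admissible because Theorem~\ref{thm.main_theorem} only needs $r \ge \max\{r_P, r_M\}$. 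Then $r_\eps^2(\rho_K)/(\frakm^*\rho_K)$ picks up an extra $\eps^{-1}$ and the $\eps$ in~\eqref{def.mu} cancels, giving a $\sigma^*$-independent $\mu$ as claimed. Second, the rates. With $r_\eps$ in place of $r$, one has $\rho_K \asymp \eps^{-1}\frakm^* s^*\sqrt{(1/n)\log(ed/s^*)}$ and $r_\eps(2\rho_K)\asymp \eps^{-1}\frakm^*\frakr_2$, i.e.\ the conclusion of Theorem~\ref{thm.main_theorem} gives bounds of order $\eps^{-1}\frakm^*\frakr_p$, not $\sigma_+\frakr_p$. To obtain the $\sigma_+\frakr_p$ uniform over $\mP_{[0,\sigma_+]}$ you still need to expand $\eps^{-1}\asymp(1+\sigma_+/\sigma^*)\vee \tfrac{6}{5}$ and observe $\eps^{-1}\frakm^* \lesssim (1 + \sigma_+/\sigma^*)\kappa_+^{1/4}\sigma^* \lesssim \kappa_+^{1/4}\sigma_+$; this is precisely how the paper passes from Theorem~\ref{thm.MOM_sqrt_lasso} (with explicit $\eps^{-1}\kappa^*\sigma^*$ factors) to the $\sigma_+$-normalized statement of Theorem~\ref{thm.rates_AMOM_sigmaplus}. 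Without these two maneuvers, your derivation neither reproduces the stated $\mu$ nor yields a bound uniform in $\sigma^*\in(0,\sigma_+]$.
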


This theorem is proved in Section~\ref{proof:thm.rates_AMOM_sigmaplus}.
Theorem~\ref{thm.rates_AMOM_sigmaplus} ensures that, with high probability, the estimator $(\widehat\bbeta_{K,\mu,\sigma_+}, \widehat\sigma_{K,\mu,\sigma_+})$ achieves the rates 
$| \widehat\bbeta - \bbeta^* |_p \lesssim \sigma_+ {s^*}^{1/p} \sqrt{(1/n) \log(ed/s^*)}$ and
$| \widehat\sigma - \sigma^* | \lesssim \sigma_+ \sqrt{(s^*/n) \log(ed/s^*)}$, uniformly over the class of distributions $\mP_{[0, \, \sigma_+]}$ with bounded variance while being robust to up to $\widetilde c_3 s^* \log(ed/s^*)$ arbitrary outliers. 
However, the uniform constants appearing in the statement might be difficult to compute in practice, to obtain precise values, one would need to quantify the constants in Theorem~1.6 in~\cite{mendelson2017multiplier} and Lemma~5.3 in~\cite{lecue2018regularization}.
As usual for MOM estimators, the maximum number of outliers is of the same order as the number of blocks.
Note that the estimator needs the knowledge of an upper bound on the noise level $\sigma_+$ and the sparsity level $s$.

In \cite{bellec2018}, it has been proved that the optimal minimax rate of estimation of $\bbeta^*$ in the $|\cdot|_p$ norm is $\sigma^* \sqrt{(s^*/n) \log(ed/s^*)}$ when $\sigma^*$ is fixed and the noise is sub-Gaussian. Our theorem shows that the rate of estimation of $\bbeta$ over $\mP_{[0, \, \sigma_+]}$ is the optimal minimax rate of estimation for the worst-case noise level $\sigma_+$.
In particular, this means that in the noiseless case when $\sigma^* = 0$, the estimator $\widehat\bbeta_{K,\mu,\sigma_+}$ does not achieve perfect reconstruction of the signal $\bbeta^*$. This is worse than the square-root Lasso \cite{derumigny2018improved} which achieves the minimax optimal rate $| \widehat\bbeta^{SR\text{-}Lasso} - \bbeta^* |_p \lesssim \sigma^* {s^*}^{1/p} \sqrt{(1/n) \log(ed/s^*)}$ adaptively over $\sigma^* \in \R_+$.
However, the square-root Lasso is not robust to even one outlier in the dataset. Furthermore, this optimal rate for the square-root Lasso has only been proved for sub-Gaussian noise $\zeta$ whereas in Theorem~\ref{thm.rates_AMOM_sigmaplus}, we allow for any distribution of $\zeta$ with finite fourth moment.
The MOM-Lasso \cite{lecue2020robustML} achieves the optimal rate $| \widehat\bbeta^{MOM-Lasso} - \bbeta^* |_p \lesssim \sigma^* {s^*}^{1/p} \sqrt{(1/n) \log(ed/s^*)}$, but needs the knowledge of $\sigma^*$. Therefore, this bound can uniformly hold only on a class of the form $\mP_{[C_1 \sigma^*, C_2 \sigma^*]}$ for some fixed $0 < C_1 \leq C_2$.

To our knowledge, the estimator $\widehat \sigma$ is the first estimator of $\sigma^*$ that achieves robustness. Its rate of estimation $\sqrt{(s^*/n) \log(ed/s^*)}$ is slower than the parametric rate $1/\sqrt{n}$ that one would get if $\beta^*$ was known.
Theorem 5 in~\cite{comminges2018adaptive} suggests that this rate $\frakr_2$ might be minimax as well: the authors show that, albeit in a Gaussian sequence model, the factor $\sqrt{s^* \log(ed/s^*)}$ arises naturally in the estimation of $\sigma^*$ by means of any adaptive procedure in a setting where the distribution of the noise $\zeta$ is unknown. Even in the case where no outliers are present, we improve on the best known bound on the estimation of $\sigma^*$,~\cite[Corollary 2]{belloni2014pivotal} which was 
$\big| (\hat \sigma^{SR\text{-}Lasso})^2 - \sigma^2 \big|
\lesssim \sigma^2 \Big( \frac{s^* \log(n \vee d \log n)}{n}
+ \sqrt{\frac{s^* \log(d \, \vee \, n)}{n}} + \frac{1}{\sqrt{n}} \Big)$.

\begin{rem}
    When $\bbeta^*$ is not sparse but very close to a sparse vector (i.e. $|\bbeta^*-\bbeta^{**}|_1 \lesssim \sigma^* \sqrt{s^* \log(ed/s^*) /n }$ for a sparse vector $\bbeta^{**} \in \mF_{s^*}$, the complexity parameter $r(\rho)$ is in fact unchanged compared to the sparse case and the upper bounds on the rates of estimation $| \widehat\bbeta - \bbeta^* |_p \lesssim \sigma_+ {s^*}{1/p} \sqrt{(1/n) \log(ed/s^*)}$ and $| \widehat\sigma - \sigma^* | \lesssim \sigma_+ \sqrt{(s^*/n) \log(ed/s^*)}$ still hold, extending Theorem~\ref{thm.rates_AMOM_sigmaplus}.
\end{rem}

In practice, it may not be obvious to choose what a good value for $\sigma_+$ could be. This means that the (unknown) distribution belongs in fact to the class $\mP_{[0, + \infty]} = \bigcup_{\sigma_+ > 0} \mP_{[0, \sigma_+]}$. A natural idea is to cut the data into two parts. On the first half of the data, we estimate the variance $\Var[Y]$ by the MOM estimator
$\widehat\sigma_{K,+}^2 := Q_{1/2,K}\left[ Y^2 \right] - \left(Q_{1/2,K}\left[ Y \right]\right)^2$.
On the second half of the data, we use $\widehat\sigma_{K,+}$ as the ``known'' upper bound $\sigma_+$ and apply our algorithm as defined in Equation~\eqref{def.MOM_est}.
The following corollary, proved in Section~\ref{proof:cor.AMOM_estiSigma}, gives a bound on the performance of this estimator on the larger class $\mP_{[0, \, +\infty]}$.

\begin{cor}[Performance of the estimator with estimated $\sigma_+$ on $\mP_{[0, \, +\infty]}$]
    \label{cor.AMOM_estiSigma}
    Let $s^*>0$. 
    Then, for every $P_{\bX,\zeta} \in \mP_{[0, \, +\infty]}$ and $\bbeta^* \in\mF_{s^*}$, there exists a constant $C > 0$ such that, for any $n > C s^* \log(p/s^*)$
    the estimator $(\widehat\bbeta_{K,\mu,\widehat\sigma_{K,+}}, \widehat\sigma_{K,\mu,\widehat\sigma_{K,+}})$ satisfies
    \begin{align*}
        \P_{\mD \sim P_{\beta^*,P_{\bX,\zeta}}^{\otimes n}}
        \Bigg( 
        & \sup_{ \mD' \in \mD(\widetilde c_3 \frakr_\mO) }
        \bigg\{ \frakr_2^{-1}
        \big| \widehat\sigma (\mD') - \sigma^* \big|
        \vee
        \sup_{p \in [1,2]} \frakr_p^{-1}
        \big| \widehat\bbeta (\mD') - \bbeta^* \big|_p \bigg\} \\
        & \vspace{-0.2cm} \hspace{4cm}
        \leq 4 \, \widetilde c_4 \, \sqrt{1 + SNR} \, \sigma^* \Bigg)
        \geq 1 - 4 \Big( \frac{s^*}{ed} \Big)^{\widetilde c_5 s^*}
        - 2 \Big( \frac{s^*}{ed} \Big)^{\widetilde c_6 s^*},
    \end{align*}
    where $\widetilde c_6$ is a universal constant and $SNR$ denotes the signal-to-noise ratio, defined by
    $SNR := \Var[\bX^\top \bbeta^*] / \sigma^{*2}
    = \bbeta^*{}^\top \Var[X] \bbeta^* / \sigma^{*2}$.
\end{cor}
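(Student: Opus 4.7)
\textbf{Proof proposal for Corollary~\ref{cor.AMOM_estiSigma}.}

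The plan is to use sample splitting. I would partition $\mD$ into two independent halves $\mD^{(1)}$ and $\mD^{(2)}$ of size $n/2$, use $\mD^{(1)}$ to compute the random upper bound $\widehat\sigma_{K,+}$ via the MOM variance formula given in the statement, and then run the estimator from Theorem~\ref{thm.rates_AMOM_sigmaplus} on $\mD^{(2)}$ with $\sigma_+ := \widehat\sigma_{K,+}$. Conditioning on $\mD^{(1)}$, the quantity $\widehat\sigma_{K,+}$ becomes a deterministic number, so the second-stage analysis will reduce to an application of Theorem~\ref{thm.rates_AMOM_sigmaplus} at a fixed (data-driven) $\sigma_+$ on the independent sample $\mD^{(2)}$.

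The first step is to analyse $\widehat\sigma_{K,+}$ on $\mD^{(1)}$. I need two bounds: an upper one of the form $\widehat\sigma_{K,+}\lesssim\sigma^*\sqrt{1+SNR}$ (so that the final rate has the right prefactor) and a lower one of the form $\widehat\sigma_{K,+}\geq\sigma^*$ (so that $P_{\bX,\zeta}\in\mP_{[0,\widehat\sigma_{K,+}]}$ and Theorem~\ref{thm.rates_AMOM_sigmaplus} may be invoked). Both will be obtained from standard MOM concentration for $Q_{1/2,K}[Y^2]$ and $Q_{1/2,K}[Y]$ around their expectations (see e.g.\ \cite[Section 4.1]{devroye2016sub}), using the finite fourth moments of $Y$ that follow from the $L^2$--$L^4$ equivalence on the span of $\mF$ and the bounded kurtosis of $\zeta$. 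Specifically, with probability at least $1-2(s^*/ed)^{\widetilde c_6 s^*}$, the raw MOM estimator $\widehat\sigma_{K,+}^2$ is within a fixed multiplicative factor of $\Var[Y]$, and because $\Var[Y]=\sigma^{*2}(1+SNR)$ when $\Cov[\bX^\top\bbeta^*,\zeta]=0$ (with analogous control via Cauchy--Schwarz in the general case), a small constant inflation—absorbed into $\widetilde c_4,\widetilde c_6$—produces, on an event $\Omega_1$,
\begin{equation*}
\sigma^*\;\leq\;\widehat\sigma_{K,+}\;\leq\;4\,\sigma^*\sqrt{1+SNR}.
\end{equation*}
The hypothesis $n>Cs^*\log(p/s^*)$ is used precisely here, to make the MOM fluctuation small enough for the lower bound to hold.

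Next, I condition on $\mD^{(1)}$ and on $\Omega_1$. On $\Omega_1$ the distribution $P_{\bX,\zeta}$ belongs to $\mP_{[0,\widehat\sigma_{K,+}]}$, and the tuning parameters $K,\mu$ of Theorem~\ref{thm.rates_AMOM_sigmaplus} do not depend on $\sigma_+$, so the prescribed choice remains valid. Since $\mD^{(2)}$ is independent of $\mD^{(1)}$, Theorem~\ref{thm.rates_AMOM_sigmaplus} applied to $\mD^{(2)}$ with $\sigma_+=\widehat\sigma_{K,+}$ (and with $n$ replaced by $n/2$, which only rescales the universal constants) yields, with probability at least $1-4(s^*/ed)^{\widetilde c_5 s^*}$ and uniformly over contaminations $\mD'\in\mD(\widetilde c_3\frakr_\mO)$,
\begin{equation*}
\big|\widehat\bbeta(\mD')-\bbeta^*\big|_p\;\leq\;\widetilde c_4\,\widehat\sigma_{K,+}\,\frakr_p\;\leq\;4\,\widetilde c_4\,\sigma^*\sqrt{1+SNR}\,\frakr_p,
\end{equation*}
and analogously $|\widehat\sigma(\mD')-\sigma^*|\leq 4\,\widetilde c_4\,\sigma^*\sqrt{1+SNR}\,\frakr_2$. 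A union bound over $\Omega_1$ and the event coming from Theorem~\ref{thm.rates_AMOM_sigmaplus} then produces the claimed probability $1-4(s^*/ed)^{\widetilde c_5 s^*}-2(s^*/ed)^{\widetilde c_6 s^*}$.

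The main obstacle will be securing the lower bound $\widehat\sigma_{K,+}\geq\sigma^*$: in the low-$SNR$ regime, $\Var[Y]$ is only marginally above $\sigma^{*2}$ and an ill-behaved MOM fluctuation could push $\widehat\sigma_{K,+}$ below $\sigma^*$, in which case the true distribution would fall outside $\mP_{[0,\widehat\sigma_{K,+}]}$ and Theorem~\ref{thm.rates_AMOM_sigmaplus} would not apply directly. This is what forces the sample-size condition $n>Cs^*\log(p/s^*)$ and a small deterministic inflation of the raw MOM variance estimator (absorbed into the constant $4$ visible in the statement). Once this is in place, transferring the fixed-$\sigma_+$ guarantee of Theorem~\ref{thm.rates_AMOM_sigmaplus} to the data-driven setting is routine, thanks to the independence of the two halves and the fact that $K,\mu$ are tuned without reference to $\sigma_+$.
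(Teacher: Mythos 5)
Your proposal is essentially the same as the paper's approach: the paper's Proposition~\ref{prop.MOM-VAR} establishes exactly the two-sided MOM concentration for $\widehat\sigma_{K,+}^2$ (via Lemma~\ref{lemma.bound_card_Z} applied to $Z=Y^2$ and $Z=Y$), yielding $\sigma^{*2}\leq 8\widehat\sigma_{K,+}^2\leq 16\sigma^{*2}(1+SNR)$ on an event of probability $\geq 1-2\exp(-7K/3600)$, and the corollary then follows by intersecting this event with the one from Theorem~\ref{thm.rates_AMOM_sigmaplus}, exactly as you do.

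Two small remarks. First, you need not fall back on Cauchy--Schwarz for the variance decomposition: by definition $f^*(\bX)=\bX^\top\bbeta^*$ is the $L^2$ projection of $Y$ onto $\{\bX^\top\bbeta:\bbeta\in\R^d\}$, so $\E[\zeta f^*(\bX)]=0$ holds automatically and $\Var(Y)=\Var(f^*(\bX))+\sigma^{*2}=\sigma^{*2}(1+SNR)$ exactly. Second, you correctly flag the delicate point that one needs $\widehat\sigma_{K,+}\geq\sigma^*$ (so that $P_{\bX,\zeta}\in\mP_{[0,\widehat\sigma_{K,+}]}$), and you propose a constant inflation of the raw MOM variance estimator to secure it. The paper's Proposition~\ref{prop.MOM-VAR} gives only $\sigma^*\leq 2\sqrt{2}\,\widehat\sigma_{K,+}$, so strictly speaking the plug-in upper bound used in the second stage should indeed be a constant multiple of $\widehat\sigma_{K,+}$ rather than $\widehat\sigma_{K,+}$ itself; your explicit inflation is the clean way to make this rigorous, and the resulting extra constant is exactly what produces the factor $4$ in $4\widetilde c_4\sqrt{1+SNR}\,\sigma^*$.
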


This corollary ensures that, with high probability, the estimator $(\widehat\bbeta_{K,\mu,\widehat\sigma_{K,+}}, \widehat\sigma_{K,\mu,\widehat\sigma_{K,+}})$ achieves the rates of estimation
$| \widehat\bbeta - \bbeta^* |_p \lesssim \sqrt{1 + SNR} \, \sigma^* {s^*}^{1/p} \sqrt{(1/n) \log(ed/s^*)}$ and
$| \widehat\sigma - \sigma^* | \lesssim \sqrt{1 + SNR} \, \sigma^* \sqrt{(s^*/n) \log(ed/s^*)}$. The factor $\sqrt{1 + SNR}$ describes how the estimation rates of $\bbeta^*$ and $\sigma^*$ are degraded as a function of the signal-to-noise ratio. Indeed, when the noise level is of the same order or higher than the standard deviation of $f^*(\bX)$, the rates are optimal. On the contrary, when the noise level is very small ($SNR \ll 1$), the rates of estimation are dominated by $\sqrt{\Var\big[\bX^\top \beta \big]} \frakr_p$.

\subsection{Adaptation to the unknown sparsity}

We now provide an adaptive to $s$ version of Theorem~\ref{thm.MOM_sqrt_lasso} by introducing an estimator $(\widetilde \beta, \widetilde \sigma, \widetilde s)$ that simultaneously estimates the vector of coefficients, the noise standard deviation and the sparsity level. This procedure is inspired by \cite[Section 4]{derumigny2018improved} that proposes a general Lepski-type method for constructing an adaptive to $s$ estimator from a sequence of estimators that attains the same rate for each value of $s$. 
This method is different from the one proposed in~\cite{lecue2020robustML} for making the MOM-LASSO estimator adaptive to the sparsity level $s$, which seems difficult to adapt for the case of unknown noise level. 

The main idea of this procedure is to compute different estimators for several possible sparsity levels. Starting from a sparsity of 2, we try different estimators by increasing each time the sparsity by a factor of 2 unless the difference between an estimator and the next one is too small. We choose this stopping value as the estimated sparsity level, and it gives directly an estimated number of blocks to use, since there exists an optimal number of blocks for each sparsity level.
More precisely, given a sparsity estimator $\widetilde s,$ we take $\widetilde K = \lceil \widetilde c_2 \widetilde s \log(ed/\widetilde s) \rceil.$ 

Given a known upper bound $s_+ \leq d$ on the sparsity, we define the sequence of MOM$-K$ estimators $(\widehat\bbeta_{(s),\sigma_+}, \widehat\sigma_{(s),\sigma_+})_{s=1,\ldots,s_{+}}$ by $\widehat\bbeta_{(s)} := \widehat\bbeta_{K_s,\mu_s,\sigma_+},$ $\widehat\sigma_{(s),\sigma_+} := \widehat\sigma_{K_s,\mu_s,\sigma_+}$ and 
\begin{align}\label{def.K_s_mu_s}
    K_s := \left\lceil \widetilde c_2 s \log\left(\frac{ed}{s}\right) \right\rceil,\quad 
    \mu_s := \widetilde c_\mu \sqrt{\frac{1}{n} \log\left(\frac{ed}{s} \right)}.
\end{align}
The adaptive procedure yields an estimator of the form $\widetilde s = 2^{\widetilde m}$ for some integer $\widetilde m \in \{1,\ldots,\lceil\log_2(s_{+})\rceil+1\},$ from which we get the simultaneous adaptive (to $s$ and $\sigma^*$) MOM estimator $(\widetilde \bbeta_{\sigma_+}, \widetilde\sigma_{\sigma_+}, \widetilde s_{\sigma_+}) = (\widehat\bbeta_{(\widetilde s),\sigma_+}, \widehat\sigma_{(\widetilde s),\sigma_+}, \widetilde s_{\sigma_+}).$ 

\textbf{Algorithm for adaptation to sparsity.} The steps of the adaptive procedure are as follows.
\begin{itemize} 
    \item Set $M := \lceil \log_2(s_+) \rceil.$
    
    \item For every $m\in\{1,\ldots,M+1\},$ compute
    $\displaystyle (\widehat\bbeta_{(2^m),\sigma_+}, \widehat\sigma_{(2^m)},\sigma_+) = \left(\widehat\bbeta_{K_{2^m},\mu_{2^m},\sigma_+}, \widehat\sigma_{K_{2^m},\mu_{2^m},\sigma_+} \right),$
    with $K_{2^m}$ and $\mu_{2^m}$ as defined in Equation~\eqref{def.K_s_mu_s}.
    
    \item For $u \in \{1, \dots, 2s_+\}$, let $\frakr_p(u) = u^{1/p} \sqrt{(1/n) \log(ed/u)}$ and
    \begin{align*}
        \mM := \bigg\{ & m\in\{1,\ldots,M\} : \, \text{for all $k\geq m$, }
        |\widehat\bbeta_{(2^{k-1})} - \widehat\bbeta_{(2^k)}|_1 \leq C_1 \widehat\sigma_{(2^{M+1})} \frakr_1(2^k)
        , \\
        &|\widehat\bbeta_{(2^{k-1})} - \widehat\bbeta_{(2^k)}|_2 \leq C_2 \widehat\sigma_{(2^{M+1})} \frakr_2(2^k)
        \text{ and }
        |\widehat\sigma_{(2^{k-1})} - \widehat\sigma_{(2^k)}| \leq C_3 \widehat\sigma_{(2^{M+1})} \frakr_2(2^k)
        \bigg\}.
    \end{align*}
    
    \item Set $\widetilde m := \min \mM,$ with the convention that $\widetilde m := M+1$ if $\mM = \emptyset.$ 
    
    \item Define $\widetilde s_{\sigma_+} := 2^{\widetilde m}$ and 
    $(\widetilde\bbeta_{\sigma_+}, \widetilde\sigma_{\sigma_+})
    := (\widehat\bbeta_{(\widetilde s),\sigma_+}, \widehat\sigma_{(\widetilde s),\sigma_+}).$
\end{itemize}

The following theorem is proved in Section~\ref{sec.proofs_thm_lasso_adaptive} and gives uniform bounds for the performance of the aggregated estimator $(\widetilde\bbeta_{\sigma_+}, \widetilde\sigma_{\sigma_+}, \widetilde s_{\sigma_+})$.

\begin{thm} \label{thm.MOM_sqrt_lasso_adaptive}
    Let $\theta_0, \theta_1, c_0, L, \kappa_+ > 1$.
    Let $s_+ \in \{1, \dots, d/(2e)\}$ and assume that ${\frakr_2(2s^+) < 1}$. Then the aggregated estimator $(\widetilde\bbeta_{\sigma_+}, \widetilde\sigma_{\sigma_+}, \widetilde s_{\sigma_+})$ satisfies
    \begin{align*}
        \inf_{\vphantom{2^{2^2}} s^* = 1, \dots, s_+} 
        \inf_{ \scaleto{\begin{array}{c}
        P_{\bX,\zeta} \in \mP_{[0, \, \sigma_+]} \\
        \bbeta^* \in \mF_{s^*}
        \end{array} }{25pt} } \hspace{-0.3cm}
        P_{\beta^*,P_{\bX,\zeta}}^{\otimes n}
        \Bigg( 
        &\sup_{ \mD' \in \mD(\widetilde c_3 \frakr_\mO) }
        \bigg\{ \frakr_2(s^*)^{-1}
        \big| \widehat\sigma (\mD') - \sigma^* \big|
        \vee \hspace{-0.1cm}
        \sup_{p \in [1,2]} \frakr_p(s^*)^{-1}
        \big| \widehat\bbeta (\mD') - \bbeta^* \big|_p \bigg\} \\
        & \hspace{2cm} \leq 4 \widetilde c_4 \sigma_+ \Bigg)
        \geq 1 - 4 (\log_2(s_{+})+1)^2 \bigg( \frac{2s_+}{ed} \bigg)^{2\widetilde c_5 s_+}
    \end{align*}
    and for all $\mD' \in \mD(\widetilde c_3 \frakr_\mO),$
    $\widetilde s_{\sigma_+}(\mD') \leq s^*$ on the same event.
\end{thm}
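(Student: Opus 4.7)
The plan is to combine Theorem~\ref{thm.rates_AMOM_sigmaplus}, applied at each dyadic tuning sparsity $2^m$, with a Lepski-type argument in the spirit of \cite[Section~4]{derumigny2018improved}. Fix $s^* \in \{1, \dots, s_+\}$ and set $m^* := \lceil \log_2(s^*) \rceil$, so that $s^* \leq 2^{m^*} \leq 2 s^*$ and $m^* \leq M$.

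First, I would build a good event $\mA := \bigcap_{m = m^*}^{M+1} \mA_m$, where $\mA_m$ is the conclusion of Theorem~\ref{thm.rates_AMOM_sigmaplus} at tuning sparsity $2^m$. Since $\bbeta^* \in \mF_{s^*} \subseteq \mF_{2^m}$ and the outlier budget $\widetilde c_3 s^* \log(ed/s^*)$ is absorbed into $\widetilde c_3 \cdot 2^m \log(ed/2^m)$ for $m \geq m^*$ (using the monotonicity of $x \mapsto x \log(ed/x)$ on $(0, d]$), the event $\mA_m$ has probability at least $1 - 4 (2^m/ed)^{\widetilde c_5 2^m}$, and on $\mA_m$ one has $|\widehat\bbeta_{(2^m)}(\mD') - \bbeta^*|_p \leq \widetilde c_4 \sigma_+ \frakr_p(2^m)$ for $p \in [1,2]$ and $|\widehat\sigma_{(2^m)}(\mD') - \sigma^*| \leq \widetilde c_4 \sigma_+ \frakr_2(2^m)$, uniformly in $\mD' \in \mD(\widetilde c_3 \frakr_\mO)$. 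A union bound combined with the super-geometric decay of $(2^m/ed)^{\widetilde c_5 2^m}$ in $m$ and $M+1 \leq \log_2(s_+) + 2$ delivers the claimed lower bound on $\P(\mA)$.

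Next, on $\mA$, I would verify that $m^* \in \mM$: for each $k \in \{m^*+1, \dots, M+1\}$ the triangle inequality gives $|\widehat\bbeta_{(2^{k-1})} - \widehat\bbeta_{(2^k)}|_p \leq 2 \widetilde c_4 \sigma_+ \frakr_p(2^{k-1})$ and an analogous bound for the noise-standard-deviation increment; since $|\widehat\sigma_{(2^{M+1})} - \sigma^*| \leq \widetilde c_4 \sigma_+ \frakr_2(2^{M+1})$ on $\mA$, a suitable choice of $C_1, C_2, C_3$ makes each increment condition in the definition of $\mM$ hold at $m = m^*$. Combining $\widetilde m \leq m^*$ with the triangle inequality and telescoping gives $|\widetilde\bbeta - \bbeta^*|_p \leq |\widehat\bbeta_{(2^{m^*})} - \bbeta^*|_p + \sum_{k = \widetilde m + 1}^{m^*} |\widehat\bbeta_{(2^{k-1})} - \widehat\bbeta_{(2^k)}|_p$, where the first term is $\lesssim \sigma_+ \frakr_p(s^*)$ by $\mA$ and each summand is $\lesssim \sigma_+ \frakr_p(2^k)$ because $\widetilde m \in \mM$; the near-geometric behaviour of $\frakr_p(2^k)$ for $p \in [1,2]$ keeps the sum $\lesssim \sigma_+ \frakr_p(s^*)$. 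The same reasoning in the $\frakr_2$ norm controls $|\widetilde\sigma - \sigma^*|$.

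The main obstacle I anticipate is promoting $\widetilde m \leq m^*$, which naturally yields only $\widetilde s \leq 2 s^*$, into the statement's sharp $\widetilde s \leq s^*$: this requires either using $\lfloor \log_2(s^*) \rfloor$ in place of $\lceil \log_2(s^*) \rceil$ and supplementing $\mA$ with a bias-type control for the estimator at tuning sparsity $2^{m^*-1} < s^*$, which lies outside the regime of Theorem~\ref{thm.rates_AMOM_sigmaplus}, or tuning the thresholds $C_1, C_2, C_3$ tight enough that the Lepski condition provably fails at $m = m^* - 1$ while still being met at $m = m^*$. A secondary bookkeeping difficulty is propagating $\widehat\sigma_{(2^{M+1})}$ through the Lepski thresholds to preserve the correct dependence on $\sigma_+$ when $\sigma^* \ll \sigma_+$.
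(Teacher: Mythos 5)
Your Lepski-type plan is precisely what the paper does: it first proves a general dyadic aggregation result (Theorem~\ref{thm.joint_adapt}) by the same steps you outline --- union bound over levels, increment events, telescoping from $\widehat\theta_{(\widetilde s)}$ to the level near $s^*$ --- and then plugs in Theorem~\ref{thm.rates_AMOM_sigmaplus} for the per-level deviation bounds. The only cosmetic difference is the anchor: you use $m^* = \lceil \log_2 s^* \rceil$ while the paper uses $m_0 = \lfloor \log_2 s^* \rfloor + 1$, which coincide except when $s^*$ is a power of two.

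Both obstacles you flag are real, and they are unresolved in the paper's own proof rather than being defects of your plan. First, the paper's argument only gives $\widetilde m \leq m_0$ with high probability, hence $\widetilde s \leq 2^{m_0} \leq 2s^*$; the stated $\widetilde s \leq s^*$ loses a factor of two that is not recovered anywhere. Second, and more substantively, the Lepski thresholds are scaled by $\widehat\sigma_{(2^{M+1})}$, which the paper shows (on the good event, under the hypothesis $C_3 w_3(2s_+) \leq 1/2$) satisfies $\sigma^*/2 \leq \widehat\sigma \leq (3/2)\sigma^*$, and Theorem~\ref{thm.joint_adapt} is proved under a ``robust convergence'' hypothesis in which each $\widehat\theta_{(s)}$ satisfies a $\sigma^*$-scaled deviation bound $C_p \sigma^* w_p(s)$. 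Theorem~\ref{thm.rates_AMOM_sigmaplus}, however, delivers only the $\sigma_+$-scaled bound $\widetilde c_4 \sigma_+ \frakr_p(s)$: the intermediate Theorem~\ref{thm.MOM_sqrt_lasso} carries the factor $\eps^{-1} \sim 1 + \sigma_+/\sigma^*$, and the paper itself remarks that in the noiseless case $\sigma^* = 0$ the estimator's error does not vanish. When $\sigma^* \ll \sigma_+$, the increments at level $m_0$ can therefore exceed every fixed multiple of $\widehat\sigma \, \frakr_p(2^{m_0})$, so the increment condition defining $\mM$ can fail at $m_0$ and $\widetilde m$ can overshoot to $M+1$. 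The verification that Theorem~\ref{thm.rates_AMOM_sigmaplus} furnishes the hypotheses of Theorem~\ref{thm.joint_adapt} does not go through as written, and the claimed bound uniform over $\sigma^* \in [0, \sigma_+]$ should not be expected without either scaling the thresholds by $\sigma_+$ rather than $\widehat\sigma$, or sharpening the per-level bounds to scale with $\sigma^*$.
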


This theorem guarantees that for every $s^* \in \{1, \dots, s_+\}$, both estimators $\widetilde\bbeta$ and $\widetilde\sigma$ converge to their true values at the rate $\sigma_+ {s^*}^{1/p} \sqrt{(1/n)\log(ed/s^*)}$ as if the true sparsity level $s^*$ was known. However, the probability bounds are slightly deteriorated due to the knowledge of an upper bound $s_{+}$ only.

Note that the estimator presented above uses the knowledge of the upper bound on the standard deviation $\sigma_+$. If $\sigma_+$ is not available, the estimator presented in Corollary~\ref{cor.AMOM_estiSigma} can be aggregated in the same way. It will satisfy the same bounds up to some small degradation in the probability of the event.

\section{From the choice of the functional \texorpdfstring{$R_c$}{Rc} to empirical process bounds} \label{sec.discussion}

Our construction in Section~\ref{sec.estimator} produces a family of MOM estimators
\begin{align*}
    (\widehat f_{K,\mu\sigma_+}, \widehat\sigma_{K,\mu,\sigma_+})
    &= \argmin_{f \in \mF,\ \sigma \leq \sigma_{+}}
    \maxnew_{g \in \mF,\ \chi \leq \sigma_{+}}
    \left\{ MOM_K\Big( R_c(\ell_g,\chi,\ell_f,\sigma) \Big) + \mu\big(\|f\|-\|g\| \big) \right\},
\end{align*}
where $R_c$ is a carefully chosen functional in~\eqref{def.R_functional}. As mentioned in Section~\ref{sec.conv_conc}, this extends the scope of the MOM estimator in~\cite{lecue2020robustML}
\begin{align*}
    \widehat f_{K,\lambda}
    &= \argmin_{f \in \mF} \maxnew_{g \in \mF} \left\{MOM_K\big( R(\ell_g,\ell_f) \big) + \lambda\big(\|f\|-\|g\| \big) \right\},
\end{align*} 
where $R(\ell_g,\ell_f) = \ell_f - \ell_g,$ which was constructed in the setting of known $\sigma^*.$ In this section we discuss in detail the role of the functional $R_c.$ In Section~\ref{sec.disc_estimator} we motivate our choice by showing that, in the sparse linear setting, we recover a robust version of the square-root LASSO. In Section~\ref{sec.disc_conv_rate_risk} we lay down our proving strategy and highlight the contribution of $R_c$ in recovering convergence rates and excess risk bounds in terms of complexity parameters. In Section~\ref{sec.disc_complexity_subGauss} and Section~\ref{sec.disc_complexity_sparse} we reproduce the main results on complexity parameters in the sub-Gaussian and sparse linear case respectively.

\subsection{Adaptivity to \texorpdfstring{$\sigma^*$}{sigma*}: choice of the functional \texorpdfstring{$R_c$}{Rc} and corresponding conditions} \label{sec.disc_estimator}

Since we implement the same proving strategy as in~\cite{lecue2020robustML}, we introduce the following properties as natural assumptions that the functional $R_c$ should satisfy.
\begin{enumerate}[label=\textbf{P\arabic*.}, ref=P\arabic*]
    \item \textbf{Anti-symmetry.} For all $f,g\in\mF,$ $\chi,\sigma \in R_+$ and $(\bx,y) \in \mX\times\R,$ we have 
    \begin{align*}
        R_c\big(\ell_g(\bx,y),\chi,\ell_f(\bx,y),\sigma \big) = - R_c\big(\ell_f(\bx,y),\sigma,\ell_g(\bx,y),\chi \big),
    \end{align*}
    \label{prop.antysymmetry}
    in short, we write $R_c(\ell_g,\chi,\ell_f,\sigma) = - R_c(\ell_f,\sigma,\ell_g,\chi).$
\end{enumerate}
\unskip
The latter is a crucial requirement for the whole convex-concave procedure to work, as we show in the next section. It is automatically satisfied when $\sigma^*$ is known, since $R(\ell_g,\ell_f) = \ell_f - \ell_g = - R(\ell_f,\ell_g).$

\begin{enumerate}[label=\textbf{P\arabic*.}, ref=P\arabic*]
    \setcounter{enumi}{1}
    \item \textbf{Concavity in \bm{$\chi$}, given \bm{$f=g.$}} For any fixed $f=g\in\mF,\ \sigma \in \R_+$ and $(\bx,y)\in\mX\times\R,$ the function $\chi \mapsto R_c(\ell_f(\bx,y),\chi,\ell_f(\bx,y),\sigma)$ is concave and has a unique maximum for $\chi\in\R_+.$ \label{prop.conc_max_chi}
\end{enumerate}
\unskip
This is an additional requirement that has no counterpart when $\sigma^*$ is known. In fact, for $f=g,$ we have $R(\ell_g,\ell_f) = \ell_f - \ell_g \equiv 0.$ 

\begin{enumerate}[label=\textbf{P\arabic*.}, ref=P\arabic*]
    \setcounter{enumi}{2}
    \item \textbf{Maximization over \bm{$g.$}} For any fixed $f\in\mF$ and $\chi,\sigma \in \R_+,$ the problems of maximizing the functionals
    \begin{align*}
        g \mapsto MOM_K\Big(R_c(\ell_g,\chi,\ell_f,\sigma) \Big),\quad g \mapsto MOM_K\Big( \ell_f - \ell_g \Big),
    \end{align*}
    over $g\in\mF$ are equivalent. \label{prop.conc_max_g}
\end{enumerate}
\unskip
The latter condition requires that our functional $R_c(\ell_g,\chi,\ell_f,\sigma)$ behaves similarly to $R(\ell_g,\ell_f) = \ell_f-\ell_g$ when viewed as a functional on $g\in\mF.$

As a consequence of anti-symmetry, the following properties are equivalent to~\ref{prop.antysymmetry}--\ref{prop.conc_max_g} above:
\begin{enumerate}[label=\textbf{P\arabic*'.}, ref=P\arabic*']
    \item \textbf{Anti-symmetry.} For all $f,g\in\mF$ and $\chi,\sigma \in \R_+,$ we have $R_c(\ell_g,\chi,\ell_f,\sigma) = - R_c(\ell_f,\sigma,\ell_g,\chi).$
    \item \textbf{Convexity in \bm{$\sigma$}, given \bm{$f=g.$}} For any fixed $f=g\in\mF,\ \chi \in \R_+$ and $(\bx,y)\in\mX\times\R,$ the function $\sigma \mapsto R_c(\ell_f(\bx,y),\chi,\ell_f(\bx,y),\sigma)$ is convex and has a unique minimum for $\sigma\in\R_+.$
    \item \textbf{Minimization over \bm{$f.$}} For any fixed $g\in\mF$ and $\chi,\sigma \in \R_+,$ the problems of minimizing the functionals
    \begin{align*}
        f \mapsto MOM_K\Big(R_c(\ell_g,\chi,\ell_f,\sigma) \Big),\quad f \mapsto MOM_K\Big( \ell_f - \ell_g \Big),
    \end{align*}
    over $f\in\mF$ are equivalent.
\end{enumerate}

Consider the sparse linear setting, where we want to recover oracle solutions 
\begin{align*}
    \bbeta^* \in \argmin_{\bbeta\in\R^d} \E\left[ (Y-\bX^\top\bbeta )^2 \right],\quad \sigma^* = \E\left[ (Y-\bX^\top\bbeta^* )^2 \right]^{\frac{1}{2}}.
\end{align*}
Any linear function $f:\mX \to \R$ can be identified with some $\bbeta_f\in\R^d$ such that $f(\bx) = \bx^\top\bbeta_f$ and $\ell_f(\bx,y) = \ell_{\bbeta_f}(\bx,y) = (y-\bx^\top\bbeta_f)^2.$ The MOM method in~\cite{lecue2020robustML} yields a robust version of the LASSO estimator
\begin{align*}
    \widehat\bbeta^{L} \in \argmin_{\bbeta\in\R^d} \left\{ \frac{1}{n} \sum_{i=1}^n (Y_i-\bX_i^\top\bbeta)^2 + \lambda|\bbeta|_1\right\},
\end{align*}
which has been shown to be minimax optimal in~\cite{bellec2016,bellec2017,bellec2018}, but its optimal tuning parameter $\lambda$ is proportional to $\sigma^*.$ An adaptive version of the LASSO is the square-root LASSO introduced in~\cite{belloni2010}, which is also minimax optimal, as shown in~\cite{derumigny2018improved}. This adaptive method uses
\begin{align*}
    \widehat\bbeta^{SR\text{-}Lasso} \in \argmin_{\bbeta\in\R^d} \left\{ \left( \frac{1}{n}\sum_{i=1}^n (Y_i-\bX_i^\top\bbeta)^2 \right)^{\frac{1}{2}} + \mu|\bbeta|_1\right\},
\end{align*}
and its optimal tuning parameter $\mu$ does not require the knowledge of $\sigma^*.$ The key insight behind the square-root LASSO, see for example Section 5 in~\cite{giraud2014introduction}, is that when $\bbeta$ is close to $\bbeta^*$ one can approximate $\sigma^{*2}$ by $\E[(Y -\bX^\top \bbeta)^2].$ Thus, with $\lambda = \sigma^*\mu,$ one finds
\begin{align*}
    \frac{\E[(Y -\bX^\top \bbeta)^2]}{\sigma^*} + \frac{\lambda}{\sigma^*} |\bbeta|_1 \simeq \E[(Y -\bX^\top \bbeta)^2]^{\frac{1}{2}} + \mu|\bbeta|_1,
\end{align*}
and the minimization problem is independent of $\sigma^*.$ 

In view of the discussion above, a candidate natural implementation of the robust square-root LASSO is given by
\begin{align*}
    \widetilde R_c(\ell_g,\chi,\ell_f,\sigma) &= \frac{\ell_f}{\sigma} + \sigma - \frac{\ell_g}{\chi} - \chi, \\
    &= (\sigma - \chi) \bigg( 1 - \frac{\ell_f}{\sigma \chi} \bigg) + \frac{\ell_f - \ell_g}{\chi}, \\
    \widetilde T_{K,\mu}(g,\chi,f,\sigma) &= MOM_K\Big( \widetilde R_c(\ell_g,\chi,\ell_f,\sigma) \Big) + \mu\big(\|f\|-\|g\| \big),
\end{align*}
since $\widetilde R_c$ implements the idea that, in the linear setting, dividing $\ell_f$ by $\sigma$ should lead to the square-root of $\ell_f.$ Also, this choice satisfies the properties~\ref{prop.antysymmetry}--\ref{prop.conc_max_g}:
\begin{itemize}
    \item Anti-symmetry holds by construction.
    \item When $f=g,$ replace $\ell_f(\bx,y) = \ell_g(\bx,y)$ by some positive real number $a^2>0,$ then the function
    \begin{align*}
        \chi \mapsto \widetilde R_c(a^2,\chi,a^2,\sigma) = (\sigma - \chi)\left(1 - \frac{a^2}{\sigma\chi}\right),
    \end{align*}
    is concave and has a unique maximum for $\chi\in\R_+.$
    \item By definition, maximizing $g\mapsto MOM_K(\ell_f-\ell_g)$ with fixed $f\in\mF$ is equivalent to maximizing the empirical average
    \begin{align*}
        g \mapsto -\frac{1}{|B_k|}\sum_{i\in B_k} \ell_g(\bX_i,Y_i),
    \end{align*}
    where the block $B_k$ realizes the median. For the same reason, maximising $g\mapsto MOM_K(\widetilde R_c(\ell_g,\chi,\ell_f,\sigma))$ is equivalent to maximizing the empirical average
    \begin{align*}
        g \mapsto \frac{1}{|B_k|}\sum_{i\in B_k} \left( \frac{\ell_f(\bX_i,Y_i)}{\sigma} + \sigma - \frac{\ell_g(\bX_i,Y_i)}{\chi} - \chi\right),
    \end{align*}
    where the block $B_k$ realizes the median. Since the quantities $f,\sigma,\chi$ are fixed, this coincides with the above.
\end{itemize}

However, this choice comes with a drawback. The proof of our main result is based on the argument proposed in~\cite{lecue2020robustML}, which requires sharp bounds for the functional $\widetilde T_{K,\mu}(\ell_g,\chi,\ell_{f^*},\sigma^*)$ over the possible values of $(g,\chi).$ This is done by carefully slicing the domain and assessing the contribution of each term appearing in $\widetilde T_{K,\mu}.$ In particular, one finds a slice in which $\chi < \sigma^* - c_\alpha r(2\rho_K)$ and the leading term of $\widetilde T_{K,\mu}$ is of the form $2\eps/\chi,$ with some small fixed $\eps>0.$ Since $2\eps/\chi \to +\infty,$ for $\chi\to0,$ we cannot control the supremum of $\widetilde T_{K,\mu}(\ell_g,\chi,\ell_{f^*},\sigma^*)$ over this slice. The only way around it would be to assume from the start that $\sigma^* > \sigma_{-},$ for some known lower bound $\sigma_{-}>0,$ but this would be a stronger assumption than the upper bound $\sigma_{+}$ we use in~\eqref{def.zeta_moments}. This issue is caused by the fact that the two terms of $\widetilde R_c(\ell_g,\chi,\ell_f,\sigma)$ are 
\begin{align*}
    (\ell_g,\chi,\ell_f,\sigma) \mapsto (\sigma - \chi) \bigg( 1 - \frac{\ell_f}{\sigma \chi} \bigg),\quad  (\ell_g,\chi,\ell_f,\sigma) \mapsto \frac{\ell_f - \ell_g}{\chi},
\end{align*}
and the second one cannot be controlled if $\chi\to 0.$ A way to introduce stability is to replace the denominator $\chi$ by the average $(\sigma+\chi)/2,$ which is always bounded away from zero when $\sigma$ is fixed. However, making this substitution alone breaks the anti-symmetry of the functional, so we have to take care of both terms simultaneously. To this end, we use
\begin{align*}
    R_c(\ell_g,\chi,\ell_f,\sigma) &= (\sigma-\chi)\bigg(1-2\frac{\ell_f+\ell_g}{(\sigma+\chi)^2} \bigg) + 2c\frac{\ell_f-\ell_g}{\sigma+\chi}, \\
    T_{K,\mu}(g,\chi,f,\sigma) &= MOM_K\Big( R_c(\ell_g,\chi,\ell_f,\sigma) \Big) + \mu\big(\|f\|-\|g\| \big),
\end{align*}
for all $(f,g)\in\mF\times\mF$ and $(\sigma,\chi)\in (0,\sigma_{+}]\times(0,\sigma_{+}],$ which guarantees that $R_c$ satisfies properties~\ref{prop.antysymmetry}--\ref{prop.conc_max_g}. In fact, anti-symmetry holds for both terms
\begin{align*}
    (\ell_g,\chi,\ell_f,\sigma) \mapsto (\sigma-\chi)\bigg(1-2\frac{\ell_f+\ell_g}{(\sigma+\chi)^2} \bigg) ,\quad  (\ell_g,\chi,\ell_f,\sigma) \mapsto 2c\frac{\ell_f-\ell_g}{\sigma+\chi},
\end{align*}
separately. Also, for any fixed $f=g\in\mF,\ \sigma\in\R_+,$ we have
\begin{align*}
    \chi \mapsto R_c(\ell_f,\chi,\ell_f,\sigma) &= (\sigma-\chi)\bigg(1-\frac{4\ell_f}{(\sigma+\chi)^2} \bigg),
\end{align*}
which satisfies property~\ref{prop.conc_max_chi}. Finally, for any fixed $f\in\mF,\ \sigma,\chi\in\R_+,$ we can rewrite
\begin{align*}
    g &\mapsto MOM_K\left( R_c(\ell_g,\chi,\ell_f,\sigma) \right) \\
    &\quad = MOM_K\left( (\sigma-\chi) + \frac{2\ell_f}{\sigma + \chi} \left(c - \frac{\sigma-\chi}{\sigma+\chi} \right) - \frac{2\ell_g}{\sigma + \chi} \left(c + \frac{\sigma-\chi}{\sigma+\chi} \right) \right).
\end{align*}
Since the quantity $c + (\sigma-\chi)/(\sigma+\chi)$ belongs to the interval $[c-1,c+1]$ and $c>1,$ property \ref{prop.conc_max_g} is satisfied.

\subsection{From \texorpdfstring{$R_c$}{Rc} to convergence rates and excess risk bounds} \label{sec.disc_conv_rate_risk}

The choice of $R_c$ induces a penalized functional $T_{K,\mu}$ which characterizes the MOM$-K$ estimator  
\begin{align*}
    (\widehat f_{K,\mu,\sigma_+}, \widehat\sigma_{K,\mu,\sigma_+}) = \argmin_{f \in \mF,\ \sigma \in I_{+}} \maxnew_{g \in \mF,\ \chi \in I_{+}} T_{K,\mu}(g,\chi,f,\sigma),\quad I_{+} = (0,\sigma_{+}].
\end{align*}
Our goal is to guarantee that, with as high probability as possible, the function estimator $\widehat f_{K,\mu,\sigma_+}$ recovers $f^*$ with as small as possible rates in $\|\cdot\|$ and $\|\cdot\|_{2,\bX},$ and that the standard deviation estimator $\widehat\sigma_{K,\mu,\sigma_+}$ recovers $\sigma^*$ with as small as possible rates in absolute value. With the same high probability, we also want that the excess risk $\Risk(\widehat f_{K,\mu})-\Risk(f^*)$ is as small as possible.

Starting with the convergence rates, they can be obtained by showing that the estimator $(\widehat f_{K,\mu,\sigma_+}, \widehat\sigma_{K,\mu,\sigma_+})$ belongs to a bounded ball of the form
\begin{align*}
    \B^*(2\rho) := \big\{(f,\sigma)\in\mF\times I_{+} : \|f-f^*\| \leq 2\rho,\ \|f-f^*\|_{2,\bX} \leq r(2\rho),\ |\sigma-\sigma^*| \leq c_{\alpha} r(2\rho) \big\},
\end{align*}
with appropriate radius $\rho$ and complexity measure $r(2\rho).$ In the proof of Theorem~\ref{thm.main_theorem}, we show that this can be achieved with $\rho = \rho_K$ and any $r(\rho) \geq \max\{r_P(\rho,\gamma_P),\ r_M(\rho,\gamma_M) \},$ which only requires the complexities $r_P,r_M.$ The convergence rates $2\rho_K, r(2\rho_K)$ are perfectly in line with those obtained with the MOM tournaments procedure in \cite{lugosi2017regularization} and the robust MOM method in \cite{lecue2020robustML}. The key idea behind this result is to essentially show that the evaluation of $T_{K,\mu}$ at the point $(\widehat f_{K,\mu,\sigma_+}, \widehat\sigma_{K,\mu,\sigma_+},f^*,\sigma^*)$ is too big for $(\widehat f_{K,\mu,\sigma_+}, \widehat\sigma_{K,\mu,\sigma_+})$ to be outside of the bounded ball $\B^*(2\rho_K).$ Precisely, we show that, for some $B_{1,1} > 0,$
\begin{align*}
    T_{K,\mu}(\widehat f_{K,\mu,\sigma_+}, \widehat\sigma_{K,\mu,\sigma_+},f^*,\sigma^*) \geq -B_{1,1},\quad \sup_{(g,\chi)\notin \B^*(2\rho_K, r(2\rho_K))} T_{K,\mu}(g,\chi,f^*,\sigma^*) < - B_{1,1},
\end{align*}
which guarantees that $(\widehat f_{K,\mu,\sigma_+}, \widehat\sigma_{K,\mu,\sigma_+},f^*,\sigma^*) \in \B^*(2\rho_K).$ The problem of finding a suitable bound $B_{1,1}$ is solved as follows.
\begin{itemize}
    \item The problem is equivalent to $- T_{K,\mu}(\widehat f_{K,\mu}, \widehat\sigma_{K,\mu},f^*,\sigma^*) \leq B_{1,1}.$ 
    \item By the anti-symmetry property~\ref{prop.antysymmetry} of $R_c,$ together with the quantile properties in Lemma~\ref{lemma.quantile_prop}, we have $- T_{K,\mu}(f, \sigma,f^*,\sigma^*) \leq T_{K,\mu}(f^*,\sigma^*,f, \sigma)$ and it is sufficient to find $T_{K,\mu}(f^*,\sigma^*,\widehat f_{K,\mu,\sigma_+}, \widehat\sigma_{K,\mu,\sigma_+}) \leq B_{1,1}.$
    \item The evaluation at $(f^*,\sigma^*)$ can be bounded with the supremum over the domain, that is, we look for $\sup_{(g,\chi)\in \mF\times I_{+}} T_{K,\mu}(g,\chi,\widehat f_{K,\mu,\sigma_+}, \widehat\sigma_{K,\mu,\sigma_+}) \leq B_{1,1}.$
    \item By definition, the MOM$-K$ estimator $(\widehat f_{K,\mu,\sigma_+}, \widehat\sigma_{K,\mu,\sigma_+})$ minimizes the latter supremum if we allow for other pairs $(f,\sigma).$ In particular, with $(f,\sigma)=(f^*,\sigma^*),$ it is enough to find $\sup_{(g,\chi)\in \mF\times I_{+}} T_{K,\mu}(g,\chi,f^*,\sigma^*) \leq B_{1,1}.$
    \item Finally, in Lemma~\ref{lemma.comparison_bounds_B_i} we show that the supremum is achieved on the bounded ball $\B^*(\rho_K),$ that is, the solution to the problem is the sharpest bound such that
    \begin{align*}
        \sup_{(g,\chi)\in \B^*(\rho_K)} T_{K,\mu}(g,\chi,f^*,\sigma^*) \leq B_{1,1}.
    \end{align*}
\end{itemize}
The argument we just sketched can be found in the proof of the main result in~\cite{lecue2020robustML}, it is a clever exploitation of the convex-concave formulation of the problem. One key element of the argument is that the computations only require lower bounds on the quantiles of the quadratic and multiplier empirical processes, which in turn can be obtained by means of the complexities $r_P$ and $r_M$ alone. These facts has been established in~\cite{lecue2018regularization, lugosi2019regularization} and we provide them in Lemma~\ref{lemma.3}, Lemma~\ref{lemma.4}.

The fact that the estimator $(\widehat f_{K,\mu,\sigma_+}, \widehat\sigma_{K,\mu,\sigma_+})$ belongs to the ball $\B^*(2\rho_K)$ is instrumental in obtaining excess risk bounds. First, one writes
\begin{align*}
    \Risk(\widehat f_{K,\mu,\sigma_+})-\Risk(f^*) &= \|\widehat f_{K,\mu,\sigma_+} - f^*\|_{2,\bX}^2 + \E[-2\zeta(\widehat f_{K,\mu,\sigma_+}-f^*)(\bX)],
\end{align*}
and then bounds $\|\widehat f_{K,\mu,\sigma_+} - f^*\|_{2,\bX}^2 \leq r^2(2\rho_K).$ By applying a quantile inequality, see Lemma~\ref{lemma.useful_bounds_lecue}, and adding the quadratic term $(\widehat f_{K,\mu,\sigma_+}-f^*)^2,$ the expectation term becomes
\begin{align*}
    \E[-2\zeta(\widehat f_{K,\mu,\sigma_+}-f^*)(\bX)] &\leq Q_{1/4,K}\left[-2\zeta(\widehat f_{K,\mu,\sigma_+}-f^*)\right] + \alpha_M^2 \\
    &\leq Q_{1/4,K}\left[\ell_{\widehat f_{K,\mu,\sigma_+}} - \ell_{f^*}\right] + \alpha_M^2,
\end{align*}
since $\ell_f - \ell_{f^*} = (f-f^*)^2 - 2\zeta(f-f^*).$ Since the $1/4-$quantile is always smaller than the $1/2-$quantile, which is the median, some algebraic manipulations allow to rewrite the difference $\ell_{\widehat f_{K,\mu,\sigma_+}} - \ell_{f^*}$ in terms of our functional $R_c(\ell_{f^*},\sigma^*,\ell_{\widehat f_{K,\mu,\sigma_+}},\widehat\sigma_{K,\mu,\sigma_+})$ and to recover the penalized $T_{K,\mu}(f^*,\sigma^*,\widehat f_{K,\mu,\sigma_+},\widehat\sigma_{K,\mu,\sigma_+}).$ Specifically, in Lemma~\ref{lemma.bound_P2zeta_f_fstar} we find
\begin{align*}
    \E[-2\zeta(\widehat f_{K,\mu,\sigma_+}-f^*)(\bX)] &\leq \frac{\widehat\sigma_{K,\mu,\sigma_+} + \sigma^*}{2c} T_{K,\mu}(f^*,\sigma^*,\widehat f_{K,\mu,\sigma_+},\widehat\sigma_{K,\mu,\sigma_+}) + \text{remainder}, \\
    &\leq \frac{\widehat\sigma_{K,\mu,\sigma_+} + \sigma^*}{2c} B_{1,1} + \text{remainder},
\end{align*}
where $B_{1,1}$ is the upper bound we found when dealing with the convergence rates. It is easy to show that $B_{1,1} \lesssim r^2(2\rho_K),$ the majority of the work is spent on bounding the remainder terms. In the same lemma, we show that they are: the quantity $\mu\rho_K \lesssim r^2(\rho_K)$ where $\mu \simeq r^2(\rho_K)/\rho_K$ is the penalization parameter, the quantity $\alpha_M^2 \lesssim r^2(2\rho_K)$ related to the quantiles of the multiplier process, the mixed terms
\begin{itemize}
    \item $|\widehat\sigma_{K,\mu,\sigma_+} - \sigma^*| \cdot Q_{15/16,K}\left[(\widehat f_{K,\mu,\sigma_+}-f^*)^2\right],$
    \item $|\widehat\sigma_{K,\mu,\sigma_+} - \sigma^*|\cdot Q_{15/16,K}\left[-2\zeta(\widehat f_{K,\mu,\sigma_+}-f^*) \right],$
\end{itemize}
involving the quantiles of the quadratic and multiplier processes. The standard deviation estimator satisfies $|\widehat\sigma_{K,\mu,\sigma_+} - \sigma^*| \lesssim r(2\rho_K).$ In Lemma~\ref{lemma.useful_bounds_lecue} we show that $Q_{15/16,K}[-2\zeta(\widehat f_{K,\mu}-f^*)] \leq \E[-2\zeta(\widehat f_{K,\mu,\sigma_+}-f^*)] + \alpha_M^2,$ so that the Cauchy-Schwarz inequality is sufficient for $\E[-2\zeta(\widehat f_{K,\mu,\sigma_+}-f^*)] \leq 4\sigma^* \|\widehat f_{K,\mu,\sigma_+} - f^*\|_{2,\bX} \lesssim r(2\rho_K).$ Finally, in Lemma~\ref{lemma.l2_quantile} we find $Q_{15/16,K}[(\widehat f_{K,\mu,\sigma_+}-f^*)^2 ] \leq r^2(2\rho_K) + \alpha_Q^2 \lesssim r^2(2\rho_K) \vee r_Q^2(2\rho_K,\gamma_Q).$

\subsection{Complexity parameters in the sub-Gaussian setting} \label{sec.disc_complexity_subGauss}

We follow the construction presented in~\cite{lecue2013learning}. Let $G = (G(f) : f\in L^2(\P_{\bX}))$ the Gaussian process indexed on $L^2(\P_{\bX})$ and such that $\E[G(f)] = 0$ and $\E[G(f)G(h)] = \E[f(\bX)h(\bX)].$ For any $\mF'\subseteq\mF,$ we set
\begin{align*}
    \E\left[\|G\|_{\mF'}\right] := \sup \left\{\E\left[\sup_{h\in\mH} G(h) \right] : \mH\subseteq\mF' \text{ is finite} \right\}.
\end{align*}
As an example, if $\mF' = \{\bx \mapsto \bx^\top\bbeta: \bbeta\in T\subset \R^d \}$ and $\bX$ is a random vector in $\R^d$ with covariance matrix $\Sigma,$ then $G \sim \mN(0,\Sigma)$ and 
\begin{align*}
    \E\left[\|G\|_{\mF'}\right] = \E\left[\sup_{\bbeta\in T} G^\top\bbeta\right].
\end{align*}

\textbf{Sub-Gaussian class.} We say that $\mF$ is sub-Gaussian if there exists a constant $L$ such that, for all $f,h\in\mF$ and $p\geq2,$ one has $\|f-h\|_{p,\bX} \leq L \sqrt{p} \|f-h\|_{2,\bX}.$

\textbf{Gaussian complexities.} For any $r\geq0,$ set $\B_2(r) = \{f\in L^2(\P_{\bX}):\|f\|_{2,\bX}\leq r\}$ and $\mF-\mF= \{f-h:f,h\in\mF\}.$ For any $\gamma,\gamma'>0,$ take
\begin{align}
\begin{split}\label{def.s*r*}
    s_n^*(\gamma) &:= \inf\{r>0:\E\big[\|G\|_{\B_2(r)\cap(\mF-\mF)}\big] \leq \gamma r^2 \sqrt{n}\}, \\
    r_n^*(\gamma') &:= \inf\{r>0:\E\big[\|G\|_{\B_2(r)\cap(\mF-\mF)}\big] \leq \gamma' r \sqrt{n}\}.
\end{split}
\end{align}

The goal of this section is to provide the following bounds.
\begin{lem}\label{lemma.complexity_bound_subgauss}
    Under the sub-Gaussian assumption, there exist absolute constants $c_2,c_3$ such that the complexity parameters $r_P,r_Q,r_M$ defined in \eqref{def.complexity} satisfy
    \begin{align}\label{eq.compl_params}
        r_P(\rho, \gamma_P) \leq r_n^*\left(\frac{\gamma_P}{c_2L^2} \right),\quad r_Q(\rho, \gamma_Q) \leq r_n^*\left(\frac{\gamma_Q}{c_2L^2} \right),\quad r_M(\rho, \gamma_M) \leq s_n^*\left(\frac{\gamma_M}{c_3 L \frakm^*} \right).
    \end{align}
    In particular, any continuous non-decreasing function $\rho \mapsto r(\rho)$ with 
    \begin{align*}
        r(\rho) \geq \max\left\{r_n^*\left(\frac{\gamma_P}{c_2L^2} \right), s_n^*\left(\frac{\gamma_M}{c_3 L \frakm^*}\right) \right\},
    \end{align*}
    is a valid choice in~\eqref{def.r_complexity}.
\end{lem}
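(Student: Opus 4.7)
The plan is to bound each of the three Rademacher-type complexities $r_P$, $r_Q$, $r_M$ separately by the Gaussian complexities $r_n^*$, $s_n^*$ defined in \eqref{def.s*r*}, using standard empirical-process arguments specialized to the sub-Gaussian setting. The basic tool is the Gin\'e--Zinn symmetrization combined with the Talagrand comparison principle: on a sub-Gaussian class, the Rademacher process is dominated by the Gaussian process up to a universal factor times the sub-Gaussian constant $L$. I will also use that the supremum in \eqref{def.complexity} ranges only over subsets $J\subseteq\mI$ with $|J|\geq n/2$, so that $\sqrt{|J|}$ and $\sqrt{n}$ are interchangeable up to a universal constant.

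For $r_P$, the class $\{f-f^*:f\in\B(f^*,\rho,r)\}$ sits inside $\B_2(r)\cap(\mF-\mF)$, and the comparison principle applied to the sub-Gaussian class yields
\begin{equation*}
    \E\bigg[\sup_{f\in\B(f^*,\rho,r)}\Big|\frac{1}{|J|}\sum_{i\in J}\xi_i(f-f^*)(\bX_i)\Big|\bigg]\leq \frac{c\,L}{\sqrt{|J|}}\,\E\big[\|G\|_{\B_2(r)\cap(\mF-\mF)}\big].
\end{equation*}
Requiring the right-hand side to be at most $\gamma_P r$ reproduces the defining inequality of $r_n^*(\gamma_P/(c_2 L^2))$ for a suitable absolute constant $c_2$, which gives the first bound. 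For $r_M$, the plan is to condition on $(\zeta_i)_{i\in J}$ and then apply a multiplier inequality in the spirit of \cite{mendelson2017multiplier}, using the finite fourth moment $\frakm^*$ of $\zeta$ and the sub-Gaussian property of $\mF-f^*$ to produce an analogous bound with constant $c\,L\,\frakm^*$ in place of $c\,L$. Matching against $\gamma_M r^2$ identifies the defining inequality of $s_n^*(\gamma_M/(c_3 L\frakm^*))$.

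The hard part is $r_Q$, because $(f-f^*)^2$ is only sub-exponential, so the Rademacher--Gaussian comparison is not directly applicable. The plan is to apply the Ledoux--Talagrand contraction principle to the map $t\mapsto t^2$; since this map is not globally Lipschitz, one must either truncate the increments $(f-f^*)(\bX_i)$ at a level of order $Lr\sqrt{\log n}$, using the sub-Gaussian tails of the class to control the excluded part, or perform a direct chaining argument for the quadratic process in the spirit of Bartlett--Mendelson. Either route pays an additional factor of order $L r$ on the Lipschitz constant, so combined with the bound of Step~1 the quadratic Rademacher process is at most $c\,L^2 r \cdot |J|^{-1/2}\,\E[\|G\|_{\B_2(r)\cap(\mF-\mF)}]$, and requiring this to be at most $\gamma_Q r^2$ yields $r_Q(\rho,\gamma_Q)\leq r_n^*(\gamma_Q/(c_2L^2))$.

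Finally, the closing assertion of the lemma is immediate: the two quantities $r_n^*(\gamma_P/(c_2L^2))$ and $s_n^*(\gamma_M/(c_3L\frakm^*))$ do not depend on $\rho$, so any continuous non-decreasing function $r(\cdot)$ dominating their maximum automatically satisfies \eqref{def.r_complexity}. The main obstacle is the contraction argument in Step~3 above, where one must avoid picking up the unbounded $L^\infty$-norm of functions in the sub-Gaussian class; the rest consists of a careful bookkeeping of universal constants into $c_2$ and $c_3$.
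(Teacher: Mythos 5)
Your overall strategy mirrors the paper's: both aim to bound the Rademacher complexities $r_P$, $r_Q$, $r_M$ of \eqref{def.complexity} by the Gaussian complexities $r_n^*$, $s_n^*$ of \eqref{def.s*r*}, exploiting that on a sub-Gaussian class the Rademacher, Gaussian, and centered empirical processes are comparable up to factors of $L$. The difference in execution is significant, though: the paper does not re-derive any of this machinery and instead simply quotes three literature results --- Corollary 1.8 of \cite{mendelson2014upper} for the linear process, Lemma 2.6 of \cite{lecue2013learning} for the quadratic process, and a corollary of Theorem 2.7 of \cite{lecue2013learning} for the multiplier process --- matching parameters and noting that $\B(f^*,\rho,r)$ embeds in $\B_2(r)\cap(\mF-\mF)$. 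Your proposal instead sketches how one would prove those results from first principles (Gin\'e--Zinn symmetrization, Talagrand comparison, conditioning on $\zeta$), which is more ambitious but introduces a concrete weak point.

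The weak point is exactly where you flag it: the quadratic process. Your truncation route does not give the stated bound. If you truncate the increments $(f-f^*)(\bX_i)$ at level $A\asymp Lr\sqrt{\log n}$ (the scale at which the $\max_i$ of a sub-Gaussian sample concentrates), the Ledoux--Talagrand contraction applied to $t\mapsto t^2$ on $[-A,A]$ has Lipschitz constant $2A \asymp Lr\sqrt{\log n}$, not $Lr$. You would therefore obtain $r_Q(\rho,\gamma_Q)\leq r_n^*\big(\gamma_Q/(c L^2 \sqrt{\log n})\big)$, which is strictly worse than the claimed $r_n^*(\gamma_Q/(c_2 L^2))$ and would propagate a spurious $\sqrt{\log n}$ into the final rates. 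The sentence ``either route pays an additional factor of order $Lr$'' therefore oversells the truncation route. Your alternative --- a direct generic-chaining argument for the quadratic empirical process, in the spirit of Bartlett--Mendelson or Mendelson's ``learning without concentration'' framework --- is the correct route and is precisely what the cited Lemma 2.6 of \cite{lecue2013learning} accomplishes; but it is genuinely more than a contraction argument and cannot be summarized as paying a Lipschitz constant. If you want a self-contained proof you need to actually carry out that chaining; otherwise it is cleaner to do what the paper does and cite the result. The sketches for $r_P$ and $r_M$, and the closing observation that $r_n^*$ and $s_n^*$ do not depend on $\rho$, are fine and match the paper.
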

\begin{proof}[Proof of Lemma \ref{lemma.complexity_bound_subgauss}]
    We invoke Lemma~\ref{cor.1.8}, Lemma~\ref{lemma.2.6} and Lemma~\ref{lemma.2.7} below. They are all based on a symmetrization argument in \cite{mendelson2014upper}, which controls the processes 
    \begin{align*}
        \sup_{f\in\mF:\|f-f^*\|_{2,\bX} \leq r} &\bigg|\frac{1}{n}\sum_{i=1}^n (f-f^*)(\bX_i) - \E[(f-f^*)(\bX)] \bigg|, \\
        \sup_{f\in\mF:\|f-f^*\|_{2,\bX} \leq r} &\bigg|\frac{1}{n}\sum_{i=1}^n (f-f^*)^2(\bX_i) - \E[(f-f^*)^2(\bX)] \bigg|, \\
        \sup_{f\in\mF:\|f-f^*\|_{2,\bX} \leq r} &\bigg|\frac{1}{n}\sum_{i=1}^n \zeta_i(f-f^*)(\bX_i) - \E[\zeta(f-f^*)(\bX)] \bigg|,
    \end{align*}
    in terms of the processes
    \begin{align*}
        \sup_{f\in\mF:\|f-f^*\|_{2,\bX} \leq r} &\bigg|\frac{1}{n}\sum_{i=1}^n \xi_i(f-f^*)(\bX_i)\bigg|, \\
        \sup_{f\in\mF:\|f-f^*\|_{2,\bX} \leq r} &\bigg|\frac{1}{n}\sum_{i=1}^n \xi_i(f-f^*)^2(\bX_i)\bigg|, \\
        \sup_{f\in\mF:\|f-f^*\|_{2,\bX} \leq r} &\bigg|\frac{1}{n}\sum_{i=1}^n \xi_i\zeta_i(f-f^*)(\bX_i)\bigg|,
    \end{align*}
    with Rademacher variables $(\xi_i)_{i=1,\ldots,n}.$ These processes play a role in the definition of the complexities in~\eqref{def.complexity}.
    
    Lemma~\ref{cor.1.8} below shows that, for any $r>r_n^*(\gamma'),$
    \begin{align*}
        \sup_{f,h\in \mF: \|f-h\|_{2,\bX} \leq r} \bigg|\frac{1}{n}\sum_{i=1}^n (f-h)(\bX_i) - \E[(f-h)(\bX)] \bigg| \leq c_2\gamma' L r,
    \end{align*}
    with probability bigger than $1-2\exp(-c_1\gamma'{}^2n).$ Choosing $\gamma' = \gamma_P/(c_2L)$ and $h=f^*$ gives, for all $r>r_n^*(\gamma_Q/(c_2L)),$
    \begin{align*}
        \sup_{f\in\mF:\|f-f^*\|_{2,\bX} \leq r} \bigg|\frac{1}{n}\sum_{i=1}^n (f-f^*)(\bX_i) - \E[(f-f^*)(\bX)] \bigg| \leq \gamma_Q r.
    \end{align*}
    By definition, the complexity $r_P(\rho,\gamma_P)$ is the smallest level $r$ at which the latter display holds for all functions $f$ in the smaller set $\B(f^*,\rho,r).$ Thus $r_P(\rho,\gamma_P) \leq r_n^*(\gamma_P/(c_2 L)).$ 
    
    Lemma~\ref{lemma.2.6} below shows that, for any $r>r_n^*(\gamma'),$
    \begin{align*}
        \sup_{f,h\in \mF: \|f-h\|_{2,\bX} \leq r} \bigg|\frac{1}{n}\sum_{i=1}^n (f-h)^2(\bX_i) - \E[(f-h)^2(\bX)] \bigg| \leq c_2\gamma' L^2 r^2,
    \end{align*}
    with probability bigger than $1-2\exp(-c_1\gamma'{}^2n).$ Choosing $\gamma' = \gamma_Q/(c_2L^2)$ and $h=f^*$ gives, for all $r>r_n^*(\gamma_Q/(c_2L^2)),$
    \begin{align*}
        \sup_{f\in\mF:\|f-f^*\|_{2,\bX} \leq r} \bigg|\frac{1}{n}\sum_{i=1}^n (f-f^*)^2(\bX_i) - \E[(f-f^*)^2(\bX)] \bigg| \leq \gamma_Q r^2.
    \end{align*}
    By definition, the complexity $r_Q(\rho,\gamma_Q)$ is the smallest level $r$ at which the latter display holds for all functions $f$ in the smaller set $\B(f^*,\rho,r).$ Thus $r_Q(\rho,\gamma_Q) \leq r_n^*(\gamma_Q/(c_2L^2)).$
    
    With $\E[\zeta^4]^{1/4} = \frakm^*,$ Lemma~\ref{lemma.2.7} below shows that, for any $r > s_n^*(\gamma),$
    \begin{align*}
        \sup_{f,h\in\mF:\|f-h\|_{2,\bX}\leq r}  \bigg|\frac{1}{n}\sum_{i=1}^n \zeta_i(f-h)(\bX_i) - \E[\zeta(f-h)(\bX)] \bigg| \leq c_3 \gamma \frakm^* L r^2,
    \end{align*}
    with probability bigger than $1-4\exp(-c_1 n \min\{\gamma^2r^2,1\}).$ Choosing $\gamma = \gamma_M/(c_3 L \frakm^*)$ and $h=f^*$ gives, for all $r>s_n^*(\gamma_M/(c_3 L \frakm^*)),$ \begin{align*}
        \sup_{f\in\mF:\|f-f^*\|_{2,\bX}\leq r}  \bigg|\frac{1}{n}\sum_{i=1}^n \zeta_i(f-f^*)(\bX_i) - \E[\zeta(f-f^*)(\bX)] \bigg| \leq \gamma_M r^2.
    \end{align*}
    By definition, the complexity $r_M(\rho,\gamma_M)$ is the smallest display $r$ at which the latter display holds for all functions $f$ in the smaller set $\B(f^*,\rho,r).$ Thus $r_M(\rho,\gamma_M) \leq s_n^*(\gamma_M/(c_3 L \frakm^*)).$
\end{proof}

\begin{lem}[Corollary 1.8 in~\cite{mendelson2014upper}] \label{cor.1.8}
    There exist absolute constants $c_1,c_2$ for which the following holds. Let $\mF$ be an $L-$sub-Gaussian class, assume that $\mF-\mF$ is star-shaped around $0.$ If $\gamma'\in(0,1)$ and $r>r_n^*(\gamma'),$ then with probability at least $1-2\exp(-c_1\gamma'{}^2n),$ we have
    \begin{align*}
        \sup_{f,h\in \mF: \|f-h\|_{2,\bX} \leq r} \bigg|\frac{1}{n}\sum_{i=1}^n (f-h)(\bX_i) - \E[(f-h)(\bX)] \bigg| \leq c_2\gamma' L r.
    \end{align*}
\end{lem}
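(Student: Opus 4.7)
The plan is to follow the standard three-step route in sub-Gaussian empirical process theory: symmetrization, comparison to a Gaussian process, and a chaining tail bound. Throughout, write
\begin{equation*}
    Z_r \; := \; \sup_{f,h \in \mF,\ \|f-h\|_{2,\bX} \leq r}
    \bigg| \frac{1}{n} \sum_{i=1}^n (f-h)(\bX_i) - \E[(f-h)(\bX)] \bigg|,
\end{equation*}
and note that since $\mF - \mF$ is star-shaped around $0$, the index set $\B_2(r) \cap (\mF - \mF)$ behaves monotonically in $r$, which will be used to transfer control at the fixed point to every larger radius.

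First I would symmetrize: by the classical symmetrization lemma, $\E[Z_r] \leq 2\, \E \sup \big| n^{-1} \sum_i \xi_i (f-h)(\bX_i) \big|$ with independent Rademacher variables $\xi_i$ independent of $(\bX_i)$. Conditionally on $(\bX_i)$, this is a Rademacher process whose increments are sub-Gaussian in $\|\cdot\|_{2,n}$, and by the $L$-sub-Gaussian hypothesis $\|\cdot\|_{2,n}$ is comparable (after concentration of covariances, or directly by contraction) to the intrinsic metric $\|\cdot\|_{2,\bX}$ up to a factor $L$. Talagrand's majorizing-measure/generic chaining theorem then bounds the Rademacher process by a Gaussian one:
\begin{equation*}
    \E \sup_{f-h \in \B_2(r) \cap (\mF - \mF)}
    \bigg| \frac{1}{n} \sum_{i=1}^n \xi_i (f-h)(\bX_i) \bigg|
    \; \leq \; \frac{c\, L}{\sqrt{n}}\,
    \E\big[ \|G\|_{\B_2(r) \cap (\mF - \mF)} \big].
\end{equation*}
Plugging $r > r_n^*(\gamma')$ into this and using the defining inequality $\E[\|G\|_{\B_2(r) \cap (\mF - \mF)}] \leq \gamma' r \sqrt{n}$ from \eqref{def.s*r*} yields $\E[Z_r] \leq (c_2/2)\, \gamma' L r$.

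Second, to pass from expectation to high probability, I would invoke a Talagrand-type deviation inequality for suprema of sub-Gaussian empirical processes (a chaining tail bound in the spirit of Dirksen, or the version recorded in Mendelson's article). Because each class element has sub-Gaussian tails controlled in terms of $L \|f-h\|_{2,\bX} \leq L r$, one obtains
\begin{equation*}
    \P\Big( Z_r \geq \E[Z_r] + u \Big) \; \leq \; 2 \exp\!\Big( - c_1 \tfrac{n u^2}{L^2 r^2} \Big).
\end{equation*}
Setting $u = (c_2/2) \gamma' L r$ and adding it to the bound on $\E[Z_r]$ delivers $Z_r \leq c_2 \gamma' L r$ with probability at least $1 - 2 \exp(- c_1 \gamma'^2 n)$, which is the claim.

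The main obstacle is the second step: one needs a deviation inequality that (i) tolerates the unboundedness that is intrinsic to sub-Gaussian classes (so the standard bounded-difference/Talagrand inequality for uniformly bounded classes does not apply directly), yet (ii) still produces the correct $L r$ scaling in the exponent so that the resulting probability depends on $\gamma'$ and $n$ only. This is precisely what is supplied by the generic-chaining tail bound used by Mendelson; the star-shape assumption is what guarantees that the fixed point $r_n^*(\gamma')$ monotonically controls the complexity at all larger radii, so the bound, once proven at $r = r_n^*(\gamma')$, extends to every $r > r_n^*(\gamma')$.
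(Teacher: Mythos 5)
The paper does not prove this lemma---it is cited verbatim as Corollary 1.8 of Mendelson's paper on upper bounds on product and multiplier empirical processes---so there is no internal proof to compare against. Your outline captures the right sequence of ideas (symmetrize, compare the Rademacher process to the canonical Gaussian process via the majorizing measures theorem, then invoke a tail bound) and would indeed reproduce the statement, but the concentration step as you have written it is not quite a theorem. The bound $\P\big(Z_r \geq \E[Z_r] + u\big) \leq 2\exp(-c_1 n u^2 / (L^2 r^2))$ is a concentration-around-the-mean inequality of Talagrand type, and Talagrand's concentration for empirical processes requires a bounded envelope, which a sub-Gaussian class does not have. The tool that actually does the job is the generic-chaining tail bound (Talagrand's book, or Dirksen): for a process whose increments are sub-Gaussian in a metric $d$, one has $\P\big(\sup_T |Z| \geq C\gamma_2(T,d) + u\,\mathrm{diam}_d(T)\big) \leq 2e^{-u^2}$. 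Here the natural metric on $T = \B_2(r)\cap(\mF-\mF)$ is $d = (L/\sqrt{n})\|\cdot\|_{2,\bX}$, so $\gamma_2(T,d) \lesssim (L/\sqrt{n})\E[\|G\|_T] \leq L\gamma' r$ once $r > r_n^*(\gamma')$, and $\mathrm{diam}_d(T) \lesssim Lr/\sqrt{n}$; taking $u = \gamma'\sqrt{n}$ gives exactly the claimed probability and the $c_2\gamma' L r$ bound. Two further simplifications are available: since the centered empirical process is already sub-Gaussian in $d$, you can chain it directly and skip symmetrization altogether, which is cleaner for a tail bound; and the star-shape assumption should be invoked not to ``transfer control to larger radii'' but to guarantee that $r \mapsto \E[\|G\|_{\B_2(r)\cap(\mF-\mF)}]/r$ is non-increasing, so that the fixed-point inequality $\E[\|G\|_{\B_2(r)\cap(\mF-\mF)}] \leq \gamma' r \sqrt{n}$ holds for every $r \geq r_n^*(\gamma')$, not only in the limit. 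These are refinements of your write-up rather than missing ideas---the plan is sound.
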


\begin{lem}[Lemma 2.6 in~\cite{lecue2013learning}] \label{lemma.2.6}
    There exist absolute constants $c_1,c_2$ for which the following holds. Let $\mF$ be an $L-$sub-Gaussian class, assume that $\mF-\mF$ is star-shaped around $0.$ If $\gamma'\in(0,1)$ and $r>r_n^*(\gamma'),$ then with probability at least $1-2\exp(-c_1\gamma'{}^2n),$ we have
    \begin{align*}
        \sup_{f,h\in \mF: \|f-h\|_{2,\bX} \leq r} \bigg|\frac{1}{n}\sum_{i=1}^n (f-h)^2(\bX_i) - \E[(f-h)^2(\bX)] \bigg| \leq c_2\gamma' L^2 r^2.
    \end{align*}
\end{lem}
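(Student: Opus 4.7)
My approach combines the three classical ingredients for controlling a quadratic empirical process over a sub-Gaussian class: symmetrization to pass to a Rademacher sum, linearization by contraction after truncation, and a Talagrand-type deviation inequality. The star-shape hypothesis on $\mF - \mF$ lets me restrict attention to a fixed radius $r$ instead of peeling across scales.

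Set $Z_r := \sup_{\|f-h\|_{2,\bX}\leq r} |n^{-1}\sum_i (f-h)^2(\bX_i) - \E(f-h)^2(\bX)|$. Standard symmetrization bounds $\E Z_r \leq 2\,\E\sup|n^{-1}\sum_i \xi_i (f-h)^2(\bX_i)|$, with $(\xi_i)$ independent Rademacher variables independent of $(\bX_i)$. To linearize the quadratic functional, I would truncate each $(f-h)$ at a level $M \asymp L r \sqrt{\log(1/\gamma')}$, writing $(f-h) = T_M(f-h) + R_M(f-h)$. The $L$-sub-Gaussian hypothesis forces each $(f-h)$ with $\|f-h\|_{2,\bX}\leq r$ to have exponential-type tails at the scale $L r$, so the residual part contributes at most $\gamma' L^2 r^2$ after summing $L^p$ bounds with $p \asymp \log(1/\gamma')$. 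For the truncated part, Talagrand's contraction principle combined with the fact that $t \mapsto t^2$ is $2M$-Lipschitz on $[-M,M]$ yields
\[
\E\sup \Big|\tfrac{1}{n}\sum_{i} \xi_i T_M(f-h)^2(\bX_i)\Big|
\leq 4M\,\E\sup \Big|\tfrac{1}{n}\sum_{i} \xi_i (f-h)(\bX_i)\Big|.
\]
A Rademacher-to-Gaussian comparison in the sub-Gaussian setting bounds the right-hand side by $C\,(L/\sqrt n)\,\E\|G\|_{\B_2(r)\cap(\mF-\mF)}$, and the very definition of $r_n^*(\gamma')$ ensures the Gaussian complexity is at most $\gamma' r \sqrt n$ whenever $r > r_n^*(\gamma')$. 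Chaining everything together gives $\E Z_r \leq c_2' \gamma' L^2 r^2$ (up to a logarithmic factor absorbed into the constant).

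To upgrade this expectation bound to the claimed high-probability statement, I would apply the Bousquet or Klein--Rio form of Talagrand's empirical process inequality to the truncated symmetrized process. Its weak variance is controlled by $\sup_{f,h}\E(f-h)^4(\bX) \leq C L^4 r^4$, which follows from the $L^4$--$L^2$ comparison implied by sub-Gaussianity, while the envelope after truncation is of order $M^2$. Calibrating the deviation parameter to $\gamma' L^2 r^2$ produces a failure probability of the form $2\exp(-c_1 \gamma'^2 n)$; one also verifies separately that the event $\{\max_i |(f-h)(\bX_i)| > M\}$ has probability bounded by the same tail, so that the truncation is transparent on the good event.

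The main obstacle is calibrating $M$ so that the truncation error, the Lipschitz factor $M$ from contraction, and the variance inputs to the concentration inequality all mesh cleanly with the target rate $\gamma' L^2 r^2$ and the exponential probability $\exp(-c_1 \gamma'^2 n)$ \emph{without} spurious logarithmic factors. A single-scale truncation typically leaves a $\sqrt{\log(1/\gamma')}$ overhead. The sharp form stated in the lemma is most cleanly obtained by replacing the single-scale truncation by a generic-chaining argument directly on $(f-h)^2$, using Bernstein-type moment bounds for the quadratic increments, so that the chain absorbs the logarithmic factor; I would expect identifying the correct chaining scale to be the technically most delicate step, everything else being mechanical once that is in place.
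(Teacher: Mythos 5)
This lemma is cited in the paper as Lemma~2.6 of Lecu\'e--Mendelson (\texttt{lecue2013learning}); the paper does not prove it and uses it as a black box, so there is no internal proof to compare against. Assessing your proposal on its own merits: the blueprint (symmetrization, truncation to linearize, contraction, Talagrand concentration, with the definition of $r_n^*$ supplying the Gaussian-complexity control) is the standard entry point, and you correctly flag that a single-scale truncation at level $M\asymp Lr\sqrt{\log(1/\gamma')}$ leaves a parasitic $\sqrt{\log(1/\gamma')}$ factor in the rate. Where I would push back is on the claim that, once one ``replaces the single-scale truncation by a generic-chaining argument directly on $(f-h)^2$,'' the rest is mechanical. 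That step \emph{is} the theorem: controlling the quadratic process at scale $\gamma' L^2 r^2$ without logarithmic loss is precisely the content of the sub-Gaussian quadratic-process bounds in Mendelson's work (e.g.\ the $\psi_1$-increment chaining in ``Empirical processes with a bounded $\psi_1$ diameter'' and its refinements), and it requires decomposing $(f-h)^2-(f'-h')^2$ into a product of increments, trading between the $\psi_2$ metric and the $L^2$ metric across chaining scales, and calibrating a Bernstein-type deviation inequality on each scale. Treating this as mechanical underestimates where essentially all of the difficulty lies.

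A second, smaller gap: the passage from the (conditional) empirical Rademacher complexity $\E_\xi \sup \lvert n^{-1}\sum_i \xi_i (f-h)(\bX_i)\rvert$ to the \emph{population} Gaussian complexity $\E\|G\|_{\B_2(r)\cap(\mF-\mF)}$ is not a one-line Rademacher-to-Gaussian comparison. The Gaussian process induced by the empirical inner product $\langle f,h\rangle_n$ has to be related to the one with covariance $\E[f(\bX)h(\bX)]$, which itself needs either a majorizing-measures comparison or a prior concentration statement for the empirical $L^2$-metric over the class — and the latter is essentially the quadratic-process bound you are trying to prove, so one has to be careful not to argue in a circle. The star-shaped hypothesis and the sub-Gaussian structure are exactly what break this circularity in the cited proof, but your sketch does not make explicit how. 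In short: the ingredients are the right ones and your diagnosis of the obstacle is accurate, but the proposal stops precisely at the step that carries the mathematical weight, and it should not be presented as a routine finishing touch.
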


\begin{lem}[Corollary of Theorem 2.7 in~\cite{lecue2013learning}] \label{lemma.2.7}
    Let $\mF$ be an $L-$sub-Gaussian class, assume that $\mF-\mF$ is star-shaped around $0.$ 
    Let $\E[|\zeta|^q]^{1/q} = \frakm^*$ for some $q>2,$ there exists an absolute constant $c_3(q),$ depending on $q$ only, for which the following holds. For some $\gamma>0$ and $r>s_n^*(\gamma),$ with probability at least $1-4\exp(-c_1 n \min\{\gamma^2r^2,1\}),$ we have
    \begin{align*}
        \sup_{f,h\in\mF:\|f-h\|_{2,\bX}\leq r}  \bigg|\frac{1}{n}\sum_{i=1}^n \zeta_i(f-h)(\bX_i) - \E[\zeta(f-h)(\bX)] \bigg| \leq c_3(q) \gamma \frakm^* L r^2.
    \end{align*}
\end{lem}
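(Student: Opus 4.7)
The plan is to deduce this from Theorem~2.7 in \cite{lecue2013learning} by a standard symmetrization-plus-truncation scheme combined with the fact that the multiplier process inherits a sub-Gaussian structure from the class $\mF-\mF$.

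First, I would apply the standard symmetrization inequality to the multiplier empirical process, reducing
\begin{align*}
    \E \sup_{f,h\in\mF:\|f-h\|_{2,\bX}\leq r} \bigg| \frac{1}{n} \sum_{i=1}^n \big\{ \zeta_i (f-h)(\bX_i) - \E[\zeta(f-h)(\bX)] \big\} \bigg|
\end{align*}
up to a factor of $2$ to the symmetrized Rademacher process
\begin{align*}
    \E \sup_{f,h\in\mF:\|f-h\|_{2,\bX}\leq r} \bigg| \frac{1}{n} \sum_{i=1}^n \xi_i \zeta_i (f-h)(\bX_i) \bigg|,
\end{align*}
where $(\xi_i)$ are independent Rademacher signs independent of everything else. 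Conditionally on $(\bX_i,\zeta_i)$, this symmetrized process is sub-Gaussian in the natural empirical metric, and because $\mF-\mF$ is $L$-sub-Gaussian it is controlled by the Gaussian complexity of $\B_2(r)\cap(\mF-\mF)$.

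Second, I would invoke a Gaussian comparison (chaining) step. Conditioning on $(\zeta_i)$, the sub-Gaussian property of $\mF$ implies that the process $f\mapsto n^{-1/2}\sum_i \xi_i(f-h)(\bX_i)$ has sub-Gaussian increments proportional to $L\|f-h\|_{2,\bX}$, and the multiplier $\zeta_i$ scales these increments in the $L^q$-sense. Combining Dudley-type chaining with the definition of $s_n^*(\gamma)$, for $r> s_n^*(\gamma)$ one obtains the in-expectation bound
\begin{align*}
    \E \sup_{f,h\in\mF:\|f-h\|_{2,\bX}\leq r} \bigg| \frac{1}{n} \sum_{i=1}^n \xi_i \zeta_i (f-h)(\bX_i) \bigg|
    \leq c_3(q)\, \gamma \,\frakm^*\, L\, r^2,
\end{align*}
with $c_3(q)$ depending only on $q>2$ through the moment constants used in the truncation.

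Third, to upgrade from expectation to a high-probability deviation bound, I would apply a Talagrand-type concentration inequality for suprema of empirical processes with heavy-tailed multipliers (as in Theorem 2.7 of \cite{lecue2013learning}, relying on Adamczak or Bousquet bounds after truncating $\zeta_i$ at a suitable level tuned to $q$). The variance proxy is of order $(\frakm^* L r)^2/n$ and the envelope contribution (from tails of $\zeta$) is what produces the $\min\{\gamma^2 r^2, 1\}$ term: the sub-Gaussian regime $\gamma^2r^2\leq 1$ yields an exponent $-c_1 n \gamma^2 r^2$, while the Poissonian regime $\gamma^2r^2>1$ gets saturated at $-c_1 n$. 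The factor $4$ in the probability $1-4e^{-c_1 n \min\{\gamma^2r^2,1\}}$ accounts for the union of the bad event for the truncated and the tail parts of $\zeta$ across the two directions of the supremum.

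The main obstacle is the heavy-tailed nature of $\zeta$: with only a finite $q$-th moment for $q>2$, the usual Talagrand inequality for bounded classes does not apply directly, and one needs the truncation argument together with the multiplier inequality of Mendelson to produce a bound whose dependence in $n$ and $r$ matches $\gamma \frakm^* L r^2$. Once Theorem~2.7 of \cite{lecue2013learning} is cited, the statement above follows by simply taking the constants $c_1$ and $c_3(q)$ provided there and noting that $s_n^*(\gamma)$ here is exactly the quantity used to calibrate the Gaussian complexity of $\B_2(r)\cap(\mF-\mF)$.
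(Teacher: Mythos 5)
The paper does not actually prove this lemma: the statement is labeled explicitly as a ``Corollary of Theorem 2.7 in \cite{lecue2013learning}'' and is used as a black box in the proof of Lemma~\ref{lemma.complexity_bound_subgauss}, so the paper's own ``proof'' is just the citation. You also end by citing Theorem~2.7 of \cite{lecue2013learning}, so your bottom line coincides with the paper's.

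That said, the preliminary sketch you give of the internal mechanics of that theorem has some imprecisions that would become actual gaps if you tried to turn it into a self-contained proof. First, standard symmetrization converts an expectation of a supremum into an expectation of a Rademacher supremum; it does not convert a high-probability statement into one, so you cannot start from symmetrization and get the deviation bound $1-4\exp(-c_1 n\min\{\gamma^2 r^2,1\})$ without a separate concentration argument that controls the supremum around its mean uniformly over the (unbounded) multiplier. Second, conditioning on $(\bX_i,\zeta_i)$ and chaining gives sub-Gaussian increments with respect to the $\zeta$-weighted empirical metric $\big(n^{-1}\sum_i\zeta_i^2(f-h)^2(\bX_i)\big)^{1/2}$, not directly $L\|f-h\|_{2,\bX}$; passing from the former to the latter is precisely where the finite $q$-th moment of $\zeta$, the star-shape of $\mF-\mF$, and a separate comparison of empirical and population $L^2$-metrics are needed, and glossing over this step is not cosmetic — it is the technical heart of Mendelson's multiplier argument. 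Third, the exponent $\min\{\gamma^2 r^2,1\}$ in the Lecué--Mendelson result comes from chaining on shells in which a Bernstein-type bound is applied on each level, not from a global ``variance proxy plus envelope'' Talagrand split, so your account of where the two regimes come from does not quite match the actual mechanism. None of this affects the correctness of the lemma as a citation, but the sketch as written does not constitute a working alternative proof.
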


\subsection{Complexity parameters in the sparse linear setting} \label{sec.disc_complexity_sparse}

The next result shows that, in the linear setting, it is possible to weaken the sub-Gaussian assumption and still be able to control the complexity parameters $r_P, r_M$ as in \eqref{eq.compl_params}. 

\begin{thm}[Theorem~1.6 in~\cite{mendelson2017multiplier}] \label{thm.1.6}
    There exists an absolute constant $c_1$ and for $K\geq1,$ $L\geq1$ and $q_0>2$ there exists a constant $c_2$ that depends only on $K,L,q_0$ for which the following holds. Consider
    \begin{itemize}
        \item $V\subset\R^d$ for which the norm $\|\cdot\|_V = \sup_{\bv\in V} |\langle \bv,\cdot\rangle|$ is $K-$unconditional with respect to the basis $\{\be_1,\ldots,\be_d\};$
        \item $\frakm^* = \E\big[|\zeta|^{q_0}\big]^{1/q_0} < +\infty;$
        \item an isotropic random vector $\bX \in\R^d$ which satisfies the \textit{weak moment condition}: for some constants $c_0, L > 1,$ for all $\by \in \R^d,$ $1 \leq p\leq c_0\log(ed),$ $1 \leq j \leq d,$
        \begin{align*}
            \E\big[|\bX^\top \be_j|^p \big]^{\frac{1}{p}}
            \leq L \sqrt{p} \, \E\big[|\bX^\top \be_j|^2 \big]^{\frac{1}{2}}.
        \end{align*}
    \end{itemize}
    If $(\bX_i,\zeta_i)_{i=1}^n$ are i.i.d. copies of $(\bX,\zeta),$ then
    \begin{align*}
        \E\left[\sup_{v\in V} \left| \frac{1}{\sqrt{n}} \sum_{i=1}^n \left(\zeta_i \bX_i^\top \bv - \E[\zeta \bX^\top \bv] \right) \right| \right] \leq c_2 \frakm^* \E[\|G\|_V].
    \end{align*}
\end{thm}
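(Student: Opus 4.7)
The inequality is a multiplier-process bound under only weak moment assumptions on $\bX$, and the natural route is Mendelson's strategy: first symmetrise, then exploit the unconditional structure of $\|\cdot\|_V$ to reduce to a coordinatewise problem, and finally compare moments to Gaussians using only moments of order at most $\log(ed)$.

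First, a standard symmetrisation, introducing i.i.d. Rademacher signs $(\xi_i)$ independent of everything, gives
\begin{align*}
    \E\bigg[\sup_{v\in V}\Big| \tfrac{1}{\sqrt n}\sum_{i=1}^n \big(\zeta_i \bX_i^\top \bv - \E[\zeta \bX^\top \bv]\big)\Big|\bigg]
    \leq 2\,\E\bigg[\sup_{v\in V}\Big| \tfrac{1}{\sqrt n}\sum_{i=1}^n \xi_i \zeta_i \bX_i^\top \bv\Big|\bigg].
\end{align*}
Writing $W_{n,j} := \tfrac{1}{\sqrt n}\sum_{i=1}^n \xi_i \zeta_i \bX_i^\top \be_j$, the right-hand side equals $\E\big[\sup_{v\in V}\big|\sum_j v_j W_{n,j}\big|\big]$. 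Because $\|\cdot\|_V$ is $K$-unconditional, this supremum is unchanged up to a factor $K$ if one replaces $(W_{n,j})_j$ by $(|W_{n,j}|\eta_j)_j$ with fresh Rademacher signs $(\eta_j)$; the standard contraction-principle comparison then dominates, at the price of a universal constant, the Rademacher process by the Gaussian one obtained by swapping $(\eta_j)$ for i.i.d. standard Gaussians $(g_j)$. It thus remains to bound $\E\big[\sup_{v\in V}\big|\sum_j v_j |W_{n,j}| g_j\big|\big]$ by $c_2\,\frakm^*\,\E[\|G\|_V]$.

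The heart of the argument is a moment comparison showing that $W_{n,j}/\frakm^*$ satisfies the moment growth of a sub-Gaussian up to moments of order $\log(ed)$. For every $p\leq c_0 \log(ed)$ and every $j$, Khintchine--Kahane on $(\xi_i)$ followed by Hölder with exponents $q_0$ and $q_0/(q_0-1)$ yield (after the standard truncation of $\zeta$ at level $\frakm^* p^{1/q_0}$)
\begin{align*}
    \E\big[|W_{n,j}|^p\big]^{1/p} \lesssim \sqrt{p}\,\E\big[|\zeta|^p\,|\bX^\top\be_j|^p\big]^{1/p}
    \lesssim \sqrt{p}\,\frakm^* \cdot L\sqrt{p} \;=\; L\, p\, \frakm^*,
\end{align*}
using the weak moment condition on $\bX^\top\be_j$ (for $p\leq c_0\log(ed)$) and $\E[|\zeta|^{q_0}]^{1/q_0}=\frakm^*$. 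These are exactly the moments of $L\,\frakm^*\,g_j$, so a Latała-type comparison of the one-dimensional marginals of the process $(\sum_j v_j W_{n,j})_{v\in V}$ with those of $\frakm^*\,(\sum_j v_j g_j)_{v\in V}$ gives the target bound with $c_2$ depending only on $K,L,q_0$.

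The main obstacle is ensuring that the comparison never requires $\bX$-moments beyond order $\log(ed)$: without the unconditional hypothesis one would need a generic chaining over the full index set $V$, which implicitly samples arbitrarily high moments and would force a genuine sub-Gaussian assumption on $\bX$. The $K$-unconditionality is precisely what collapses the effective chaining to the $d$ canonical directions $(\be_j)_j$, so that only moments up to $p\asymp\log(ed)$ are ever invoked, matching exactly what the weak moment assumption provides.
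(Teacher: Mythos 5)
The paper does not prove this statement; it is imported verbatim (as noted in its header) from Theorem~1.6 of Mendelson's multiplier-process paper, and the surrounding text uses it as a black box to bound the complexity parameters $r_n^*$ and $s_n^*$ in the linear setting. There is therefore no ``paper proof'' to compare against, and your sketch should be judged on its own as a reconstruction of Mendelson's argument.

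Taken on those terms, the sketch captures the right high-level picture (symmetrise, exploit unconditionality to reduce to the canonical directions $\be_1,\dots,\be_d$, compare moments of the coordinate marginals to Gaussians using only moments of order $\lesssim\log(ed)$), but it has two genuine gaps. First, the moment bound you derive reads $\E[|W_{n,j}|^p]^{1/p}\lesssim L\,p\,\frakm^*$, which is \emph{sub-exponential}, not sub-Gaussian, growth in $p$; the Gaussian $g_j$ satisfies $\E[|g_j|^p]^{1/p}\asymp\sqrt p$, so these are not ``exactly the moments of $L\,\frakm^*\,g_j$'' as claimed, and a direct marginal comparison in that form does not close. Recovering the $\sqrt p$ scaling requires a sharper treatment of the Khintchine step: one must keep the conditional $L^2$-norm $\bigl(\tfrac1n\sum_i\zeta_i^2(\bX_i^\top\be_j)^2\bigr)^{1/2}$ as a random quantity and control its $p/2$-th moment by a concentration-plus-truncation argument on $\zeta$ rather than applying H\"older naively, which is exactly where the ``truncation at level $\frakm^* p^{1/q_0}$'' you mention has to be made precise and is quite delicate. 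Second, the final invocation of a ``Lata{\l}a-type comparison'' of one-dimensional marginals is too weak to conclude: marginal domination of each $\sum_j v_j W_{n,j}$ by $\frakm^*\sum_j v_j g_j$ does not, in general, transfer to domination of the suprema over $V$. What is actually needed is that, under $K$-unconditionality, the functional $\|(a_j)_j\|_V$ is monotone in $(|a_j|)_j$, combined with a peeling over scales of $|W_{n,j}|$ and a careful accounting of the few coordinates where the moment control fails (which is precisely why the condition is only required for $p\le c_0\log(ed)$ rather than all $p$). As written, the last step of the sketch is an assertion rather than an argument.
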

Since this result deals with the multiplier empirical process and, when $\zeta\equiv1,$ with the standard empirical process, by arguing as in the proof of Lemma~\ref{lemma.complexity_bound_subgauss} we find that any function
\begin{align*}
    \rho \mapsto r(\rho) \geq \max\left\{r_n^*\left(\frac{\gamma_P}{c_2} \right), s_n^*\left(\frac{\gamma_M}{c_2 \frakm^*}\right) \right\},
\end{align*}
is a valid choice in \eqref{def.r_complexity}. Our Definition~\ref{def.distr_class} restricts our analysis to settings where the assumptions of the previous theorem are satisfied.

By following Section 4 in~\cite{lecue2013learning}, we provide bounds for the complexity parameters $r_n^*,s_n^*$ in~\eqref{eq.compl_params}. For any $\bbeta\in\R^d,$ set $f_{\bbeta}:\R^d\to\R$ the linear map $f_{\bbeta}(\bx) = \bx^\top\bbeta,$ consider $\mF = \big\{ f_{\bbeta} : \bbeta\in\R^d \big\}$ and, for any $\rho>0,$
\begin{align*}
    \B_1(\rho) = \big\{ f_{\bbeta}\in\mF : |\bbeta|_1\leq \rho \big\}. 
\end{align*}
Assume that $\bX$ is an isotropic random vector that satisfies the weak moment condition of Theorem~\ref{thm.1.6}, recall that $\frakm^* = \E[\zeta^4]^{1/4}.$ By symmetry, $\B_1(\rho)-\B_1(\rho) = \B_1(2\rho)$ and it is sufficient to control the function $r\mapsto \E\big[\|G\|_{\B_1(2\rho)\cap\B_2(r)}\big].$ One finds, for every $2\rho/\sqrt{d} \leq r,$
\begin{align*}
    \E\big[ \|G\|_{\B_1(2\rho)\cap\B_2(r)} \big] = \E\bigg[\sup_{\bbeta\in\R^d:|\bbeta|_1\leq 2\rho,|\bbeta|_2\leq r} \bigg|\sum_{i=0}^d g_i\beta_i \bigg| \bigg] \sim \rho\sqrt{\log(ed\min\{r^2/\rho^2,1\})},
\end{align*}
and if $r\leq 2\rho/\sqrt{d},$ then
\begin{align*}
    \E\big[ \|G\|_{\B_1(2\rho)\cap\B_2(r)} \big] = \E\bigg[\sup_{\bbeta\in\R^d:|\bbeta|_1\leq 2\rho,|\bbeta|_2\leq r} \bigg|\sum_{i=0}^d g_i\beta_i \bigg| \bigg] \sim \rho\sqrt{d}.
\end{align*}
With $C_{\gamma_P}$ some constants only depending on $L$ and $\gamma_P,$ one finds
\begin{align*}
    r_n^{*2}\left(\frac{\gamma_P}{c_2} \right)  \leq C_{\gamma_P}^2 \times \begin{cases}
    \frac{\rho^2}{n} \log\Big(\frac{e d}{n} \Big) & \text{if $n \leq c_3 d,$} \\
    \frac{\rho^2}{d} & \text{if $c_3 d \leq n \leq c_4 d,$} \\
    0 & \text{$n > c_4 d,$}
    \end{cases}
\end{align*}
the constants $c_3,c_4$ depend only on $L.$ Similarly, with $C_{\gamma_M}$ some constants only depending on $L$ and $\gamma_M,$
\begin{align*}
    s_n^{*2}\left(\frac{\gamma_M}{c_2\frakm^*} \right) \leq C_{\gamma_M}^2 \times
    \begin{cases}
    \rho \frakm^* \sqrt{\frac{\log d}{n}} & \text{if $\rho^2 n \leq \frakm^{*2}\log d,$} \\
    \rho \frakm^* \sqrt{\frac{1}{n} \log\Big(\frac{e d^2 \frakm^{*2}}{\rho^2 n} \Big)} & \text{if $\frakm^{*2}\log d \leq \rho^2 n \leq \frakm^{*2}d^2,$} \\
    \frakm^{*2} \frac{d}{n} & \text{$\rho^2 n \geq \frakm^{*2}d^2.$}
    \end{cases}
\end{align*}
The bounds given above are valid for any regime of $n$ and $d,$ but we continue the discussion for the more interesting high-dimensional case, that is $d\gg n.$ This simplifies the notation and allows to choose, for some constant $C_{\gamma_P,\gamma_M}$ only depending on $L,\gamma_P,\gamma_M,$
\begin{align}\label{def.r_complexity_linear}
    r^2(\rho) = C_{\gamma_P,\gamma_M}^2 
    \begin{cases}
        \max\Big\{\rho \frakm^* \sqrt{\frac{\log d}{n}},\ \frac{\rho^2}{n}\log\left(\frac{ed}{n}\right) \Big\}, & \text{if $\rho \leq \frac{\frakm^* \sqrt{\log d}}{\sqrt{n}},$} \\
        \max\Big\{\rho \frakm^* \sqrt{\frac{1}{n} \log\left(\frac{e d^2 \frakm^{*2}}{\rho^2 n} \right)},\ \frac{\rho^2}{n}\log\left(\frac{ed}{n}\right) \Big\}, & \text{if } \frac{\frakm^* \sqrt{\log d}}{\sqrt{n}} \leq \rho \leq \frac{\frakm^* d}{\sqrt{n}},
    \end{cases}
\end{align}
which coincides with the function obtained in Section 4.4 in~\cite{lecue2020robustML}.

\textbf{Solution of the sparsity equation.} We study the case $n\geq s\log(ed/s)$ and assume there exists a $s-$sparse vector in $\bbeta^*+ \B_1(\rho/20).$ In the proof of Theorem 1.4 in~\cite{lecue2018regularization}, it is shown that the smallest solution of the sparsity equation~\eqref{def.sparsity_eq} is
\begin{align*}
    \rho^* = C_{\gamma_P,\gamma_M}^* \frakm^* s^* \sqrt{\frac{1}{n} \log\left(\frac{ed}{s^*} \right)},
\end{align*}
for some constant $C_{\gamma_P,\gamma_M}^*$ only depending on $L,\gamma_P,\gamma_M.$ We now compute $r^2(\rho^*).$ Up to multiplying $\rho^*$ by a big constant, we have $\rho^* \gtrsim \frakm^* \sqrt{\log d} / \sqrt{n},$ since $s^* \sqrt{\log(ed/s^*)} > \sqrt{\log d}$ for all $1 < s^* \leq d.$ By definition, we have
\begin{align*}
    r^2(\rho^*)
    &= C_{\gamma_P,\gamma_M}^2 \max\Big\{\rho^* \frakm^* \sqrt{\frac{1}{n} \log\left(\frac{e d^2 \frakm^{*2}}{\rho^{*2} n} \right)},\ \frac{\rho^{*2}}{n}\log\left(\frac{ed}{n}\right) \Big\} \\
    &= C_{\gamma_P,\gamma_M}^2 \rho^* \frakm^* \sqrt{\frac{1}{n} \log\left(\frac{e d^2 \frakm^{*2}}{\rho^{*2} n} \right)} \\
    &= C_{\gamma_P,\gamma_M}^2 C_{\gamma_P,\gamma_M}^* \frac{\frakm^{*2} s^*}{n} \sqrt{\log\left(\frac{ed}{s^*} \right)} \sqrt{\log\left(\frac{e d^2 }{C_{\gamma_P,\gamma_M}^{*2} {s^*}^2 \log\left(\frac{ed}{s^*} \right)} \right)} \\
    &\leq \sqrt{2} C_{\gamma_P,\gamma_M}^2 C_{\gamma_P,\gamma_M}^* \frac{\frakm^{*2} s^*}{n} \log\left(\frac{ed}{s^*} \right),
\end{align*}
in the last inequality we have used that $\log(a^2) = 2\log(|a|)$ and $C_{\gamma_P,\gamma_M}^* > 1/\sqrt{\log(ed/s^*)}.$ The latter is true without loss of generality in the high-dimensional setting $d\gg n \geq s^*\log(ed/s^*).$ The quantity $r(\rho^*)$ is the convergence rate of the LASSO estimator with penalization parameter $\lambda \sim r^2(\rho^*)/\rho^* \sim \frakm^*\sqrt{\log(ed/s^*)/n}.$ This choice of $\lambda$ requires the knowledge of the true sparsity parameter $s^*.$

\section*{Acknowledgements}

The research of Gianluca Finocchio is part of the project \textit{Nonparametric Bayes for high-dimensional models: contraction, credible sets, computations} (with project number 613.001.604) which is (partly) financed by the Dutch Research Council (NWO). 
Part of this work was done while Alexis Derumigny was employed at the University of Twente. Alexis Derumigny would like to thank Alexandre Tsybakov, Guillaume Lecué and Matthieu Lerasle for useful discussions on a very early version of the manuscript, that appears in his thesis~\cite[Chapter 3]{derumigny2019thesis}. This earlier version considers a different estimator and derives theoretical guarantees in the framework that $\sigma^* \asymp \sigma_-$ for some known level $\sigma_- > 0$.

\newpage

\begin{appendices}

\section{Proof of Theorem~\ref{thm.main_theorem}} \label{sec.proofs_main}

The structure of the proof is as follows. First, we control the supremum of the functional $T_{K, \mu}(g, \chi, f^*, \sigma^*)$ over possible values of $(g,\chi)$ by partitioning the domain in slices. Each slice is treated separately by the results from Lemma~\ref{lemma.bound_supg_Fkappa_1} to Lemma~\ref{lemma.bound_supg_Fkappa_9}. Then, we compare the bounds over different slices in Lemma~\ref{lemma.comparison_bounds_B_i} and show that the leading contribution comes from a bounded ball of the form
\begin{align*}
    \B^*(\rho_K) = \big\{(g,\chi)\in\mF\times (0,\sigma_{+}]: \|g-f^*\|\leq \rho_K,\ \|g-f^*\|_{2,\bX} \leq r(\rho_K),\ |\chi-\sigma^*|\leq c_\alpha r(\rho_K) \big\}.
\end{align*}
In Lemma~\ref{lemma.conv_rates} we translate the supremum bounds into convergence rates by showing that the MOM$-K$ estimator belongs to a bounded ball $\B^*(2\rho_K).$ We finalize the proof by computing the excess risk bound in Lemma~\ref{lemma.excess_risk}.

\medskip

In the notation of Theorem~\ref{thm.main_theorem}, for any $c>2$ we have
\begin{align*}
    c_\mu &:= 200 (c+2) \kappa_{+}^{1/2}, \\
    \eps &:= \frac{c-2}{192 \theta_0^2 (c+2) \big(8 + 134\kappa_{+}^{1/2} ((1+\frac{\sigma_{+}}{\sigma^*})\vee \frac{6}{5}) \big)}, \\
    c_\alpha^2 &:= \frac{3 (c-2)}{5 \theta_0^2},
\end{align*}
furthermore, we use the auxiliary parameters
\begin{align}
\begin{split}\label{def.parameters}
    \gamma_P = \frac{1}{1488 \theta_0},\quad \gamma_Q = \frac{\eps}{360},\quad \gamma_M = \frac{\eps}{744},\quad \eta = \frac{1}{16},\quad \gamma = \frac{31}{32},\quad \alpha = x = \frac{1}{93}.
\end{split}
\end{align}
We denote by $r(\cdot)$ a function such that $r(\rho) \geq \max\{r_P(\rho,\gamma_P), r_M(\rho,\gamma_M)\}.$ By Assumption~\ref{ass.r2rho_rrho}, there exists an absolute constant such that $r(\rho) \leq r(2\rho) < c_r r(\rho).$ With $C^2 = 384 \theta_1^2 c_r^2 c_\alpha^2 \kappa_{+}^{1/2},$ we allow for $K \in \left[K^* \vee 32|\mO|,\ n\eps^2/C^2 \right]$. We denote by $\Omega(K)$ the intersection of the event $\Omega_1(K)$ in Lemma~\ref{lemma.Q1/8_zeta2}, the event $\Omega_2(K)$ in Lemma~\ref{lemma.useful_bounds_lecue} and the event $\Omega_3(K)$ in Lemma~\ref{lemma.l2_quantile}. The probability of $\Omega(K) = \Omega_1(K) \cap \Omega_2(K) \cap \Omega_3(K)$ is at least $1 - \P(\Omega_1(K)) - \P(\Omega_2(K)) - \P(\Omega_3(K)) \geq 1 - 4 \exp(-K/8920).$ For any $c_\rho\in\{1,2\},$ we denote
\begin{align}\label{def.alpha_2k_delta_Kn}
    \alpha_{K,c_\rho} := c_\alpha r(c_\rho \rho_K),\quad \delta_{K,n}^2 := \frac{25 \frakm^{*4} K}{n},\quad r^2(\rho_K) = \frac{384 \theta_1^2 \delta_{K,n}^2}{25\frakm^{*2} \eps^2},
\end{align}
the last equation rewrites the implicit definition of $\rho_K$ in~\eqref{eq.rhoK}.

The next lemma checks that the choices made in Theorem~\ref{thm.main_theorem} satisfy a set of sufficient conditions that are required by our proving strategy. In principle, our main result is valid for different choices as long as the relevant quantities satisfy the conditions below.
\begin{lem}\label{lemma.conditions}
    The assumptions of Theorem~\ref{thm.main_theorem} imply, with $c_K^2 = 384$ and any $\iota_\mu \in [1/4, 4],$
    \begin{align}
        n\eps^2 &> K c_K^2 \theta_1^2 c_r^2 c_\alpha^2 \kappa_{+}^{1/2} , \label{ass.n_big} \\
        \iota_\mu c_\mu &> \frac{1600\kappa_{+}^{3/4}\eps}{c_K^2 \theta_1^2} + 48 \kappa_{+}^{1/2}(c+2), \label{ass.c_mu_lowbound} \\
        \frac{c-2}{24\theta_0^2} &> \frac{800\kappa_{+}^{1/2}\eps^2}{c_K^2 \theta_1^2} + 16(c+2) \eps + \bigg( \frac{1+\frac{\sigma_{+}}{\sigma^*}}{3} \vee \frac{36}{10} \bigg) \iota_\mu c_\mu\eps, \label{ass.1/theta_0_lowbound} \\
        c_\alpha^2 &> \frac{1800\kappa_{+}^{1/2}\eps^2}{c_K^2 \theta_1^2} + 108(c+2) \eps + \frac{144 \iota_\mu c_\mu \eps}{10}. \label{ass.c_alpha_lowbound}
    \end{align}
    Conditions~\eqref{ass.n_big} and~\eqref{ass.c_alpha_lowbound} imply $4\delta_{K,n}/\sigma^* < \alpha_{K,c_\rho} < \sigma^*.$ Condition~\eqref{ass.1/theta_0_lowbound} implies both
    \begin{align}
        \frac{1}{16\theta_0^2} &> 4\eps + \frac{(\sigma^*+\sigma_{+}) \iota_\mu c_\mu\eps}{2(c-2)\frakm^*} , \label{ass.1/theta_0_lowbound_negative} \\
        \frac{c-2}{24\theta_0^2} &> \frac{800\kappa^{*1/2}\eps^2}{c_K^2 \theta_1^2} + 16(c+2) \eps + \frac{36 \iota_\mu c_\mu \eps}{10}. \label{ass.1/theta_0_lowbound_2}
    \end{align}
\end{lem}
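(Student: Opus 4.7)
Inequality~\eqref{ass.n_big} is a direct rewriting of the assumed upper bound $K \leq n\eps^2/C^2$ with $C^2 = c_K^2 \theta_1^2 c_r^2 c_\alpha^2 \kappa_{+}^{1/2}$. Inequalities~\eqref{ass.c_mu_lowbound}--\eqref{ass.c_alpha_lowbound} will be verified by substituting the explicit expressions for $c_\mu, c_\alpha^2$ and $\eps$ from~\eqref{def.constants}. The key observation is that $\eps$ carries the factor $1/(192\,\theta_0^2 (c+2)(8 + 134 \kappa_{+}^{1/2}(\cdots)))$ in its denominator, so every $\eps$-proportional term on the right-hand sides is quantitatively negligible. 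For~\eqref{ass.c_mu_lowbound}, I would use $\iota_\mu c_\mu \geq 50 (c+2) \kappa_{+}^{1/2}$ while the right-hand side contributes $48(c+2)\kappa_{+}^{1/2}$, leaving a slack of $2(c+2)\kappa_{+}^{1/2}$ that dominates the residual $\eps$-correction. Inequalities~\eqref{ass.1/theta_0_lowbound} and~\eqref{ass.c_alpha_lowbound} follow by an analogous accounting: after expanding $\eps$, each of the three terms on the right is controlled by a fixed small fraction of the left, with the coefficients $192$, $200$, and the factor $3/5$ in $c_\alpha^2$ chosen precisely so this succeeds uniformly in $c > 2$.

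\textbf{Consequences for $\alpha_{K,c_\rho}$.} By Assumption~\ref{ass.r2rho_rrho}, $r(c_\rho \rho_K) \leq c_r r(\rho_K)$ for $c_\rho \in \{1, 2\}$, hence
\[
    \alpha_{K,c_\rho}^2 \,=\, c_\alpha^2 r^2(c_\rho \rho_K) \,\leq\, c_\alpha^2 c_r^2 r^2(\rho_K) \,=\, c_\alpha^2 c_r^2 \cdot \frac{c_K^2 \theta_1^2 \frakm^{*2} K}{n\eps^2},
\]
where the last identity rewrites~\eqref{eq.rhoK} using $c_K^2 = 384$. Condition~\eqref{ass.n_big} then yields the strict inequality $\alpha_{K,c_\rho}^2 < \frakm^{*2}/\kappa_{+}^{1/2} \leq \frakm^{*2}/\kappa^{*1/2} = \sigma^{*2}$, since $\kappa^{*1/2} = \frakm^{*2}/\sigma^{*2}$. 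For the lower bound, substituting $\delta_{K,n}^2 = 25 \frakm^{*4} K/n$ and using $\alpha_{K,c_\rho}^2 \geq c_\alpha^2 r^2(\rho_K)$ reduces $4\delta_{K,n}/\sigma^* < \alpha_{K,c_\rho}$ to the scalar inequality $20 (\frakm^*/\sigma^*) \eps < c_\alpha \theta_1 c_K$. Since $\frakm^*/\sigma^* = \kappa^{*1/4} \leq \kappa_{+}^{1/4}$, this follows from~\eqref{ass.c_alpha_lowbound} because $\sqrt{1800} > 20$.

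\textbf{Derivation of \eqref{ass.1/theta_0_lowbound_negative} and \eqref{ass.1/theta_0_lowbound_2}.} The second implication is immediate: $\kappa^{*1/2} \leq \kappa_{+}^{1/2}$ and $36/10 \leq ((1+\sigma_{+}/\sigma^*)/3) \vee (36/10)$, so the right-hand side of~\eqref{ass.1/theta_0_lowbound_2} is dominated term by term by that of~\eqref{ass.1/theta_0_lowbound}. For~\eqref{ass.1/theta_0_lowbound_negative}, I would first lower-bound the max in~\eqref{ass.1/theta_0_lowbound} by $(1+\sigma_{+}/\sigma^*)/3$ and then multiply through by $3/(2(c-2))$. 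This replaces the left-hand side by $1/(16\theta_0^2)$ and produces a residual coefficient $24(c+2)/(c-2) \geq 4$ on the $\eps$ term (valid for all $c > 2$) and $(1+\sigma_{+}/\sigma^*)/(2(c-2))$ on the $\iota_\mu c_\mu \eps$ term. Jensen's inequality applied to $\zeta^2$ gives $\frakm^* \geq \sigma^*$, whence $(\sigma^* + \sigma_{+})/\frakm^* \leq 1 + \sigma_{+}/\sigma^*$, so the right-hand side obtained from~\eqref{ass.1/theta_0_lowbound} dominates that of~\eqref{ass.1/theta_0_lowbound_negative}.

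\textbf{Main obstacle.} The lemma is essentially an exercise in numerical bookkeeping: the constants in~\eqref{def.constants} and~\eqref{def.parameters} have been reverse-engineered from the demands of later arguments so that each inequality holds with explicit slack. The only delicate point is ensuring that the dependence on $c-2$ (from $c_\alpha^2$ and the factor $(c-2)/(c+2)$ hidden in $\eps$) cancels correctly with the $(c-2)^{-1}$ rescaling used to pass from~\eqref{ass.1/theta_0_lowbound} to~\eqref{ass.1/theta_0_lowbound_negative}; this is why the definition of $\eps$ places the factor $c-2$ in the numerator.
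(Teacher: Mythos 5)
Your overall strategy is the same as the paper's, and your treatment of the consequences is correct and clean. In particular, the derivation of $\alpha_{K,c_\rho}^2 < \frakm^{*2}/\kappa_{+}^{1/2} \leq \sigma^{*2}$ from~\eqref{ass.n_big}, the reduction of $4\delta_{K,n}/\sigma^* < \alpha_{K,c_\rho}$ to the scalar inequality $20\,\kappa^{*1/4}\eps < c_\alpha c_K \theta_1$ and the observation that $\sqrt{1800} > 20$ settles it via~\eqref{ass.c_alpha_lowbound}, the term-by-term comparison giving~\eqref{ass.1/theta_0_lowbound_2}, and the passage from~\eqref{ass.1/theta_0_lowbound} to~\eqref{ass.1/theta_0_lowbound_negative} by multiplying through by $3/(2(c-2))$, using $24(c+2)/(c-2) \geq 4$ and $(\sigma^*+\sigma_+)/\frakm^* \leq 1 + \sigma_+/\sigma^*$ (via $\frakm^* \geq \sigma^*$), all match the paper.

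The gap is in the first block: for~\eqref{ass.c_mu_lowbound}, \eqref{ass.1/theta_0_lowbound} and~\eqref{ass.c_alpha_lowbound} you assert that the verification "succeeds by substituting the explicit expressions" and call it bookkeeping, but you never perform it, and the intuition you offer --- that every $\eps$-proportional term is "quantitatively negligible" --- is not quite the mechanism. In the paper's proof, after dividing~\eqref{ass.1/theta_0_lowbound} by $\frac{c-2}{24\theta_0^2}$ the inequality takes the form $A\eps^2 + B\eps < 1$ with $B\eps$ deliberately \emph{not} negligible: $\eps$ in~\eqref{def.constants} is defined exactly as $c_\eps/(2B)$ with $c_\eps = 1/2$, so $B\eps = 1/4$, and the argument hinges on checking $1/(2B) \leq 1/\sqrt{2A}$ (and on a numeric absorption step like $400/3 < 134$) so that $A\eps^2 + B\eps \leq 3/8 < 1$. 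For~\eqref{ass.c_mu_lowbound}, your decomposition $\iota_\mu c_\mu \geq 50(c+2)\kappa_+^{1/2} = 2(c+2)\kappa_+^{1/2} + 48(c+2)\kappa_+^{1/2}$ is exactly the paper's, but the claim that the slack $2(c+2)\kappa_+^{1/2}$ dominates $1600\kappa_+^{3/4}\eps/(c_K^2\theta_1^2)$ still requires proving $\eps < 12\theta_1^2(c+2)/(25\kappa_+^{1/4})$, which the paper itself obtains from the same $A$, $B$ decomposition. Similarly,~\eqref{ass.c_alpha_lowbound} requires substituting $\eps$ and tracking constants (e.g. $320/3 < 107$, $441/768 < 3/5$). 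Since verifying these three numeric inequalities \emph{is} the content of the lemma, leaving them as "reverse-engineered" constants is a substantial omission even though the intended plan is correct.
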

\begin{proof}[Proof of Lemma~\ref{lemma.conditions}]
    Condition~\eqref{ass.n_big} is equivalent to the upper bound $K \leq n\eps^2/C^2$ on the number of blocks. We have $r^2(\rho_K) = c_K^2 \theta_1^2 \frakm^{*2} K / (\eps^2 n)$ and $r^2(2\rho_K) \leq c_r^2 r^2(\rho_K),$ by Assumption~\ref{ass.r2rho_rrho}. Since $\alpha_{K,2} = c_\alpha r(2\rho_K),$ then also $\alpha_{K,2} \leq c_r \alpha_{K,1},$ therefore
    \begin{align*}
        \frac{\alpha_{K,1}^2}{\sigma^{*2}} \leq \frac{\alpha_{K,2}^2}{\sigma^{*2}} \leq \frac{c_r^2 \alpha_{K,1}^2}{\sigma^{*2}} = c_r^2 c_\alpha^2 \frac{c_K^2 \theta_1^2 \frakm^{*2} K}{\sigma^{*2} n\eps^2} = c_r^2 c_\alpha^2 \frac{c_K^2 \theta_1^2 \kappa^{*1/2} K}{n\eps^2} < 1,
    \end{align*}
    where the last inequality is condition~\eqref{ass.n_big}, then $\alpha_{K,c_\rho} < \sigma^*.$ We show $4\delta_{K,n}/\sigma^* < \alpha_{K,c_\rho}$ using 
    \begin{align}\label{eq.alpha_K_2delta_K}
        \frac{16 \delta_{K,n}^2}{\sigma^{*2}} = \frac{400 \frakm^{*4} K}{\sigma^{*2} n} = \kappa^{*1/2} \frac{400 \frakm^{*2} K}{n} < c_\alpha^2 \frac{384\theta_1^2 \frakm^{*2} K}{n\eps^2} = \alpha_{K,1},
    \end{align}
    where the only inequality is implied by condition \eqref{ass.c_alpha_lowbound}.
    
    By definition of $c_\mu$ in~\eqref{def.constants}, we have 
    \begin{align*}
        \iota_\mu c_\mu \geq \frac{c_\mu}{4} = 50 (c+2) \kappa_{+}^{1/2} = 2(c+2)\kappa_{+}^{1/2} + 48(c+2) \kappa_{+}^{1/2},
    \end{align*}
    thus~\eqref{ass.c_mu_lowbound} is satisfied since, as we show below,
    \begin{align*}
        \eps < \frac{ c_K^2 \theta_1^2 (c+2)}{800\kappa_{+}^{1/4}} = \frac{12 \theta_1^2 (c+2)}{25 \kappa_{+}^{1/4}}.
    \end{align*}
    
    With $c_K^2 = 384,$ we rewrite condition~\eqref{ass.1/theta_0_lowbound} as
    \begin{align*}
        \frac{50 \kappa_{+}^{1/2} \theta_0^2 \eps^2}{(c-2) \theta_1^2} + \frac{384 \theta_0^2 (c+2) \eps}{c-2} + \bigg( \frac{1+\frac{\sigma_{+}}{\sigma^*}}{3} \vee \frac{36}{10} \bigg) \frac{24 \theta_0^2 \iota_\mu c_\mu \eps}{c-2}  < 1.
    \end{align*}
    With the definition of $c_\mu$ in~\eqref{def.constants} and $\iota_\mu = 4,$ this becomes
    \begin{align*}
        \frac{50 \kappa_{+}^{1/2} \theta_0^2 }{(c-2) \theta_1^2}\eps^2 + \frac{48 \theta_0^2 (c+2) }{c-2} \left( 8 + \frac{400 \kappa_{+}^{1/2}}{3} \bigg( \left(1+\frac{\sigma_{+}}{\sigma^*}\right) \vee \frac{12}{10} \bigg) \right)\eps < 1.
    \end{align*}
    The inequality above has the form $A\eps^2 + B\eps < 1,$ which is satisfied by any $\eps$ smaller than $\min\{1/\sqrt{2A},\ 1/2B \}.$ The definition of $\eps$ in~\eqref{def.constants} coincides with imposing $\eps= c_\eps\cdot \min\{1/\sqrt{2A},\ 1/2B \} = c_\eps/2B,$ with $c_\eps = 1/2$ and
    \begin{align*}
        \frac{1}{\sqrt{2A}} &= \sqrt{c-2} \frac{\theta_1}{10 \theta_0 \kappa_{+}^{1/4}}, \\
        \frac{1}{2B} &= \frac{c-2}{96 \theta_0^2 (c+2) \big(8 + 134\kappa_{+}^{1/2} ((1+\frac{\sigma_{+}}{\sigma^*})\vee \frac{6}{5}) \big)},
    \end{align*}
    we have used that $400/3 < 134.$ Thus, condition~\eqref{ass.1/theta_0_lowbound} is satisfied. It is immediate to verify that this implies both~\eqref{ass.1/theta_0_lowbound_negative} and~\eqref{ass.1/theta_0_lowbound_2}.

    With $c_K^2 =384,$ the definition of $c_\mu$ in~\eqref{def.constants} and $\iota_\mu = 4,$ we rewrite~\eqref{ass.c_alpha_lowbound} as
    \begin{align*}
        c_\alpha^2 &> \frac{75\kappa_{+}^{1/2}}{16 \theta_1^2}\eps^2 + 108(c+2)\left( 1 + \frac{320}{3}\kappa_{+}^{1/2} \right) \eps.
    \end{align*}
    By the discussion on $\eps$ above, it is sufficient that, with $c_\eps = 1/2$ and $320/3 < 107,$
    \begin{align*}
        c_\alpha^2 &> \frac{75\kappa_{+}^{1/2}}{16 \theta_1^2} \cdot \frac{c_\eps^2 (c-2) \theta_1^2}{100 \theta_0^2 \kappa_{+}^{1/2}} + 108(c+2)\left( 1 + 107 \kappa_{+}^{1/2} \right) \frac{c_\eps (c-2)}{96 \theta_0^2 (c+2) \big(8 + 134\kappa_{+}^{1/2} ((1+\frac{\sigma_{+}}{\sigma^*})\vee \frac{6}{5}) \big)}.
    \end{align*}
    This is equivalent to
    \begin{align*}
        c_\alpha^2 &> \frac{15 (c-2)}{320 \theta_0^2} c_\eps^2 + \frac{27 (c-2) ( 1 + 107\kappa_{+}^{1/2})}{24 \theta_0^2 \big(8 + 134\kappa_{+}^{1/2} ((1+\frac{\sigma_{+}}{\sigma^*})\vee \frac{6}{5}) \big)} c_\eps,
    \end{align*}
    and, with 
    \begin{align*}
        \frac{ 1 + 107\kappa_{+}^{1/2}}{8 + 134\kappa_{+}^{1/2} ((1+\frac{\sigma_{+}}{\sigma^*})\vee \frac{6}{5})} < 1,
    \end{align*}
    and $c_\eps = 1/2,$ condition~\eqref{ass.c_alpha_lowbound} holds if
    \begin{align*}
        c_\alpha^2 &\geq \frac{15 (c-2)}{320 \theta_0^2} c_\eps^2 + \frac{27 (c-2)}{24 \theta_0^2} c_\eps = \frac{(c-2)}{16 \theta_0^2} \left( \frac{15}{80} + \frac{27}{3}\right) = \frac{441 (c-2)}{768 \theta_0^2}.
    \end{align*}
    This is exactly the case from the definition of $c_\alpha$ in~\eqref{def.constants}, since $3/5 > 441/768.$ The proof is complete.
\end{proof}

\subsection{Control of the supremum of \texorpdfstring{$T_{K, \mu}(g, \chi, f^*, \sigma^*)$}{T(K,mu)}} \label{sec.proofs_bound_supg_Fkappa}

With $\sigma_{+}$ the known upper bound on $\sigma^*,$ set $I_{+} = (0,\sigma_{+}]$ and, with $r(\cdot)$ any function such that $r(\rho) \geq \{r_P(\rho,\gamma_P), r_M(\rho,\gamma_M)\},$ any $c_\rho \in \{1,2\}$ and $\alpha_{K,c_\rho} = c_\alpha r(c_\rho\rho_K),$ let us define
\begin{align*}
    \mF^{(c_\rho)}_1 &:= \{ (g, \chi) \in \mF \times I_{+} :
    \|g - f^*\| \leq c_\rho \rho_K,
    \|g - f^*\|_{2,\bX} \leq r(c_\rho \rho_K),\ 
    |\sigma^* - \chi| \leq \alpha_{K,c_\rho}
    \} \\
    \mF^{(c_\rho)}_2 &:= \{ (g, \chi) \in \mF \times I_{+} :
    \|g - f^*\| \leq c_\rho \rho_K,
    \|g - f^*\|_{2,\bX} > r(c_\rho \rho_K),\ 
    |\sigma^* - \chi| \leq \alpha_{K,c_\rho}
    \} \\
    \mF^{(c_\rho)}_3 &:= \{ (g, \chi) \in \mF \times I_{+} :
    \|g - f^*\| > c_\rho \rho_K,\ 
    |\sigma^* - \chi| \leq \alpha_{K,c_\rho}
    \} \\
    \mF^{(c_\rho)}_4 &:= \{ (g, \chi) \in \mF \times I_{+} :
    \|g - f^*\| \leq c_\rho \rho_K,
    \|g - f^*\|_{2,\bX} \leq r(c_\rho \rho_K),\ 
    \chi > \sigma^* + \alpha_{K,c_\rho}
    \} \\
    \mF^{(c_\rho)}_5 &:= \{ (g, \chi) \in \mF \times I_{+} :
    \|g - f^*\| \leq c_\rho \rho_K,
    \|g - f^*\|_{2,\bX} > r(c_\rho \rho_K),\ 
    \chi > \sigma^* + \alpha_{K,c_\rho}
    \} \\
    \mF^{(c_\rho)}_6 &:= \{ (g, \chi) \in \mF \times I_{+} :
    \|g - f^*\| > c_\rho \rho_K,\ 
    \chi > \sigma^* + \alpha_{K,c_\rho}
    \} \\
    \mF^{(c_\rho)}_7 &:= \{ (g, \chi) \in \mF \times I_{+} :
    \|g - f^*\| \leq c_\rho \rho_K,
    \|g - f^*\|_{2,\bX} \leq r(c_\rho \rho_K),\ 
    \chi < \sigma^* - \alpha_{K,c_\rho}
    \} \\
    \mF^{(c_\rho)}_8 &:= \{ (g, \chi) \in \mF \times I_{+} :
    \|g - f^*\| \leq c_\rho \rho_K,
    \|g - f^*\|_{2,\bX} > r(c_\rho \rho_K),\ 
    \chi < \sigma^* - \alpha_{K,c_\rho}
    \} \\
    \mF^{(c_\rho)}_9 &:= \{ (g, \chi) \in \mF \times I_{+} :
    \|g - f^*\| > c_\rho \rho_K,\ 
    \chi < \sigma^* - \alpha_{K,c_\rho}
    \}.
\end{align*}
The sets above are a partition of the domain $\mF\times I_+$ where the functional
\begin{align*}
    T_{K, \mu}(g, \chi, f^*, \sigma^*) = MOM_K\Big( R_c(\ell_g,\chi,\ell_{f^*},\sigma^*) \Big) + \mu (\|f^*\| - \|g\|)
\end{align*}
takes inputs. For $c_\rho\in\{1,2\}$ and $i=1,\ldots,9,$ we set $B_{i,c_\rho}$ some upper bound for the supremum of $T_{K, \mu}(g, \chi, f^*, \sigma^*)$ over $(g,\chi)\in\mF^{(c_\rho)}_i.$ That is,
\begin{align}\label{def.B_ik}
    \sup_{(g,\chi)\in\mF^{(c_\rho)}_i} T_{K, \mu}(g, \chi, f^*, \sigma^*) \leq B_{i,c_\rho},
\end{align}
and the goal of this section is to give sharp bounds for each slice separately. Using the definition of $R_c(\ell_g,\chi,\ell_{f^*},\sigma^*)$ in~\eqref{def.R_functional}, and $\ell_g = \ell_{f^*} + \ell_g - \ell_{f^*},$ we find
\begin{align*}
    R_c(\ell_g,\chi,\ell_{f^*},\sigma^*) &= (\sigma^*-\chi)\bigg(1-2\frac{\ell_{f^*}+\ell_g}{(\sigma^*+\chi)^2} \bigg) + 2c\frac{\ell_{f^*}-\ell_g}{\sigma^*+\chi} \\
    &= (\sigma^*-\chi) \bigg( 1 - \frac{4 \ell_{f^*}}{(\sigma^*+\chi)^2} \bigg) + 2\frac{\ell_{f^*} - \ell_g}{\sigma^*+\chi}\bigg(c + \frac{\sigma^*-\chi}{\sigma^*+\chi}\bigg) \\
    &= R_c(\ell_{f^*},\chi,\ell_{f^*},\sigma^*) + 2\Delta_c(\chi,\sigma^*) \frac{\ell_{f^*} - \ell_g}{\sigma^*+\chi}.
\end{align*}
with 
\begin{align}\label{def.Delta_c}
    \Delta_c(\chi,\sigma) := \bigg(c + \frac{\sigma-\chi}{\sigma+\chi}\bigg) \in [c-1,c+1],\quad \forall \sigma,\chi\in(0,+\infty),
\end{align}
and $c>2$ by construction.
We plug this into the functional $T_{K, \mu}(g, \chi, f^*, \sigma^*),$ so that 
\begin{align*}
    T_{K, \mu}&(g, \chi, f^*, \sigma^*) = MOM_K \bigg( R_c(\ell_{f^*},\chi,\ell_{f^*},\sigma^*) + 2\Delta_c(\chi,\sigma^*) \frac{\ell_{f^*} - \ell_g}{\sigma^*+\chi} \bigg)
    + \mu (\|f^*\| - \|g\|).
\end{align*}
For all $(\bx,y)\in\mX\times\R,$ we have the decomposition
\begin{align*}
    \ell_f(\bx,y) - \ell_g(\bx,y) = 2\big(y-f(\bx)\big) \big(g(\bx)-f(\bx) \big) - \big(g(\bx) - f(\bx) \big)^2,
\end{align*}
and this gives $\ell_{f^*}-\ell_g = 2\zeta(g-f^*)-(g-f^*)^2.$ By the triangular quantile property in Lemma~\ref{lemma.quantile_prop}, we can write
\begin{align}\label{eq.T_bound_main}
\begin{split}
    T_{K, \mu}&(g, \chi, f^*, \sigma^*) \\
    &= Q_{3/4,K}\bigg[ R_c(\ell_{f^*},\chi,\ell_{f^*},\sigma^*) + 2\Delta_c(\chi,\sigma^*) \frac{\ell_{f^*} - \ell_g}{\sigma^*+\chi} \bigg]
    + \mu (\|f^*\| - \|g\|) \\
    &\leq Q_{3/4,K}\big[ R_c(\ell_{f^*},\chi,\ell_{f^*},\sigma^*) \big] + \frac{2 \Delta_c(\chi,\sigma^*)}{(\sigma^*+\chi)} Q_{3/4,K}\Big[ 2\zeta(g-f^*)-(g-f^*)^2 \Big] \\
    &\quad + \mu (\|f^*\| - \|g\|).
\end{split}
\end{align}
By arguing as in the proof of Lemma~\ref{lemma.bound_P2zeta_f_fstar}, see bound for \eqref{eq.Q7/8_control}, the quantity
\begin{align*}
    Q_{3/4,K}\big[ R_c(\ell_{f^*},\chi,\ell_{f^*},\sigma^*) \big] &= Q_{3/4,K}\Bigg[ (\sigma^*-\chi) \bigg( 1 - \frac{4 \ell_{f^*}}{(\sigma^*+\chi)^2} \bigg) \Bigg]
\end{align*}
is bounded above, when $\chi\geq\sigma^*,$ by
\begin{align}
\begin{split}
    \label{eq.bound_Q_3_4_R>}
    Q_{3/4,K}\big[ R_c(\ell_{f^*},\chi,\ell_{f^*},\sigma^*) \big] &\leq (\chi-\sigma^*) \Big( \frac{4\sigma^{*2}+4\delta_{K,n}}{(\sigma^*+\chi)^2} - 1 \Big),
\end{split}
\end{align}
or, when $\chi\leq\sigma^*,$ by 
\begin{align}
\begin{split}
    \label{eq.bound_Q_3_4_R<}
    Q_{3/4,K}\big[ R_c(\ell_{f^*},\chi,\ell_{f^*},\sigma^*) \big] &\leq (\sigma^*-\chi) \Big( 1 - \frac{4\sigma^{*2}-4\delta_{K,n}}{(\sigma^*+\chi)^2} \Big).
\end{split}
\end{align}

The following lemmas show that, on the event $\Omega(K),$ one can choose bounds $B_{i,c_\rho} $ in \eqref{def.B_ik}, for $i=1,\ldots,9$ and $c_\rho \in\{1,2\},$ as 
\begin{align*}
    B_{1,c_\rho} &= \frac{16}{\sigma^*(2\sigma^*-\alpha_{K,c_\rho})^2} \delta_{K,n}^2 + \frac{8(c+2)\eps }{2\sigma^*-\alpha_{K,c_\rho}} r^2(c_\rho \rho_K) + \frac{c_\mu\eps c_\rho}{\frakm^*}r^2(\rho_K), \\
    B_{2,c_\rho} &= \frac{16}{\sigma^*(2\sigma^*-\alpha_{K,c_\rho})^2} \delta_{K,n}^2
        + 2(c-2) \frac{4\eps - (4\theta_0)^{-2}}{2\sigma^*+\alpha_{K,c_\rho}} r^2(c_\rho \rho_K)  + \frac{c_\mu\eps c_\rho}{\frakm^*}r^2(\rho_K), \\
    B_{3,c_\rho} &= \max \Bigg\{ \frac{16}{\sigma^*(2\sigma^*-\alpha_{K,c_\rho})^2} \delta_{K,n}^2 + c_\rho \bigg( \frac{ 8(c+2)\eps }{2\sigma^*-\alpha_{K,c_\rho}} - \frac{4 c_\mu \eps}{5 \frakm^*}\bigg) r^2(\rho_K) + \frac{c_\mu\eps}{10\frakm^*} r^2(\rho_K), 
         \\
    &\hspace{1.5cm}
    \frac{16}{\sigma^*(2\sigma^*-\alpha_{K,c_\rho})^2} \delta_{K,n}^2 + c_\rho \bigg(2(c-2)\frac{4\eps - (4\theta_0)^{-2}}{2\sigma^*+\alpha_{K,c_\rho}}  + \frac{c_\mu\eps}{\frakm^*} \bigg)r^2(\rho_K) + \frac{c_\mu\eps}{10\frakm^*}r^2(\rho_K) \Bigg\}, \\
    B_{4,c_\rho} &= -\frac{2\sigma^*}{(2\sigma^*+\alpha_{K,c_\rho})^2} \alpha_{K,c_\rho}^2 + \frac{8(c+2) \eps}{2\sigma^*+\alpha_{K,c_\rho}} r^2(c_\rho \rho_K) + \frac{c_\mu\eps c_\rho}{\frakm^*} r^2(\rho_K), \\
    B_{5,c_\rho} &= -\frac{2\sigma^*}{(2\sigma^*+\alpha_{K,c_\rho})^2} \alpha_{K,c_\rho}^2 + 2(c-2) \frac{4\eps - (4\theta_0)^{-2}}{\sigma^*+\sigma_{+}} r^2(c_\rho \rho_K)  + \frac{c_\mu\eps c_\rho}{\frakm^*}r^2(\rho_K), \\
    B_{6,c_\rho} &= \max \Bigg\{ -\frac{2\sigma^*}{(2\sigma^*+\alpha_{K,c_\rho})^2} \alpha_{K,c_\rho}^2 + c_\rho \bigg( \frac{ 8(c+2)\eps }{2\sigma^*+\alpha_{K,c_\rho}} - \frac{4 c_\mu \eps}{5 \frakm^*}\bigg) r^2(\rho_K)
    + \frac{c_\mu\eps}{10\frakm^*} r^2(\rho_K), \\
    &\hspace{1.5cm}
    -\frac{2\sigma^*}{(2\sigma^*+\alpha_{K,c_\rho})^2} \alpha_{K,c_\rho}^2 + c_\rho \bigg(2(c-2)\frac{4\eps - (4\theta_0)^{-2}}{\sigma^*+\sigma_{+}}  + \frac{c_\mu\eps}{\frakm^*} \bigg)r^2(\rho_K) + \frac{c_\mu\eps}{10\frakm^*}r^2(\rho_K) \Bigg\}, \\
    B_{7,c_\rho} &= -\frac{2\sigma^*}{(2\sigma^*-\alpha_{K,c_\rho})^2} \alpha_{K,c_\rho}^2 + \frac{8(c+2)\eps}{\sigma^*} r^2(c_\rho \rho_K) + \frac{c_\mu\eps c_\rho}{\frakm^*} r^2(\rho_K), \\
    B_{8,c_\rho} &= -\frac{2\sigma^*}{(2\sigma^*-\alpha_{K,c_\rho})^2} \alpha_{K,c_\rho}^2 + 2(c-2) \frac{4\eps - (4\theta_0)^{-2}}{2\sigma^*-\alpha_{K,c_\rho}} r^2(c_\rho \rho_K)  + \frac{c_\mu\eps c_\rho}{\frakm^*}r^2(\rho_K), \\
    B_{9,c_\rho} &= \max \Bigg\{ -\frac{2\sigma^*}{(2\sigma^*-\alpha_{K,c_\rho})^2} \alpha_{K,c_\rho}^2 + \bigg( \frac{ 8(c+2)\eps c_\rho }{\sigma^*} - \frac{4 c_\mu \eps c_\rho}{5 \frakm^*} + \frac{c_\mu\eps}{10\frakm^*} \bigg) r^2(\rho_K), \\
    &\hspace{1.5cm}
    -\frac{2\sigma^*}{(2\sigma^*-\alpha_{K,c_\rho})^2} \alpha_{K,c_\rho}^2 + c_\rho \bigg(2(c-2)\frac{4\eps - (4\theta_0)^{-2}}{2\sigma^*-\alpha_{K,c_\rho}}  + \frac{c_\mu\eps}{\frakm^*} \bigg)r^2(\rho_K) + \frac{c_\mu\eps}{10\frakm^*}r^2(\rho_K) \Bigg\}.
\end{align*}

\begin{lem}\label{lemma.bound_supg_Fkappa_1}
    On the event $\Omega(K),$ for all $c_\rho \in \{1, 2\},$ the supremum of $T_{K, \mu}(g, \chi, f^*, \sigma^*)$ over the set
    \begin{align*}
        \mF^{(c_\rho)}_1 &:= \{ (g, \chi) \in \mF \times I_{+} :
        \|g - f^*\| \leq c_\rho \rho_K,
        \|g - f^*\|_{2,\bX} \leq r(c_\rho \rho_K),\ 
        |\sigma^* - \chi| \leq \alpha_{K,c_\rho}
        \},
    \end{align*}
    is bounded above by 
    \begin{align*}
        B_{1,c_\rho} 
        &
        := \frac{16}{\sigma^*(2\sigma^*-\alpha_{K,c_\rho})^2} \delta_{K,n}^2 + \frac{8(c+2)\eps }{2\sigma^*-\alpha_{K,c_\rho}} r^2(c_\rho \rho_K) + \frac{c_\mu\eps c_\rho}{\frakm^*}r^2(\rho_K).
    \end{align*}
    
\end{lem}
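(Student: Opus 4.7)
My plan is to apply the decomposition~\eqref{eq.T_bound_main}, which splits
\begin{align*}
T_{K,\mu}(g,\chi,f^*,\sigma^*)
&\leq \underbrace{Q_{3/4,K}\big[R_c(\ell_{f^*},\chi,\ell_{f^*},\sigma^*)\big]}_{(A)}
+ \underbrace{\frac{2\Delta_c(\chi,\sigma^*)}{\sigma^*+\chi}\, Q_{3/4,K}\big[2\zeta(g-f^*)-(g-f^*)^2\big]}_{(B)} \\
&\quad + \underbrace{\mu(\|f^*\|-\|g\|)}_{(C)},
\end{align*}
and to bound each piece separately on $\Omega(K)$, using only the defining inequalities of $\mF_1^{(c_\rho)}$.

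For term $(A)$, I would apply the pre-derived bounds~\eqref{eq.bound_Q_3_4_R>} and~\eqref{eq.bound_Q_3_4_R<} (which hold on $\Omega_1(K)$) according to the sign of $\sigma^*-\chi$. Expanding $(\sigma^*+\chi)^2-4\sigma^{*2}=(\chi-\sigma^*)(\chi+3\sigma^*)$ and dropping the resulting non-positive square contribution yields, in both sign cases, the common estimate $\frac{4\delta_{K,n}|\sigma^*-\chi|}{(\sigma^*+\chi)^2}\leq\frac{4\delta_{K,n}\alpha_{K,c_\rho}}{(2\sigma^*-\alpha_{K,c_\rho})^2}$, using $|\sigma^*-\chi|\leq\alpha_{K,c_\rho}$ and $\sigma^*+\chi\geq 2\sigma^*-\alpha_{K,c_\rho}$. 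A weighted Young's inequality, calibrated via the magnitude relation $4\delta_{K,n}\leq\sigma^*\alpha_{K,c_\rho}$ from~\eqref{eq.alpha_K_2delta_K}, then converts this cross-product into the target $\frac{16\delta_{K,n}^2}{\sigma^*(2\sigma^*-\alpha_{K,c_\rho})^2}$ plus a remainder of order $\alpha_{K,c_\rho}^2/(2\sigma^*-\alpha_{K,c_\rho})$ that is absorbed into the $r^2(c_\rho\rho_K)$-scaled contribution of term $(B)$.

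For term $(B)$, the prefactor is at most $\frac{2(c+1)}{2\sigma^*-\alpha_{K,c_\rho}}$ since $\Delta_c(\chi,\sigma^*)\in[c-1,c+1]$. Subadditivity of quantiles (Lemma~\ref{lemma.quantile_prop}) gives $Q_{3/4,K}[2\zeta(g-f^*)-(g-f^*)^2]\leq Q_{7/8,K}[2\zeta(g-f^*)] - Q_{1/8,K}[(g-f^*)^2]$, and the second piece is $\leq 0$. On $\Omega_2(K)$, Lemma~\ref{lemma.useful_bounds_lecue} controls the multiplier quantile by $2\E[\zeta(g-f^*)(\bX)]$ plus a deviation of order $\gamma_M r^2(c_\rho\rho_K)$; the expectation is itself at most $\E[(g-f^*)^2(\bX)]\leq r^2(c_\rho\rho_K)$, by the first-order optimality $\E[\ell_g-\ell_{f^*}]\geq 0$ on the convex class $\mF$. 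Multiplying through by the prefactor and consolidating the remainder inherited from $(A)$ yields $\frac{8(c+2)\eps}{2\sigma^*-\alpha_{K,c_\rho}}r^2(c_\rho\rho_K)$. Term $(C)$ is immediate: the triangle inequality gives $\|f^*\|-\|g\|\leq\|g-f^*\|\leq c_\rho\rho_K$, and the choice~\eqref{def.mu} of $\mu$ yields $\frac{\iota_\mu c_\mu\eps c_\rho}{\frakm^*}r^2(\rho_K)$, matching the last summand of $B_{1,c_\rho}$ once $\iota_\mu\leq 4$ is absorbed into $c_\mu$.

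The main obstacle will be the constant-level bookkeeping in term $(A)$: the Young's-inequality weights must be tuned so that exactly the coefficient $16$ emerges in front of $\delta_{K,n}^2/(\sigma^*(2\sigma^*-\alpha_{K,c_\rho})^2)$, while the leftover $\sigma^*\alpha_{K,c_\rho}^2/(2\sigma^*-\alpha_{K,c_\rho})^2$ piece fits inside the $8(c+2)\eps$-prefactor of term $(B)$ without inflating the stated constants. This absorption hinges on condition~\eqref{ass.c_alpha_lowbound} of Lemma~\ref{lemma.conditions}, which controls $c_\alpha^2$ in terms of $\eps$ and the problem parameters; once this balance is secured, the three contributions sum exactly to $B_{1,c_\rho}$.
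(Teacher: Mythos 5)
The decomposition into pieces $(A),(B),(C)$ via~\eqref{eq.T_bound_main}, and the treatment of $(B)$ and $(C)$, are essentially the paper's route. The gap is in your treatment of $(A)$.

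After dropping the non-positive square contribution you reach
$Q_{3/4,K}\big[R_c(\ell_{f^*},\chi,\ell_{f^*},\sigma^*)\big]\leq \frac{4\delta_{K,n}\,|\sigma^*-\chi|}{(\sigma^*+\chi)^2}\leq\frac{4\delta_{K,n}\,\alpha_{K,c_\rho}}{(2\sigma^*-\alpha_{K,c_\rho})^2}$,
which is a valid but \emph{strictly larger} quantity than the target $\frac{16\,\delta_{K,n}^2}{\sigma^*(2\sigma^*-\alpha_{K,c_\rho})^2}$: their ratio is $\sigma^*\alpha_{K,c_\rho}/(4\delta_{K,n})$, and Lemma~\ref{lemma.conditions} guarantees $\alpha_{K,c_\rho}>4\delta_{K,n}/\sigma^*$, so this ratio exceeds $1$ and can be large. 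Your proposed Young's inequality split with weight $\lambda\sim\sigma^*$ does produce the $\frac{16\delta_{K,n}^2}{\sigma^*(\cdot)^2}$ piece exactly, but the leftover is of order
$\frac{\sigma^*\alpha_{K,c_\rho}^2}{(2\sigma^*-\alpha_{K,c_\rho})^2}\sim\frac{c_\alpha^2\,r^2(c_\rho\rho_K)}{\sigma^*}$.
The only available slack in $(B)$ is at the scale $\frac{\eps\, r^2(c_\rho\rho_K)}{\sigma^*}$ (coming from $\Delta_c\leq c+1$ versus the allowed $c+2$, or a small sub-multiple of $4\eps$). Absorbing your remainder would require $c_\alpha^2\lesssim \eps$, but by \eqref{def.constants} one has $c_\alpha^2=\frac{3(c-2)}{5\theta_0^2}$ while $\eps=\frac{c-2}{192\,\theta_0^2(c+2)(8+134\kappa_+^{1/2}(\cdots))}$, so $c_\alpha^2/\eps\gtrsim (c+2)\kappa_+^{1/2}\gg 1$. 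Condition~\eqref{ass.c_alpha_lowbound} is a \emph{lower} bound on $c_\alpha^2$, so it works against you here. The absorption you are counting on fails.

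What the paper does instead, which you should not discard, is \emph{keep} the quadratic cancellation and exploit it to localize the maximizer. The term-$(A)$ quantity, viewed as a function of $\chi$, becomes nonpositive outside a window of half-width $\delta_a=2\delta_{K,n}/\sigma^*$ around $\sigma^*$ (this is exactly the place where the $(\sigma^*-\chi)^2$-piece you dropped takes over). Since $\delta_a<\alpha_{K,c_\rho}$, the supremum over $|\sigma^*-\chi|\leq\alpha_{K,c_\rho}$ is attained at some $\sigma_{a_\pm}$ with $|\sigma_{a_\pm}-\sigma^*|\leq\delta_a$. Replacing $|\sigma^*-\chi|$ by $\delta_a$ rather than by $\alpha_{K,c_\rho}$ then yields directly a bound of order $\frac{\delta_{K,n}}{\sigma^*}\cdot\frac{\delta_{K,n}}{(2\sigma^*-\alpha_{K,c_\rho})^2}$, i.e., precisely the $\frac{16\delta_{K,n}^2}{\sigma^*(2\sigma^*-\alpha_{K,c_\rho})^2}$ piece, with no remainder to absorb. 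This is the content of the argument referenced around \eqref{eq.Q7/8_control} in Lemma~\ref{lemma.bound_P2zeta_f_fstar}, and it is the step your proof is missing.

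A minor second point: for the multiplier quantile in $(B)$ you invoke first-order optimality to get $\E[2\zeta(g-f^*)(\bX)]\leq \E[(g-f^*)^2(\bX)]$, but convexity of $\mF$ gives the stronger $\E[2\zeta(g-f^*)(\bX)]\leq 0$, which is what Lemma~\ref{lemma.useful_bounds_lecue} (property 4) actually uses to give the clean bound $Q_{3/4,K}[2\zeta(g-f^*)]\leq \alpha_M^2\leq 4\eps\, r^2(c_\rho\rho_K)$. Your weaker bound would leave an extra $r^2$-term in $(B)$ that also has no home. Once both points are corrected, the three contributions sum to $B_{1,c_\rho}$.
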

\begin{proof}[Proof of Lemma~\ref{lemma.bound_supg_Fkappa_1}]
    Let $(g,\chi) \in \mF^{(c_\rho)}_1.$ Using the bound obtained in~\eqref{eq.T_bound_main}, the inequality $(g - f^*)^2 \geq 0$ and the triangular inequality, the quantity $T_{K, \mu} (g, \chi, f^*, \sigma^*)$ is bounded above by
    \begin{align*}
        Q_{3/4,K}& \big[ R_c(\ell_{f^*},\chi,\ell_{f^*},\sigma^*) \big]
        + \frac{2 \Delta_c(\chi,\sigma^*)}{\sigma^*+\chi} Q_{3/4,K}\big[2\zeta(g-f^*)-(g-f^*)^2 \big]
        + \mu (\|f^*\| - \|g\|) \\
        &\leq Q_{3/4,K}\big[ R_c(\ell_{f^*},\chi,\ell_{f^*},\sigma^*) \big] 
        + \frac{2 \Delta_c(\chi,\sigma^*) }{\sigma^*+\chi} Q_{3/4,K}\big[ 2\zeta(g-f^*)\big] + \mu \|f^*-g\|.
    \end{align*}
    By Lemma~\ref{lemma.useful_bounds_lecue}, $Q_{3/4,K}[ 2\zeta(g-f^*)] \leq \alpha_M^2 \leq 4\eps r^2(c_\rho \rho_K)$ and, with $\Delta_c(\chi,\sigma^*)\leq c+2,$ we find 
    \begin{align*}
        T_{K, \mu} (g, \chi, f^*, \sigma^*) &\leq Q_{3/4,K}\big[ R_c(\ell_{f^*},\chi,\ell_{f^*},\sigma^*) \big] + \frac{8(c+2)\eps }{2\sigma^*-\alpha_{K,c_\rho}} r^2(c_\rho \rho_K) + \mu c_\rho \rho_K \\
        &= Q_{3/4,K}\big[ R_c(\ell_{f^*},\chi,\ell_{f^*},\sigma^*) \big] + \frac{8(c+2)\eps }{2\sigma^*-\alpha_{K,c_\rho}} r^2(c_\rho \rho_K) + \frac{c_\mu\eps c_\rho}{\frakm^*} r^2(\rho_K),
    \end{align*} 
    where in the last step we put our choice $\mu = (c_\mu\eps/\frakm^*) r^2(\rho_K)/\rho_K.$ We now bound the quantile term appearing in the latter display. Directly from~\eqref{eq.bound_Q_3_4_R>} and~\eqref{eq.bound_Q_3_4_R<}, we get
    \begin{align*}
        &Q_{3/4,K}\big[ R_c(\ell_{f^*},\chi,\ell_{f^*},\sigma^*) \big]\\
        &\quad \leq \max\bigg\{ \sup_{\chi\in[\sigma^*,\sigma^*+\alpha_{K,c_\rho}]} |\sigma^*-\chi| \Big( \frac{4\sigma^{*2}+4\delta_{K,n}}{(\sigma^*+\chi)^2} - 1 \Big),\ \sup_{\chi\in[\sigma^*-\alpha_{K,c_\rho},\sigma^*]} |\sigma^*-\chi| \Big( 1 - \frac{4\sigma^{*2}-4\delta_{K,n}}{(\sigma^*+\chi)^2} \Big)  \bigg\}.
    \end{align*}
    By arguing as in the proof of Lemma~\ref{lemma.bound_P2zeta_f_fstar}, see bounds on \eqref{eq.Q7/8_control}, with $\alpha_{K,c_\rho} > 2\delta_{K,n}/\sigma^*$ we obtain
    \begin{align*}
        T_{K, \mu} (g, \chi, f^*, \sigma^*) 
        &\leq \frac{16}{\sigma^*(2\sigma^*-\alpha_{K,c_\rho})^2} \delta_{K,n}^2 + \frac{8(c+2)\eps }{2\sigma^*-\alpha_{K,c_\rho}} r^2(c_\rho \rho_K) + \frac{c_\mu\eps c_\rho}{\frakm^*}r^2(\rho_K),
    \end{align*}
    which is what we wanted.
\end{proof}

\begin{lem}\label{lemma.bound_supg_Fkappa_2}
    On the event $\Omega(K),$ for all $c_\rho \in \{1, 2\},$ the supremum of $T_{K, \mu}(g, \chi, f^*, \sigma^*)$ over the set
    \begin{align*}
        \mF^{(c_\rho)}_2 &:= \{ (g, \chi) \in \mF \times I_{+} :
        \|g - f^*\| \leq c_\rho \rho_K,
        \|g - f^*\|_{2,\bX} > r(c_\rho \rho_K),\ 
        |\sigma^* - \chi| \leq \alpha_{K,c_\rho}
        \},
    \end{align*}
    is bounded above by 
    \begin{align*}
        B_{2,c_\rho} 
        &
        := \frac{16}{\sigma^*(2\sigma^*-\alpha_{K,c_\rho})^2} \delta_{K,n}^2 
        + 2(c-2) \frac{4\eps - (4\theta_0)^{-2}}{2\sigma^*+\alpha_{K,c_\rho}} r^2(c_\rho \rho_K)  + \frac{c_\mu\eps c_\rho}{\frakm^*}r^2(\rho_K).
    \end{align*}
\end{lem}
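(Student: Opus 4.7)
The plan is to follow the scaffolding of Lemma~\ref{lemma.bound_supg_Fkappa_1}'s proof, but to exploit the new hypothesis $\|g-f^*\|_{2,\bX} > r(c_\rho\rho_K)$ in order to extract a strictly negative contribution from the quadratic piece of $\ell_{f^*}-\ell_g = 2\zeta(g-f^*) - (g-f^*)^2$. Starting again from the decomposition~\eqref{eq.T_bound_main}, I would write
\begin{align*}
    T_{K,\mu}(g,\chi,f^*,\sigma^*)
    &\leq Q_{3/4,K}\big[R_c(\ell_{f^*},\chi,\ell_{f^*},\sigma^*)\big] \\
    &\quad + \frac{2\Delta_c(\chi,\sigma^*)}{\sigma^*+\chi}\, Q_{3/4,K}\big[2\zeta(g-f^*)-(g-f^*)^2\big] + \mu\, c_\rho\rho_K,
\end{align*}
and then treat the three summands separately. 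The first and third pieces are handled exactly as in Lemma~\ref{lemma.bound_supg_Fkappa_1}: the quantile of $R_c(\ell_{f^*},\chi,\ell_{f^*},\sigma^*)$ is controlled on $\Omega_1(K)$ by \eqref{eq.bound_Q_3_4_R>}--\eqref{eq.bound_Q_3_4_R<} combined with $\alpha_{K,c_\rho}>4\delta_{K,n}/\sigma^*$, producing the term $\tfrac{16}{\sigma^*(2\sigma^*-\alpha_{K,c_\rho})^2}\delta_{K,n}^2$, while the penalty contributes $\tfrac{c_\mu\eps c_\rho}{\frakm^*}r^2(\rho_K)$ via the definition of $\mu$.

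The new ingredient concerns the middle term. I would apply the quantile triangular inequality (Lemma~\ref{lemma.quantile_prop}) to split
\begin{align*}
    Q_{3/4,K}\big[2\zeta(g-f^*)-(g-f^*)^2\big]
    \leq Q_{7/8,K}\big[2\zeta(g-f^*)\big] - Q_{1/8,K}\big[(g-f^*)^2\big].
\end{align*}
On $\Omega_2(K)$ (Lemma~\ref{lemma.useful_bounds_lecue}) the multiplier quantile is bounded by $\alpha_M^2 \leq 4\eps\, r^2(c_\rho\rho_K)$ exactly as before. On $\Omega_3(K)$ (Lemma~\ref{lemma.l2_quantile}), the quadratic empirical process concentrates around $\|\cdot\|_{2,\bX}^2$ uniformly over the slice; combined with $\|g-f^*\|_{2,\bX} > r(c_\rho\rho_K)$ and the $L^1$--$L^2$ norm equivalence in Assumption~\ref{ass.main}, this yields the small-ball style lower bound $Q_{1/8,K}[(g-f^*)^2] \geq (4\theta_0)^{-2}\, r^2(c_\rho\rho_K)$. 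Putting the two pieces together gives
\begin{align*}
    Q_{3/4,K}\big[2\zeta(g-f^*)-(g-f^*)^2\big]
    \leq \big(4\eps - (4\theta_0)^{-2}\big)\, r^2(c_\rho\rho_K),
\end{align*}
which is strictly negative by condition~\eqref{ass.1/theta_0_lowbound_negative}.

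Because the prefactor $\tfrac{2\Delta_c(\chi,\sigma^*)}{\sigma^*+\chi}$ now multiplies a negative quantity, an upper bound on the product follows from a \emph{lower} bound on the prefactor itself. Using $\Delta_c(\chi,\sigma^*) \geq c-1 > c-2$ from~\eqref{def.Delta_c} together with $\sigma^*+\chi \leq 2\sigma^*+\alpha_{K,c_\rho}$ on the slice $\{|\sigma^*-\chi|\leq \alpha_{K,c_\rho}\}$, I obtain the (intentionally loose) bound $\tfrac{2\Delta_c(\chi,\sigma^*)}{\sigma^*+\chi} \geq \tfrac{2(c-2)}{2\sigma^*+\alpha_{K,c_\rho}}$, so that the middle contribution is at most $\tfrac{2(c-2)(4\eps-(4\theta_0)^{-2})}{2\sigma^*+\alpha_{K,c_\rho}} r^2(c_\rho\rho_K)$. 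Summing the three contributions recovers $B_{2,c_\rho}$ exactly. The main subtlety is the lower bound on $Q_{1/8,K}[(g-f^*)^2]$ in step two: transferring a pointwise $L^2(\P_{\bX})$ lower bound into a quantile lower bound \emph{uniformly} over $g\in\mF^{(c_\rho)}_2$ is precisely the content of Lemma~\ref{lemma.l2_quantile}, and this is what dictates the use of the complexity parameters $r_P(\cdot,\gamma_P)$ and $r_Q(\cdot,\gamma_Q)$ embedded in the definition of $r(\cdot)$.
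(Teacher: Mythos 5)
Your overall architecture is sound: reuse Lemma~\ref{lemma.bound_supg_Fkappa_1} for the $R_c(\ell_{f^*},\chi,\ell_{f^*},\sigma^*)$ quantile and the penalty, split the middle quantile into multiplier and quadratic parts, extract negativity from $4\eps - (4\theta_0)^{-2} < 0$, and handle the prefactor by noting that a \emph{lower} bound on $2\Delta_c(\chi,\sigma^*)/(\sigma^*+\chi)$ suffices once the bracket is negative. That last observation matches the paper exactly. However, there is a genuine gap in the middle step.

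You invoke ``$\alpha_M^2 \leq 4\eps\, r^2(c_\rho\rho_K)$ exactly as before.'' The final sentence of Lemma~\ref{lemma.useful_bounds_lecue} proves $\alpha_M^2 \leq 4\eps\, r^2(\rho)$ \emph{only} under the hypothesis $\|f-f^*\|_{2,\bX} \leq r(\rho)$, because
\begin{align*}
    \alpha_M^2 = \eps\max\Big(\tfrac{1488\theta_m^2}{\eps^2}\tfrac{K}{n},\ r_M^2(\rho,\gamma_M),\ \|f-f^*\|_{2,\bX}^2\Big)
\end{align*}
depends explicitly on $\|f-f^*\|_{2,\bX}$. On the slice $\mF_2^{(c_\rho)}$ you are precisely in the complementary regime $\|g-f^*\|_{2,\bX} > r(c_\rho\rho_K)$, so $\alpha_M^2$ can be as large as $\eps\,\|g-f^*\|_{2,\bX}^2$, which is not controlled by $r^2(c_\rho\rho_K)$ (no upper bound on $\|g-f^*\|_{2,\bX}$ is available from $\|g-f^*\| \leq c_\rho\rho_K$ in the general convex setting). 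At the same time you weaken the quadratic lower bound to $Q_{1/8,K}[(g-f^*)^2] \geq (4\theta_0)^{-2} r^2(c_\rho\rho_K)$ instead of the stronger $(4\theta_0)^{-2}\|g-f^*\|_{2,\bX}^2$ given by item~1 of Lemma~\ref{lemma.useful_bounds_lecue}. With both weakenings the middle contribution is at most $\alpha_M^2 - (4\theta_0)^{-2} r^2(c_\rho\rho_K)$, which can be \emph{positive} and unbounded over the slice.

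The fix is to keep the $\|g-f^*\|_{2,\bX}^2$ dependence in both terms and only then compare to $r^2(c_\rho\rho_K)$. The quantity to control is $\alpha_M^2 - (4\theta_0)^{-2}\|g-f^*\|_{2,\bX}^2$. If $\alpha_M^2 = \eps\|g-f^*\|_{2,\bX}^2$, this equals $(\eps-(4\theta_0)^{-2})\|g-f^*\|_{2,\bX}^2$, which is at most $(\eps-(4\theta_0)^{-2}) r^2(c_\rho\rho_K)$ because the bracket is negative and $\|g-f^*\|_{2,\bX}^2 > r^2(c_\rho\rho_K)$; if instead $\alpha_M^2$ is driven by the other two entries of the max, then $\alpha_M^2 \leq 4\eps\, r^2(c_\rho\rho_K)$ and $-(4\theta_0)^{-2}\|g-f^*\|_{2,\bX}^2 \leq -(4\theta_0)^{-2} r^2(c_\rho\rho_K)$. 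In either case $\alpha_M^2 - (4\theta_0)^{-2}\|g-f^*\|_{2,\bX}^2 \leq (4\eps - (4\theta_0)^{-2}) r^2(c_\rho\rho_K)$, which is the step the paper takes. Finally, a minor attribution error: the small-ball lower bound on the quadratic quantile comes from item~1 of Lemma~\ref{lemma.useful_bounds_lecue} (via Lemma~\ref{lemma.3}); Lemma~\ref{lemma.l2_quantile} only provides the \emph{upper} bound $Q_{15/16,K}[(f-f^*)^2] \leq \|f-f^*\|_{2,\bX}^2 + \alpha_Q^2$ and is not used here.
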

\begin{proof}[Proof of Lemma~\ref{lemma.bound_supg_Fkappa_2}]
    Let $(g, \chi) \in \mF^{(c_\rho)}_2.$ The space $\mF^{(c_\rho)}_2$ shares with $\mF^{(c_\rho)}_1$ the conditions $\|g-f^*\|\leq c_\rho \rho_K$ and $|\chi-\sigma^*|\leq\alpha_{K,c_\rho}.$ By arguing as in the proof of Lemma~\ref{lemma.bound_supg_Fkappa_1}, we know already that
    \begin{align*}
        T_{K, \mu} &(g, \chi, f^*, \sigma^*) \\
        &\leq \frac{16}{\sigma^*(2\sigma^*-\alpha_{K,c_\rho})^2} \delta_{K,n}^2
        + \frac{2\Delta_c(\chi,\sigma^*)}{(\sigma^*+\chi)} Q_{3/4,K}\big[ 2\zeta(g-f^*)-(g-f^*)^2 \big]  + \frac{c_\mu\eps c_\rho}{\frakm^*}r^2(\rho_K).
    \end{align*}
    An application of Lemma~\ref{lemma.useful_bounds_lecue} bounds from above the quantiles of $2\zeta(g-f^*)$ and from below the quantiles of $(g-f^*)^2,$ together with $\Delta_c(\chi,\sigma^*) \geq c-2$ this leads to
    \begin{align*}
        T_{K, \mu} &(g, \chi, f^*, \sigma^*) \\
        &\leq \frac{16}{\sigma^*(2\sigma^*-\alpha_{K,c_\rho})^2} \delta_{K,n}^2
        + \frac{2\Delta_c(\chi,\sigma^*)}{(\sigma^*+\chi)} Q_{3/4,K}\big[ 2\zeta(g-f^*)-(g-f^*)^2 \big]  + \frac{c_\mu\eps c_\rho}{\frakm^*}r^2(\rho_K) \\
        &\leq \frac{16}{\sigma^*(2\sigma^*-\alpha_{K,c_\rho})^2} \delta_{K,n}^2
        + \frac{2\Delta_c(\chi,\sigma^*)}{(\sigma^*+\chi)} \big(\alpha_M^2 - (4\theta_0)^{-2}\|g-f^*\|_{2,\bX}^2 \big)  + \frac{c_\mu\eps c_\rho}{\frakm^*}r^2(\rho_K) \\
        &\leq \frac{16}{\sigma^*(2\sigma^*-\alpha_{K,c_\rho})^2} \delta_{K,n}^2 
        + 2(c-2) \frac{4\eps - (4\theta_0)^{-2}}{2\sigma^*+\alpha_{K,c_\rho}} r^2(c_\rho \rho_K)  + \frac{c_\mu\eps c_\rho}{\frakm^*}r^2(\rho_K),
    \end{align*}
    since $4 \eps < 1/(4 \theta_0)^2$ by condition~\eqref{ass.1/theta_0_lowbound_negative}, so
    $\alpha_M^2 - \|g - f^*\|_{2,\bX}^2 (4 \theta_0)^{-2}
    \leq (4 \eps - (4 \theta_0)^{-2}) r^2(c_\rho \rho_K).$
\end{proof}

\begin{lem}\label{lemma.bound_supg_Fkappa_3}
    On the event $\Omega(K),$ for all $c_\rho \in \{1, 2\},$ the supremum of $T_{K, \mu}(g, \chi, f^*, \sigma^*)$ over the set
    \begin{align*}
        \mF^{(c_\rho)}_3 &:= \{ (g, \chi) \in \mF \times I_{+} :
        \|g - f^*\| > c_\rho \rho_K,\ 
        |\sigma^* - \chi| \leq \alpha_{K,c_\rho}
        \},
    \end{align*}
    is bounded above by 
    \begin{align*}
        B_{3,c_\rho} 
        &
        := \max \Bigg\{ \frac{16}{\sigma^*(2\sigma^*-\alpha_{K,c_\rho})^2} \delta_{K,n}^2 + c_\rho \bigg( \frac{ 8(c+2)\eps }{2\sigma^*-\alpha_{K,c_\rho}} - \frac{4 c_\mu \eps}{5 \frakm^*}\bigg) r^2(\rho_K)
        + \frac{c_\mu\eps}{10\frakm^*} r^2(\rho_K), 
         \\
        &\hspace{1.5cm}
         \frac{16}{\sigma^*(2\sigma^*-\alpha_{K,c_\rho})^2} \delta_{K,n}^2 + c_\rho \bigg(2(c-2)\frac{4\eps - (4\theta_0)^{-2}}{2\sigma^*+\alpha_{K,c_\rho}}  + \frac{c_\mu\eps}{\frakm^*} \bigg)r^2(\rho_K) + \frac{c_\mu\eps}{10\frakm^*}r^2(\rho_K) \Bigg\}.
    \end{align*}
\end{lem}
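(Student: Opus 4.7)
The proof extends Lemmas~\ref{lemma.bound_supg_Fkappa_1} and~\ref{lemma.bound_supg_Fkappa_2} to the regime $\|g - f^*\| > c_\rho \rho_K$, the new ingredient being the sparsity equation. For any $(g, \chi) \in \mF^{(c_\rho)}_3$, setting $\rho := \|g - f^*\|$, I would introduce the localized competitor
\[
    g_0 := f^* + \frac{c_\rho \rho_K}{\rho}(g - f^*) = \frac{c_\rho \rho_K}{\rho} g + \left(1-\frac{c_\rho \rho_K}{\rho}\right) f^*,
\]
which lies in $\mF$ by convexity, satisfies $\|g_0 - f^*\| = c_\rho \rho_K$, and has $\|g_0 - f^*\|_{2,\bX} = (c_\rho\rho_K/\rho)\|g - f^*\|_{2,\bX}$. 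The two arguments of the $\max$ in $B_{3,c_\rho}$ correspond to two sub-cases: \textbf{(A)} $\|g_0-f^*\|_{2,\bX}\leq r(c_\rho\rho_K)$, so that $(g_0,\chi) \in \mF^{(c_\rho)}_1$; and \textbf{(B)} $\|g_0-f^*\|_{2,\bX}> r(c_\rho\rho_K)$, so that $(g_0,\chi) \in \mF^{(c_\rho)}_2$.

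Starting from~\eqref{eq.T_bound_main}, the $\chi$-only quantile $Q_{3/4,K}[R_c(\ell_{f^*},\chi,\ell_{f^*},\sigma^*)]$ is bounded by $\frac{16\delta_{K,n}^2}{\sigma^*(2\sigma^*-\alpha_{K,c_\rho})^2}$ exactly as in Lemma~\ref{lemma.bound_supg_Fkappa_1}, using~\eqref{eq.bound_Q_3_4_R>}--\eqref{eq.bound_Q_3_4_R<} together with $|\chi - \sigma^*| \leq \alpha_{K,c_\rho}$. For the $g$-dependent quantile $Q_{3/4,K}[2\zeta(g-f^*)-(g-f^*)^2]$, I would transport the estimate from $g_0$ to $g$ using the linearity of the multiplier term in $(g-f^*)$ and the quadratic scaling of $(g-f^*)^2$, then apply the multiplier- and quadratic-process bounds of Lemma~\ref{lemma.useful_bounds_lecue} on the ball of radius $c_\rho \rho_K$. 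After the resulting dehomogenization by the factor $\rho/(c_\rho \rho_K)$, sub-case~(A) produces the positive multiplier contribution $c_\rho \frac{8(c+2)\eps}{2\sigma^*-\alpha_{K,c_\rho}} r^2(\rho_K)$, while sub-case~(B) produces the negative contribution $c_\rho \cdot 2(c-2)\frac{4\eps - (4\theta_0)^{-2}}{2\sigma^*+\alpha_{K,c_\rho}} r^2(\rho_K)$ coming from the lower bound on the quadratic quantile.

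The penalization term $\mu(\|f^*\|-\|g\|)$ is controlled in sub-case~(A) by invoking the sparsity equation: since $\rho > c_\rho \rho_K \geq \rho^*$, one has $\Delta(\rho) \geq 4\rho/5$, so~\eqref{def.aux} provides an approximant $f^{**}$ at distance $\rho/20$ from $f^*$ and a subdifferential $z^* \in (\partial\|\cdot\|)_{f^{**}}$ for which
\[
    \mu(\|f^*\| - \|g\|) \leq \mu\Big(\tfrac{\rho}{10} - z^*(g-f^*)\Big) \leq -\tfrac{4\mu\rho}{5} + \tfrac{\mu\rho}{10}.
\]
Substituting $\mu = c_\mu \eps r^2(\rho_K)/(\frakm^* \rho_K)$, both the positive multiplier contribution and the dominant negative penalty are linear in $\rho/\rho_K$, so the supremum over $\rho > c_\rho \rho_K$ is attained as $\rho \to c_\rho \rho_K^+$, producing the $-c_\rho \frac{4 c_\mu \eps}{5\frakm^*} r^2(\rho_K)$ term of the first argument of the $\max$, plus the residual $\frac{c_\mu \eps}{10\frakm^*} r^2(\rho_K)$ coming from the $\rho/20$ approximation. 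In sub-case~(B), the quadratic contribution scales as $-(4\theta_0)^{-2}(\rho/c_\rho\rho_K)^2 r^2(c_\rho\rho_K)$ and already dominates the linear-in-$\rho$ bound $\mu(\|f^*\|-\|g\|) \leq \mu \rho$, so a crude bookkeeping that retains only the leading negative term at the critical value $\rho \simeq c_\rho\rho_K$ yields the second argument of the $\max$.

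The main obstacle will be the careful bookkeeping of the dehomogenization factor $\rho/(c_\rho\rho_K)$: in sub-case~(A) one must verify that the linear-in-$\rho$ multiplier contribution is precisely dominated by the linear-in-$\rho$ sparsity-equation savings (so that $\rho = c_\rho\rho_K$ is the maximizer), while in sub-case~(B) one must balance the quadratic-in-$\rho$ negative quadratic term against the linear-in-$\rho$ crude penalty bound. Combining both sub-cases and taking the $\max$ produces the claimed $B_{3,c_\rho}$.
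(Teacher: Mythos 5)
Your high-level structure is right -- localize, split on whether the localized function falls into the quadratic or multiplier regime, and use Lemma~\ref{lemma.6} together with the sparsity equation -- and for $c_\rho = 1$ your argument agrees with the paper's. For $c_\rho = 2$, however, your choice of localization radius $c_\rho \rho_K$ is not what the paper does, and it does not produce the claimed bound. The paper always normalizes to radius $\rho_K$, setting $f := f^* + \rho_K (g-f^*)/\|g-f^*\|$ and $\Upsilon := \|g-f^*\|/\rho_K > c_\rho$, applies Lemma~\ref{lemma.6} with $\rho' = \rho_K$ (hence $\Gamma_{f^*}(\rho_K)$ and the residual $\mu \rho_K /10$, with \emph{no} $c_\rho$ factor), uses the sparsity equation at level $\rho_K$ (which requires $f \in H_{\rho_K}$, i.e.\ $\|f - f^*\|_{2,\bX} \leq r(\rho_K)$ -- and that is what the case split checks), bounds the $\Upsilon$-scaled multiplier term by $4\eps \, r^2(\rho_K)$, checks that the $\Upsilon$-coefficient is negative via conditions~\eqref{ass.c_mu_lowbound} and~\eqref{ass.1/theta_0_lowbound_negative}, and finally replaces $\Upsilon$ by $c_\rho$.

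Your version has two concrete gaps. First, applying Lemma~\ref{lemma.6} with $\rho' = \rho := \|g - f^*\|$ gives the residual $\mu \rho /10$; optimizing over $\rho > c_\rho \rho_K$ (under the negativity you need) drives $\rho \to c_\rho \rho_K$ and leaves $\mu c_\rho \rho_K / 10 = \tfrac{c_\rho c_\mu \eps}{10 \frakm^*} r^2(\rho_K)$, which has a superfluous factor $c_\rho$ compared to the $\tfrac{c_\mu \eps}{10 \frakm^*} r^2(\rho_K)$ that appears in $B_{3,c_\rho}$. Similarly, bounding the multiplier quantile on the ball of radius $c_\rho \rho_K$ and dehomogenizing yields $r^2(c_\rho\rho_K)$, not $c_\rho \, r^2(\rho_K)$, which is a different quantity (Assumption~\ref{ass.r2rho_rrho} only gives $r^2(2\rho_K) \le c_r^2 r^2(\rho_K)$ with $c_r \ne c_\rho$ in general). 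Second, invoking the sparsity equation at level $\rho$ for $g$ itself requires $g \in H_\rho$, i.e.\ $\|g - f^*\|_{2,\bX} \leq r(\rho)$; the case condition you verify, $\|g_0 - f^*\|_{2,\bX} \leq r(c_\rho\rho_K)$, does not imply this since $r(\cdot)/\cdot$ is typically non-increasing. The paper's fix for both issues is the same: always work at the single fixed radius $\rho_K$, so that only the $\Upsilon$-scaled terms pick up a factor of $\Upsilon$ (replaced by $c_\rho$), the residual term is unscaled, and the sparsity-equation hypothesis is exactly the case-split condition.
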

\begin{proof}[Proof of Lemma~\ref{lemma.bound_supg_Fkappa_3}]
    Let $(g, \chi) \in \mF^{(c_\rho)}_3$. The space $\mF^{(c_\rho)}_3$ shares with $ \mF^{(c_\rho)}_1, \mF^{(c_\rho)}_2$ the constraint $|\chi-\sigma^*|\leq \alpha_{K,c_\rho}.$ By arguing as in the proofs of Lemma~\ref{lemma.bound_supg_Fkappa_1} and Lemma~\ref{lemma.bound_supg_Fkappa_2}, the bound in~\eqref{eq.T_bound_main} becomes
    \begin{align*}
        T_{K, \mu} &(g, \chi, f^*, \sigma^*) \\
        &\leq \frac{16}{\sigma^*(2\sigma^*-\alpha_{K,c_\rho})^2} \delta_{K,n}^2 + \frac{2\Delta_c(\chi,\sigma^*)}{(\sigma^*+\chi)} Q_{3/4,K}\big[ 2\zeta(g-f^*)-(g-f^*)^2 \big]  + \mu (\|f^*\| - \|g\|) \\
        &\leq \frac{16}{\sigma^*(2\sigma^*-\alpha_{K,c_\rho})^2} \delta_{K,n}^2 + \frac{2\Delta_c(\chi,\sigma^*)}{(\sigma^*+\chi)} Q_{3/4,K}\big[ 2\zeta(g-f^*)-(g-f^*)^2 \big]  \\
        &\quad - \mu \sup_{z^* \in \Gamma_{f^*}(\rho_K)} z^* (g - f^*)
        + \frac{\mu \rho_K}{10},
    \end{align*}
    where the last inequality follows from the application of Lemma~\ref{lemma.6} with $\rho = \rho_K$. We follow now the proof of Lemma 5 in~\cite{lecue2020robustML}. Let us define $f := f^* + \rho_K(g - f^*) / \|g - f^*\|,$ this function belongs to the function class $\mF$ by convexity. Let $\Upsilon := \|g - f^*\| / \rho_K.$ By construction, $\|f - f^*\| = \rho_K$ and $g - f^* = \Upsilon (f - f^*).$ Then,
    \begin{align*}
        T_{K, \mu} &(g, \chi, f^*, \sigma^*) \\
        &\leq \frac{16}{\sigma^*(2\sigma^*-\alpha_{K,c_\rho})^2} \delta_{K,n}^2 + \frac{2\Upsilon \Delta_c(\chi,\sigma^*)}{(\sigma^*+\chi)} Q_{3/4,K}\big[ 2\zeta(f-f^*)-(f-f^*)^2 \big]  \\
        &\quad - \mu \Upsilon \sup_{z^* \in \Gamma_{f^*}(\rho_K)} z^* (f - f^*) + \frac{\mu \rho_K}{10}.
    \end{align*}
    From here, we separate the cases $\|f - f^*\|_{2,\bX} \leq r(\rho_K)$ and $\|f - f^*\|_{2,\bX} > r(\rho_K).$
    
    We start with $\|f - f^*\|_{2,\bX} \leq r(\rho_K).$ Since $\|f-f^*\|= \rho_K,$ we have $f \in H_{\rho_K}$ with $H_{\rho_K} = \{f\in\mF:\|f-f^*\|\leq\rho_K,\ \|f-f^*\|_{2,\bX}\leq r(\rho_K)\}$ defined in Section~\ref{sec.main_sparsity}. Recall that $K^*$ is defined as the smallest integer satisfying $K^*\geq n\eps r^2(\rho^*)/c_K^2 \theta_m^2,$ with $\rho^*$ the smallest value $\rho>0$ satisfying the sparsity inequality
    \begin{align*}
        \inf_{f\in H_\rho} \sup_{z^*\in\Gamma_{f^*}(\rho_K)} z^*(f-f^*) \geq \frac{4}{5}\rho.
    \end{align*}
    Since $K \geq K^*,$ we get $\rho_K \geq \rho^*$ and $\rho_K$ satisfies the sparsity inequality
    \begin{align*}
        \sup_{z^* \in \Gamma_{f^*}(\rho_K)} z^* (f - f^*) \geq \frac{4}{5}\rho_K.
    \end{align*}
    Using our choice of $\mu = (c_\mu\eps/\frakm^*)r^2(\rho_K)/\rho_K,$ we get
    \begin{align*}
        - \mu \sup_{z^* \in \Gamma_{f^*}(\rho_K)} z^* (f - f^*)
        \leq - \frac{4 c_\mu \eps}{5 \frakm^*} r^2(\rho_K).
    \end{align*}
    The latter display, the fact that $(f - f^*)^2 \geq 0,$ the bound $\Delta_c(\chi,\sigma^*)\leq c+2,$ and the quantile bound $Q_{3/4,K}[ 2\zeta(f-f^*)] \leq \alpha_M^2 \leq 4\eps r^2(\rho_K)$ in Lemma~\ref{lemma.useful_bounds_lecue}, all together yield
    \begin{align*}
        T_{K, \mu} (g, \chi, f^*, \sigma^*) 
        &\leq \frac{16}{\sigma^*(2\sigma^*-\alpha_{K,c_\rho})^2} \delta_{K,n}^2 + \Upsilon \bigg( \frac{ 8(c+2)\eps }{2\sigma^*-\alpha_{K,c_\rho}} - \frac{4 c_\mu \eps}{5 \frakm^*}\bigg) r^2(\rho_K)
        + \frac{\mu \rho_K}{10}.
    \end{align*}
    By condition~\eqref{ass.c_mu_lowbound}, the term multiplied by $\Upsilon$ is negative. This is true because $\kappa_{+}^{1/4} \geq \kappa^{*1/4} = \frakm^*/\sigma^* > 1$ and
    \begin{align*}
        c_\mu > \frac{5\frakm^*(c+2)}{\sigma^*} \implies \frac{4c_\mu\eps}{5\frakm^*} > \frac{4(c+2)\eps}{\sigma^*} > \frac{ 4(c+2)\eps }{2\sigma^*-\alpha_{K,c_\rho}},
    \end{align*}
    the last inequality follows from $\alpha_{K,c_\rho} < \sigma^*,$ which is guaranteed by Lemma~\ref{lemma.conditions}. Since $\Upsilon > c_\rho,$ we have
    \begin{align*}
        T_{K, \mu} (g, \chi, f^*, \sigma^*) 
        &\leq \frac{16}{\sigma^*(2\sigma^*-\alpha_{K,c_\rho})^2} \delta_{K,n}^2 + c_\rho \bigg( \frac{ 8(c+2)\eps }{2\sigma^*-\alpha_{K,c_\rho}} - \frac{4 c_\mu \eps}{5 \frakm^*}\bigg) r^2(\rho_K)
        + \frac{c_\mu\eps}{10\frakm^*} r^2(\rho_K).
    \end{align*}
    This concludes the first part of the proof.
    
    We now consider the case $\|f-f^*\|_{2,\bX} > r(\rho_K).$ Since $\|f - f^*\| = \rho_K$ and $\Delta_c(\chi,\sigma^*)\geq c-2,$ an application of Lemma~\ref{lemma.useful_bounds_lecue} bounds from above the quantiles of $2\zeta(g-f^*)$ and from below the quantiles of $(g-f^*)^2,$ this gives 
    \begin{align*}
        T_{K, \mu} &(g, \chi, f^*, \sigma^*) \\
        &\leq \frac{16}{\sigma^*(2\sigma^*-\alpha_{K,c_\rho})^2} \delta_{K,n}^2 + \Upsilon \bigg(2(c-2)\frac{4\eps - (4\theta_0)^{-2}}{2\sigma^*+\alpha_{K,c_\rho}} r^2(\rho_K)  + \mu\rho_K \bigg) + \frac{\mu \rho_K}{10} \\
        &\leq \frac{16}{\sigma^*(2\sigma^*-\alpha_{K,c_\rho})^2} \delta_{K,n}^2 + c_\rho \bigg(2(c-2)\frac{4\eps - (4\theta_0)^{-2}}{2\sigma^*+\alpha_{K,c_\rho}}  + \frac{c_\mu\eps}{\frakm^*} \bigg)r^2(\rho_K) + \frac{c_\mu\eps}{10\frakm^*}r^2(\rho_K),
    \end{align*}
    using that $\Upsilon>c_\rho$ and the term multiplied by $\Upsilon$ is negative, by condition~\eqref{ass.1/theta_0_lowbound_negative}. This can be seen by
    \begin{align*}
        \frac{1}{16\theta_0^2} &> 4\eps + \frac{(\sigma^*+\sigma_{+}) c_\mu\eps}{2(c-2)\frakm^*} \\
        &\quad \iff 0 > 2(c-2)\frac{4\eps - (4\theta_0)^{-2}}{\sigma^*+\sigma_{+}} + \frac{c_\mu\eps}{\frakm^*} > 2(c-2)\frac{4\eps - (4\theta_0)^{-2}}{2\sigma^*+\alpha_{K,c_\rho}}  + \frac{c_\mu\eps}{\frakm^*}.
    \end{align*}
    This concludes the second part of the proof.
\end{proof}

\begin{lem}\label{lemma.bound_supg_Fkappa_4}
    On the event $\Omega(K),$ for all $c_\rho \in \{1, 2\},$ the supremum of $T_{K, \mu}(g, \chi, f^*, \sigma^*)$ over the set
    \begin{align*}
        \mF^{(c_\rho)}_4 &:= \{ (g, \chi) \in \mF \times I_{+} :
        \|g - f^*\| \leq c_\rho \rho_K,
        \|g - f^*\|_{2,\bX} \leq r(c_\rho \rho_K),\ 
        \chi > \sigma^* + \alpha_{K,c_\rho}
        \},
    \end{align*}
    is bounded above by 
    \begin{align*}
        B_{4,c_\rho} 
        &
        := -\frac{2\sigma^*}{(2\sigma^*+\alpha_{K,c_\rho})^2} \alpha_{K,c_\rho}^2 + \frac{8(c+2) \eps}{2\sigma^*+\alpha_{K,c_\rho}} r^2(c_\rho \rho_K) + \frac{c_\mu\eps c_\rho}{\frakm^*} r^2(\rho_K).
    \end{align*}
\end{lem}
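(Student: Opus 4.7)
The plan is to follow the template of Lemma~\ref{lemma.bound_supg_Fkappa_1}, replacing only the control of the constant-in-$g$ term $Q_{3/4,K}[R_c(\ell_{f^*},\chi,\ell_{f^*},\sigma^*)]$ to account for the new constraint $\chi > \sigma^* + \alpha_{K,c_\rho}$. Starting from the master inequality~\eqref{eq.T_bound_main}, I would use the triangle-type quantile property (Lemma~\ref{lemma.quantile_prop}) to discard the non-negative term $-(g-f^*)^2$, then invoke Lemma~\ref{lemma.useful_bounds_lecue} to bound $Q_{3/4,K}[2\zeta(g-f^*)] \leq \alpha_M^2 \leq 4\eps\,r^2(c_\rho \rho_K)$. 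Combining this with $\Delta_c(\chi,\sigma^*)\leq c+2$ and the slice lower bound $\sigma^*+\chi \geq 2\sigma^*+\alpha_{K,c_\rho}$ produces the multiplier contribution $\tfrac{8(c+2)\eps}{2\sigma^*+\alpha_{K,c_\rho}}\,r^2(c_\rho \rho_K)$; bounding the penalty by $\mu(\|f^*\|-\|g\|)\leq \mu\|g-f^*\|\leq \mu c_\rho \rho_K = \tfrac{c_\mu\eps c_\rho}{\frakm^*}\,r^2(\rho_K)$ yields the last summand of $B_{4,c_\rho}$.

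For the remaining reference term, since $\chi > \sigma^*$, inequality~\eqref{eq.bound_Q_3_4_R>} gives, with $t := \chi - \sigma^*$,
\[
Q_{3/4,K}\bigl[R_c(\ell_{f^*},\chi,\ell_{f^*},\sigma^*)\bigr]
\leq -\frac{t\,(4\sigma^* t + t^2 - 4\delta_{K,n})}{(2\sigma^*+t)^2}.
\]
Lemma~\ref{lemma.conditions} ensures $\alpha_{K,c_\rho}>4\delta_{K,n}/\sigma^*$, so the right-hand side is strictly negative on the slice $t>\alpha_{K,c_\rho}$, and a short derivative computation shows that $t\mapsto t(4\sigma^*t+t^2-4\delta_{K,n})/(2\sigma^*+t)^2$ is strictly increasing on $(\alpha_{K,c_\rho},\sigma_+-\sigma^*]$. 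Consequently, the supremum of the above upper bound over the slice is attained at $t=\alpha_{K,c_\rho}$ and equals $-\alpha_{K,c_\rho}(4\sigma^*\alpha_{K,c_\rho}+\alpha_{K,c_\rho}^2-4\delta_{K,n})/(2\sigma^*+\alpha_{K,c_\rho})^2$. Using $4\delta_{K,n}<\sigma^*\alpha_{K,c_\rho}$ once more lower bounds the numerator by $3\sigma^*\alpha_{K,c_\rho}^2\geq 2\sigma^*\alpha_{K,c_\rho}^2$, producing the leading summand of $B_{4,c_\rho}$. Summing the three contributions concludes the proof.

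The only non-routine point is the monotonicity claim for $t\mapsto t(4\sigma^*t+t^2-4\delta_{K,n})/(2\sigma^*+t)^2$: writing $f=g/h$ with $g(t)=t^3+4\sigma^*t^2-4\delta_{K,n}t$ and $h(t)=(2\sigma^*+t)^2$ and dividing $g'h-gh'$ by the positive factor $(2\sigma^*+t)$, $f'>0$ reduces to $t\,(t^2+6\sigma^*t+16\sigma^{*2}+4\delta_{K,n})>8\sigma^*\delta_{K,n}$, which follows already from $16\sigma^{*2}t>64\sigma^*\delta_{K,n}$ under $t>\alpha_{K,c_\rho}>4\delta_{K,n}/\sigma^*$. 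Everything else is a straightforward adaptation of the scheme used for $B_{1,c_\rho}$.
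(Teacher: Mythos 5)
Your proof is correct and follows essentially the same route as the paper: identical treatment of the multiplier and penalty terms (via Lemma~\ref{lemma.useful_bounds_lecue}, $\Delta_c \le c+2$, and $\sigma^*+\chi \ge 2\sigma^*+\alpha_{K,c_\rho}$), then a boundary evaluation of the residual term using~\eqref{eq.bound_Q_3_4_R>}.

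The one point where you add value is the monotonicity argument. The paper simply asserts that the supremum of $(\chi-\sigma^*)\big(\tfrac{4(\sigma^{*2}+\delta_{K,n})}{(\sigma^*+\chi)^2}-1\big)$ over $\chi>\sigma^*+\alpha_{K,c_\rho}$ equals its boundary value, referring implicitly to the maximizer location $\sigma_{a_+}$ computed inside the proof of Lemma~\ref{lemma.bound_P2zeta_f_fstar} (where the sup is over a different interval), so one must still infer that the maximizer lies strictly below $\sigma^*+\alpha_{K,c_\rho}$ to conclude the map is decreasing on the relevant slice. Your direct derivative check — reducing $f'>0$ to $t(t^2+6\sigma^*t+16\sigma^{*2}+4\delta_{K,n})>8\sigma^*\delta_{K,n}$, which holds because $t>\alpha_{K,c_\rho}>4\delta_{K,n}/\sigma^*$ — fills that gap explicitly and is arithmetically verified. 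Your final numerator bound $\alpha_{K,c_\rho}(4\sigma^*\alpha_{K,c_\rho}+\alpha_{K,c_\rho}^2-4\delta_{K,n})>3\sigma^*\alpha_{K,c_\rho}^2\ge 2\sigma^*\alpha_{K,c_\rho}^2$ is a trivially different route to the paper's $\tfrac{2\sigma^*\alpha_{K,c_\rho}}{(2\sigma^*+\alpha_{K,c_\rho})^2}$ lower bound and yields the same constant. (Incidentally, the paper's displayed intermediate step has a typographical $8$ in place of $4$ in $\tfrac{8(\sigma^{*2}+\delta_{K,n})}{(2\sigma^*+\alpha_{K,c_\rho})^2}$; the subsequent line and your derivation both use the correct $4$.) No gap; the proof stands.
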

\begin{proof}[Proof of Lemma~\ref{lemma.bound_supg_Fkappa_4}]
    Let $(g, \chi) \in \mF^{(c_\rho)}_4$. The space $\mF^{(c_\rho)}_4$ shares with $\mF^{(c_\rho)}_1$ the conditions $\|g-f^*\|\leq c_\rho \rho_K$ and $\|g-f^*\|_{2,\bX}\leq r(c_\rho \rho_K).$ By arguing as in the proof of Lemma~\ref{lemma.bound_supg_Fkappa_1} and using that $\chi > \sigma^* + \alpha_{K,c_\rho},$ from~\eqref{eq.bound_Q_3_4_R>} we get
    \begin{align*}
        T_{K, \mu} &(g, \chi, f^*, \sigma^*) \\
        &\leq \sup_{\chi>\sigma^*+\alpha_{K,c_\rho}} (\chi-\sigma^*) \Big( \frac{4(\sigma^{*2}+\delta_{K,n})}{(\sigma^*+\chi)^2} - 1 \Big) + \frac{8(c+2) \eps}{2\sigma^*+\alpha_{K,c_\rho}} r^2(c_\rho \rho_K) + \frac{c_\mu\eps c_\rho}{\frakm^*} r^2(\rho_K) \\
        &= -\alpha_{K,c_\rho} \Big(1 - \frac{8(\sigma^{*2}+\delta_{K,n})}{(2\sigma^*+\alpha_{K,c_\rho})^2} \Big) + \frac{8(c+2) \eps}{2\sigma^*+\alpha_{K,c_\rho}} r^2(c_\rho \rho_K) + \frac{c_\mu\eps c_\rho}{\frakm^*} r^2(\rho_K).
    \end{align*}
    Since $\alpha_{K,c_\rho} > 2\delta_{K,n}/\sigma^*,$ one has
    \begin{align*}
        1 - \frac{4(\sigma^{*2}+\delta_{K,n})}{(2\sigma^*+\alpha_{K,c_\rho})^2} = \frac{4(\sigma^*\alpha_{K,c_\rho}-\delta_{K,n})}{(2\sigma^*+\alpha_{K,c_\rho})^2} + \frac{\alpha_{K,c_\rho}^2}{(2\sigma^*+\alpha_{K,c_\rho})^2} > \frac{4(\sigma^*\alpha_{K,c_\rho}-\delta_{K,n})}{(2\sigma^*+\alpha_{K,c_\rho})^2} > \frac{2\sigma^*\alpha_{K,c_\rho}}{(2\sigma^*+\alpha_{K,c_\rho})^2},
    \end{align*}
    and
    \begin{align*}
        T_{K, \mu} (g, \chi, f^*, \sigma^*) 
        &\leq -\frac{2\sigma^*}{(2\sigma^*+\alpha_{K,c_\rho})^2} \alpha_{K,c_\rho}^2 + \frac{8(c+2) \eps}{2\sigma^*+\alpha_{K,c_\rho}} r^2(c_\rho \rho_K) + \frac{c_\mu\eps c_\rho}{\frakm^*} r^2(\rho_K).
    \end{align*}
    This is enough to conclude.
\end{proof}

\begin{lem}\label{lemma.bound_supg_Fkappa_5}
    On the event $\Omega(K),$ for all $c_\rho \in \{1, 2\},$ the supremum of $T_{K, \mu}(g, \chi, f^*, \sigma^*)$ over the set
    \begin{align*}
        \mF^{(c_\rho)}_5 &:= \{ (g, \chi) \in \mF \times I_{+} :
        \|g - f^*\| \leq c_\rho \rho_K,
        \|g - f^*\|_{2,\bX} > r(c_\rho \rho_K),\ 
        \chi > \sigma^* + \alpha_{K,c_\rho}
        \},
    \end{align*}
    is bounded above by 
    \begin{align*}
        B_{5,c_\rho} 
        &
        := -\frac{2\sigma^*}{(2\sigma^*+\alpha_{K,c_\rho})^2} \alpha_{K,c_\rho}^2 + 2(c-2) \frac{4\eps - (4\theta_0)^{-2}}{\sigma^*+\sigma_{+}} r^2(c_\rho \rho_K)  + \frac{c_\mu\eps c_\rho}{\frakm^*}r^2(\rho_K).
    \end{align*}
\end{lem}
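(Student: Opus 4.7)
The plan is to combine ingredients from the proofs of Lemma~\ref{lemma.bound_supg_Fkappa_2} and Lemma~\ref{lemma.bound_supg_Fkappa_4}, since $\mF_5^{(c_\rho)}$ shares with $\mF_2^{(c_\rho)}$ the large $L^2$-norm constraint $\|g-f^*\|_{2,\bX} > r(c_\rho \rho_K)$ and shares with $\mF_4^{(c_\rho)}$ the one-sided control $\chi > \sigma^* + \alpha_{K,c_\rho}$ on the candidate variance.

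First I start from the master bound \eqref{eq.T_bound_main}. For the purely-$\chi$ piece $Q_{3/4,K}[R_c(\ell_{f^*},\chi,\ell_{f^*},\sigma^*)]$ I use the regime $\chi \geq \sigma^*$ estimate \eqref{eq.bound_Q_3_4_R>} and maximize over $\chi > \sigma^* + \alpha_{K,c_\rho}$, exactly as in the proof of Lemma~\ref{lemma.bound_supg_Fkappa_4}. The inequality $\alpha_{K,c_\rho} > 2\delta_{K,n}/\sigma^*$ from Lemma~\ref{lemma.conditions} then yields
\begin{align*}
    \sup_{\chi > \sigma^* + \alpha_{K,c_\rho}} (\chi-\sigma^*)\Bigl(\tfrac{4\sigma^{*2}+4\delta_{K,n}}{(\sigma^*+\chi)^2} - 1\Bigr)
    \leq -\frac{2\sigma^*}{(2\sigma^*+\alpha_{K,c_\rho})^2}\alpha_{K,c_\rho}^2,
\end{align*}
which produces the first term of $B_{5,c_\rho}$.

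Next I handle the cross term $\frac{2\Delta_c(\chi,\sigma^*)}{\sigma^*+\chi} Q_{3/4,K}\!\left[2\zeta(g-f^*)-(g-f^*)^2\right]$. On $\Omega(K)$, combining the upper bound $Q_{3/4,K}[2\zeta(g-f^*)] \leq \alpha_M^2 \leq 4\eps\, r^2(c_\rho \rho_K)$ with the lower bound $Q_{3/4,K}[(g-f^*)^2] \geq (4\theta_0)^{-2}\|g-f^*\|_{2,\bX}^2$ from Lemma~\ref{lemma.useful_bounds_lecue}, together with the defining constraint $\|g-f^*\|_{2,\bX} > r(c_\rho \rho_K)$, bounds the inner quantile by $(4\eps - (4\theta_0)^{-2})\, r^2(c_\rho \rho_K)$. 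By condition \eqref{ass.1/theta_0_lowbound_negative} this quantity is strictly negative. This is the main subtlety of the argument: because the factor multiplying $\Delta_c(\chi,\sigma^*)/(\sigma^*+\chi)$ is negative, to obtain an upper bound I must take the \emph{smallest} possible value of $\Delta_c(\chi,\sigma^*)$ (namely the lower bound $c-2$ from \eqref{def.Delta_c}) and the \emph{largest} possible value of $\sigma^*+\chi$. Since $\chi$ ranges in $I_{+} = (0, \sigma_{+}]$, the latter is achieved at $\sigma^*+\sigma_{+}$, which is exactly the denominator appearing in $B_{5,c_\rho}$ (in contrast with the $2\sigma^*+\alpha_{K,c_\rho}$ denominator of Lemma~\ref{lemma.bound_supg_Fkappa_2}, which was available only because there one had $\chi \leq \sigma^*+\alpha_{K,c_\rho}$).

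Finally, the penalization $\mu(\|f^*\|-\|g\|) \leq \mu\|g-f^*\| \leq \mu c_\rho \rho_K = (c_\mu \eps c_\rho/\frakm^*)\, r^2(\rho_K)$ using the choice $\mu = (c_\mu \eps/\frakm^*)\, r^2(\rho_K)/\rho_K$ from \eqref{def.mu}. Summing the three contributions produces the stated bound $B_{5,c_\rho}$. The only non-routine step is the careful handling of the sign of the cross term in step two above; the rest is a direct concatenation of ingredients already developed for Lemmas~\ref{lemma.bound_supg_Fkappa_2} and~\ref{lemma.bound_supg_Fkappa_4}.
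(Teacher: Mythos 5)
Your proposal is correct and follows the paper's intended route exactly; the paper's own proof of this lemma is a three-sentence reference to ``arguing as in the proofs of Lemma~\ref{lemma.bound_supg_Fkappa_1}, Lemma~\ref{lemma.bound_supg_Fkappa_2} and Lemma~\ref{lemma.bound_supg_Fkappa_4},'' and you have filled in exactly those details. Your central observation --- that because $4\eps-(4\theta_0)^{-2}<0$ one must take the \emph{minimum} of the multiplier $2\Delta_c(\chi,\sigma^*)/(\sigma^*+\chi)$, i.e.\ $\Delta_c \geq c-2$ together with the largest admissible $\sigma^*+\chi \leq \sigma^*+\sigma_+$ --- is precisely what produces the $\sigma^*+\sigma_+$ denominator distinguishing $B_{5,c_\rho}$ from $B_{2,c_\rho}$, and the paper relies on it without spelling it out.

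One small technical imprecision: the chain $\alpha_M^2 \leq 4\eps\,r^2(c_\rho\rho_K)$ is justified in Lemma~\ref{lemma.useful_bounds_lecue} only under $\|g-f^*\|_{2,\bX} \leq r(c_\rho\rho_K)$, which is exactly the opposite of the defining constraint of $\mF_5^{(c_\rho)}$; since $\alpha_M^2$ contains the term $\eps\,\|g-f^*\|_{2,\bX}^2$ in its $\max$, it can exceed $4\eps\,r^2(c_\rho\rho_K)$ on this slice. What the proof of Lemma~\ref{lemma.bound_supg_Fkappa_2} actually uses --- and what you should invoke here as well --- is the \emph{combined} bound
\begin{align*}
    \alpha_M^2 - (4\theta_0)^{-2}\|g-f^*\|_{2,\bX}^2 \;\leq\; \big(4\eps-(4\theta_0)^{-2}\big)\,r^2(c_\rho\rho_K),
\end{align*}
which holds regardless of which term achieves the $\max$ in $\alpha_M^2$: when $\|g-f^*\|_{2,\bX}^2$ dominates, one has $(\eps-(4\theta_0)^{-2})\|g-f^*\|_{2,\bX}^2 \leq (4\eps-(4\theta_0)^{-2})r^2(c_\rho\rho_K)$ because both coefficients are negative and $\|g-f^*\|_{2,\bX}^2 > r^2(c_\rho\rho_K)$. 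The quantile bookkeeping should also pass through the difference property of Lemma~\ref{lemma.quantile_prop} with split levels (e.g.\ $Q_{15/16,K}$ and $Q_{1/16,K}$) rather than two $Q_{3/4,K}$ terms, but that is routine. Once phrased this way your conclusion and the overall structure of your argument are exactly the paper's.
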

\begin{proof}[Proof of Lemma~\ref{lemma.bound_supg_Fkappa_5}]
    Let $(g, \chi) \in \mF^{(c_\rho)}_5.$ The space $\mF^{(c_\rho)}_5$ shares with $\mF^{(c_\rho)}_1$ the condition $\|g-f^*\|\leq c_\rho \rho_K,$ with $\mF^{(c_\rho)}_2$ the condition $\|g-f^*\|_{2,\bX}>r(\rho_K),$ and with $\mF^{(c_\rho)}_4$ the condition $\chi > \sigma^* + \alpha_{K,c_\rho}.$ By arguing as in the proofs of Lemma~\ref{lemma.bound_supg_Fkappa_1}, Lemma~\ref{lemma.bound_supg_Fkappa_2} and Lemma~\ref{lemma.bound_supg_Fkappa_4}, one gets
    \begin{align*}
        T_{K, \mu} (g, \chi, f^*, \sigma^*) 
        \leq  -\frac{2\sigma^*}{(2\sigma^*+\alpha_{K,c_\rho})^2} \alpha_{K,c_\rho}^2 + 2(c-2) \frac{4\eps - (4\theta_0)^{-2}}{\sigma^*+\sigma_{+}} r^2(c_\rho \rho_K)  + \frac{c_\mu\eps c_\rho}{\frakm^*}r^2(\rho_K),
    \end{align*}
    where $\sigma_{+}$ is the upper bound on $\chi.$ 
\end{proof}

\begin{lem}\label{lemma.bound_supg_Fkappa_6}
    On the event $\Omega(K),$ for all $c_\rho \in \{1, 2\},$ the supremum of $T_{K, \mu}(g, \chi, f^*, \sigma^*)$ over the set
    \begin{align*}
        \mF^{(c_\rho)}_6 &:= \{ (g, \chi) \in \mF \times I_{+} :
        \|g - f^*\| > c_\rho \rho_K,\ 
        \chi > \sigma^* + \alpha_{K,c_\rho}
        \},
    \end{align*}
    is bounded above by 
    \begin{align*}
        B_{6,c_\rho} 
        &
        := \max \Bigg\{ -\frac{2\sigma^*}{(2\sigma^*+\alpha_{K,c_\rho})^2} \alpha_{K,c_\rho}^2 + c_\rho \bigg( \frac{ 8(c+2)\eps }{2\sigma^*+\alpha_{K,c_\rho}} - \frac{4 c_\mu \eps}{5 \frakm^*}\bigg) r^2(\rho_K)
        + \frac{c_\mu\eps}{10\frakm^*} r^2(\rho_K), 
         \\
        &\hspace{1.5cm}
        -\frac{2\sigma^*}{(2\sigma^*+\alpha_{K,c_\rho})^2} \alpha_{K,c_\rho}^2 + c_\rho \bigg(2(c-2)\frac{4\eps - (4\theta_0)^{-2}}{\sigma^*+\sigma_{+}}  + \frac{c_\mu\eps}{\frakm^*} \bigg)r^2(\rho_K) + \frac{c_\mu\eps}{10\frakm^*}r^2(\rho_K) \Bigg\}.
    \end{align*}
\end{lem}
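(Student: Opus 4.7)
The plan is to combine the argument from Lemma~\ref{lemma.bound_supg_Fkappa_3} (which handles the constraint $\|g-f^*\| > c_\rho \rho_K$) with the $R_c$-term treatment of Lemma~\ref{lemma.bound_supg_Fkappa_4} and the Cauchy--Schwarz denominator bound of Lemma~\ref{lemma.bound_supg_Fkappa_5} (which handle the constraint $\chi > \sigma^* + \alpha_{K,c_\rho}$). The target bound $B_{6,c_\rho}$ is structurally a max of two terms exactly like $B_{3,c_\rho}$, with the quantile-of-$R_c$ contribution $\frac{16}{\sigma^*(2\sigma^*-\alpha_{K,c_\rho})^2}\delta_{K,n}^2$ replaced by $-\frac{2\sigma^*}{(2\sigma^*+\alpha_{K,c_\rho})^2}\alpha_{K,c_\rho}^2$, and with the denominators of the multiplier/quadratic terms shifted to reflect the new range of $\sigma^*+\chi$.

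First I would start from \eqref{eq.T_bound_main}. The quantile $Q_{3/4,K}[R_c(\ell_{f^*},\chi,\ell_{f^*},\sigma^*)]$ is bounded using \eqref{eq.bound_Q_3_4_R>} (since $\chi > \sigma^*$) and then, taking the supremum over $\chi > \sigma^* + \alpha_{K,c_\rho}$ exactly as in Lemma~\ref{lemma.bound_supg_Fkappa_4}, I obtain the term $-\frac{2\sigma^*}{(2\sigma^*+\alpha_{K,c_\rho})^2}\alpha_{K,c_\rho}^2$, which relies on $\alpha_{K,c_\rho} > 2\delta_{K,n}/\sigma^*$ from Lemma~\ref{lemma.conditions}. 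For the penalization term $\mu(\|f^*\|-\|g\|)$, I apply Lemma~\ref{lemma.6} with $\rho=\rho_K$ to get the upper bound $-\mu\sup_{z^*\in\Gamma_{f^*}(\rho_K)} z^*(g-f^*) + \mu\rho_K/10$.

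Then, following Lemma~\ref{lemma.bound_supg_Fkappa_3}, I reparametrize by $f := f^* + \rho_K(g-f^*)/\|g-f^*\|$ and $\Upsilon := \|g-f^*\|/\rho_K > c_\rho$, so that $g-f^* = \Upsilon(f-f^*)$ and $\|f-f^*\|=\rho_K$. The analysis splits into two cases. If $\|f-f^*\|_{2,\bX} \leq r(\rho_K)$, I invoke the sparsity equation (which holds for $\rho_K \geq \rho^*$ since $K \geq K^*$) to get $\sup_{z^*} z^*(f-f^*) \geq 4\rho_K/5$, combined with $\Delta_c(\chi,\sigma^*) \leq c+2$, $\sigma^*+\chi \geq 2\sigma^*+\alpha_{K,c_\rho}$, and $Q_{3/4,K}[2\zeta(f-f^*)] \leq 4\eps r^2(\rho_K)$ from Lemma~\ref{lemma.useful_bounds_lecue}. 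This produces the first entry of the max after replacing $\Upsilon$ by $c_\rho$; to justify that replacement I need the coefficient $\frac{8(c+2)\eps}{2\sigma^*+\alpha_{K,c_\rho}} - \frac{4c_\mu\eps}{5\frakm^*}$ to be non-positive, which follows from the same chain of inequalities used in Lemma~\ref{lemma.bound_supg_Fkappa_3} via $\kappa^{*1/4}=\frakm^*/\sigma^* \leq \kappa_{+}^{1/4}$ together with condition~\eqref{ass.c_mu_lowbound} (the inequality is even easier here since $2\sigma^*+\alpha_{K,c_\rho} > 2\sigma^*-\alpha_{K,c_\rho}$, making the multiplier coefficient smaller).

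In the remaining case $\|f-f^*\|_{2,\bX} > r(\rho_K)$, I use Lemma~\ref{lemma.useful_bounds_lecue} to bound $Q_{3/4,K}[2\zeta(f-f^*)-(f-f^*)^2]$ from above by $\alpha_M^2 - (4\theta_0)^{-2}\|f-f^*\|_{2,\bX}^2 \leq (4\eps-(4\theta_0)^{-2})r^2(\rho_K)$, together with $\Delta_c(\chi,\sigma^*) \geq c-2$. Since the bracket is negative by condition~\eqref{ass.1/theta_0_lowbound_negative}, I upper-bound $\frac{1}{\sigma^*+\chi}$ by $\frac{1}{\sigma^*+\sigma_{+}}$ (taking the denominator as large as possible over the domain $\chi \leq \sigma_{+}$), which yields the second entry of the max after collapsing $\Upsilon > c_\rho$ to $c_\rho$. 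The main obstacle is, as in Lemma~\ref{lemma.bound_supg_Fkappa_3}, verifying that the coefficient of $\Upsilon$ stays non-positive in both cases: for Case~2 this is precisely condition~\eqref{ass.1/theta_0_lowbound_negative} with the denominator $\sigma^*+\sigma_{+}$ (the worst case explicitly written there), while for Case~1 it is condition~\eqref{ass.c_mu_lowbound}; the remainder of the argument is a bookkeeping exercise mirroring the preceding lemmas.
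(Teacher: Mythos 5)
Your proposal matches the paper's own proof essentially step by step: the $R_c$-quantile contribution is reduced exactly as in Lemma~\ref{lemma.bound_supg_Fkappa_4}, the $\Upsilon$-reparametrization plus the split on $\|f-f^*\|_{2,\bX}\lessgtr r(\rho_K)$ are taken verbatim from Lemma~\ref{lemma.bound_supg_Fkappa_3}, the $\sigma^*+\sigma_+$ denominator in the second case is the same move as in Lemma~\ref{lemma.bound_supg_Fkappa_5}, and the sign-checks invoke the very same conditions \eqref{ass.c_mu_lowbound} and \eqref{ass.1/theta_0_lowbound_negative}. (One wording slip: since $\chi\le\sigma_+$ implies $\sigma^*+\chi\le\sigma^*+\sigma_+$, you are \emph{lower}-bounding $1/(\sigma^*+\chi)$ by $1/(\sigma^*+\sigma_+)$ — which is the correct thing to do because the bracket it multiplies is negative, and your parenthetical makes that intent clear.)
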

\begin{proof}[Proof of Lemma~\ref{lemma.bound_supg_Fkappa_6}]
    Let $(g, \chi) \in \mF^{(c_\rho)}_6.$ The space $\mF^{(c_\rho)}_6$ shares with $\mF^{(c_\rho)}_3$ the condition $\|g-f^*\|>c_\rho \rho_K,$ and with $\mF^{(c_\rho)}_5$ the condition $\chi > \sigma^* + \alpha_{K,c_\rho}.$ By arguing as in the proofs of Lemma~\ref{lemma.bound_supg_Fkappa_3} and Lemma~\ref{lemma.bound_supg_Fkappa_5}, we find
    \begin{align*}
        T_{K, \mu} (g, \chi, f^*, \sigma^*)
        &\leq -\frac{2\sigma^*}{(2\sigma^*+\alpha_{K,c_\rho})^2} \alpha_{K,c_\rho}^2 + \frac{2\Upsilon \Delta_c(\chi,\sigma^*)}{\sigma^*+\chi} Q_{3/4,K}\big[ 2\zeta(f-f^*)-(f-f^*)^2 \big]  \\
        &\quad - \mu \Upsilon \sup_{z^* \in \Gamma_{f^*}(\rho_K)} z^* (f - f^*) 
        + \frac{\mu \rho_K}{10},
    \end{align*}
    with the function $f=f^* + \rho_K(g-f^*)/\|g-f^*\|$ and the quantity $\Upsilon = \|g-f^*\|/\rho_K,$ as in the proof of Lemma~\ref{lemma.bound_supg_Fkappa_3}. By following the same argument, we split the cases $\|f - f^*\|_{2,\bX} \leq r(\rho_K)$ and $\|f - f^*\|_{2,\bX} > r(\rho_K).$
    
    We start with $\|f - f^*\|_{2,\bX} \leq r(\rho_K).$ We find,
    \begin{align*}
        - \mu \sup_{z^* \in \Gamma_{f^*}(\rho_K)} z^* (f - f^*)
        \leq - \frac{4 c_\mu \eps}{5 \frakm^*} r^2(\rho_K).
    \end{align*}
    Combining this the fact that $(f - f^*)^2 \geq 0,$ we get
    \begin{align*}
        T_{K, \mu} (g, \chi, f^*, \sigma^*) 
        &\leq -\frac{2\sigma^*}{(2\sigma^*+\alpha_{K,c_\rho})^2} \alpha_{K,c_\rho}^2 + \Upsilon \bigg( \frac{ 8(c+2)\eps }{2\sigma^*+\alpha_{K,c_\rho}} - \frac{4 c_\mu \eps}{5 \frakm^*}\bigg) r^2(\rho_K)
        + \frac{\mu \rho_K}{10} \\
        &\leq -\frac{2\sigma^*}{(2\sigma^*+\alpha_{K,c_\rho})^2} \alpha_{K,c_\rho}^2 + c_\rho \bigg( \frac{ 8(c+2)\eps }{2\sigma^*+\alpha_{K,c_\rho}} - \frac{4 c_\mu \eps}{5 \frakm^*}\bigg) r^2(\rho_K)
        + \frac{c_\mu\eps}{10\frakm^*} r^2(\rho_K),
    \end{align*}
    using that the quantity multiplied by $\Upsilon$ is negative by condition~\eqref{ass.c_mu_lowbound}, and $\Upsilon > c_\rho.$ This concludes the first part of the proof.
    
    We now consider $\|f - f^*\|_{2,\bX} > r(\rho_K).$ We have,
    \begin{align*}
        T_{K, \mu} &(g, \chi, f^*, \sigma^*) \\
        &\leq -\frac{2\sigma^*}{(2\sigma^*+\alpha_{K,c_\rho})^2} \alpha_{K,c_\rho}^2 + \Upsilon \bigg(2(c-2)\frac{4\eps - (4\theta_0)^{-2}}{\sigma^*+\sigma_{+}} r^2(\rho_K)  + \mu\rho_K \bigg) + \frac{\mu \rho_K}{10} \\
        &\leq -\frac{2\sigma^*}{(2\sigma^*+\alpha_{K,c_\rho})^2} \alpha_{K,c_\rho}^2 + c_\rho \bigg(2(c-2)\frac{4\eps - (4\theta_0)^{-2}}{\sigma^*+\sigma_{+}}  + \frac{c_\mu\eps}{\frakm^*} \bigg)r^2(\rho_K) + \frac{c_\mu\eps}{10\frakm^*}r^2(\rho_K),
    \end{align*}
    using that the quantity multiplied by $\Upsilon$ is negative by condition~\eqref{ass.1/theta_0_lowbound_negative}, and $\Upsilon > c_\rho.$ This concludes the proof.
\end{proof}

\begin{lem}\label{lemma.bound_supg_Fkappa_7}
    On the event $\Omega(K),$ for all $c_\rho \in \{1, 2\},$ the supremum of $T_{K, \mu}(g, \chi, f^*, \sigma^*)$ over the set
    \begin{align*}
        \mF^{(c_\rho)}_7 &:= \{ (g, \chi) \in \mF \times I_{+} :
        \|g - f^*\| \leq c_\rho \rho_K,
        \|g - f^*\|_{2,\bX} \leq r(c_\rho \rho_K),\ 
        \chi < \sigma^* - \alpha_{K,c_\rho}
        \},
    \end{align*}
    is bounded above by 
    \begin{align*}
        B_{7,c_\rho} 
        &
        := -\frac{2\sigma^*}{(2\sigma^*-\alpha_{K,c_\rho})^2} \alpha_{K,c_\rho}^2 + \frac{8(c+2)\eps}{\sigma^*} r^2(c_\rho \rho_K) + \frac{c_\mu\eps c_\rho}{\frakm^*} r^2(\rho_K).
    \end{align*}
\end{lem}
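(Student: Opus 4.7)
The approach mirrors the strategy of Lemma~\ref{lemma.bound_supg_Fkappa_1} (same constraints on $g$) combined with the endpoint-evaluation trick from Lemma~\ref{lemma.bound_supg_Fkappa_4} (same $\chi$-regime structure). Starting from~\eqref{eq.T_bound_main}, the constraints $\|g-f^*\|\le c_\rho\rho_K$ and $\|g-f^*\|_{2,\bX}\le r(c_\rho\rho_K)$ let me drop $-(g-f^*)^2\le 0$, apply Lemma~\ref{lemma.useful_bounds_lecue} to get $Q_{3/4,K}[2\zeta(g-f^*)]\le\alpha_M^2\le 4\eps r^2(c_\rho\rho_K)$, and bound the penalization by $\mu c_\rho\rho_K=(c_\mu\eps c_\rho/\frakm^*)r^2(\rho_K)$. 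The new ingredient in this slice is that we still have $\chi>0$ (no lower bound $\sigma^*-\alpha_{K,c_\rho}$ on $\chi$ itself); so in the coefficient $2\Delta_c(\chi,\sigma^*)/(\sigma^*+\chi)$ in front of the multiplier term I would use $\Delta_c(\chi,\sigma^*)\le c+2$ and the trivial $\sigma^*+\chi\ge\sigma^*$, producing exactly the factor $8(c+2)\eps/\sigma^*$ appearing in $B_{7,c_\rho}$.

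The substantive work is in controlling $Q_{3/4,K}[R_c(\ell_{f^*},\chi,\ell_{f^*},\sigma^*)]$. Since $\chi<\sigma^*$, I invoke~\eqref{eq.bound_Q_3_4_R<} and must estimate
\[
    \sup_{\chi\in(0,\,\sigma^*-\alpha_{K,c_\rho}]}(\sigma^*-\chi)\Bigl(1-\frac{4(\sigma^{*2}-\delta_{K,n})}{(\sigma^*+\chi)^2}\Bigr).
\]
Substituting $u=\sigma^*+\chi$ and $A=4(\sigma^{*2}-\delta_{K,n})$, the derivative with respect to $\chi$ has numerator $4\sigma^*A-Au-u^3$, and a direct cubic analysis shows there is a unique critical point at $u\approx 2\sigma^*-\delta_{K,n}/(2\sigma^*)$ (to leading order in $\delta_{K,n}/\sigma^{*2}$), corresponding to $\chi\approx\sigma^*-\delta_{K,n}/(2\sigma^*)$. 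Since Lemma~\ref{lemma.conditions} yields $\alpha_{K,c_\rho}>4\delta_{K,n}/\sigma^*$, our upper endpoint $\sigma^*-\alpha_{K,c_\rho}$ sits well to the left of the critical point, so the function is increasing on the whole range and the supremum is attained at $\chi=\sigma^*-\alpha_{K,c_\rho}$. Evaluating there gives
\[
    \frac{\alpha_{K,c_\rho}\bigl(\alpha_{K,c_\rho}^{2}-4\sigma^{*}\alpha_{K,c_\rho}+4\delta_{K,n}\bigr)}{(2\sigma^{*}-\alpha_{K,c_\rho})^{2}},
\]
and applying $\alpha_{K,c_\rho}<\sigma^*$ (so $\alpha_{K,c_\rho}^2<\sigma^*\alpha_{K,c_\rho}$) together with $4\delta_{K,n}<\sigma^*\alpha_{K,c_\rho}$ (both from Lemma~\ref{lemma.conditions}) shows the numerator is at most $-2\sigma^*\alpha_{K,c_\rho}^2$, delivering the stated first term $-\frac{2\sigma^*}{(2\sigma^*-\alpha_{K,c_\rho})^2}\alpha_{K,c_\rho}^2$.

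The main obstacle is precisely the monotonicity claim, which is conceptually analogous to the one used in Lemma~\ref{lemma.bound_supg_Fkappa_4} but slightly more delicate here because the admissible interval $(0,\sigma^*-\alpha_{K,c_\rho}]$ is bounded on both sides and the supremum value is negative (it is the boundary effect, not a global max, that drives the estimate). The factor-$4$ margin in $\alpha_{K,c_\rho}>4\delta_{K,n}/\sigma^*$ from~\eqref{eq.alpha_K_2delta_K} is exactly what furnishes both the monotonicity on the restricted domain and the factor-$2$ loss when passing from the exact endpoint value to the clean bound $-2\sigma^*\alpha_{K,c_\rho}^2/(2\sigma^*-\alpha_{K,c_\rho})^2$.
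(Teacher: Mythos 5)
Your proof is correct and follows the same outline as the paper: reduce via~\eqref{eq.T_bound_main} to bounding the three pieces, use $(g-f^*)^2 \geq 0$ together with Lemma~\ref{lemma.useful_bounds_lecue} and $\Delta_c \leq c+2$, $\sigma^*+\chi \geq \sigma^*$ for the multiplier term, and then reduce the $R_c(\ell_{f^*},\chi,\ell_{f^*},\sigma^*)$-quantile to the scalar supremum over $\chi \in (0,\sigma^*-\alpha_{K,c_\rho}]$ via~\eqref{eq.bound_Q_3_4_R<}, which you evaluate at the right endpoint and bound using $\alpha_{K,c_\rho} < \sigma^*$ and $4\delta_{K,n}/\sigma^* < \alpha_{K,c_\rho}$. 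Your observation that one needs an argument for why the supremum sits at the endpoint (the paper writes an unexplained ``$=$'') is a genuine sharpening, but the way you establish it is only heuristic: the ``leading order in $\delta_{K,n}/\sigma^{*2}$'' expansion shows the critical point is approximately $\sigma^* - \delta_{K,n}/(2\sigma^*)$, but does not by itself prove that the endpoint lies strictly to its left for all admissible parameters. To make it airtight, note that $h'(\chi) = -1 + A(3\sigma^*-\chi)/(\sigma^*+\chi)^3$ with $A = 4(\sigma^{*2}-\delta_{K,n})$ is a strictly decreasing function of $\chi$ on $(0,\sigma^*)$, so it suffices to verify $h'(\sigma^*-\alpha_{K,c_\rho}) > 0$, which reduces (with $a = \alpha_{K,c_\rho}/\sigma^*$, $\delta = \delta_{K,n}/\sigma^{*2}$) to $4(1-\delta)(2+a) > (2-a)^3$; the left side is increasing and the right side decreasing in $a$, and at the smallest admissible $a = 4\delta$ the difference equals $8\delta(7-14\delta+8\delta^2) > 0$ since the quadratic in $\delta$ has negative discriminant. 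With that replacement your argument is complete and in fact more explicit than the paper's.
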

\begin{proof}[Proof of Lemma~\ref{lemma.bound_supg_Fkappa_7}]
    Let $(g, \chi) \in \mF^{(c_\rho)}_7.$ The space $\mF^{(c_\rho)}_7$  shares with $\mF^{(c_\rho)}_1$ the conditions $\|g-f^*\|\leq c_\rho \rho_K$ and $\|g-f^*\|_{2,\bX}\leq r(c_\rho \rho_K).$ By arguing as in the proof of Lemma~\ref{lemma.bound_supg_Fkappa_1} and using $\chi < \sigma^* - \alpha_{K,c_\rho},$ from~\eqref{eq.bound_Q_3_4_R<} we get
    \begin{align*}
        T_{K, \mu} &(g, \chi, f^*, \sigma^*) \\
        &\leq \sup_{\chi<\sigma^*-\alpha_{K,c_\rho}} (\sigma^*-\chi) \Big( 1 - \frac{4(\sigma^{*2}-\delta_{K,n})}{(\sigma^*+\chi)^2} \Big) + \frac{8(c+2)\eps}{\sigma^*} r^2(c_\rho \rho_K) + \frac{c_\mu\eps c_\rho}{\frakm^*} r^2(\rho_K) \\
        &= -\alpha_{K,c_\rho} \Big(\frac{4(\sigma^{*2}-\delta_{K,n})}{(2\sigma^*-\alpha_{K,c_\rho})^2} - 1 \Big) + \frac{8(c+2)\eps}{\sigma^*} r^2(c_\rho \rho_K) + \frac{c_\mu\eps c_\rho}{\frakm^*} r^2(\rho_K).
    \end{align*}
    Since $4\delta_{K,n}/\sigma^* < \alpha_{K,c_\rho} < \sigma^*$ by Lemma~\ref{lemma.conditions}, we find
    \begin{align*}
        \frac{4(\sigma^{*2}-\delta_{K,n})}{(2\sigma^*-\alpha_{K,c_\rho})^2} - 1 = \frac{4\sigma^*\alpha_{K,c_\rho} - 4\delta_{K,n} - \alpha_{K,c_\rho}^2}{(2\sigma^*-\alpha_{K,c_\rho})^2} > \frac{2\sigma^*\alpha_{K,c_\rho}}{(2\sigma^*-\alpha_{K,c_\rho})^2},
    \end{align*}
    and
    \begin{align*}
        T_{K, \mu} (g, \chi, f^*, \sigma^*) \leq -\frac{2\sigma^*}{(2\sigma^*-\alpha_{K,c_\rho})^2} \alpha_{K,c_\rho}^2 + \frac{8(c+2)\eps}{\sigma^*} r^2(c_\rho \rho_K) + \frac{c_\mu\eps c_\rho}{\frakm^*} r^2(\rho_K),
    \end{align*}
    which is sufficient to conclude.
\end{proof}

\begin{lem}\label{lemma.bound_supg_Fkappa_8}
    On the event $\Omega(K),$ for all $c_\rho \in \{1, 2\},$ the supremum of $T_{K, \mu}(g, \chi, f^*, \sigma^*)$ over the set
    \begin{align*}
        \mF^{(c_\rho)}_8 &:= \{ (g, \chi) \in \mF \times I_{+} :
        \|g - f^*\| \leq c_\rho \rho_K,
        \|g - f^*\|_{2,\bX} > r(c_\rho \rho_K),\ 
        \chi < \sigma^* - \alpha_{K,c_\rho}
        \},
    \end{align*}
    is bounded above by 
    \begin{align*}
        B_{8,c_\rho} 
        &
        := -\frac{2\sigma^*}{(2\sigma^*-\alpha_{K,c_\rho})^2} \alpha_{K,c_\rho}^2 + 2(c-2) \frac{4\eps - (4\theta_0)^{-2}}{2\sigma^*-\alpha_{K,c_\rho}} r^2(c_\rho \rho_K)  + \frac{c_\mu\eps c_\rho}{\frakm^*}r^2(\rho_K).
    \end{align*}
\end{lem}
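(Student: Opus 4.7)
\textbf{Proof plan for Lemma~\ref{lemma.bound_supg_Fkappa_8}.}
The set $\mF^{(c_\rho)}_8$ combines three constraints that have already been treated in isolation: $\|g-f^*\| \leq c_\rho \rho_K$ appears in Lemma~\ref{lemma.bound_supg_Fkappa_1}, $\|g-f^*\|_{2,\bX} > r(c_\rho\rho_K)$ is the defining feature of Lemma~\ref{lemma.bound_supg_Fkappa_2}, and $\chi < \sigma^* - \alpha_{K,c_\rho}$ is the small-$\chi$ regime of Lemma~\ref{lemma.bound_supg_Fkappa_7}. The plan is to start from the master bound \eqref{eq.T_bound_main} and handle each of the three resulting pieces by the argument already used in the appropriate lemma, then just collect terms.

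\textbf{Step 1: the quadratic-in-$\chi$ term.} For $Q_{3/4,K}\big[R_c(\ell_{f^*},\chi,\ell_{f^*},\sigma^*)\big]$ I apply bound~\eqref{eq.bound_Q_3_4_R<} since $\chi < \sigma^*$ on $\mF^{(c_\rho)}_8$. Then, exactly as in the proof of Lemma~\ref{lemma.bound_supg_Fkappa_7}, the supremum over $\chi \in (0, \sigma^* - \alpha_{K,c_\rho})$ is attained in the limit $\chi \to \sigma^* - \alpha_{K,c_\rho}$, and using $\alpha_{K,c_\rho} > 4\delta_{K,n}/\sigma^*$ from Lemma~\ref{lemma.conditions} together with the identity
\begin{align*}
    \frac{4(\sigma^{*2}-\delta_{K,n})}{(2\sigma^*-\alpha_{K,c_\rho})^2} - 1
    = \frac{4\sigma^*\alpha_{K,c_\rho} - 4\delta_{K,n} - \alpha_{K,c_\rho}^2}{(2\sigma^*-\alpha_{K,c_\rho})^2}
    > \frac{2\sigma^*\alpha_{K,c_\rho}}{(2\sigma^*-\alpha_{K,c_\rho})^2},
\end{align*}
this piece is bounded by $-\frac{2\sigma^*}{(2\sigma^*-\alpha_{K,c_\rho})^2}\alpha_{K,c_\rho}^2$.

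\textbf{Step 2: the multiplier-quadratic term.} For $\tfrac{2\Delta_c(\chi,\sigma^*)}{\sigma^*+\chi}\,Q_{3/4,K}[2\zeta(g-f^*)-(g-f^*)^2]$, I copy the argument of Lemma~\ref{lemma.bound_supg_Fkappa_2}: by Lemma~\ref{lemma.useful_bounds_lecue},
\begin{align*}
    Q_{3/4,K}\big[2\zeta(g-f^*)-(g-f^*)^2\big]
    \leq \alpha_M^2 - (4\theta_0)^{-2}\|g-f^*\|_{2,\bX}^2
    \leq \big(4\eps - (4\theta_0)^{-2}\big)\, r^2(c_\rho\rho_K),
\end{align*}
where the last step uses $\|g-f^*\|_{2,\bX} > r(c_\rho\rho_K)$ and $\alpha_M^2 \leq 4\eps r^2(c_\rho\rho_K)$; this quantity is negative by condition~\eqref{ass.1/theta_0_lowbound_negative}. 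The new wrinkle here is the choice of bound on the positive prefactor. Since the bracket is negative, an upper bound on the product requires a \emph{lower} bound on $\tfrac{2\Delta_c(\chi,\sigma^*)}{\sigma^*+\chi}$. I use $\Delta_c(\chi,\sigma^*) \geq c-2$ as in the other lemmas, and for $(\sigma^*+\chi)^{-1}$ I take the largest possible $\chi$ in $\mF^{(c_\rho)}_8$, namely $\chi \to \sigma^* - \alpha_{K,c_\rho}$, giving $(\sigma^*+\chi)^{-1} \geq (2\sigma^*-\alpha_{K,c_\rho})^{-1}$. Together this contributes $2(c-2)\frac{4\eps - (4\theta_0)^{-2}}{2\sigma^*-\alpha_{K,c_\rho}}\,r^2(c_\rho\rho_K)$.

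\textbf{Step 3: the penalization term and assembly.} From $\|g-f^*\| \leq c_\rho\rho_K$ and the triangle inequality, $\mu(\|f^*\|-\|g\|) \leq \mu c_\rho \rho_K = \frac{c_\mu\eps c_\rho}{\frakm^*}\,r^2(\rho_K)$ by the choice of $\mu$. Summing the three contributions from Steps 1--3 yields exactly $B_{8,c_\rho}$. The only genuinely delicate point is the direction of the inequality in Step 2 (small $\chi$ combined with a negative bracket), which is why I take the supremum of $(\sigma^*+\chi)^{-1}$ at the opposite endpoint compared to Lemma~\ref{lemma.bound_supg_Fkappa_2}; everything else is a bookkeeping exercise.
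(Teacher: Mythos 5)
Your proof is correct and follows the same approach as the paper, which handles this lemma by simply pointing to Lemmas~\ref{lemma.bound_supg_Fkappa_1}, \ref{lemma.bound_supg_Fkappa_2} and \ref{lemma.bound_supg_Fkappa_7} with the phrase ``by arguing as in the proofs of\ldots''; making that delegation explicit, as you do, is exactly the right thing. Step~1 matches the treatment in Lemma~\ref{lemma.bound_supg_Fkappa_7}; Step~2 correctly reproduces the bound $\alpha_M^2 - (4\theta_0)^{-2}\|g-f^*\|_{2,\bX}^2 \leq (4\eps-(4\theta_0)^{-2})r^2(c_\rho\rho_K)$ from Lemma~\ref{lemma.bound_supg_Fkappa_2} and, importantly, gets the sign logic right: because the bracket is negative (by condition~\eqref{ass.1/theta_0_lowbound_negative}), you need a \emph{lower} bound on the positive prefactor $\tfrac{2\Delta_c}{\sigma^*+\chi}$, which forces $\Delta_c \geq c-2$ and $(\sigma^*+\chi)^{-1} \geq (2\sigma^*-\alpha_{K,c_\rho})^{-1}$ (the latter from $\chi < \sigma^*-\alpha_{K,c_\rho}$), giving the denominator $2\sigma^*-\alpha_{K,c_\rho}$ that appears in $B_{8,c_\rho}$ in place of the $2\sigma^*+\alpha_{K,c_\rho}$ of $B_{2,c_\rho}$. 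Step~3 is a standard penalty bound. The one wording slip is in your closing sentence: you do not take the \emph{supremum} of $(\sigma^*+\chi)^{-1}$ nor the ``opposite'' endpoint---in both Lemma~\ref{lemma.bound_supg_Fkappa_2} and here the lower bound on $(\sigma^*+\chi)^{-1}$ comes from pushing $\chi$ to the largest admissible value; what changes is the admissible interval ($|\chi-\sigma^*|\leq\alpha_{K,c_\rho}$ there, $\chi<\sigma^*-\alpha_{K,c_\rho}$ here), and that is what shifts the denominator. The arithmetic you carry out is nevertheless exactly right, and the assembled bound matches $B_{8,c_\rho}$ term by term.
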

\begin{proof}[Proof of Lemma~\ref{lemma.bound_supg_Fkappa_8}]
    Let $(g, \chi) \in \mF^{(c_\rho)}_8.$ The space $\mF^{(c_\rho)}_8$ shares with $\mF^{(c_\rho)}_1$ the condition $\|g-f^*\|\leq c_\rho \rho_K,$ with $\mF^{(c_\rho)}_2$ the condition $\|g-f^*\|>r(c_\rho \rho_K),$ and with $\mF^{(c_\rho)}_7$ the condition $\chi < \sigma^* - \alpha_{K,c_\rho}.$ By arguing as in the proofs of Lemma~\ref{lemma.bound_supg_Fkappa_1}, Lemma~\ref{lemma.bound_supg_Fkappa_2} and Lemma~\ref{lemma.bound_supg_Fkappa_7}, one finds
    \begin{align*}
        T_{K, \mu} (g, \chi, f^*, \sigma^*) 
        \leq -\frac{2\sigma^*}{(2\sigma^*-\alpha_{K,c_\rho})^2} \alpha_{K,c_\rho}^2 + 2(c-2) \frac{4\eps - (4\theta_0)^{-2}}{2\sigma^*-\alpha_{K,c_\rho}} r^2(c_\rho \rho_K)  + \frac{c_\mu\eps c_\rho}{\frakm^*}r^2(\rho_K),
    \end{align*}
    which concludes the proof.
\end{proof}

\begin{lem}\label{lemma.bound_supg_Fkappa_9}
    On the event $\Omega(K),$ for all $c_\rho \in \{1, 2\},$ the supremum of $T_{K, \mu}(g, \chi, f^*, \sigma^*)$ over the set
    \begin{align*}
        \mF^{(c_\rho)}_9 &:= \{ (g, \chi) \in \mF \times I_{+} :
        \|g - f^*\| > c_\rho \rho_K,\ 
        \chi < \sigma^* - \alpha_{K,c_\rho}
        \},
    \end{align*}
    is bounded above by 
    \begin{align*}
        B_{9,c_\rho} 
        &
        := \max \Bigg\{ -\frac{2\sigma^*}{(2\sigma^*-\alpha_{K,c_\rho})^2} \alpha_{K,c_\rho}^2 + \bigg( \frac{ 8(c+2)\eps c_\rho }{\sigma^*} - \frac{4 c_\mu \eps c_\rho}{5 \frakm^*} + \frac{c_\mu\eps}{10\frakm^*} \bigg) r^2(\rho_K), 
         \\
        &\hspace{1.5cm}
         -\frac{2\sigma^*}{(2\sigma^*-\alpha_{K,c_\rho})^2} \alpha_{K,c_\rho}^2 + c_\rho \bigg(2(c-2)\frac{4\eps - (4\theta_0)^{-2}}{2\sigma^*-\alpha_{K,c_\rho}}  + \frac{c_\mu\eps}{\frakm^*} \bigg)r^2(\rho_K) + \frac{c_\mu\eps}{10\frakm^*}r^2(\rho_K) \Bigg\}.
    \end{align*}
\end{lem}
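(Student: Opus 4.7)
The set $\mF^{(c_\rho)}_9$ combines the constraint $\|g-f^*\| > c_\rho\rho_K$ (shared with $\mF^{(c_\rho)}_3$ of Lemma~\ref{lemma.bound_supg_Fkappa_3}) with $\chi < \sigma^* - \alpha_{K,c_\rho}$ (shared with $\mF^{(c_\rho)}_7$ of Lemma~\ref{lemma.bound_supg_Fkappa_7}), so the plan is to amalgamate those two arguments, in the same spirit in which Lemma~\ref{lemma.bound_supg_Fkappa_6} combines Lemmas~\ref{lemma.bound_supg_Fkappa_3} and~\ref{lemma.bound_supg_Fkappa_4}. The two resulting branches of the maximum defining $B_{9,c_\rho}$ correspond, respectively, to the sparsity-equation case and to the large-$\|f-f^*\|_{2,\bX}$ case of the rescaling trick.

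Starting from the decomposition~\eqref{eq.T_bound_main}, the pivot quantile $Q_{3/4,K}[R_c(\ell_{f^*},\chi,\ell_{f^*},\sigma^*)]$ is bounded by $-2\sigma^*\alpha_{K,c_\rho}^2/(2\sigma^*-\alpha_{K,c_\rho})^2$ via~\eqref{eq.bound_Q_3_4_R<}, exactly as in Lemma~\ref{lemma.bound_supg_Fkappa_7}. To handle the unbounded constraint on $g$, I proceed as in Lemma~\ref{lemma.bound_supg_Fkappa_3}: apply Lemma~\ref{lemma.6} with $\rho=\rho_K$ to replace $\mu(\|f^*\|-\|g\|)$ by $-\mu\sup_{z^*\in\Gamma_{f^*}(\rho_K)}z^*(g-f^*)+\mu\rho_K/10$, introduce $f := f^*+\rho_K(g-f^*)/\|g-f^*\|$ and $\Upsilon := \|g-f^*\|/\rho_K>c_\rho\ge 1$, and pull an $\Upsilon$ out of the multiplier-process term via the pointwise inequality $\,2\zeta(g-f^*)-(g-f^*)^2\le \Upsilon\bigl(2\zeta(f-f^*)-(f-f^*)^2\bigr)$, valid because $\Upsilon\ge 1$ gives $\Upsilon^2(f-f^*)^2\ge \Upsilon(f-f^*)^2$ pointwise, hence also for the $3/4$-quantile.

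Now split on $\|f-f^*\|_{2,\bX}$. In the sparsity regime $f\in H_{\rho_K}$, the sparsity equation yields $\sup_{z^*\in\Gamma_{f^*}(\rho_K)}z^*(f-f^*)\ge 4\rho_K/5$; dropping the non-negative $(f-f^*)^2$, applying Lemma~\ref{lemma.useful_bounds_lecue} to get $Q_{3/4,K}[2\zeta(f-f^*)]\le 4\eps\,r^2(\rho_K)$, using $\Delta_c\le c+2$ and the bound $1/(\sigma^*+\chi)<1/\sigma^*$ (valid because $\chi>0$; this is precisely the factor characteristic of the $\chi<\sigma^*-\alpha_{K,c_\rho}$ regime, as in Lemma~\ref{lemma.bound_supg_Fkappa_7}), produces the coefficient $8(c+2)\eps/\sigma^*-4c_\mu\eps/(5\frakm^*)$, which is negative by~\eqref{ass.c_mu_lowbound}; then $\Upsilon\ge c_\rho$ upgrades to the first branch of $B_{9,c_\rho}$. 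In the complementary regime, Lemma~\ref{lemma.useful_bounds_lecue} gives $Q_{3/4,K}[2\zeta(f-f^*)-(f-f^*)^2]\le(4\eps-(4\theta_0)^{-2})r^2(\rho_K)<0$ by~\eqref{ass.1/theta_0_lowbound_negative}; combining this negativity with $\Delta_c\ge c-2$ and the infimum $1/(\sigma^*+\chi)\ge 1/(2\sigma^*-\alpha_{K,c_\rho})$ over $\chi<\sigma^*-\alpha_{K,c_\rho}$, bounding the subdifferential term trivially by $\Upsilon\mu\rho_K$, and using $\mu\rho_K=(c_\mu\eps/\frakm^*)r^2(\rho_K)$, the bracket $2(c-2)(4\eps-(4\theta_0)^{-2})/(2\sigma^*-\alpha_{K,c_\rho})+c_\mu\eps/\frakm^*$ is seen to be negative by reducing it via $2\sigma^*-\alpha_{K,c_\rho}\le\sigma^*+\sigma_+$ to~\eqref{ass.1/theta_0_lowbound_negative}; then $\Upsilon\ge c_\rho$ produces the second branch. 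Adding in both cases the residual $\mu\rho_K/10=c_\mu\eps\,r^2(\rho_K)/(10\frakm^*)$ and taking the maximum completes the proof.

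The only delicate point is the sign-tracking when selecting the worst-case value of $\sigma^*+\chi$: in Case~1 the positive multiplier term is maximised as $\chi\downarrow 0$, giving the factor $1/\sigma^*$, whereas in Case~2 the negative multiplier term is maximised as $\chi\uparrow\sigma^*-\alpha_{K,c_\rho}$, giving the factor $1/(2\sigma^*-\alpha_{K,c_\rho})$. This bookkeeping is exactly what distinguishes Lemma~\ref{lemma.bound_supg_Fkappa_9} from Lemma~\ref{lemma.bound_supg_Fkappa_6}; no substantively new technique is required beyond the two constituent lemmas.
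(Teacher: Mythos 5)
Your proof is correct and takes essentially the same route as the paper: the paper also proves Lemma~\ref{lemma.bound_supg_Fkappa_9} by combining the $\chi<\sigma^*-\alpha_{K,c_\rho}$ bookkeeping of Lemma~\ref{lemma.bound_supg_Fkappa_7} with the rescaling/sparsity-equation argument of Lemma~\ref{lemma.bound_supg_Fkappa_6}, splitting on whether $\|f-f^*\|_{2,\bX}$ is above or below $r(\rho_K)$ and invoking conditions~\eqref{ass.c_mu_lowbound} and~\eqref{ass.1/theta_0_lowbound_negative} to make the $\Upsilon$-multiplied coefficients negative before replacing $\Upsilon$ by $c_\rho$. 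You have in fact been slightly more explicit than the paper about the pointwise inequality that justifies pulling $\Upsilon$ out of the quantile, and about which endpoint of $\sigma^*+\chi$ is the worst case in each branch, which is a welcome clarification.
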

\begin{proof}[Proof of Lemma~\ref{lemma.bound_supg_Fkappa_9}]
    Let $(g, \chi) \in \mF^{(c_\rho)}_9.$ The space $\mF^{(c_\rho)}_9$ shares with $\mF^{(c_\rho)}_6$ the condition $\|g-f^*\|>c_\rho \rho_K,$ and with $\mF^{(c_\rho)}_7$ the condition $\chi < \sigma^* - \alpha_{K,c_\rho}.$ By arguing as in the proofs of Lemma~\ref{lemma.bound_supg_Fkappa_6} and Lemma~\ref{lemma.bound_supg_Fkappa_7}, we get
    \begin{align*}
        T_{K, \mu} (g, \chi, f^*, \sigma^*)
        &\leq -\frac{2\sigma^*}{(2\sigma^*-\alpha_{K,c_\rho})^2} \alpha_{K,c_\rho}^2 + \frac{2\Upsilon \Delta_c(\chi,\sigma^*)}{\sigma^*+\chi} Q_{3/4,K}\big[ 2\zeta(f-f^*)-(f-f^*)^2 \big]  \\
        &\quad - \mu \Upsilon \sup_{z^* \in \Gamma_{f^*}(\rho_K)} z^* (f - f^*)
        + \frac{\mu \rho_K}{10},
    \end{align*}
    with the function $f=f^* + \rho_K(g-f^*)/\|g-f^*\|$ and the quantity $\Upsilon = \|g-f^*\|/\rho_K.$ We now split the cases $\|f - f^*\|_{2,\bX} \leq r(\rho_K)$ and $\|f - f^*\|_{2,\bX} > r(\rho_K).$
    
    For $\|f - f^*\|_{2,\bX} \leq r(\rho_K),$ we find
    \begin{align*}
        - \mu \sup_{z^* \in \Gamma_{f^*}(\rho_K)} z^* (f - f^*)
        \leq - \frac{4 c_\mu \eps}{5 \frakm^*} r^2(\rho_K),
    \end{align*}
    which we combine with the fact that $(f - f^*)^2 \geq 0,$ this gives
    \begin{align*}
        T_{K, \mu} (g, \chi, f^*, \sigma^*) 
        &\leq -\frac{2\sigma^*}{(2\sigma^*-\alpha_{K,c_\rho}^2)^2} \alpha_{K,c_\rho}^2 + \Upsilon \bigg( \frac{ 8(c+2)\eps }{\sigma^*} - \frac{4 c_\mu \eps}{5 \frakm^*}\bigg) r^2(\rho_K)
        + \frac{\mu \rho_K}{10} \\
        &\leq -\frac{2\sigma^*}{(2\sigma^*-\alpha_{K,c_\rho}^2)^2} \alpha_{K,c_\rho}^2 + c_\rho \bigg( \frac{ 8(c+2)\eps }{\sigma^*} - \frac{4 c_\mu \eps}{5 \frakm^*}\bigg) r^2(\rho_K)
        + \frac{c_\mu\eps}{10\frakm^*} r^2(\rho_K),
    \end{align*}
    using that the quantity multiplied by $\Upsilon$ is negative by condition~\eqref{ass.c_mu_lowbound}, and $\Upsilon > c_\rho.$ This concludes the first part of the proof.
    
    We now consider the case $\|f - f^*\|_{2,\bX} > r(\rho_K).$ We find
    \begin{align*}
        T_{K, \mu} &(g, \chi, f^*, \sigma^*) \\
        &\leq -\frac{2\sigma^*}{(2\sigma^*-\alpha_{K,c_\rho})^2} \alpha_{K,c_\rho}^2 + \Upsilon \bigg(2(c-2)\frac{4\eps - (4\theta_0)^{-2}}{2\sigma^*-\alpha_{K,c_\rho}} r^2(\rho_K)  + \mu\rho_K \bigg) + \frac{\mu \rho_K}{10} \\
        &\leq -\frac{2\sigma^*}{(2\sigma^*-\alpha_{K,c_\rho})^2} \alpha_{K,c_\rho}^2 + c_\rho \bigg(2(c-2)\frac{4\eps - (4\theta_0)^{-2}}{2\sigma^*-\alpha_{K,c_\rho}}  + \frac{c_\mu\eps}{\frakm^*} \bigg)r^2(\rho_K) + \frac{c_\mu\eps}{10\frakm^*}r^2(\rho_K),
    \end{align*}
    using that the quantity multiplied by $\Upsilon$ is negative by condition~\eqref{ass.1/theta_0_lowbound_negative}, and $\Upsilon > c_\rho.$ This concludes the proof.
\end{proof}

\subsection{Comparison between the bounds}

This section compares the bounds $B_{1,c_\rho},\ldots,B_{9,c_\rho}$ found above. We show that, for $c_\rho=1,$ the quantity $B_{1,1}$ dominates the bounds $B_{i,1}$ on the slices $i=2,\ldots,9.$ Furthermore, for $c_\rho=2,$ the negative quantity $-B_{1,1}$ is also bigger than any other bound $B_{i,2}$ on the slices $i=2,\ldots,9.$ This implicitly shows that the bounds $B_{i,2}$ are negative and bounded away from zero, if $i\neq1.$

\begin{lem}\label{lemma.comparison_bounds_B_i}
    We have $B_{1,1} = \max_{i=1, \dots, 9} B_{i,1}$
    and $- B_{1,1} > \max_{i=2, \dots, 9} B_{i,2}.$
\end{lem}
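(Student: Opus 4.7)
The statement splits into two parts and both amount to verifying quantitative algebraic inequalities between the explicit expressions for $B_{i,c_\rho}$, using the sufficient conditions~\eqref{ass.n_big}--\eqref{ass.1/theta_0_lowbound_2} collected in Lemma~\ref{lemma.conditions} (in particular $\alpha_{K,c_\rho}<\sigma^*$, $4\delta_{K,n}/\sigma^*<\alpha_{K,c_\rho}$, $4\eps<(4\theta_0)^{-2}$, and the lower bounds on $c_\alpha^2$ and $c_\mu$). The overall strategy is to compute the differences $B_{i,c_\rho}-B_{1,1}$ (respectively $B_{i,2}+B_{1,1}$) term by term and isolate a dominating sign-definite contribution.

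\textbf{Step 1: $B_{1,1}=\max_{i=1,\ldots,9}B_{i,1}$.} Here $B_{1,1}$ is a positive quantity that carries the full positive multiplier process term $\tfrac{8(c+2)\eps}{2\sigma^*-\alpha_{K,1}}r^2(\rho_K)$ and the full penalization term $\tfrac{c_\mu\eps}{\frakm^*}r^2(\rho_K)$. I will compare slice by slice:
\begin{itemize}
\item For $i\in\{2,5,8\}$ the positive term $\tfrac{8(c+2)\eps}{2\sigma^*-\alpha_{K,1}}r^2(\rho_K)$ is replaced by $2(c-2)\tfrac{4\eps-(4\theta_0)^{-2}}{2\sigma^*\pm\alpha_{K,1}}r^2(\rho_K)$, which is negative by~\eqref{ass.1/theta_0_lowbound_negative}; also $B_{5,1},B_{8,1}$ carry an extra non-positive $-\tfrac{2\sigma^*}{(2\sigma^*\pm\alpha_{K,1})^2}\alpha_{K,1}^2$ and the $\delta_{K,n}^2$ coefficient is the same or smaller.
\item For $i\in\{4,7\}$, only the negative $-\tfrac{2\sigma^*}{(2\sigma^*\pm\alpha_{K,1})^2}\alpha_{K,1}^2$ is added and the $\delta_{K,n}^2$ piece disappears, so $B_{i,1}\le B_{1,1}$ follows by comparing the leading Cauchy-bounded term $\tfrac{16\delta_{K,n}^2}{\sigma^*(2\sigma^*-\alpha_{K,1})^2}$ against $\tfrac{8(c+2)\eps}{2\sigma^*+\alpha_{K,1}}r^2(\rho_K)$, both of which are nonnegative.
\item For $i\in\{3,6,9\}$ I bound each of the two arguments of the $\max$ in turn. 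The first branch replaces $\tfrac{c_\mu\eps}{\frakm^*}r^2(\rho_K)$ by $\bigl(-\tfrac{4c_\mu\eps}{5\frakm^*}+\tfrac{c_\mu\eps}{10\frakm^*}\bigr)r^2(\rho_K)$, a strictly more negative contribution; the second branch replaces $\tfrac{8(c+2)\eps}{2\sigma^*-\alpha_{K,1}}r^2(\rho_K)$ by $2(c-2)\tfrac{4\eps-(4\theta_0)^{-2}}{2\sigma^*+\alpha_{K,1}}r^2(\rho_K)$ (negative) and adds only the small $\tfrac{c_\mu\eps}{10\frakm^*}r^2(\rho_K)$.
\end{itemize}
In each case $B_{i,1}\le B_{1,1}$ follows by collecting signs and invoking $\alpha_{K,1}<\sigma^*$ and $r(\rho_K)\le r(2\rho_K)\le c_r r(\rho_K)$.

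\textbf{Step 2: $-B_{1,1}>\max_{i=2,\ldots,9}B_{i,2}$.} Since $B_{1,1}\lesssim r^2(\rho_K)\le r^2(2\rho_K)$ up to constants of order $c_\mu\eps/\frakm^*$, it suffices to show that each $B_{i,2}$ has a dominant negative term of order at least $\bigl(\text{const}\bigr)\cdot r^2(2\rho_K)/\sigma^*$ with a constant strictly exceeding what appears in $B_{1,1}$. For $i\in\{4,5,6,7,8,9\}$ this dominant term is $-\tfrac{2\sigma^*}{(2\sigma^*\pm\alpha_{K,2})^2}\alpha_{K,2}^2$. Using $\alpha_{K,2}=c_\alpha r(2\rho_K)$, $\alpha_{K,2}<\sigma^*$, and $16\delta_{K,n}^2\le\sigma^{*2}\alpha_{K,2}^2$ (a consequence of~\eqref{eq.alpha_K_2delta_K}), this term is at most $-\tfrac{2c_\alpha^2}{9\sigma^*}r^2(2\rho_K)$, and condition~\eqref{ass.c_alpha_lowbound} ensures $c_\alpha^2$ is large enough compared to $c_\mu\eps$ and $(c+2)\eps$ for this to beat the combined positive contribution in $B_{i,2}+B_{1,1}$. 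For $i=2$ (and the analogous second branch of $B_{3,2}$), the dominant negative term is $2(c-2)\tfrac{4\eps-(4\theta_0)^{-2}}{2\sigma^*+\alpha_{K,2}}r^2(2\rho_K)$; condition~\eqref{ass.1/theta_0_lowbound_2} precisely quantifies that $(4\theta_0)^{-2}-4\eps$ overpowers the competing positive coefficients $8(c+2)\eps$ and $c_\mu\eps\kappa^{*1/2}$ and the $\delta_{K,n}^2$ piece (rewritten via~\eqref{eq.alpha_K_2delta_K}). For the first branch of $B_{3,2}$ (same for $B_{6,2}$, $B_{9,2}$), the net coefficient of $r^2(\rho_K)$ reduces to $-\tfrac{7c_\mu\eps}{10\frakm^*}+\text{lower order}$, which combined with the $-\alpha_{K,2}^2$ term dominates $B_{1,1}$ by condition~\eqref{ass.c_mu_lowbound}.

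\textbf{Expected difficulty.} The routine part is arithmetic sign-tracking; the delicate part is the second branch of the $B_{3,c_\rho}$, $B_{6,c_\rho}$, $B_{9,c_\rho}$ maxima, where three positive and two negative terms of comparable order in $\eps$ must be balanced, and where the ratio $r^2(2\rho_K)/r^2(\rho_K)\le c_r^2$ enters explicitly. Here, the definition $C^2=384\,\theta_1^2 c_r^2 c_\alpha^2 \kappa_+^{1/2}$ is used to absorb $c_r^2$ into the constants in~\eqref{ass.c_alpha_lowbound} and~\eqref{ass.1/theta_0_lowbound_2}, and the remaining inequalities follow from the numerical slack built into the definitions of $c_\mu$, $c_\alpha$, and $\eps$ in~\eqref{def.constants}.
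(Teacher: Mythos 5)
Your overall strategy — compare $B_{i,c_\rho}$ to $B_{1,1}$ slice by slice, isolate the dominating sign-definite term, and invoke the numerical conditions collected in Lemma~\ref{lemma.conditions} — is exactly the paper's proof. Step~2 is described accurately: the dominating negative term for $i\in\{4,\ldots,9\}$ is indeed $-2\sigma^*\alpha_{K,2}^2/(2\sigma^*\pm\alpha_{K,2})^2$, condition~\eqref{ass.c_alpha_lowbound} closes those cases, and conditions~\eqref{ass.c_mu_lowbound} and~\eqref{ass.1/theta_0_lowbound_2} handle $i\in\{2,3\}$ and the first branches of $i\in\{3,6,9\}$.

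There is, however, a concrete imprecision in Step~1 that would trip up a careful execution. You lump $i\in\{4,7\}$ together and assert that, for both, ``only the negative $-2\sigma^*\alpha_{K,1}^2/(2\sigma^*\pm\alpha_{K,1})^2$ is added and the $\delta_{K,n}^2$ piece disappears,'' so that $B_{i,1}\leq B_{1,1}$ falls out of sign-tracking. That is true for $i=4$ but false for $i=7$: in $B_{7,1}$ the multiplier term has denominator $\sigma^*$, whereas in $B_{1,1}$ it has denominator $2\sigma^*-\alpha_{K,1}>\sigma^*$, so the positive term $\tfrac{8(c+2)\eps}{\sigma^*}r^2(\rho_K)$ in $B_{7,1}$ is \emph{strictly larger} than its counterpart $\tfrac{8(c+2)\eps}{2\sigma^*-\alpha_{K,1}}r^2(\rho_K)$ in $B_{1,1}$. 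Sign-tracking alone therefore does \emph{not} yield $B_{7,1}\leq B_{1,1}$; one must show the added negative term $-2\sigma^*\alpha_{K,1}^2/(2\sigma^*-\alpha_{K,1})^2$, together with the $\delta_{K,n}^2$ term of $B_{1,1}$, outweighs the increase in the multiplier coefficient, and this requires precisely condition~\eqref{ass.c_alpha_lowbound} (the paper reduces it to $18(c+2)\eps\leq c_\alpha^2+450\eps^2$). The same subtlety affects the first branch of $B_{9,1}$, where the $1/\sigma^*$ denominator also appears and the compensating $-4c_\mu\eps/(5\frakm^*)$ term is what saves the comparison via~\eqref{ass.c_mu_lowbound}. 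Your closing sentence of Step~1 (``follows by collecting signs and invoking $\alpha_{K,1}<\sigma^*$ and $r(\rho_K)\leq r(2\rho_K)\leq c_r r(\rho_K)$'') therefore understates the work: conditions~\eqref{ass.c_mu_lowbound},~\eqref{ass.1/theta_0_lowbound_2}, and~\eqref{ass.c_alpha_lowbound} are genuinely needed already in Step~1 for $i\in\{3,6,7,9\}$, not only in Step~2. Once that is corrected the plan goes through as intended.
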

\begin{proof}[Proof of Lemma~\ref{lemma.comparison_bounds_B_i}]
     We start by showing that $B_{1,1}$ is bigger than the other $B_{i,1}$, $i=2, \dots, 9$. By Lemma~\ref{lemma.bound_supg_Fkappa_1}, we have
     \begin{align*}
         B_{1,1} = \frac{16}{\sigma^*(2\sigma^*-\alpha_{K,1})^2} \delta_{K,n}^2 + \frac{8(c+2)\eps }{2\sigma^*-\alpha_{K,1}} r^2(\rho_K) + \frac{c_\mu\eps}{\frakm^*}r^2(\rho_K).
     \end{align*}
     
    Take $i=2.$ By Lemma~\ref{lemma.bound_supg_Fkappa_2}, we have
    \begin{align*}
        B_{2,1} = \frac{16}{\sigma^*(2\sigma^*-\alpha_{K,1})^2} \delta_{K,n}^2 
        + 2(c-2) \frac{4\eps - (4\theta_0)^{-2}}{2\sigma^*+\alpha_{K,1}} r^2(c_\rho \rho_K)  + \frac{c_\mu\eps}{\frakm^*}r^2(\rho_K),
    \end{align*}
    so that imposing $B_{2,1} \leq B_{1,1}$ is equivalent to
    \begin{align*}
        2(c-2) \frac{4\eps - (4\theta_0)^{-2}}{2\sigma^*+\alpha_{K,1}} \leq \frac{8(c+2) \eps}{2\sigma^*-\alpha_{K,1}},
    \end{align*}
    which is always true since $4\eps - (4\theta_0)^{-2} < 0,$ by condition~\eqref{ass.1/theta_0_lowbound_negative}.
    
    Take $i=3.$ By Lemma~\ref{lemma.bound_supg_Fkappa_3}, we have
    \begin{align*}
        B_{3,1} &= \max \Bigg\{ \frac{16}{\sigma^*(2\sigma^*-\alpha_{K,1})^2} \delta_{K,n}^2 + \bigg( \frac{ 8(c+2)\eps }{2\sigma^*-\alpha_{K,1}} - \frac{4 c_\mu \eps}{5 \frakm^*}\bigg) r^2(\rho_K)
        + \frac{c_\mu\eps}{10\frakm^*} r^2(\rho_K), 
         \\
        &\hspace{1.5cm}
         \frac{16}{\sigma^*(2\sigma^*-\alpha_{K,1})^2} \delta_{K,n}^2 + \bigg(2(c-2)\frac{4\eps - (4\theta_0)^{-2}}{2\sigma^*+\alpha_{K,1}}  + \frac{c_\mu\eps}{\frakm^*} \bigg)r^2(\rho_K) + \frac{c_\mu\eps}{10\frakm^*}r^2(\rho_K) \Bigg\},
    \end{align*}
    so that imposing $B_{3,1} \leq B_{1,1}$ requires both 
    \begin{align*}
        \frac{ 8(c+2)\eps }{2\sigma^*-\alpha_{K,1}} - \frac{17 c_\mu \eps}{10 \frakm^*} &\leq \frac{8(c+2)\eps }{2\sigma^*-\alpha_{K,1}}, \\
        2(c-2)\frac{4\eps - (4\theta_0)^{-2}}{2\sigma^*+\alpha_{K,1}}  + \frac{c_\mu\eps}{10\frakm^*} &\leq \frac{8(c+2)\eps }{2\sigma^*-\alpha_{K,1}}.
    \end{align*}
    The first inequality is always true, whereas the second is equivalent to
    \begin{align*}
        \frac{8(c-2)\eps}{2\sigma^*+\alpha_{K,1}} - \frac{8(c+2)\eps }{2\sigma^*-\alpha_{K,1}} + \frac{c_\mu\eps}{10\frakm^*} &\leq 2(c-2)\frac{(4\theta_0)^{-2}}{2\sigma^*+\alpha_{K,1}}.
    \end{align*}
    Since $2\sigma^* + \alpha_{K,1} > 2\sigma^* - \alpha_{K,1},$ the latter condition is implied by
    \begin{align*}
        \frac{c_\mu\eps}{10\frakm^*} -\frac{32\eps}{2\sigma^*-\alpha_{K,1}} &\leq \frac{ c-2}{8\theta_0^2 (2\sigma^*+\alpha_{K,1})}.
    \end{align*}
    By Lemma~\ref{lemma.conditions}, we have $0<\alpha_{K,1}<\sigma^*$ and the above display is satisfied if
    \begin{align*}
        \frac{c_\mu\eps}{10\frakm^*} \leq \frac{16\eps}{\sigma^*} + \frac{ c-2}{24\theta_0^2 \sigma^*}.
    \end{align*}
    We multiply by $\sigma^*$ and use that $\kappa^{*1/4} = \frakm^*/\sigma* \geq 1,$ so it is sufficient that
    \begin{align*}
        \frac{c_\mu\eps}{10} \leq 16\eps + \frac{c-2}{24\theta_0^2},
    \end{align*}
    which holds by condition~\eqref{ass.1/theta_0_lowbound_2}.
    
    Take $i=4.$ By Lemma~\ref{lemma.bound_supg_Fkappa_4}, we have
    \begin{align*}
        B_{4,1} = -\frac{2\sigma^*}{(2\sigma^*+\alpha_{K,1})^2} \alpha_{K,1}^2 + \frac{8(c+2) \eps}{2\sigma^*+\alpha_{K,1}} r^2(\rho_K) + \frac{c_\mu\eps}{\frakm^*} r^2(\rho_K),
    \end{align*}
    so that imposing $B_{4,1} \leq B_{1,1}$ is equivalent to 
    \begin{align*}
        -\frac{2\sigma^*}{(2\sigma^*+\alpha_{K,1})^2} \frac{\alpha_{K,1}^2}{r^2(\rho_K) } + \frac{8(c+2) \eps}{2\sigma^*+\alpha_{K,1}} \leq \frac{16}{\sigma^*(2\sigma^*-\alpha_{K,1})^2} \frac{\delta_{K,n}^2}{r^2(\rho_K) } + \frac{8(c+2) \eps}{2\sigma^*-\alpha_{K,1}} ,
    \end{align*}
    which is always satisfied.
    
    Take $i=5.$ By Lemma~\ref{lemma.bound_supg_Fkappa_5}, we have
    \begin{align*}
        B_{5,1} = -\frac{2\sigma^*}{(2\sigma^*+\alpha_{K,1})^2} \alpha_{K,1}^2 + 2(c-2) \frac{4\eps - (4\theta_0)^{-2}}{\sigma^*+\sigma_{+}} r^2(\rho_K)  + \frac{c_\mu\eps}{\frakm^*}r^2(\rho_K),
    \end{align*}
    so that imposing $B_{5,1} \leq B_{1,1}$ is equivalent to 
    \begin{align*}
        -\frac{2\sigma^*}{(2\sigma^*+\alpha_{K,1})^2} \frac{\alpha_{K,1}^2}{r^2(\rho_K)} + 2(c-2) \frac{4\eps - (4\theta_0)^{-2}}{\sigma^*+\sigma_{+}} \leq \frac{16}{\sigma^*(2\sigma^*-\alpha_{K,1})^2} \frac{\delta_{K,n}^2}{r^2(\rho_K)} + \frac{8(c+2) \eps}{2\sigma^*-\alpha_{K,1}},
    \end{align*}
    which is always satisfied, since the term on the left is negative by condition~\eqref{ass.1/theta_0_lowbound_negative}.
    
    Take $i=6.$ By Lemma~\ref{lemma.bound_supg_Fkappa_6}, we have
    \begin{align*}
        B_{6,1} &= \max \Bigg\{ -\frac{2\sigma^*}{(2\sigma^*+\alpha_{K,1})^2} \alpha_{K,1}^2 + \bigg( \frac{ 8(c+2)\eps }{2\sigma^*+\alpha_{K,1}} - \frac{4 c_\mu \eps}{5 \frakm^*}\bigg) r^2(\rho_K)
        + \frac{c_\mu\eps}{10\frakm^*} r^2(\rho_K), 
         \\
        &\hspace{1.5cm}
        -\frac{2\sigma^*}{(2\sigma^*+\alpha_{K,1})^2} \alpha_{K,1}^2 + \bigg(2(c-2)\frac{4\eps - (4\theta_0)^{-2}}{\sigma^*+\sigma_{+}}  + \frac{c_\mu\eps}{\frakm^*} \bigg)r^2(\rho_K) + \frac{c_\mu\eps}{10\frakm^*}r^2(\rho_K) \Bigg\},
    \end{align*}
    so that imposing $B_{6,1} \leq B_{1,1}$ is equivalent to both
    \begin{align*}
        \frac{ 8(c+2)\eps }{2\sigma^*+\alpha_{K,1}} 
        - \frac{7 c_\mu \eps}{10 \frakm^*}
        &\leq \frac{2\sigma^*}{(2\sigma^*+\alpha_{K,1})^2} \frac{\alpha_{K,1}^2}{r^2(\rho_K) } 
        + \frac{16}{\sigma^*(2\sigma^*-\alpha_{K,1})^2} \frac{\delta_{K,n}^2}{r^2(\rho_K) } 
        + \frac{8(c+2)\eps }{2\sigma^*-\alpha_{K,1}} 
        + \frac{c_\mu\eps}{\frakm^*},
    \end{align*}
    which is always true, and 
    \begin{align*}
        2(c-2)\frac{4\eps - (4\theta_0)^{-2}}{\sigma^*+\sigma_{+}} 
        + \frac{11 c_\mu\eps}{10\frakm^*} 
        &\leq \frac{2\sigma^*}{(2\sigma^*+\alpha_{K,1})^2} \frac{\alpha_{K,1}^2}{r^2(\rho_K) }    
        + \frac{16}{\sigma^*(2\sigma^*-\alpha_{K,1})^2} \frac{\delta_{K,n}^2}{r^2(\rho_K) }  \\
        &\quad + \frac{8(c+2)\eps }{2\sigma^*-\alpha_{K,1}} 
        + \frac{c_\mu\eps}{\frakm^*}.
    \end{align*}
    The first term on the left side is negative, by condition~\eqref{ass.1/theta_0_lowbound_negative}. With the ratio $\delta_{K,n}^2/r^2(\rho_K)$ in~\eqref{def.alpha_2k_delta_Kn}, it is sufficient that
    \begin{align*}
        \frac{c_\mu\eps}{10\frakm^*} 
        &\leq \frac{2\sigma^*}{(2\sigma^*+\alpha_{K,1})^2} c_\alpha^2   
        + \frac{16}{\sigma^*(2\sigma^*-\alpha_{K,1})^2} \cdot\frac{25\frakm^{*2}\eps^2}{c_K^2\theta_1^2} + \frac{8(c+2)\eps }{2\sigma^*-\alpha_{K,1}}.
    \end{align*}
    By Lemma~\ref{lemma.conditions}, we have $0<\alpha_{K,1}<\sigma^*,$ so it is enough that
    \begin{align*}
        \frac{c_\mu\eps}{10\frakm^*} 
        &\leq \frac{2 c_\alpha^2}{9\sigma^*}    
        + \frac{400\frakm^{*2}\eps^2}{4\sigma^{*3} c_K^2\theta_1^2}  + \frac{8(c+2)\eps }{2\sigma^*}.
    \end{align*}
    We now multiply by $\frakm^*$ and use that $\kappa^{*1/4} = \frakm^*/\sigma^* \geq 1,$ this gives the sufficient condition
    \begin{align*}
        \frac{9c_\mu\eps}{20} 
        &\leq c_\alpha^2 + \frac{450\eps^2}{c_K^2\theta_1^2}  + 18(c+2)\eps,
    \end{align*}
    which follows from condition~\eqref{ass.c_alpha_lowbound}. 
    
    Take $i=7.$ By Lemma~\ref{lemma.bound_supg_Fkappa_7}, we have
    \begin{align*}
        B_{7,1} = -\frac{2\sigma^*}{(2\sigma^*-\alpha_{K,1})^2} \alpha_{K,1}^2 + \frac{8(c+2)\eps}{\sigma^*} r^2(\rho_K) + \frac{c_\mu\eps}{\frakm^*} r^2(\rho_K),
    \end{align*}
    so that imposing $B_{7,1} \leq B_{1,1}$ is equivalent to
    \begin{align*}
         \frac{8(c+2)\eps}{\sigma^*} \leq \frac{2\sigma^*}{(2\sigma^*-\alpha_{K,1})^2} \frac{\alpha_{K,1}^2}{r^2(\rho_K)}
         + \frac{16}{\sigma^*(2\sigma^*-\alpha_{K,1})^2} \frac{\delta_{K,n}^2}{r^2(\rho_K) }  
         + \frac{8(c+2)\eps }{2\sigma^*-\alpha_{K,1}}.
    \end{align*}
    We argue as for $i=6,$ we plug in the ratio $\delta_{K,n}^2/r^2(\rho_K)$ from \eqref{def.alpha_2k_delta_Kn} and use $0<\alpha_{K,1}<\sigma^*$ and $2\sigma^*-\alpha_{K,1} < 2\sigma^*+\alpha_{K,1},$ it is enough that
    \begin{align*}
         \frac{8(c+2)\eps}{\sigma^*}
        &\leq \frac{2 c_\alpha^2}{9\sigma^*}    
        + \frac{400\frakm^{*2}\eps^2}{4\sigma^{*3} c_K^2\theta_1^2}  + \frac{8(c+2)\eps }{2\sigma^*}.
    \end{align*}
    We now multiply by $\sigma^*$ and use that  $\kappa^{*1/4} = \frakm^*/\sigma^* \geq 1,$ this gives the sufficient condition
    \begin{align*}
        8(c+2)\eps \leq \frac{2 c_\alpha^2}{9} + 100\eps^2 + 4(c+2)\eps,
    \end{align*}
    which is true if $18(c+2)\eps \leq c_\alpha^2 + 450\eps^2,$ which holds thanks to condition~\eqref{ass.c_alpha_lowbound}. 
    
    Take $i=8.$ By Lemma~\ref{lemma.bound_supg_Fkappa_8}, we have
    \begin{align*}
        B_{8,1} = -\frac{2\sigma^*}{(2\sigma^*-\alpha_{K,1})^2} \alpha_{K,1}^2 + 2(c-2) \frac{4\eps - (4\theta_0)^{-2}}{2\sigma^*-\alpha_{K,1}} r^2(\rho_K)  + \frac{c_\mu\eps}{\frakm^*}r^2(\rho_K),
    \end{align*}
    so that imposing $B_{8,1} \leq B_{1,1}$ is equivalent to
    \begin{align*}
         -\frac{2\sigma^*}{(2\sigma^*-\alpha_{K,1})^2} \frac{\alpha_{K,1}^2}{r^2(\rho_K)} + 2(c-2) \frac{4\eps - (4\theta_0)^{-2}}{2\sigma^*-\alpha_{K,1}}    \leq \frac{16}{\sigma^*(2\sigma^*-\alpha_{K,1})^2} \frac{\delta_{K,n}^2}{r^2(\rho_K) } + \frac{8(c+2) \eps}{2\sigma^*-\alpha_{K,1}},
    \end{align*}
    which holds since the left side is negative, thanks to condition~\eqref{ass.1/theta_0_lowbound_negative}.
    
    Take $i=9.$ By Lemma~\ref{lemma.bound_supg_Fkappa_9}, we have
    \begin{align*}
        B_{9,1} &= \max \Bigg\{ -\frac{2\sigma^*}{(2\sigma^*-\alpha_{K,1})^2} \alpha_{K,1}^2 + \bigg( \frac{ 8(c+2)\eps }{\sigma^*} - \frac{4 c_\mu \eps}{5 \frakm^*} + \frac{c_\mu\eps}{10\frakm^*} \bigg) r^2(\rho_K), 
         \\
        &\hspace{1.5cm}
         -\frac{2\sigma^*}{(2\sigma^*-\alpha_{K,1})^2} \alpha_{K,1}^2 + \bigg(2(c-2)\frac{4\eps - (4\theta_0)^{-2}}{2\sigma^*-\alpha_{K,1}}  + \frac{c_\mu\eps}{\frakm^*} \bigg)r^2(\rho_K) + \frac{c_\mu\eps}{10\frakm^*}r^2(\rho_K) \Bigg\},
    \end{align*}
    so that imposing $B_{9,1} \leq B_{1,1}$ is equivalent to both
    \begin{align*}
         \frac{ 8(c+2)\eps }{\sigma^*} 
         - \frac{7 c_\mu \eps}{10 \frakm^*}
         &\leq \frac{2\sigma^*}{(2\sigma^*-\alpha_{K,1})^2} \frac{\alpha_{K,1}^2}{r^2(\rho_K)}
         + \frac{16}{\sigma^*(2\sigma^*-\alpha_{K,1})^2} \frac{\delta_{K,n}^2}{r^2(\rho_K)} 
         + \frac{8(c+2)\eps }{2\sigma^*-\alpha_{K,1}} 
         + \frac{c_\mu\eps}{\frakm^*},
    \end{align*}
    which is always true, and
    \begin{align*}
         2(c-2)\frac{4\eps - (4\theta_0)^{-2}}{2\sigma^*-\alpha_{K,1}}
         + \frac{c_\mu\eps}{10\frakm^*}
         &\leq \frac{2\sigma^*}{(2\sigma^*-\alpha_{K,1})^2} \frac{\alpha_{K,1}^2}{r^2(\rho_K)}
         + \frac{16}{\sigma^*(2\sigma^*-\alpha_{K,1})^2} \frac{\delta_{K,n}^2}{r^2(\rho_K)} 
         + \frac{8(c+2)\eps }{2\sigma^*-\alpha_{K,1}}.
    \end{align*}
    Arguing as in $i=6,$ the first term on the left side is negative by condition~\eqref{ass.1/theta_0_lowbound_negative}, then it is sufficient that
    \begin{align*}
         \frac{c_\mu\eps}{10\frakm^*}
         &\leq \frac{2\sigma^*}{(2\sigma^*-\alpha_{K,1})^2} \frac{\alpha_{K,1}^2}{r^2(\rho_K)}
         + \frac{16}{\sigma^*(2\sigma^*-\alpha_{K,1})^2} \frac{\delta_{K,n}^2}{r^2(\rho_K)} 
         + \frac{8(c+2)\eps }{2\sigma^*-\alpha_{K,1}},
    \end{align*}
    which coincides with the bound obtained in $i=6.$ 
    
    The first part of the proof is complete. We now show that $- B_{1,1}$ is bigger than $B_{i,2},$ for all $i=2, \dots, 9.$ We recall that Lemma~\ref{lemma.bound_supg_Fkappa_1} gives
     \begin{align*}
         B_{1,1} = \frac{16}{\sigma^*(2\sigma^*-\alpha_{K,1})^2} \delta_{K,n}^2 + \frac{8(c+2)\eps }{2\sigma^*-\alpha_{K,1}} r^2(\rho_K) + \frac{c_\mu\eps}{\frakm^*}r^2(\rho_K).
     \end{align*}
    
    Take $i=2.$ By Lemma~\ref{lemma.bound_supg_Fkappa_2}, we have
    \begin{align*}
        B_{2,2} = \frac{16}{\sigma^*(2\sigma^*-\alpha_{K,2})^2} \delta_{K,n}^2 
        + 2(c-2) \frac{4\eps - (4\theta_0)^{-2}}{2\sigma^*+\alpha_{K,2}} r^2(2\rho_K)  + \frac{2 c_\mu\eps}{\frakm^*}r^2(\rho_K),
    \end{align*}
    so that imposing $B_{2,2} + B_{1,1} < 0$ gives
    \begin{align*}
        &\frac{16}{\sigma^*(2\sigma^*-\alpha_{K,2})^2} \delta_{K,n}^2
        +  \frac{8(c-2) \eps }{2\sigma^*+\alpha_{K,2}} r^2(2\rho_K)  + \frac{2c_\mu\eps}{\frakm^*}r^2(\rho_K) \\
        &\quad + \frac{16}{\sigma^*(2\sigma^*-\alpha_{K,1})^2} \delta_{K,n}^2 + \frac{8(c+2) \eps}{2\sigma^*-\alpha_{K,1}} r^2(\rho_K) + \frac{c_\mu\eps}{\frakm^*}r^2(\rho_K) < 2(c-2) \frac{ (4\theta_0)^{-2}}{2\sigma^*+\alpha_{K,2}} r^2(2\rho_K),
    \end{align*}
    Since $r^2(2\rho_K) \geq r^2(\rho_K),$ $\alpha_{K,2} \geq \alpha_{K,1},$ it is sufficient to show
    \begin{align*}
        \frac{32}{\sigma^*(2\sigma^*-\alpha_{K,2})^2} \frac{\delta_{K,n}^2}{r^2(2\rho_K)} + \frac{8(c-2) \eps }{2\sigma^*-\alpha_{K,2}} + \frac{8(c+2) \eps}{2\sigma^*-\alpha_{K,2}} + \frac{3c_\mu\eps}{\frakm^*} < \frac{c-2}{8\theta_0^2(2\sigma^*+\alpha_{K,2})}.
    \end{align*}
    By Lemma~\ref{lemma.conditions}, we have $0 < \alpha_{K,2} < \sigma^*$ and, with the ratio $\delta_{K,n^2}/r^2(\rho_K)$ in~\eqref{def.alpha_2k_delta_Kn}, it is enough that
    \begin{align*}
        \frac{800\frakm^{*2}\eps^2}{\sigma^{*2}c_K^2 \theta_1^2} + 8(c-2) \eps + 8(c+2) \eps + \frac{3c_\mu\eps\sigma^*}{\frakm^*} < \frac{c-2}{24\theta_0^2}.
    \end{align*}
    Since $\kappa^{*1/4} = \frakm^*/\sigma^* \geq 1,$ we find the sufficient condition
    \begin{align*}
        \frac{800\kappa^{*1/2}\eps^2}{c_K^2 \theta_1^2} + 16 (c+2) \eps + 3c_\mu\eps < \frac{c-2}{24\theta_0^2},
    \end{align*}
    which is true by condition~\eqref{ass.1/theta_0_lowbound_2}.
    
    Take $i=3.$ By Lemma~\ref{lemma.bound_supg_Fkappa_3}, we have
    \begin{align*}
        B_{3,2} &= \max \Bigg\{ \frac{16}{\sigma^*(2\sigma^*-\alpha_{K,2})^2} \delta_{K,n}^2 + 2 \bigg( \frac{ 8(c+2)\eps }{2\sigma^*-\alpha_{K,2}} - \frac{4 c_\mu \eps}{5 \frakm^*}\bigg) r^2(\rho_K)
        + \frac{c_\mu\eps}{10\frakm^*} r^2(\rho_K), 
         \\
        &\hspace{1.5cm}
         \frac{16}{\sigma^*(2\sigma^*-\alpha_{K,2})^2} \delta_{K,n}^2 + 2 \bigg(2(c-2)\frac{4\eps - (4\theta_0)^{-2}}{2\sigma^*+\alpha_{K,2}}  + \frac{c_\mu\eps}{\frakm^*} \bigg)r^2(\rho_K) + \frac{c_\mu\eps}{10\frakm^*}r^2(\rho_K) \Bigg\},
    \end{align*}
    so that imposing $B_{3,2} + B_{1,1} < 0$ requires both
    \begin{align*}
        \frac{32}{\sigma^*(2\sigma^*-\alpha_{K,2})^2} \frac{\delta_{K,n}^2}{r^2(2\rho_K)} + \frac{ 16(c+2)\eps }{2\sigma^*-\alpha_{K,2}}
        + \frac{8(c+2)\eps }{2\sigma^*-\alpha_{K,1}}
        &< \frac{ c_\mu \eps}{2 \frakm^*}, \\
        \frac{32}{\sigma^*(2\sigma^*-\alpha_{K,2})^2} \frac{\delta_{K,n}^2}{r^2(2\rho_K)} 
        + \frac{16(c-2)\eps}{2\sigma^*+\alpha_{K,2}}
        + \frac{8(c+2)\eps }{2\sigma^*-\alpha_{K,1}} 
        + \frac{31 c_\mu\eps}{10 \frakm^*}
        &< \frac{ c-2}{4\theta_0^2(2\sigma^*+\alpha_{K,2})}.
    \end{align*}
    
    By arguing as for $i=2,$ it is sufficient that both
    \begin{align*}
        \frac{800\kappa^{*1/2}\eps^2}{c_K^2\theta_1^2} + 16(c+2)\eps
        + 8(c+2)\eps
        &< \frac{ c_\mu \eps}{2\kappa^{*1/4}}, \\
        \frac{800\kappa^{*1/2}\eps^2}{c_K^2\theta_1^2}
        + 8(c-2)\eps
        + 8(c+2)\eps 
        + \frac{31 c_\mu\eps}{10 \kappa^{*1/4}}
        &< \frac{ c-2}{12\theta_0^2}.
    \end{align*}
    The first bound holds by condition~\eqref{ass.c_mu_lowbound}, so we plug it into the second line using $\kappa^*\geq1,$ we obtain the sufficient condition $36 c_\mu\eps/10 < (c-2) / (12\theta_0^2),$ which follows from condition~\eqref{ass.1/theta_0_lowbound_2}.
    
    Take $i=4.$ By Lemma~\ref{lemma.bound_supg_Fkappa_4}, we have
    \begin{align*}
        B_{4,2} = -\frac{2\sigma^*}{(2\sigma^*+\alpha_{K,2})^2} \alpha_{K,2}^2 + \frac{8(c+2) \eps}{2\sigma^*+\alpha_{K,2}} r^2(2 \rho_K) + \frac{2 c_\mu\eps}{\frakm^*} r^2(\rho_K),
    \end{align*}
    so that imposing $B_{4,2} + B_{1,1} < 0$ gives
    \begin{align*}
        \frac{16}{\sigma^*(2\sigma^*-\alpha_{K,1})^2} \frac{\delta_{K,n}^2}{r^2(2 \rho_K)} + \frac{8(c+2) \eps}{2\sigma^*+\alpha_{K,2}} + \frac{8(c+2) \eps}{2\sigma^*-\alpha_{K,1}} + \frac{3c_\mu\eps}{\frakm^*} < \frac{2\sigma^*}{(2\sigma^*+\alpha_{K,2})^2} \frac{\alpha_{K,2}^2}{r^2(2 \rho_K)}.
    \end{align*}
    By arguing as for $i=3,$ it is sufficient that
    \begin{align*}
        \frac{400\kappa^{*1/2}\eps^2}{c_K^2\theta_1^2} + 4(c+2) \eps + 8(c+2) \eps + \frac{3c_\mu\eps}{\kappa^{*1/4}} < \frac{2c_\alpha^2}{9}.
    \end{align*}
    With $\kappa^*\geq1,$ it is enough that
    \begin{align*}
        \frac{1800\kappa^{*1/2}\eps^2}{c_K^2\theta_1^2} + 54(c+2) \eps + \frac{27c_\mu\eps}{2} < c_\alpha^2,
    \end{align*}
    which follows from condition~\eqref{ass.c_alpha_lowbound}.
    
    Take $i=5.$ By Lemma~\ref{lemma.bound_supg_Fkappa_5}, we have
    \begin{align*}
        B_{5,2} = -\frac{2\sigma^*}{(2\sigma^*+\alpha_{K,2})^2} \alpha_{K,2}^2 + 2(c-2) \frac{4\eps - (4\theta_0)^{-2}}{\sigma^*+\sigma_{+}} r^2(2\rho_K)  + \frac{2 c_\mu\eps}{\frakm^*}r^2(\rho_K),
    \end{align*}
    so that imposing $B_{5,2} + B_{1,1} < 0$ gives
    \begin{align*}
        \frac{16}{\sigma^*(2\sigma^*-\alpha_{K,1})^2} \frac{\delta_{K,n}^2}{r^2(2\rho_K)}
        + 2(c-2) \frac{4\eps - (4\theta_0)^{-2}}{\sigma^*+\sigma_{+}}  
        + \frac{3c_\mu\eps}{\frakm^*} + \frac{8(c+2) \eps}{2\sigma^*-\alpha_{K,1}} 
        < \frac{2\sigma^*}{(2\sigma^*+\alpha_{K,2})^2} \frac{\alpha_{K,2}^2}{r^2(2\rho_K)}.
    \end{align*}
    The second term in the latter display is negative by condition~\eqref{ass.1/theta_0_lowbound_negative}. By arguing as for $i=4,$ it is sufficient that
    \begin{align*}
         \frac{400\kappa^{*1/2}\eps^2}{c_K^2\theta_1^2}
        + \frac{3c_\mu\eps}{\kappa^{*1/4}} + 8(c+2) \eps
        < \frac{2c_\alpha^2}{9}.
    \end{align*}
    With $\kappa^*\geq1,$ it is enough that
    \begin{align*}
         \frac{1800\kappa^{*1/2}\eps^2}{c_K^2\theta_1^2}
        + \frac{27c_\mu\eps}{2} + 36(c+2) \eps
        < c_\alpha^2,
    \end{align*}
    which is true thanks to condition~\eqref{ass.c_alpha_lowbound}.
    
    Take $i=6.$ By Lemma~\ref{lemma.bound_supg_Fkappa_6}, we have
    \begin{align*}
        B_{6,2} &= \max \Bigg\{ -\frac{2\sigma^*}{(2\sigma^*+\alpha_{K,2})^2} \alpha_{K,2}^2 + 2 \bigg( \frac{ 8(c+2)\eps }{2\sigma^*+\alpha_{K,2}} - \frac{4 c_\mu \eps}{5 \frakm^*}\bigg) r^2(\rho_K)
        + \frac{c_\mu\eps}{10\frakm^*} r^2(\rho_K), 
         \\
        &\hspace{1.5cm}
        -\frac{2\sigma^*}{(2\sigma^*+\alpha_{K,2})^2} \alpha_{K,2}^2 + 2 \bigg(2(c-2)\frac{4\eps - (4\theta_0)^{-2}}{\sigma^*+\sigma_{+}}  + \frac{c_\mu\eps}{\frakm^*} \bigg)r^2(\rho_K) + \frac{c_\mu\eps}{10\frakm^*}r^2(\rho_K) \Bigg\},
    \end{align*}
    so that imposing $B_{6,2} + B_{1,1} < 0$ requires both
    \begin{align*}
        \frac{16}{\sigma^*(2\sigma^*-\alpha_{K,1})^2} \frac{\delta_{K,n}^2}{r^2(\rho_K)} 
        + \frac{ 16(c+2)\eps }{2\sigma^*+\alpha_{K,2}} 
        + \frac{8(c+2)\eps }{2\sigma^*-\alpha_{K,1}}
        - \frac{ c_\mu \eps}{2 \frakm^*} 
        &< \frac{2\sigma^*}{(2\sigma^*+\alpha_{K,2})^2} \frac{\alpha_{K,2}^2}{r^2(\rho_K)} ,\\
        \frac{16}{\sigma^*(2\sigma^*-\alpha_{K,1})^2} \frac{\delta_{K,n}^2}{r^2(\rho_K)} 
        + \frac{8(c+2)\eps }{2\sigma^*-\alpha_{K,1}}
        + \frac{31c_\mu\eps}{10\frakm^*}
        + 4(c-2)\frac{4\eps - (4\theta_0)^{-2}}{\sigma^*+\sigma_{+}}
        &< \frac{2\sigma^*}{(2\sigma^*+\alpha_{K,2})^2} \frac{\alpha_{K,2}^2}{r^2(\rho_K)}.
    \end{align*}
    By condition~\eqref{ass.1/theta_0_lowbound_negative}, the last terms on the left side of both equations are negative. By arguing as in $i=5,$ we find the sufficient conditions
    \begin{align*}
        \frac{400\kappa^{*1/2}\eps^2}{c_K^2\theta_1^2}
        + 24(c+2)\eps
        &< \frac{2c_\alpha^2}{9},\\
        \frac{400\kappa^{*1/2}\eps^2}{c_K^2\theta_1^2}
        + 8(c+2)\eps
        + \frac{31c_\mu\eps}{10\kappa^{*1/4}}
        &< \frac{2c_\alpha^2}{9}.
    \end{align*}
    With $\kappa^*\geq1,$ it is enough that
    \begin{align*}
        \frac{1800\kappa^{*1/2}\eps^2}{c_K^2\theta_1^2}
        + 108(c+2)\eps
        &< c_\alpha^2,\\
        \frac{1800\kappa^{*1/2}\eps^2}{c_K^2\theta_1^2}
        + 36(c+2)\eps
        + 14c_\mu\eps
        &< c_\alpha^2,
    \end{align*}
    which follow from condition~\eqref{ass.c_alpha_lowbound}. 
    
    Take $i=7.$ By Lemma~\ref{lemma.bound_supg_Fkappa_7}, we have
    \begin{align*}
        B_{7,2} = -\frac{2\sigma^*}{(2\sigma^*-\alpha_{K,2})^2} \alpha_{K,2}^2 + \frac{8(c+2)\eps}{\sigma^*} r^2(2 \rho_K) + \frac{2 c_\mu\eps}{\frakm^*} r^2(\rho_K),
    \end{align*}
    so that imposing $B_{7,2} + B_{1,1} < 0$ gives
    \begin{align*}
        \frac{16}{\sigma^*(2\sigma^*-\alpha_{K,1})^2} \frac{\delta_{K,n}^2}{r^2(2 \rho_K)}
        + \frac{8(c+2)\eps}{\sigma^*} 
        + \frac{8(c+2)\eps }{2\sigma^*-\alpha_{K,1}} 
        + \frac{3c_\mu\eps}{\frakm^*}
        < \frac{2\sigma^*}{(2\sigma^*-\alpha_{K,2})^2} \frac{\alpha_{K,2}^2}{r^2(2 \rho_K)}.
    \end{align*}
    By arguing as in $i=6,$ it is sufficient that
    \begin{align*}
        \frac{400\kappa^{*1/2}\eps^2}{c_K^2\theta_1^2}
        + 8(c+2)\eps
        + 8(c+2)\eps
        + \frac{3c_\mu\eps}{\kappa^{*1/4}}
        < \frac{2c_\alpha^2}{9}.
    \end{align*}
    With $\kappa^*\geq 1,$ it is enough that 
    \begin{align*}
        \frac{1800\kappa^{*1/2}\eps^2}{c_K^2\theta_1^2}
        + 72(c+2)\eps
        + \frac{27c_\mu\eps}{2}
        < c_\alpha^2,
    \end{align*}
    which follows from condition~\eqref{ass.c_alpha_lowbound}. 
    
    Take $i=8.$ By Lemma~\ref{lemma.bound_supg_Fkappa_8}, we have
    \begin{align*}
        B_{8,2} = -\frac{2\sigma^*}{(2\sigma^*-\alpha_{K,2})^2} \alpha_{K,2}^2 + 2(c-2) \frac{4\eps - (4\theta_0)^{-2}}{2\sigma^*-\alpha_{K,2}} r^2(2\rho_K)  + \frac{2 c_\mu\eps}{\frakm^*}r^2(\rho_K),
    \end{align*}
    so that imposing $B_{8,2} + B_{1,1} < 0$ gives
    \begin{align*}
        \frac{16}{\sigma^*(2\sigma^*-\alpha_{K,1})^2} \frac{\delta_{K,n}^2}{r^2(2 \rho_K)}
        + \frac{8(c+2)\eps }{2\sigma^*-\alpha_{K,1}} 
        + \frac{3c_\mu\eps}{\frakm^*} 
        + 2(c-2) \frac{4\eps - (4\theta_0)^{-2}}{2\sigma^*-\alpha_{K,2}}
        < \frac{2\sigma^*}{(2\sigma^*-\alpha_{K,2})^2} \frac{\alpha_{K,2}^2}{r^2(2\rho_K)}.
    \end{align*}
    By condition~\eqref{ass.1/theta_0_lowbound_negative}, the last term on the left side is negative. By arguing as in $i=7,$ it is sufficient that
    \begin{align*}
        \frac{400\kappa^{*1/2}\eps^2}{c_K^2\theta_1^2}
        + 8(c+2)\eps
        + \frac{3c_\mu\eps}{\kappa^{*1/4}} 
        < \frac{2c_\alpha^2}{9}.
    \end{align*}
    With $\kappa^*\geq 1,$ it is enough that
    \begin{align*}
        \frac{1800\kappa^{*1/2}\eps^2}{c_K^2\theta_1^2}
        + 36(c+2)\eps
        + \frac{27c_\mu\eps}{2} 
        < c_\alpha^2,
    \end{align*}
    which holds thanks to condition~\eqref{ass.c_alpha_lowbound}.
    
    Take $i=9.$ By Lemma~\ref{lemma.bound_supg_Fkappa_9}, we have
    \begin{align*}
        B_{9,2} &= \max \Bigg\{ -\frac{2\sigma^*}{(2\sigma^*-\alpha_{K,2})^2} \alpha_{K,2}^2 + \bigg( \frac{ 16(c+2)\eps }{\sigma^*} - \frac{8 c_\mu \eps}{5 \frakm^*} + \frac{c_\mu\eps}{10\frakm^*} \bigg) r^2(\rho_K), 
         \\
        &\hspace{1.5cm}
         -\frac{2\sigma^*}{(2\sigma^*-\alpha_{K,2})^2} \alpha_{K,2}^2 + 2 \bigg(2(c-2)\frac{4\eps - (4\theta_0)^{-2}}{2\sigma^*-\alpha_{K,2}}  + \frac{c_\mu\eps}{\frakm^*} \bigg)r^2(\rho_K) + \frac{c_\mu\eps}{10\frakm^*}r^2(\rho_K) \Bigg\},
    \end{align*}
    so that imposing $B_{9,2} + B_{1,1} < 0$ gives both
    \begin{align*}
        \frac{16}{\sigma^*(2\sigma^*-\alpha_{K,1})^2} \frac{\delta_{K,n}^2}{r^2(\rho_K)}
        + \frac{ 16(c+2)\eps}{\sigma^*} 
        + \frac{8(c+2)\eps }{2\sigma^*-\alpha_{K,1}} 
        - \frac{5 c_\mu \eps}{10 \frakm^*} 
        &< \frac{2\sigma^*}{(2\sigma^*-\alpha_{K,2})^2} \frac{\alpha_{K,2}^2}{r^2(\rho_K)},  \\
        \frac{16}{\sigma^*(2\sigma^*-\alpha_{K,1})^2} \frac{\delta_{K,n}^2}{r^2(\rho_K)} 
        + \frac{8(c+2)\eps }{2\sigma^*-\alpha_{K,1}} 
        + \frac{32c_\mu\eps}{10\frakm^*}
        + 4(c-2)\frac{4\eps - (4\theta_0)^{-2}}{2\sigma^*-\alpha_{K,2}}
        &< \frac{2\sigma^*}{(2\sigma^*-\alpha_{K,2})^2} \frac{\alpha_{K,2}^2}{r^2(\rho_K)}.
    \end{align*}
    By condition~\eqref{ass.1/theta_0_lowbound_negative}, the last terms on the left side in the latter display are negative. By arguing as in $i=8,$ it is sufficient that
    \begin{align*}
        \frac{400\kappa^{*1/2}\eps^2}{c_K^2\theta_1^2}
        + 16(c+2)\eps
        + 8(c+2)\eps
        &< \frac{2c_\alpha^2}{9},  \\
        \frac{400\kappa^{*1/2}\eps^2}{c_K^2\theta_1^2}
        + 8(c+2)\eps
        + \frac{32c_\mu\eps}{10\kappa^{*1/4}}
        &< \frac{2c_\alpha^2}{9}.
    \end{align*}
    With $\kappa^*\geq 1,$ it is enough that
    \begin{align*}
        \frac{1800\kappa^{*1/2}\eps^2}{c_K^2\theta_1^2}
        + 108(c+2)\eps
        &< c_\alpha^2,  \\
        \frac{1800\kappa^{*1/2}\eps^2}{c_K^2\theta_1^2}
        + 36(c+2)\eps
        + \frac{144c_\mu\eps}{10}
        &< c_\alpha^2,
    \end{align*}
    which both follow from condition~\eqref{ass.c_alpha_lowbound}.
\end{proof}

\subsection{Contraction rates and risk bound}

In this section we obtain convergence rates and risk bounds by exploiting the results of the previous section. We recall that we are using a function $r(\cdot)$ such that $r(\rho) \geq \max\{r_P(\rho,\gamma_P), r_M(\rho,\gamma_M)\}.$ By Assumption~\ref{ass.r2rho_rrho}, there exists an absolute constant $c_r$ such that $r(\rho) \leq r(2\rho) < c_r r(\rho).$ With $C^2 = 384 \theta_1^2 c_r^2 c_\alpha^2 \kappa_{+}^{1/2},$ we allow for $K \in \left[K^* \vee 32|\mO|,\ n\eps^2/C^2 \right]$. We denote by $\Omega(K)$ the intersection of the event $\Omega_1(K)$ in Lemma~\ref{lemma.Q1/8_zeta2}, the event $\Omega_2(K)$ in Lemma~\ref{lemma.useful_bounds_lecue} and the event $\Omega_3(K)$ in Lemma~\ref{lemma.l2_quantile}. The probability of $\Omega(K) = \Omega_1(K) \cap \Omega_2(K) \cap \Omega_3(K)$ is at least $1 - \P(\Omega_1(K)) - \P(\Omega_2(K)) - \P(\Omega_3(K)) \geq 1 - 4 \exp(-K/8920).$ 

\begin{lem} \label{lemma.conv_rates}
    On the event $\Omega(K)$ defined above, the MOM$-K$ estimator $(\widehat f_{K,\mu,\sigma_+}, \widehat\sigma_{K,\mu,\sigma_+})$ belongs to the slice
    \begin{align*}
        \mF^{(2)}_1 &:= \{ (g, \chi) \in \mF \times I_{+} :
    \|g - f^*\| \leq 2 \rho_K,
    \|g - f^*\|_{2,\bX} \leq r(2 \rho_K),\ 
    |\sigma^* - \chi| \leq c_\alpha r(2 \rho_K)
    \},
    \end{align*}
    thus recovering the convergence rates in~\eqref{eq.bound_thm}.
\end{lem}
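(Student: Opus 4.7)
The strategy is to argue by contradiction: assume $(\widehat f_{K,\mu,\sigma_+}, \widehat\sigma_{K,\mu,\sigma_+}) \notin \mF^{(2)}_1$ and derive a contradiction using the minimax definition of the MOM estimator together with Lemma~\ref{lemma.comparison_bounds_B_i}. Since the nine slices $\mF^{(2)}_1, \dots, \mF^{(2)}_9$ partition the whole domain $\mF \times I_{+}$, the negation means the estimator lies in some $\mF^{(2)}_i$ with $i \in \{2, \dots, 9\}$, and each of these slices has already been controlled by Lemmas~\ref{lemma.bound_supg_Fkappa_2}--\ref{lemma.bound_supg_Fkappa_9}.

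The first step is to bound the ``small'' side. Since $(\widehat f_{K,\mu,\sigma_+}, \widehat\sigma_{K,\mu,\sigma_+})$ minimises $\mC_{K,\mu}$ by construction, I have on $\Omega(K)$
\begin{align*}
    \mC_{K,\mu}(\widehat f_{K,\mu,\sigma_+}, \widehat\sigma_{K,\mu,\sigma_+})
    \leq \mC_{K,\mu}(f^*,\sigma^*)
    = \sup_{(g,\chi) \in \mF \times I_+} T_{K,\mu}(g,\chi,f^*,\sigma^*)
    \leq B_{1,1},
\end{align*}
where the last inequality is exactly Lemma~\ref{lemma.comparison_bounds_B_i} applied with $c_\rho = 1$, together with the partition bounds $B_{i,1}$ of Lemmas~\ref{lemma.bound_supg_Fkappa_1}--\ref{lemma.bound_supg_Fkappa_9}. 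The second step is to produce a matching lower bound using the anti-symmetry property~\ref{prop.antysymmetry} of $R_c$. Combined with the anti-symmetry of the median (a consequence of Lemma~\ref{lemma.quantile_prop}, since $Q_{1/2,K}[-\mathbf{x}]=-Q_{1/2,K}[\mathbf{x}]$) and the obvious sign-flip of the penalty term, this yields the functional identity
\begin{align*}
    T_{K,\mu}(f^*,\sigma^*,\widehat f_{K,\mu,\sigma_+}, \widehat\sigma_{K,\mu,\sigma_+})
    = - T_{K,\mu}(\widehat f_{K,\mu,\sigma_+}, \widehat\sigma_{K,\mu,\sigma_+},f^*,\sigma^*).
\end{align*}
Evaluating $\mC_{K,\mu}(\widehat f_{K,\mu,\sigma_+},\widehat\sigma_{K,\mu,\sigma_+})$ at the particular point $(g,\chi) = (f^*,\sigma^*)$ gives a lower bound, so chaining the previous two displays leads to
\begin{align*}
    T_{K,\mu}(\widehat f_{K,\mu,\sigma_+},\widehat\sigma_{K,\mu,\sigma_+}, f^*,\sigma^*)
    \geq - \mC_{K,\mu}(\widehat f_{K,\mu,\sigma_+},\widehat\sigma_{K,\mu,\sigma_+})
    \geq - B_{1,1}.
\end{align*}

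To close the contradiction, suppose $(\widehat f_{K,\mu,\sigma_+}, \widehat\sigma_{K,\mu,\sigma_+}) \in \mF^{(2)}_i$ for some $i \in \{2,\dots,9\}$. Then by the slice bounds $B_{i,2}$ of Lemmas~\ref{lemma.bound_supg_Fkappa_2}--\ref{lemma.bound_supg_Fkappa_9} applied with $c_\rho = 2$,
\begin{align*}
    T_{K,\mu}(\widehat f_{K,\mu,\sigma_+},\widehat\sigma_{K,\mu,\sigma_+}, f^*,\sigma^*) \leq B_{i,2}.
\end{align*}
But the second part of Lemma~\ref{lemma.comparison_bounds_B_i} asserts $B_{i,2} < -B_{1,1}$ for every $i\geq 2$, which contradicts the lower bound just obtained. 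Hence the only remaining possibility is $(\widehat f_{K,\mu,\sigma_+}, \widehat\sigma_{K,\mu,\sigma_+}) \in \mF^{(2)}_1$, and unwinding the definition of this slice gives exactly the three rates in~\eqref{eq.bound_thm}.

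The main obstacle in this argument is not the contradiction itself, which is essentially an algebraic consequence of properties~\ref{prop.antysymmetry}--\ref{prop.conc_max_g} once the slice bounds are available; it is the quantitative comparison $B_{i,2} < -B_{1,1}$ for every $i \geq 2$, which is the content of Lemma~\ref{lemma.comparison_bounds_B_i} and is the delicate part of the whole chapter. The calibration of the constants in~\eqref{def.constants} (in particular $c_\mu$, $\eps$ and $c_\alpha$) was engineered precisely so that the positive contribution $-(2\sigma^*)/(2\sigma^*\pm\alpha_{K,2})^2 \cdot \alpha_{K,2}^2$ on slices $\mF^{(2)}_4,\ldots,\mF^{(2)}_9$ dominates the error terms of order $r^2(\rho_K)$, and this calibration is used transparently here via Lemma~\ref{lemma.comparison_bounds_B_i}.
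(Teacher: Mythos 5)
Your argument mirrors the paper's proof almost line for line: first bound $\mC_{K,\mu}(\widehat f_{K,\mu,\sigma_+},\widehat\sigma_{K,\mu,\sigma_+})$ by $B_{1,1}$ via Lemma~\ref{lemma.comparison_bounds_B_i}, then use anti-symmetry to obtain the lower bound $T_{K,\mu}(\widehat f_{K,\mu,\sigma_+},\widehat\sigma_{K,\mu,\sigma_+},f^*,\sigma^*)\geq -B_{1,1}$, and conclude by the separation $-B_{1,1}>\max_{i\geq 2}B_{i,2}$, so the estimator must sit in $\mF_1^{(2)}$. The one imprecision is the claimed ``functional identity'' $T_{K,\mu}(f^*,\sigma^*,\widehat f,\widehat\sigma)= -T_{K,\mu}(\widehat f,\widehat\sigma,f^*,\sigma^*)$ justified by $Q_{1/2,K}[-\bx]=-Q_{1/2,K}[\bx]$: under the paper's definition~\eqref{def.quantile_alpha} the $\alpha$-quantile is \emph{any} element of the set $\mQ_\alpha[\bx]$, which is generally an interval, so exact anti-symmetry of the median does not hold. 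What Lemma~\ref{lemma.quantile_prop} (property~2) actually gives is the one-sided inequality $Q_{1/2}[\bx]\geq -Q_{1/2}[-\bx]$, and hence $T_{K,\mu}(f^*,\sigma^*,\widehat f,\widehat\sigma)\geq -T_{K,\mu}(\widehat f,\widehat\sigma,f^*,\sigma^*)$; fortunately this is the direction you need, so the chain of inequalities and the conclusion are unaffected. Replace the ``identity'' with the ``$\geq$'' inequality and the proof matches the paper's.
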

\begin{proof}[Proof of Lemma \ref{lemma.conv_rates}]
    By definition~\eqref{def.C_functional}, we have
    \begin{align*}
        \mC_{K,\mu} (\widehat f_{K,\mu}, \widehat\sigma_{K,\mu})
        &\leq \mC_{K,\mu} (f^*, \sigma^*)
        = \sup_{g \in \mF,\ \chi < \sigma_{+}} T_{K, \mu}(g, \chi, f^*, \sigma^*) \leq B_{1,1},
    \end{align*}
    where the last inequality follows from Lemma~\ref{lemma.comparison_bounds_B_i}. Then,
    \begin{align*}
        B_{1,1}
        \geq \mC_{K,\mu} (\widehat f_{K,\mu}, \widehat\sigma_{K,\mu})
        &= \sup_{g \in \mF, \, \chi < \sigma_{+}} T_{K, \mu}(g, \chi, \widehat f_{K,\mu}, \widehat\sigma_{K,\mu}) \\
        &\geq T_{K, \mu}(f^*, \sigma^*, \widehat f_{K,\mu}, \widehat\sigma_{K,\mu})
        \geq - T_{K, \mu}(\widehat f_{K,\mu}, \widehat\sigma_{K,\mu}, f^*, \sigma^*),
    \end{align*}
    in the last step we have used $Q_{1/2}[\bx] \geq -Q_{1/2}[-\bx]$ from Lemma~\ref{lemma.quantile_prop}. We deduce that, on the event $\Omega(K),$ $T_{K, \mu}(\widehat f_{K,\mu}, \widehat\sigma_{K,\mu}, f^*, \sigma^*) \geq - B_{1,1}.$ Applying Lemma~\ref{lemma.comparison_bounds_B_i} again, we have $- B_{1,1} > \sup_{i=2, \dots 9} B_{i,2}$ and
    \begin{align*}
        \max_{i=2,\ldots,9} \sup_{(g,\chi)\in\mF_i^{(2)}} T_{K,\mu}(g,\chi,f^*,\sigma^*) \leq \max_{i=2,\ldots,9} B_{i,2} < -B_{1,1}.
    \end{align*}
    Thus, the estimator $(\widehat f_{K,\mu,\sigma_+}, \widehat\sigma_{K,\mu,\sigma_+})$ is outside $\cup_{i=2}^9 \mF_i^{(2)},$ which means that $(\widehat f_{K,\mu,\sigma_+}, \widehat\sigma_{K,\mu,\sigma_+})$ belongs to $\mF_1^{(2)}.$ By definition of $\mF_1^{(2)},$ we have
    $\|\widehat f_{K,\mu,\sigma_+} - f^*\| \leq 2 \rho_K,$ $\|\widehat f_{K,\mu,\sigma_+} - f^*\|_{2,\bX} \leq r(2 \rho_K),$ and $|\widehat\sigma_{K,\mu,\sigma_+} - \sigma^*| \leq \alpha_{K, 2} = c_\alpha r(2\rho_K).$ The proof is complete.
\end{proof}

\begin{lem} \label{lemma.excess_risk}
    On the event $\Omega(K)$ defined above, the MOM$-K$ estimator $(\widehat f_{K,\mu,\sigma_+}, \widehat\sigma_{K,\mu,\sigma_+})$ satisfies
    \begin{align*}
        R(\widehat f_{K,\mu,\sigma_+}) - R(f^*) &\leq \left(2 + 2 c_\alpha +  \left(44 + 5 c_\mu \right)\eps + \frac{25 \kappa^{*1/2}}{8 \theta_1^2}\eps^2\right) r^2(2\rho_K) \\
        &\quad + 4\theta_1^2\eps \left(r^2(2\rho_K) \vee r_Q^2(2\rho_K,\gamma_Q)\right),
    \end{align*}
    thus recovering the excess risk bound in~\eqref{eq.bound_excess_risk}.
\end{lem}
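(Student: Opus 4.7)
The plan is to follow the strategy sketched in Section~\ref{sec.disc_conv_rate_risk} and mirror the construction used for the convergence rates in Lemma~\ref{lemma.conv_rates}. First I would decompose the excess risk via
\begin{align*}
    R(\widehat f_{K,\mu,\sigma_+}) - R(f^*)
    = \|\widehat f_{K,\mu,\sigma_+} - f^*\|_{2,\bX}^2
    + \E\big[-2\zeta(\widehat f_{K,\mu,\sigma_+}-f^*)(\bX)\big],
\end{align*}
using that $\ell_f - \ell_{f^*} = (f-f^*)^2 - 2\zeta(f-f^*)$ and that $f^*$ minimizes the risk. By Lemma~\ref{lemma.conv_rates}, on the event $\Omega(K)$, the first term is at most $r^2(2\rho_K)$, contributing the leading $1$ in the coefficient of the bound. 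The remainder of the proof is devoted to controlling the expectation term.

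Next, I would trade the expectation for a quantile. Applying Lemma~\ref{lemma.useful_bounds_lecue} with $\alpha_M^2 \leq 4\eps r^2(2\rho_K)$, one gets $\E[-2\zeta(\widehat f_{K,\mu,\sigma_+}-f^*)(\bX)] \leq Q_{1/4,K}[-2\zeta(\widehat f_{K,\mu,\sigma_+}-f^*)] + \alpha_M^2$. Since $(\widehat f_{K,\mu,\sigma_+}-f^*)^2\geq 0$, adding this non-negative term inside the quantile yields $Q_{1/4,K}[\ell_{\widehat f_{K,\mu,\sigma_+}}-\ell_{f^*}] + \alpha_M^2$, and the $1/4$-quantile is dominated by the median $MOM_K$. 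I would then invoke the anti-symmetry property \ref{prop.antysymmetry} of $R_c$ together with the identity established at the beginning of Section~\ref{sec.proofs_bound_supg_Fkappa} to write
\begin{align*}
    \ell_{\widehat f_{K,\mu,\sigma_+}} - \ell_{f^*}
    = \frac{\sigma^* + \widehat\sigma_{K,\mu,\sigma_+}}{2\Delta_c(\widehat\sigma_{K,\mu,\sigma_+},\sigma^*)}
    \Big( -R_c(\ell_{\widehat f_{K,\mu,\sigma_+}},\widehat\sigma_{K,\mu,\sigma_+},\ell_{f^*},\sigma^*)
    + R_c(\ell_{f^*},\widehat\sigma_{K,\mu,\sigma_+},\ell_{f^*},\sigma^*) \Big),
\end{align*}
so that after applying the triangular quantile property of Lemma~\ref{lemma.quantile_prop} and the bound $\Delta_c\geq c-2>0$, the median of $\ell_{\widehat f_{K,\mu,\sigma_+}}-\ell_{f^*}$ is bounded by $\frac{\sigma^* + \widehat\sigma_{K,\mu,\sigma_+}}{2(c-2)}$ times $T_{K,\mu}(f^*,\sigma^*,\widehat f_{K,\mu,\sigma_+},\widehat\sigma_{K,\mu,\sigma_+})$, plus the penalization term $\mu\|\widehat f_{K,\mu,\sigma_+}-f^*\|\leq 2\mu\rho_K$, plus a controllable quantile term arising from $R_c(\ell_{f^*},\widehat\sigma_{K,\mu,\sigma_+},\ell_{f^*},\sigma^*)$ that I would bound as in \eqref{eq.bound_Q_3_4_R>}--\eqref{eq.bound_Q_3_4_R<}.

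At this stage I would invoke the key inequality from the proof of Lemma~\ref{lemma.conv_rates}, namely $T_{K,\mu}(f^*,\sigma^*,\widehat f_{K,\mu,\sigma_+},\widehat\sigma_{K,\mu,\sigma_+}) \leq B_{1,1}$ on the event $\Omega(K)$. Substituting the expression for $B_{1,1}$ from Lemma~\ref{lemma.bound_supg_Fkappa_1}, using $\widehat\sigma_{K,\mu,\sigma_+}+\sigma^*\leq 2\sigma^*+\alpha_{K,2}$, $\mu\rho_K = (c_\mu\eps/\frakm^*)r^2(\rho_K)$ and $r^2(\rho_K)\leq r^2(2\rho_K)$, would produce the $(44+5c_\mu)\eps r^2(2\rho_K)$ contribution together with the $\tfrac{25\kappa^{*1/2}}{8\theta_1^2}\eps^2$ term coming from $\delta_{K,n}^2 = \tfrac{25\frakm^{*4}K}{n}$ and the formula for $r^2(\rho_K)$. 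The main obstacle will be handling the cross terms listed in Section~\ref{sec.disc_conv_rate_risk}, namely $|\widehat\sigma_{K,\mu,\sigma_+}-\sigma^*|\cdot Q_{15/16,K}[(\widehat f_{K,\mu,\sigma_+}-f^*)^2]$ and $|\widehat\sigma_{K,\mu,\sigma_+}-\sigma^*|\cdot Q_{15/16,K}[-2\zeta(\widehat f_{K,\mu,\sigma_+}-f^*)]$, which appear when expanding $R_c$ and are responsible for the $2c_\alpha r^2(2\rho_K)$ term and the $r_Q^2$ contribution. For these I would use $|\widehat\sigma_{K,\mu,\sigma_+}-\sigma^*|\leq c_\alpha r(2\rho_K)$ from Lemma~\ref{lemma.conv_rates}, Lemma~\ref{lemma.l2_quantile} to bound the quadratic quantile by $r^2(2\rho_K)\vee r_Q^2(2\rho_K,\gamma_Q)$, and Lemma~\ref{lemma.useful_bounds_lecue} together with Cauchy--Schwarz to bound the multiplier quantile by $4\sigma^* r(2\rho_K)+\alpha_M^2$, yielding the $4\theta_1^2\eps(r^2(2\rho_K)\vee r_Q^2(2\rho_K,\gamma_Q))$ term. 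Collecting everything gives the stated bound.
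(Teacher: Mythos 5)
Your high-level route matches the paper's: decompose the excess risk into $\|\widehat f - f^*\|_{2,\bX}^2$ plus $\E[-2\zeta(\widehat f - f^*)]$, use Lemma~\ref{lemma.conv_rates} for the first term, trade the expectation for a quantile via Lemma~\ref{lemma.useful_bounds_lecue}, add $(\widehat f-f^*)^2\geq 0$, rewrite $\ell_{\widehat f}-\ell_{f^*}$ through $R_c$, and close with $T_{K,\mu}(f^*,\sigma^*,\widehat f,\widehat\sigma)\leq B_{1,1}$. The paper packages almost all of this into Lemma~\ref{lemma.bound_P2zeta_f_fstar} and then just plugs in; you re-derive a variant inline, which is fine in spirit.

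However, there is a genuine internal inconsistency in your sketch. The algebraic identity you write, namely $\ell_{\widehat f}-\ell_{f^*} = \frac{\sigma^*+\widehat\sigma}{2\Delta_c(\widehat\sigma,\sigma^*)}\big(-R_c(\ell_{\widehat f},\widehat\sigma,\ell_{f^*},\sigma^*)+R_c(\ell_{f^*},\widehat\sigma,\ell_{f^*},\sigma^*)\big)$, is exact (one can check it by direct substitution), and its second term $R_c(\ell_{f^*},\widehat\sigma,\ell_{f^*},\sigma^*)=(\sigma^*-\widehat\sigma)\big(1-4\ell_{f^*}/(\sigma^*+\widehat\sigma)^2\big)$ involves \emph{only} $\ell_{f^*}$. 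Consequently, after the triangular quantile split, there is no term of the form $(\widehat\sigma-\sigma^*)(\ell_{\widehat f}-\ell_{f^*})$ left over, hence \emph{no} cross terms $|\widehat\sigma-\sigma^*|\cdot Q_{15/16}[(\widehat f-f^*)^2]$ or $|\widehat\sigma-\sigma^*|\cdot Q_{15/16}[-2\zeta(\widehat f-f^*)]$ arise. Those cross terms do appear in the paper's Lemma~\ref{lemma.bound_P2zeta_f_fstar}, but that is because the paper uses the \emph{other} rearrangement $2c\frac{\ell_{\widehat f}-\ell_{f^*}}{\sigma^*+\widehat\sigma}=R_c(\ell_{f^*},\sigma^*,\ell_{\widehat f},\widehat\sigma)-(\widehat\sigma-\sigma^*)\big(1-2\frac{\ell_{\widehat f}+\ell_{f^*}}{(\widehat\sigma+\sigma^*)^2}\big)$: pulling out an exact $2c$ forces the cross term $(\widehat\sigma-\sigma^*)\frac{2(\ell_{\widehat f}-\ell_{f^*})}{(\widehat\sigma+\sigma^*)^2}$ to be left in and then bounded via the quadratic and multiplier quantiles, which is precisely where the $2c_\alpha r^2(2\rho_K)$ and $4\theta_1^2\eps(r^2\vee r_Q^2)$ contributions come from. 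Your version absorbs that cross term into the prefactor, trading the clean $1/(2c)$ for $1/(2\Delta_c)$ with $\Delta_c\in(c-1,c+1)$; it is algebraically equivalent, and if carried through would give a \emph{strictly tighter} bound that omits the $c_\alpha$ and $r_Q^2$ terms. So your concluding claim that these cross terms need to be handled contradicts your own identity — you should either commit to your decomposition (and note the resulting bound is tighter, hence still implies the stated lemma) or switch to the paper's decomposition, but not mix the two. Minor further remark: the tight lower bound is $\Delta_c>c-1$, not $c-2$, since $\Delta_c=c+\frac{\sigma^*-\widehat\sigma}{\sigma^*+\widehat\sigma}\in(c-1,c+1)$.
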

\begin{proof}[Proof of Lemma \ref{lemma.excess_risk}]
    We apply Lemma~\ref{lemma.bound_P2zeta_f_fstar} with $\rho=2\rho_K$ and $\alpha_{K,c_\rho} = \alpha_{K,2},$ which gives
    \begin{align*}
        R(\widehat f_{K,\mu}) &- R(f^*)
        = \|\widehat f_{K,\mu} - f^*\|_{2,\bX}^2 + \E[-2 \zeta (\widehat f_{K,\mu} - f^*)(\bX)] \\
        &\leq r^2(2 \rho_K) 
        + \frac{2\sigma^*+\alpha_{K,2}}{2c} T_{K,\mu}(f^*,\sigma^*,\widehat f_{K,\mu},\widehat\sigma_{K,\mu}) 
        + \frac{2\sigma^*+\alpha_{K,2}}{c} \mu\rho_K + \alpha_M^2 \\
        &\quad + \frac{8(2\sigma^*+\alpha_{K,2})}{c\sigma^*(2\sigma^*-\alpha_{K,2})^2} \delta_{K,n}^2 
        + \frac{\alpha_{K,2}}{c(2\sigma^*-\alpha_{K,2})} \left( 2 \sigma^* r(2\rho_K) + r^2(2\rho_K) + \alpha_Q^2 + \alpha_M^2 \right).
    \end{align*}
    In the proof of Lemma~\ref{lemma.conv_rates} we have shown that $T_{K, \lambda}(f^*, \sigma^*, \widehat f_{K,\mu}, \widehat\sigma_{K,\mu}) \leq \mC_{K, \lambda}(\widehat f_{K,\mu}, \widehat\sigma_{K,\mu})\leq B_{1,1}.$ By Lemma~\ref{lemma.bound_supg_Fkappa_1} and the ratio $\delta_{K,n}^2/r^2(2\rho_K)$ in \eqref{def.alpha_2k_delta_Kn}, we have
    \begin{align*}
        B_{1,1} &= \frac{16}{\sigma^*(2\sigma^*-\alpha_{K,1})^2} \delta_{K,n}^2 + \frac{8(c+2)\eps }{2\sigma^*-\alpha_{K,1}} r^2(\rho_K) + \frac{c_\mu\eps}{\frakm^*}r^2(\rho_K) \\
        &= \left( \frac{25\frakm^{*2} \eps^2}{24 \theta_1^2 \sigma^*(2\sigma^*-\alpha_{K,1})^2} + \frac{8(c+2)\eps }{2\sigma^*-\alpha_{K,1}} + \frac{c_\mu\eps}{\frakm^*} \right) r^2(\rho_K) \\
        &\leq \left( \frac{25 \kappa^{*1/2} \eps^2}{24 \theta_1^2 \sigma^*} + \frac{8(c+2)\eps }{\sigma^*} + \frac{c_\mu\eps}{\sigma^*} \right) r^2(\rho_K),
    \end{align*}
    in the last inequality we have used $\frakm^* > \sigma^*$ and $\alpha_{K,1} < \sigma^*,$ which holds by Lemma \ref{lemma.conditions}. This gives
    \begin{align*}
        \frac{2\sigma^*+\alpha_{K,2}}{2c} B_{1,1} &\leq \frac{3\sigma^*}{2c} \left( \frac{25 \kappa^{*1/2} \eps^2}{24 \theta_1^2 \sigma^*} + \frac{8(c+2)\eps }{\sigma^*} + \frac{c_\mu\eps}{\sigma^*} \right) r^2(2\rho_K) \\
        &= \left( \frac{25 \kappa^{*1/2} \eps^2}{16 \theta_1^2 c} + \frac{12(c+2)\eps}{c} + \frac{3c_\mu\eps}{2c} \right) r^2(2\rho_K).
    \end{align*}
    
    By construction, we have $\mu = (c_\mu\eps/\frakm^*)r^2(\rho_K)/\rho_K,$ so that
    \begin{align*}
        \frac{2\sigma^*+\alpha_{K,2}}{c} \mu\rho_K \leq \frac{3\sigma^*}{c} \frac{c_\mu\eps}{\frakm^*} r^2(\rho_K) \leq \frac{3 c_\mu\eps}{c} r^2(\rho_K),
    \end{align*}
    since $\frakm^* > \sigma^*$ and $\alpha_{K,2} < \sigma^*.$
    
    By Lemma~\ref{lemma.useful_bounds_lecue} we have $\alpha_M^2 \leq 4\eps r^2(2\rho_K),$ whereas by Lemma~\ref{lemma.l2_quantile} we bound
    \begin{align*}
        \alpha_Q^2 &\leq \eps \max\bigg(\|f-f^*\|_{2,\bX}^2 \frac{1488 \theta_1^4}{\eps^2}\frac{K}{n},\ r_Q^2(\rho,\gamma_Q),\ \|f-f^*\|_{2,\bX}^2\bigg) \\
        &\leq \eps \left(r^2(2\rho_K) \vee r_Q^2(2\rho_K,\gamma_Q)\right) \max\bigg(\frac{1488 \theta_1^4 K}{n\eps^2},\ 1 \bigg) \leq 4\theta_1^2\eps \left(r^2(2\rho_K) \vee r_Q^2(2\rho_K,\gamma_Q)\right)
    \end{align*}
    since $K \leq n\eps^2/C^2,$ $C^2 = 384 \theta_1^2 c_r^2 c_\alpha^2 k_{+}^{1/2}$ and $1488/384 < 4.$
    
    With $\alpha_{K,2} < \sigma^*$ and the ratio $\delta_{K,n}^2/r^2(\rho_K)$ in \eqref{def.alpha_2k_delta_Kn}, we find
    \begin{align*}
        \frac{8(2\sigma^*+\alpha_{K,2})}{c\sigma^*(2\sigma^*-\alpha_{K,2})^2} \delta_{K,n}^2 \leq \frac{24}{c \sigma^{*2}} \delta_{K,n}^2 \leq \frac{25 \kappa^{*1/2} \eps^2}{16 \theta_1^2 c} r^2(\rho_K).
    \end{align*}
    
    By putting together all the previous bounds we have
    \begin{align*}
        R(\widehat f_{K,\mu,\sigma_+}) - R(f^*) &\leq r^2(2 \rho_K) 
        + \left( \frac{25 \kappa^{*1/2} \eps^2}{16 \theta_1^2 c} + \frac{12(c+2)\eps}{c} + \frac{3c_\mu\eps}{2c} + \frac{3 c_\mu\eps}{c} + 4\eps \right) r^2(2\rho_K) \\
        &\quad + \frac{25 \kappa^{*1/2} \eps^2}{16 \theta_1^2 c} r^2(2\rho_K)
        + \frac{c_\alpha }{c \sigma^*} \left( 2 \sigma^* r^2(2\rho_K) + (1 + 4\eps) r^3(2\rho_K) \right) \\
        &\quad + \frac{4\theta_1^2 c_\alpha \eps}{c \sigma^*} r(2\rho_K) \left(r^2(2\rho_K) \vee r_Q^2(2\rho_K,\gamma_Q)\right).
    \end{align*}
    Using $c_\alpha r(2\rho_K) = \alpha_{K,2} < \sigma^*$ in the second and third lines of the latter display, we find
    \begin{align*}
        R(\widehat f_{K,\mu}) - R(f^*)&\leq r^2(2 \rho_K) 
        + \left( \frac{25 \kappa^{*1/2} \eps^2}{16 \theta_1^2 c} + \frac{12(c+2)\eps}{c} + \frac{3c_\mu\eps}{2c} + \frac{3 c_\mu\eps}{c} + 4\eps \right) r^2(2\rho_K) \\
        &\quad + \frac{25 \kappa^{*1/2} \eps^2}{16 \theta_1^2 c} r^2(2\rho_K)
        +  \left( \frac{c_\alpha }{c} 2 r^2(2\rho_K) + \frac{1 }{c } (1 + 4\eps) r^2(2\rho_K) \right) \\
        &\quad + \frac{4\theta_1^2 \eps}{c} \left(r^2(2\rho_K) \vee r_Q^2(2\rho_K,\gamma_Q)\right).
    \end{align*}
    With $c>1$ and $(c+2)/c < 3,$ this recovers
    \begin{align*}
        R(\widehat f_{K,\mu}) - R(f^*)&\leq \left(2 + 2 c_\alpha +  \left(44 + 5 c_\mu \right)\eps + \frac{25 \kappa^{*1/2}}{8 \theta_1^2}\eps^2\right) r^2(2\rho_K) \\
        &\quad + 4\theta_1^2\eps \left(r^2(2\rho_K) \vee r_Q^2(2\rho_K,\gamma_Q)\right),
    \end{align*}
    which completes the proof.
\end{proof}

\section{Proofs for the high-dimensional sparse linear regression} \label{sec.proofs:high_dim_reg}

\subsection{Proof of Theorem~\ref{thm.rates_AMOM_sigmaplus}}
\label{proof:thm.rates_AMOM_sigmaplus}

In Section~\ref{sec.proofs_thm_lasso}, we prove the following Theorem~\ref{thm.MOM_sqrt_lasso}. We show now how this theorem can be used to derive our Theorem~\ref{thm.rates_AMOM_sigmaplus}.

\begin{thm}\label{thm.MOM_sqrt_lasso}
    Assume that $P_{\bX, \xi} \in \mP_{[0,\sigma_+]}$.
    There exists universal constants $\widetilde c_\mu,$ $(\widetilde c_i)_{i=0,\ldots,5}$ that only depend on $\theta_0,\theta_1,\gamma_Q,\gamma_M$ such that the following holds.
    Assume that $|\mI| \geq n / 2,$
    $|\mO| \leq \widetilde c_0 s^* \log(ed/s^*)$, $n \geq s^* \log(ed/s^*)$ and $\bbeta^* \in \mF_{s^*}$.
    
    For every $\iota_K, \iota_\mu \in [1/2, 2]^2$,
    let $K = \lceil \iota_K \widetilde c_2 s^* \log(ed/s^*) \rceil$
    and let $(\widehat\bbeta_{K,\mu,\sigma_+}, \widehat\sigma_{K,\mu,\sigma_+})$ be the MOM$-K$ estimator defined in~\eqref{def.MOM_est} with penalization parameter
    \begin{align*}
        \mu := \iota_\mu \widetilde c_\mu \sqrt{\frac{1}{n} \log\left(\frac{e d}{s^*} \right)}.
    \end{align*}
    Then, for all $p\in[1,2],$ we have
    \begin{align}
    \begin{split}\label{eq.bound_linear_minimax}
        |\widehat\bbeta_{K,\mu,\sigma_+} - \bbeta^*|_p
        &\leq \widetilde c_3 \eps^{-1} \kappa^* \sigma^* {s^*}^\frac{1}{p}
        \sqrt{\frac{1}{n}
        \log \left( \frac{ ed}{s^*} \right)}, \\
        |\widehat\sigma_{K,\mu,\sigma_+} - \sigma^* |
        &\leq c_\alpha \widetilde c_3 \eps^{-1} \kappa^* \sigma^* {s^*}^{\frac{1}{2}}
        \sqrt{\frac{1}{n} \log \left( \frac{ed}{s^*} \right)}.
    \end{split}
    \end{align}
    with probability at least $1 - 4 \exp( - K / 8920).$
\end{thm}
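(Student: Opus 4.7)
The plan is to derive Theorem~\ref{thm.MOM_sqrt_lasso} as a specialization of the general Theorem~\ref{thm.main_theorem} to the sparse linear setting. The first step is to make the identifications: take $\mF = \{f_\bbeta : \bbeta \in \R^d\}$ with $\|f_\bbeta\| := |\bbeta|_1$, so that by the isotropy condition in Definition~\ref{def.distr_class} we have $\|f_\bbeta - f_{\bbeta^*}\|_{2,\bX} = |\bbeta - \bbeta^*|_2$. Assumption~\ref{ass.main} holds with constants $\theta_0,\theta_1$ coming directly from the norm-equivalence hypotheses in Definition~\ref{def.distr_class}, and Assumption~\ref{ass.zeta_moments} holds with $\frakm^{*4} = \kappa^* \sigma^{*4} \leq \kappa_+ \sigma_+^4$. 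Next, I would invoke Theorem~\ref{thm.1.6} together with the argument of Lemma~\ref{lemma.complexity_bound_subgauss} to certify that the function $r(\cdot)$ displayed in~\eqref{def.r_complexity_linear} is a valid choice in~\eqref{def.r_complexity}, and would verify Assumption~\ref{ass.r2rho_rrho} with $c_r=2$ directly from that explicit formula.

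Second, I would solve the sparsity equation. Since $\bbeta^* \in \mF_{s^*}$ is exactly $s^*$-sparse, the computation reproduced after equation~\eqref{def.r_complexity_linear} (which itself invokes Theorem~1.4 of~\cite{lecue2018regularization}) gives
\begin{align*}
    \rho^* \asymp \frakm^* s^* \sqrt{\tfrac{1}{n}\log(ed/s^*)}, \qquad r^2(\rho^*) \asymp \frakm^{*2} \tfrac{s^*}{n}\log(ed/s^*),
\end{align*}
with constants depending only on $\theta_0,\theta_1,\gamma_P,\gamma_M$. The implicit definition of $\rho_K$ in~\eqref{eq.rhoK} then yields $r^2(\rho_K) \asymp \frakm^{*2} K/(n\eps^2)$ and $\rho_K \asymp \frakm^* K \sqrt{1/(n\log(ed/K))}/\eps^2$. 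Choosing $K = \lceil \iota_K \widetilde c_2 s^* \log(ed/s^*) \rceil$ with $\widetilde c_2$ large enough to guarantee $K \geq K^* \vee 32|\mO|$ (using $|\mO| \leq \widetilde c_0 s^*\log(ed/s^*)$), and small enough to keep $K \leq n\eps^2/C^2$ via the sample-size assumption $n \geq s^*\log(ed/s^*)$ times a suitable constant, puts us in the admissible range of Theorem~\ref{thm.main_theorem}. Substituting this $\rho_K$ into $\mu = \iota_\mu c_\mu \eps r^2(\rho_K)/(\frakm^* \rho_K)$ one checks that the resulting $\mu$ is, up to a universal constant depending on $\eps$ and the constants in $r(\cdot)$, of the form $\iota_\mu \widetilde c_\mu \sqrt{(1/n)\log(ed/s^*)}$, consistent with the statement, because the factors of $\frakm^*$ cancel.

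Third, Theorem~\ref{thm.main_theorem} then delivers, on an event of probability at least $1 - 4\exp(-K/8920)$,
\begin{align*}
    |\widehat\bbeta - \bbeta^*|_1 \leq 2\rho_K, \qquad |\widehat\bbeta - \bbeta^*|_2 \leq r(2\rho_K), \qquad |\widehat\sigma - \sigma^*| \leq c_\alpha r(2\rho_K),
\end{align*}
which, using the explicit form of $\rho_K$ and $r(\rho_K)$ together with $\frakm^* = \kappa^{*1/4}\sigma^* \leq \kappa^* \sigma^*$ (since $\kappa^*\geq 1$), give the stated bounds at $p=1,\,p=2$ and for $\widehat\sigma$. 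For general $p \in [1,2]$, I would invoke the log-linear Hölder interpolation
\begin{align*}
    |v|_p \leq |v|_1^{2/p-1}\, |v|_2^{2(p-1)/p},
\end{align*}
applied to $v = \widehat\bbeta - \bbeta^*$. A short exponent count shows that the $s^*$-power combines to $s^{*1/p}$, the $\sqrt{\log(ed/s^*)/n}$ factors combine to exponent $\tfrac{1}{2}$, and the $\frakm^*$-factors combine to $\kappa^{*1/4}\sigma^* \leq \kappa^*\sigma^*$, while all $\eps$-factors can be bounded by a universal power of $\eps^{-1}$ absorbed into $\widetilde c_3$.

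The main obstacle is bookkeeping rather than novelty: one has to track how the constants $\theta_0,\theta_1,\gamma_P,\gamma_M,c_\mu,c_\alpha$ and the implicit constants in the complexity bounds of Section~\ref{sec.disc_complexity_sparse} combine, so as to write a single $\widetilde c_3$ that works simultaneously for all $p\in[1,2]$ and for all $\iota_K,\iota_\mu \in [1/2,2]$. A delicate point is checking the upper and lower bounds on $K$ so that the $C^2$-constraint and the $32|\mO|$-lower bound of Theorem~\ref{thm.main_theorem} are simultaneously met by a single choice of $\widetilde c_2$, independent of $s^*$ and $\sigma^*$.
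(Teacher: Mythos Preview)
Your proposal is correct and follows essentially the same route as the paper's proof: specialize Theorem~\ref{thm.main_theorem} to the sparse linear setting by invoking the complexity computations of Section~\ref{sec.disc_complexity_sparse} (via Theorem~\ref{thm.1.6}), solve the sparsity equation to get $\rho^*$ and hence $K^*\asymp s^*\log(ed/s^*)$, verify that the resulting $\mu$ is of the form $\iota_\mu\widetilde c_\mu\sqrt{(1/n)\log(ed/s^*)}$ because the $\frakm^*$ factors cancel, and finish with the $\ell_p$ interpolation inequality for $p\in[1,2]$. The paper differs only in bookkeeping: it introduces the rescaled complexity function $r_\eps$ of~\eqref{def.r_complexity_linear_eps} to absorb the $\eps$-dependence cleanly and carries out the implicit computation of $\rho_K$ and the admissible $\mu$-interval more explicitly, but the strategy is identical to yours.
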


With high probability, we have
\begin{align*}
    |\widehat\bbeta_{K,\mu} - \bbeta^*|_p
    &\leq \widetilde c_3 \eps^{-1} \kappa^* \sigma^* {s^*}^\frac{1}{p}
    \sqrt{\frac{1}{n}
    \log \left( \frac{ ed}{s^*} \right)}.
\end{align*}
We can explicit the value of $\varepsilon^{-1}$ as
\begin{align*}
    \eps^{-1} &= \frac{192 \theta_0^2 (c+2) \big(8 + 134\kappa_{+}^{1/2} ((1+\frac{\sigma_{+}}{\sigma^*})\vee \frac{6}{5}) \big)}{c-2}
    = C \big( (1 + \frac{\sigma_{+}}{\sigma^*}) \vee \frac{6}{5} \big).
\end{align*}
for a constant $C>0$, and therefore
\begin{align*}
    |\widehat\bbeta_{K,\mu} - \bbeta^*|_p
    &\lesssim \big( (1 + \frac{\sigma_+}{\sigma^*}) \vee \frac{6}{5} \big) \sigma^* {s^*}^\frac{1}{p}
    \sqrt{\frac{1}{n}
    \log \left( \frac{ ed}{s} \right)}.
\end{align*}
Since by assumption $\sigma^* < \sigma_+$, we deduce
\begin{align*}
    |\widehat\bbeta_{K,\mu} - \bbeta^*|_p
    &\lesssim \sigma_+ {s^*}^\frac{1}{p}
    \sqrt{\frac{1}{n}
    \log \left( \frac{ ed}{s} \right)}.
\end{align*}
The proof for the bound on $\widehat\sigma_{K,\mu,\sigma_+}$ follows the same computations as it involves a factor of $\varepsilon^{-1}$.

\subsection{Proof of Theorem~\ref{thm.MOM_sqrt_lasso}} \label{sec.proofs_thm_lasso}

In this section we use the results in Theorem~\ref{thm.main_theorem} and the computations in Section~\ref{sec.disc_complexity_sparse} for the sparse linear setting.
For any fixed $\eps\in(0,1),$ the function
\begin{align}\label{def.r_complexity_linear_eps}
    r_{\eps}^2(\rho) = C_{\gamma_P,\gamma_M}^2 
    \begin{cases}
        \max\Big\{\rho \frakm^* \sqrt{\frac{\log d}{n\eps^2}},\ \frac{\rho^2}{n\eps^2}\log\left(\frac{ed}{n\eps^2}\right) \Big\}, & \text{if $\rho \leq \frac{\frakm^* \sqrt{\log d}}{\sqrt{n\eps^2}},$} \\
        \max\Big\{\rho \frakm^* \sqrt{\frac{1}{n\eps^2} \log\left(\frac{e d^2 \frakm^{*2}}{\rho^2 n\eps^2} \right)},\ \frac{\rho^2}{n\eps^2}\log\left(\frac{ed}{n\eps^2}\right) \Big\}, & \text{if } \frac{\frakm^* \sqrt{\log d}}{\sqrt{n\eps^2}} \leq \rho \leq \frac{\frakm^* d}{\sqrt{n\eps^2}},
    \end{cases}
\end{align}
is a strict upper bound on $r^2(\rho)$ defined in~\eqref{def.r_complexity_linear}. By arguing as in the discussion above, the smallest solution of the sparsity equation is of the form
\begin{align*}
    \rho^* = C_{\gamma_P,\gamma_M}^* \frakm^* s^* \sqrt{\frac{1}{n\eps^2} \log\left(\frac{ed}{s^*} \right)},\quad r_{\eps}^2(\rho^*) = C_{\gamma_P,\gamma_M}^{*2} \frac{\frakm^{*2} s^*}{n\eps^2} \log\left(\frac{ed}{s^*} \right).
\end{align*}
For any fixed constant $C>0,$ let $K^*$ be the smallest integer such that 
\begin{align*}
    K^* \geq \frac{n\eps^2}{C^2 \frakm^{*2}} r_\eps^2(\rho^*),
\end{align*}
this matches definition~\eqref{def.K*} in Theorem~\ref{thm.main_theorem} with $C^2 = 384 \theta_1^2$ and $r=r_{\eps}.$ By definition, this is equivalent to
\begin{align*}
    K^* \geq \frac{C_{\gamma_P,\gamma_M}^{*2}}{C^2} s \log\left(\frac{ed}{s} \right),
\end{align*}
which gives the heuristic that the minimum number of blocks is of order $K^* \sim s \log(ed/s).$ For any integer $K\geq K^*,$ we compute the radii $\rho_K$ solving
\begin{align*}
    K = \frac{n\eps^2}{C^2 \frakm^{*2}} r_\eps^2(\rho_K),
\end{align*}
which is a rearrangement of definition~\eqref{eq.rhoK} in Theorem~\ref{thm.main_theorem}. For all $\rho^* \leq \rho_K \lesssim \frakm^*\sqrt{n\eps^2},$ we have 
\begin{align*}
    r_{\eps}^2(\rho_K) = C_{\gamma_P,\gamma_M}^2 
    \rho_K \frakm^* \sqrt{\frac{1}{n\eps^2} \log\left(\frac{e d^2 \frakm^{*2}}{\rho_K^2 n\eps^2} \right)},
\end{align*}
and the implicit solutions $\rho_K$ are of the form
\begin{align*}
    \rho_K = C_K K \frakm^* \sqrt{\frac{1}{n\eps^2} \left[\log\left(\frac{ed^2}{K^2}\right)\right]^{-1} },
\end{align*}
with $C_K$ some absolute constant, for all $K \lesssim n\eps^2.$ In fact, let us compute
\begin{align*}
    \frac{n\eps^2}{K \frakm^{*2}} r_\eps^2(\rho_K) &= C_{\gamma_P,\gamma_M}^2 
     C_K \sqrt{ \left[\log\left(\frac{ed^2}{K^2}\right) \right]^{-1} \log\left(\frac{e d^2 }{C_K^2 K^2} \log\left(\frac{ed^2}{K^2}\right) \right)} \\
     &= C_{\gamma_P,\gamma_M}^2 
     C_K \sqrt{ \frac{\log\left(\frac{e d^2 }{K^2}\right) +  \log\log\left(\frac{ed^2}{K^2}\right) - \log\left(C_K^2\right)}{\log\left(\frac{ed^2}{K^2}\right)} },
\end{align*}
which we want to be equal to the given $C^2.$ Since $d\gg n$ and $K\lesssim n\eps^2,$ without loss of generality $C_K^2 \ll d/n,$ thus
\begin{align*}
    \frac{1}{2} < 1 - \frac{ \log\left(C_K^2\right)}{\log\left(\frac{ed^2}{K^2}\right)}  
    < \frac{\log\left(\frac{e d^2 }{K^2}\right) +  \log\log\left(\frac{ed^2}{K^2}\right) - \log\left(C_K^2\right)}{\log\left(\frac{ed^2}{K^2}\right)} 
    < 2 - \frac{ \log\left(C_K^2\right)}{\log\left(\frac{ed^2}{K^2}\right)} < 2,
\end{align*}
which allows for an absolute constant $C_K \in [C_{\gamma_P,\gamma_M}^2 / (\sqrt{2}C^2), \sqrt{2} C_{\gamma_P,\gamma_M}^2 / C^2]$ recovering the solution.

As mentioned earlier, we can write $K^* = \lceil \widetilde c s^*\log(ed/s^*)\rceil$ with $\widetilde c = C^{*2}_{\gamma_P,\gamma_M} / (384\theta_1^2)$ and, without loss of generality, $\widetilde c \geq 1.$ Assume that the number of outliers is smaller than $\widetilde c_0 s^*\log(ed/s^*)$ with $\widetilde c_0 = \widetilde c / 32,$ this results in $32|\mO| \leq K^*$ and the choice $K=K^*$ is valid in Theorem~\ref{thm.main_theorem}. Then set $\widetilde c_2 = 2 \widetilde c$ and apply Theorem~\ref{thm.main_theorem} separately for any choice $K = \lceil \iota_K \widetilde c_2 s^*\log(ed/s^*)\rceil$ for all $\iota_K \in [1/2,2].$ Then, for any $\iota_\mu\in[1/4,4],$ any penalization parameter of the form
\begin{align*}
    \mu
    &= \iota_\mu c_\mu \eps \frac{r_{\eps}^2(\rho_K)}{\frakm^* \rho_K}
    = \iota_\mu c_\mu C_{\gamma_P,\gamma_M}^2 \eps
    \sqrt{\frac{1}{n\eps^2}
    \log\left(\frac{e d^2 \frakm^{*2}}{\rho_K^2 n\eps^2} \right)}
    = \iota_\mu \widetilde c_\mu \sqrt{\frac{1}{n}
    \log\left(\frac{e d^2 }{K^2} \right)},
\end{align*}
with universal constant $\widetilde c_\mu = c_\mu C_{\gamma_P,\gamma_M}^2,$ is a compatible choice. Furthermore, one finds
\begin{align*}
    \mu
    &= \iota_\mu c_\mu C_{\gamma_P,\gamma_M}^2
    \sqrt{\frac{1}{n} \left(\log\left(\frac{e d^2 }{{s^*}^2} \right)
    - 2\log\log\left(\frac{ed}{s^*} \right)
    - 2\log(\iota_K \widetilde c_2) \right)}.
\end{align*}
We observe that, since $\iota_K \widetilde c_2 \geq 1,$ 
\begin{align*}
    \log\left(\frac{e d^2 }{s^2} \right)
    - 2\log\log\left(\frac{ed}{s^*} \right) - 2\log(\iota_K \widetilde c_2) \leq \log\left(\frac{e d^2 }{{s^*}^2} \right),
\end{align*}
and, with $\log(ed/s^*) \leq (\sqrt{e}d/s^*)^{1/2}$ and $\iota_K \widetilde c_2 \leq (ed/s^*)^{1/4},$
\begin{align*}
    \log\left(\frac{e d^2 }{{s^*}^2} \right) - 2\log\log\left(\frac{ed}{^*} \right) - 2\log(\iota_K \widetilde c_2) \geq \frac{1}{2} \log\left(\frac{e d^2 }{{s^*}^2} \right) - 2\log(\iota_K \widetilde c_2) \geq \frac{1}{4} \log\left(\frac{e d^2 }{{s^*}^2} \right).
\end{align*}
Therefore, any penalization parameter in the smaller interval
\begin{align*}
    \mu \in \left[ \frac{1}{2} \widetilde c_\mu \sqrt{\frac{1}{n} \log\left(\frac{ed^2}{{s^*}^2} \right)}, 2 \widetilde c_\mu \sqrt{\frac{1}{n} \log\left(\frac{ed^2}{{s^*}^2} \right)} \right],
\end{align*}
with absolute constant $\widetilde c_\mu = c_\mu C_{\gamma_P,\gamma_M}^2$, is valid. This matches the construction required by Theorem~\ref{thm.MOM_sqrt_lasso} for any $\iota_K,\iota_\mu \in [1/2,2]^2$ and shows that the penalization parameter $\mu$ can be chosen without knowledge of the moments of the noise.

The convergence rates in Theorem~\ref{thm.main_theorem} become
\begin{align*}
    |\widetilde \bbeta - \bbeta^*|_1 &\leq 2\rho_K = 2 C_K \eps^{-1} \frakm^* K \sqrt{\frac{1}{n} \left[\log\left(\frac{ed^2}{K^2}\right)\right]^{-1} }, \\
    |\widetilde \bbeta - \bbeta^*|_2 &\leq r_{\eps}(2\rho_K) \leq 2C \eps^{-1} \frakm^* \sqrt{\frac{K}{n}}, \\
    |\widehat\sigma_{K,\mu} - \sigma^*| &\leq c_\alpha r_{\eps}(2\rho_K) \leq 2c_\alpha C \eps^{-1} \frakm^* \sqrt{\frac{K}{n}}.
\end{align*}
Finally, for $K \simeq K^*,$ one gets
\begin{align*}
    |\widetilde \bbeta - \bbeta^*|_1
    &\leq 2\rho_{K^*}
    \lesssim 2C_{\gamma_P,\gamma_M}^* \eps^{-1}\frakm^* s^* \sqrt{\frac{1}{n} \log\left(\frac{ed}{s^*}\right)}, \\
    |\widetilde \bbeta - \bbeta^*|_2
    &\leq r_{\eps}(2\rho_{K^*})
    \lesssim 2C_{\gamma_P,\gamma_M}^* \eps^{-1}\frakm^* \sqrt{\frac{s^*}{n} \log\left(\frac{ed}{s^*}\right)}, \\
    |\widehat\sigma_{K,\mu} - \sigma^*|
    &\leq c_\alpha r(2\rho_{K^*})
    \lesssim 2 c_\alpha C_{\gamma_P,\gamma_M}^* \eps^{-1}\frakm^* \sqrt{\frac{s^*}{n} \log\left(\frac{ed}{s^*}\right)}.
\end{align*}
The bounds in~\eqref{eq.bound_linear_minimax} for $p\in[1,2]$ are obtained by applying the interpolation inequality $|\bbeta|_p \leq |\bbeta|_1^{-1+2/p} |\bbeta|_2^{2-2/p}.$ This concludes the proof.

\subsection{Proof of Corollary~\ref{cor.AMOM_estiSigma}}
\label{proof:cor.AMOM_estiSigma}

Recall the definition of signal-to-noise ratio
\begin{align*}
    SNR := \frac{\Var(f^*)}{\Var(\zeta)} = \frac{\Var(f^*)}{\sigma^{*2}},
\end{align*}
and denote
\begin{align*}
    A_Y^2 := \frac{\Var(Y^2)}{\Var(Y)^2},\quad B_Y^2 := \frac{\E[Y]^2}{\Var(Y)}.
\end{align*}

The following proposition allows us to bound above and below the estimator~$\widehat\sigma_{K,+}$ on an event with high probability.
\begin{prop}
    Assume that $\Var(Y) > 0$ and consider the quantities $A_Y,B_Y$ defined above. For any integer
    \begin{align*}
        K \in \left[8|\mO|,\ \frac{n\eps^2}{C^2} \wedge \frac{n}{177 A_Y^2} \wedge \frac{n}{706 B_Y^2} \right],
    \end{align*}
    there exists an event $\Omega(K)$ with probability at least $1 - 2\exp(-7K/3600)$ such that, on this event, the estimator
    \begin{align*}
        \widehat\sigma_{K,+}^2 := Q_{1/2,K}\left[ Y^2 \right] - \left(Q_{1/2,K}\left[ Y \right]\right)^2,
    \end{align*}
    satisfies $\sigma^{*2} \leq 8 \widehat\sigma_{K,+}^2
    \leq 16 \sigma^{*2}(SNR + 1).$
    \label{prop.MOM-VAR}
\end{prop}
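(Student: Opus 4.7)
The plan is to reduce the statement to concentration bounds for the MOM estimators of $\E[Y]$ and $\E[Y^2]$, and then to control $\widehat\sigma_{K,+}^2 = Q_{1/2,K}[Y^2] - Q_{1/2,K}[Y]^2$ through the identity
\begin{align*}
    \widehat\sigma_{K,+}^2 - \Var(Y)
    = \bigl(Q_{1/2,K}[Y^2] - \E[Y^2]\bigr) - \bigl(Q_{1/2,K}[Y] - \E[Y]\bigr)\bigl(Q_{1/2,K}[Y] + \E[Y]\bigr).
\end{align*}
First, in the sparse linear setting with $\bX$ isotropic, the normal equations $\E[\bX Y] = \E[\bX\bX^\top]\bbeta^* = \bbeta^*$ yield $\E[\bX\zeta] = 0$ and hence $\E[\zeta f^*(\bX)] = 0$; together with $\E[\zeta] = 0$ from Assumption~\ref{ass.zeta_moments} this gives $\Var(Y) = \Var(f^*(\bX)) + \sigma^{*2} = \sigma^{*2}(SNR + 1)$. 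It thus suffices to produce an event on which $\widehat\sigma_{K,+}^2 \in [\Var(Y)/8,\ 2\,\Var(Y)]$.

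Next, apply the standard MOM deviation inequality separately to $Z = Y$ and $Z = Y^2$: each uncontaminated block mean satisfies a Chebyshev bound (failure probability at most $1/c^2$ at level $c\sqrt{\Var(Z)K/n}$), and the count of blocks failing this is controlled by a Hoeffding binomial tail estimate. Since at most $|\mO| \leq K/8$ blocks may be contaminated, at least $7K/8$ blocks are sound, so choosing appropriate constants $c_Y, c_{Y^2}$ and taking a union bound over the two MOM applications yields an event $\Omega(K)$ of probability at least $1 - 2\exp(-7K/3600)$ on which
\begin{align*}
    \bigl|Q_{1/2,K}[Y] - \E[Y]\bigr| \leq c_Y \sqrt{\Var(Y)\,K/n}, \qquad
    \bigl|Q_{1/2,K}[Y^2] - \E[Y^2]\bigr| \leq c_{Y^2} \sqrt{\Var(Y^2)\,K/n}.
\end{align*}

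Plugging these into the identity above, using $|Q_{1/2,K}[Y] + \E[Y]| \leq 2|\E[Y]| + |Q_{1/2,K}[Y] - \E[Y]|$ together with $\Var(Y^2) = A_Y^2\,\Var(Y)^2$ and $\E[Y]^2 = B_Y^2\,\Var(Y)$, and factoring out $\Var(Y)$, the deviation on $\Omega(K)$ is bounded by
\begin{align*}
    \bigl|\widehat\sigma_{K,+}^2 - \Var(Y)\bigr|
    \leq \Var(Y)\Bigl(c_{Y^2}\,A_Y\sqrt{K/n} + 2c_Y\,B_Y\sqrt{K/n} + c_Y^2\,K/n\Bigr).
\end{align*}
The numerical thresholds $K \leq n/(177\,A_Y^2)$ and $K \leq n/(706\,B_Y^2)$ are calibrated exactly so that the right-hand side is at most $\Var(Y)/2$. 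This yields $\widehat\sigma_{K,+}^2 \in [\Var(Y)/2,\ 3\Var(Y)/2] \subset [\Var(Y)/8,\ 2\Var(Y)]$ and, combined with the first step, $\sigma^{*2} \leq \Var(Y) \leq 8\widehat\sigma_{K,+}^2$ and $8\widehat\sigma_{K,+}^2 \leq 16\,\Var(Y) = 16\,\sigma^{*2}(SNR+1)$, which is the claim.

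The main obstacle is the simultaneous calibration of the Chebyshev constants $c_Y, c_{Y^2}$, the threshold constants $177$ and $706$, and the Hoeffding exponent $7/3600$, so that each algebraic inequality closes with the exact numerical values stated. The upper constraint $K \leq n\eps^2/C^2$ plays no direct role in this proposition: it is inherited from the regime where Theorem~\ref{thm.main_theorem} applies downstream in the two-stage procedure of Corollary~\ref{cor.AMOM_estiSigma}.
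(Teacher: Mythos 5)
Your strategy — orthogonality of $\zeta$ and $f^*(\bX)$, MOM concentration for $Y$ and $Y^2$ with Chebyshev on blocks plus a Hoeffding count, then algebra — is the same as the paper's (which invokes Lemma~\ref{lemma.bound_card_Z} with $\eta=1/2$, $\gamma=7/8$, $x=1/15$, giving $c_Y=c_{Y^2}=\sqrt{15}$ and each event with probability $1-\exp(-7K/3600)$). However, two of your claims do not hold. First, the assertion that the constants $177$, $706$ are ``calibrated exactly so that the right-hand side is at most $\Var(Y)/2$'' is false: with $c_Y=c_{Y^2}=\sqrt{15}$ forced by the target probability, your deviation bound evaluates to
\begin{align*}
    \sqrt{15}\,A_Y\sqrt{K/n} + 2\sqrt{15}\,B_Y\sqrt{K/n} + 15K/n
    \;\leq\; \sqrt{15/177} + 2\sqrt{15/706} + 15/52 \;\approx\; 0.871,
\end{align*}
i.e.\ roughly $0.87\,\Var(Y)$, not $\Var(Y)/2$, so the claimed $\widehat\sigma_{K,+}^2 \in [\Var(Y)/2, 3\Var(Y)/2]$ does not follow. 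Second, the claim that ``$K\leq n\eps^2/C^2$ plays no direct role'' is also wrong: the quadratic term $c_Y^2 K/n$ cannot be bounded from the $A_Y$ and $B_Y$ constraints alone (both $A_Y$ and $B_Y$ can be arbitrarily small, e.g.\ for nearly Rademacher, zero-mean $Y$, so $n/(177A_Y^2)$ and $n/(706B_Y^2)$ can be arbitrarily large). The paper uses, implicitly via $K \leq n\eps^2/C^2$, the extra restriction $K \leq n/52$ to control this term.

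That said, your approach is not beyond rescue, and it is instructive to compare: the target $\widehat\sigma_{K,+}^2 \geq \Var(Y)/8$ requires the downward deviation to be at most $7\Var(Y)/8 = 0.875\,\Var(Y)$, and the true bound $\approx 0.871\,\Var(Y)$ squeaks under — but only once you bring in $K\leq n/52$, and only by a hair, so the arithmetic must be done explicitly. The paper instead targets the weaker lower bound $\widehat\sigma_{K,+}^2 \geq \sigma^{*2}/8$ directly (it shows $\sigma^{*2}/4 \leq 2\widehat\sigma_{K,+}^2 \leq 4\Var(Y)$), retaining the factor $8(SNR+1)-1 \geq 7$ on the right-hand side before dividing by $(SNR+1)$; for $SNR>0$ this gives genuinely more slack, and the constants $177$, $706$, $52$ are what emerge from requiring each of the three deviation terms to be at most one third of that budget in the worst case $SNR\to 0$.
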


Combining Proposition~\ref{prop.MOM-VAR} and Theorem~\ref{thm.rates_AMOM_sigmaplus} by replacing $\sigma_+$ by $\widehat\sigma_{K,+}$ and reasoning on the intersection of both events yields the conclusion.

We now prove Proposition~\ref{prop.MOM-VAR}.
\begin{proof}
    We start with
    \begin{align*}
        \Var(Y) = \Var(f^*(\bX) + \zeta) = \Var(f^*(\bX)) + \sigma^{*2} + 2\Cov(f^*(\bX),\zeta) = \Var(f^*(\bX)) + \sigma^{*2},
    \end{align*}
    where in the last step we have used that $f^*(\bX) = \bX^\top\bbeta^*$ is the orthogonal projection of the square-integrable random variable $Y = \bX^\top\bbeta^* + \zeta$ onto the closed and convex set of square-integrable random variables $\mA := \{\bX^\top\bbeta: \bbeta\in\R^d\}.$ Thus, $\Var(Y) = \sigma^{*2}(SNR + 1).$
    
    We apply Lemma~\ref{lemma.bound_card_Z} to the variable $Z=Y^2.$ We choose $\eta=1/2$ and $\gamma = 7/8,$ $x = 1/15,$ $\delta_{K,n}^2 = a_{K,n}^2 := 15 (K/n) \Var(Y^2),$ so that $\gamma (1 - 1/15 - x) \geq 1/2,$ in fact
    \begin{align*}
        \gamma \left(1 - \frac{1}{15} - x \right) = \frac{7}{8}\left(1 - \frac{1}{15} - \frac{1}{15} \right) = \frac{91}{120} > \frac{1}{2}.
    \end{align*}
    Therefore, on an event $\Omega_1(K)$ with probability at least $1-\exp(- 7 K / 3600),$ we have $Q_{1/2,K}\left[ Y^2 \right] \in [\E[Y^2] - a_{K,n}, \E[Y^2] + a_{K,n}].$
    
    We now repeat the argument for $Z=Y.$ We choose again $\eta=1/2$ and $\gamma = 7/8,$ $x =1/15,$ $\delta_{K,n}^2 = b_{k,n}^2 := 15 (K/n) \Var(Y),$ so that $\gamma (1 - 1/15 - x) \geq 1/2.$ Therefore, on an event $\Omega_2(K)$ with probability at least $1-\exp(- 7 K / 3600),$ we have $(Q_{1/2,K}\left[ Y \right])^2 \in [(\E[Y] - b_{K,n})^2, (\E[Y] + b_{K,n})^2].$
    
    We now work on the event $\Omega(K) = \Omega_1(K)\cap\Omega_2(K)$ which has probability at least $1-2\exp(- 7 K / 3600).$ We have
    \begin{align*}
        \widehat\sigma_{K,+}^2 &\in \Big[ \Var(Y) - a_{K,n} - 2\E[Y]b_{K,n} - b_{K,n}^2,\  \Var(Y) + a_{K,n} + 2\E[Y]b_{K,n} - b_{K,n}^2 \Big],
    \end{align*}
    with $a_{K,n}^2 = 15 (K/n) \Var(Y^2),\ b_{K,n}^2 = 15 (K/n) \Var(Y).$ We now show that
    \begin{align*}
        \frac{\sigma^{*2}}{4} \leq 2\widehat\sigma_{K,+}^2 \leq 4 \Var(Y),
    \end{align*}
    which would give the claim. We start with the lower bound, we want
    \begin{align*}
        1 \leq \frac{2\Var(Y) - 2a_{K,n} - 4\E[Y]b_{K,n} - 2b_{K,n}^2}{\sigma^{*2}/4},
    \end{align*}
    and we show the stronger
    \begin{align*}
        \max\left\{ \frac{2a_{K,n}}{\sigma^{*2}/4},\ \frac{4\E[Y]b_{K,n}}{\sigma^{*2}/4},\ \frac{2b_{K,n}^2}{\sigma^{*2}/4} \right\} &\leq \frac{1}{3}\left(\frac{2\Var(Y)}{\sigma^{*2}/4} - 1 \right).
    \end{align*}
    By construction, we have
    \begin{align*}
        \frac{8a_{K,n}}{\sigma^{*2}} &= \frac{\sqrt{\Var(Y^2)}}{\sigma^{*2}} \sqrt{\frac{960 K}{n}}, \\
        \frac{16\E[Y]b_{K,n}}{\sigma^{*2}} &= \frac{\E[Y]\sqrt{\Var(Y)}}{\sigma^{*2}} \sqrt{\frac{3840 K}{n}}, \\
        \frac{8b_{K,n}^2}{\sigma^{*2}} &= \frac{\Var(Y)}{\sigma^{*2}} \frac{120 K}{n},
    \end{align*}
    and the quantities $A_Y,B_Y$ are defined in such a way that $\sqrt{\Var(Y^2)} = A_Y \Var(Y)$ and $\E[Y] = B_Y \sqrt{\Var(Y)}.$ Therefore, it is enough that
    \begin{align*}
        A_Y (SNR + 1) \sqrt{\frac{8640 K}{n}} &\leq 8(SNR + 1) - 1, \\
        B_Y (SNR + 1) \sqrt{\frac{34560 K}{n}} &\leq 8(SNR + 1) - 1, \\
        (SNR + 1) \frac{360 K}{n} &\leq 8(SNR + 1) - 1.
    \end{align*}
    We now divide by $(SNR + 1)$ and use $1/(SNR + 1) \leq 1,$ the stronger condition
    \begin{align*}
        A_Y \sqrt{\frac{8640 K}{n}} &\leq 7, \\
        B_Y \sqrt{\frac{34560 K}{n}} &\leq 7, \\
        \frac{360 K}{n} &\leq 7,
    \end{align*}
    is then satisfied if $K \leq n / \max\{ 177 A_Y^2,\ 706 B_Y^2 ,\ 52\},$ which is true by assumption on the upper bound on the number of blocks. This completes the proof of $\sigma^{*2} \leq 8\widehat\sigma_{K,+}^2$ on the event $\Omega(K).$
    
    We now deal with $2\widehat\sigma_{K,+}^2 \leq 4 \Var(Y).$ Since the quantity $-b_{K,n}^2$ is negative, it is sufficient that $2\Var(Y) + 2a_{K,n} + 2\E[Y] b_{K,n} \leq 2 \Var(Y)$ and, dividing by $\sigma^{*2},$ 
    \begin{align*}
        \frac{2 a_{K,n}}{\sigma^{*2}} + \frac{2\E[Y]b_{K,n}}{\sigma^{*2}} \leq \frac{2\Var(Y)}{\sigma^{*2}}.
    \end{align*}
    We show the stronger inequalities
    \begin{align*}
        \frac{2 a_{K,n}}{\sigma^{*2}} &\leq \frac{\Var(Y)}{\sigma^{*2}}, \\
        \frac{2\E[Y]b_{K,n}}{\sigma^{*2}} &\leq \frac{\Var(Y)}{\sigma^{*2}},
    \end{align*}
    by arguing as for the previous step. It is sufficient that
    \begin{align*}
        A_Y (SNR + 1) \sqrt{\frac{60 K}{n}} &\leq (SNR + 1), \\
        B_Y (SNR + 1) \sqrt{\frac{60 K}{n}} &\leq (SNR + 1),
    \end{align*}
    which holds if $K \leq n / \max\{ 60 A_Y^2,\ 60 B_Y^2\},$ and the latter is true by assumption on the upper bound on the number of blocks. This completes the proof of $2\widehat\sigma_{K,+}^2 \leq 4\Var(Y)$ on the event $\Omega(K).$
\end{proof}

\section{Proofs for adaptivity to the sparsity level \texorpdfstring{$s$}{s}} \label{sec.proofs:adaptivity_s}

\subsection{A general algorithm for simultaneous adaptivity}

In this section, we prove a more general theorem, that will yield Theorem~\ref{thm.MOM_sqrt_lasso_adaptive} as a particular case.

\textbf{Algorithm for adaptation to sparsity.} The steps of the adaptive procedure are as follows.
\begin{itemize}
    \item Let $w_1, w_2, w_3$ be three functions $[1,d/e] \to \R_+$ and set $M := \lfloor \log_2(s_+) \rfloor.$
    \item For every $m\in\{1,\ldots,M+1\},$ compute $(\widehat\bbeta_{(2^m)}, \widehat\sigma_{(2^m)})$.
    \item Set 
    \begin{align*}
        \mM := \bigg\{ & m\in\{1,\ldots,M\} : \, \text{for all $k\geq m$, }
        |\widehat\bbeta_{(2^{k-1})} - \widehat\bbeta_{(2^k)}|_1 \leq C_1 \widehat \sigma w_1(2^k)
        , \\
        &|\widehat\bbeta_{(2^{k-1})} - \widehat\bbeta_{(2^k)}|_2 \leq C_2 \widehat \sigma w_2(2^k)
        \text{ and }
        |\widehat\sigma_{(2^{k-1})} - \widehat\sigma_{(2^k)}| \leq C_3 \widehat \sigma w_3(2^k)
        \bigg\}.
    \end{align*}
    \item Set $\widetilde m := \min \mM,$ with the convention that $\widetilde m := M+1$ if $\mM = \emptyset.$ 
    \item Define $\widetilde s := 2^{\widetilde m}$ and $(\widetilde\bbeta, \widetilde\sigma) := (\widehat\bbeta_{(\widetilde s)}, \widehat\sigma_{(\widetilde s)}).$
\end{itemize}

\begin{defi}
    Let $\Theta$ be a subset of $\R^d \times \R_+$ and $\| \cdot \|$ a norm on $\Theta$.
    For a given $s \in \{2,\ldots,d/(2e)\}$, we say that an estimator $\widehat \theta_{(s)} \in \Theta$ robustly converges to $\theta^* \in \Theta$ in norm $\| \cdot \|$ with bound $C_1 \sigma^* w(s)$ if
    \begin{align}
        &\inf_{\bbeta^*\in \mF_s, \, \sigma^* > 0} P_{\beta^*,P_{\bX,\zeta}}^{\otimes n}
        \left( \forall \mD' \in \mD(N), \|\widehat \theta_{(s)}(\mD') - \theta^*\| \leq C_1 \sigma^* w(s) \right)
        \geq 1 - \widetilde c_6 C_2 \left(\frac{s}{ed} \right)^{\widetilde c_5 s} - u_n,
        \label{eq:general_sup_s}
        \\
        &\inf_{\bbeta^*\in \widetilde \mF_{2s}, \, \sigma^* > 0} 
        P_{\beta^*,P_{\bX,\zeta}}^{\otimes n}
        \left( \forall \mD' \in \mD(N), \|\widehat \theta_{(s)}(\mD') - \theta^*\| \leq C_1 \sigma^* w(s)  \right)
        \geq 1 -  \widetilde c_6 C_2 \left(\frac{2s}{ed} \right)^{2 \widetilde c_5  s} - u_n.
        \label{eq:general_sup_2s}
    \end{align}
    and if the function $w(\cdot): [1, d/e] \rightarrow \R_+$
    satisfies the following conditions:
    \begin{enumerate}
        \item $w(\cdot)$ is increasing on $[1, d/e]$ ;
        
        \item There exists a constant $C' > 0$ such that, for all $m=1, \dots, \lfloor \log_2(s_+) \rfloor$, we have $$\sum_{k=1}^m w(2^k) \leq C' \cdot w(2^m) \; ;$$
        
        \item There exists a constant $C''> 0$ such that, for all $b=1, \dots, s_+$, $$w(2b) \leq C'' w(b).$$
    \end{enumerate}
    \label{defi:function_w}
\end{defi}

\begin{thm}[Joint adaptation of $(\widehat\bbeta,\widehat \sigma)$ to $s$]
    \label{thm.joint_adapt}
    Let $s_{+}\in\{2,\ldots,d/(2e)\}$ and for $s=1, \dots, 2 s_+$,
    let $(\widehat \bbeta_{(s)}, \widehat \sigma_{(s)})$ be a joint estimator of $(\bbeta^*, \sigma^*)$ such that
    \begin{enumerate}
        \item $\widehat \bbeta_{(s)}$ robustly converges to $\bbeta^*$ in $|\cdot|_1$-norm with bound $C_1 \sigma^* w_1(s)$;
        \item $\widehat \bbeta_{(s)}$ robustly converges to $\bbeta^*$ in $|\cdot|_2$-norm with bound $C_2 \sigma^* w_2(s)$;
        \item $\widehat \sigma_{(s)}$ robustly converges to $\sigma^*$ in $|\cdot|$-norm with bound $C_3 \sigma^* w_3(s)$;
    \end{enumerate}
    for some constants $N > 0$, $\widetilde c_6 > 0$ $C_1 > 0,$ $u_n > 0$ and for some functions $w_1, w_2, w_3$ such that $C_3 w_3(2s_+) \leq 1/2$.
    Then, there exists constants $\widetilde C_1, \widetilde C_2, \widetilde C_3$ such that,
    for all $s^*\in\{1,\ldots,s_{+}\}$ and $\bbeta^* \in \widetilde\mF_{s^*},$
    the aggregated estimator $(\widetilde \bbeta, \widetilde \sigma, \widetilde s)$ satisfies 
    \begin{align*}
        P_{\beta^*,P_{\bX,\zeta}}^{\otimes n}
        &\Big( \forall \mD' \in \mD(N),
        |\widetilde\bbeta - \bbeta^*|_1 \leq \widetilde C_1 \sigma^* w_1(s^*), \,
        |\widetilde\bbeta - \bbeta^*|_2 \leq \widetilde C_2 \sigma^* w_2(s^*), \,
        |\widetilde\sigma - \sigma^*| \leq \widetilde C_3 \sigma^* w_3(s^*)  \Big) \\
        &\geq 1 - 21(\log_2(s_{+})+1)^2 \Bigg( \widetilde c_5 \left(\frac{2s^*}{d} \right)^{2 \widetilde c_6 s^*} + u_n\Bigg)
        - 21 \widetilde c_6 \left(\frac{2^{M+1}}{d} \right)^{\widetilde c_5 2^{M+1}} - 21 u_n
    \end{align*}
    and
    \begin{align*}
        \P_{\bbeta^*}\left(\forall \mD' \in \mD(N), \widetilde s \leq s^* \right)
        \geq 1 - 6 (\log_2(s_{+})+1)^2 \Bigg( \widetilde c_6 \left(\frac{2s^*}{d} \right)^{2 \widetilde c_5 s^*} + u_n\Bigg)
        - 6 \, \widetilde c_6 \left(\frac{2^{M+1}}{d} \right)^{\widetilde c_5 2^{M+1}} - 6 u_n.
    \end{align*}
\end{thm}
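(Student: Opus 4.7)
\textbf{Proof plan for Theorem~\ref{thm.joint_adapt}.}

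Set $m^* := \lceil \log_2 s^* \rceil$, so that $2^{m^*-1} < s^* \leq 2^{m^*} \leq 2 s^*$, and let $m \in \{1, \dots, M+1\}$ index the dyadic sparsity levels $2^m$ tested by the algorithm. The plan is a standard Lepski-type argument adapted to the simultaneous setting, carried out in four steps.

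\textbf{Step 1 (good event).} Define $\Omega$ as the intersection of the three collections of events given by the robust convergence assumption: for every $m \in \{m^*, \dots, M+1\}$, use hypothesis \eqref{eq:general_sup_s} with $s = 2^m$ (valid since $\bbeta^* \in \mF_{s^*} \subseteq \mF_{2^m}$), and for $m = m^* - 1$, use hypothesis \eqref{eq:general_sup_2s} with $s = 2^{m^*-1}$ (valid since $\bbeta^* \in \mF_{s^*} \subseteq \mF_{2^{m^*}} = \mF_{2\cdot 2^{m^*-1}}$). Together these ensure that, on $\Omega$ and uniformly over all $\mD' \in \mD(N)$, each of $\widehat\bbeta_{(2^m)}$ and $\widehat\sigma_{(2^m)}$ for $m \geq m^* - 1$ satisfies the triple convergence bound with rate $w_i(2^m)$. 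In particular $|\widehat\sigma_{(2^{M+1})} - \sigma^*| \leq C_3 \sigma^* w_3(2s_+) \leq \sigma^*/2$, so that $\widehat\sigma_{(2^{M+1})} \in [\sigma^*/2, 3\sigma^*/2]$ and may be used as a proxy for $\sigma^*$ in the thresholds (up to a factor of $2$). The probability of $\Omega^c$ is controlled by a union bound over at most $\log_2(s_+) + 2$ values of $m$; each failure probability is of the form $\widetilde c_6 C_2 (2^m/ed)^{\widetilde c_5 2^m} + u_n$, whose sum is dominated, up to a $(\log_2(s_+)+1)$ factor, by the two extremes $m = m^*-1$ (giving the $(2s^*/d)^{2\widetilde c_5 s^*}$-type term) and $m = M+1$ (giving the $(2^{M+1}/d)^{\widetilde c_5 2^{M+1}}$ term).

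\textbf{Step 2 (upper bound on $\widetilde m$).} On $\Omega$, I claim $m^* \in \mM$, which gives $\widetilde m \leq m^*$ and therefore $\widetilde s \leq 2^{m^*} \leq 2 s^*$ (absorbable in a constant via property (3) of $w$). To see this, for every $k \geq m^*$ both indices $k, k-1$ lie in $\{m^*-1, \dots, M+1\}$, so by the triangle inequality
\begin{align*}
|\widehat\bbeta_{(2^{k-1})} - \widehat\bbeta_{(2^k)}|_1
&\leq C_1 \sigma^* \bigl(w_1(2^{k-1}) + w_1(2^k)\bigr)
\leq 2 C_1 \sigma^* w_1(2^k),
\end{align*}
since $w_1$ is increasing. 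Because $\widehat\sigma_{(2^{M+1})} \geq \sigma^*/2$, the right-hand side is at most $4 C_1 \widehat\sigma_{(2^{M+1})} w_1(2^k)$, so picking threshold constants $C_1^{\mathrm{thr}}, C_2^{\mathrm{thr}}, C_3^{\mathrm{thr}}$ in the definition of $\mM$ at least $4 C_1, 4 C_2, 4 C_3$ guarantees the three conditions for every $k \geq m^*$, hence $m^* \in \mM$.

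\textbf{Step 3 (rate for $(\widetilde\bbeta, \widetilde\sigma)$).} On $\Omega$, with $\widetilde m \leq m^*$, decompose
\begin{align*}
\widetilde\bbeta - \bbeta^*
= \sum_{k=\widetilde m + 1}^{m^*} \bigl(\widehat\bbeta_{(2^{k-1})} - \widehat\bbeta_{(2^k)}\bigr)
+ \bigl(\widehat\bbeta_{(2^{m^*})} - \bbeta^*\bigr),
\end{align*}
and bound the first sum using $\widetilde m \in \mM$ (the Lepski consistency tests) together with property (2) of $w$, yielding $|\,\cdot\,|_1 \leq C_1^{\mathrm{thr}} \widehat\sigma_{(2^{M+1})} \sum_{k \leq m^*} w_1(2^k) \leq C_1^{\mathrm{thr}} C' \widehat\sigma_{(2^{M+1})} w_1(2^{m^*})$; the second term is bounded directly on $\Omega$ by $C_1 \sigma^* w_1(2^{m^*})$. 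Using $\widehat\sigma_{(2^{M+1})} \leq (3/2)\sigma^*$ and $w_1(2^{m^*}) \leq C'' w_1(s^*)$ gives the desired bound $|\widetilde\bbeta - \bbeta^*|_1 \leq \widetilde C_1 \sigma^* w_1(s^*)$. The same scheme, applied to the $|\cdot|_2$-norm and to $|\widetilde\sigma - \sigma^*|$, yields the remaining two inequalities.

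\textbf{Main difficulty.} The main subtlety is the twofold interplay that must be handled simultaneously: the thresholds in $\mM$ are defined using the data-driven proxy $\widehat\sigma_{(2^{M+1})}$ rather than the unknown $\sigma^*$, and the Lepski test must succeed at $m = m^*$ (requiring comparison with $\widehat\bbeta_{(2^{m^*-1})}$, which forces the use of the "slightly overestimated sparsity" hypothesis \eqref{eq:general_sup_2s}). Reconciling these, while making the constants $\widetilde C_1, \widetilde C_2, \widetilde C_3$ absolute and explicit, requires a careful choice of the threshold constants $C_1^{\mathrm{thr}}, C_2^{\mathrm{thr}}, C_3^{\mathrm{thr}}$ in the algorithm and a tight accounting of the $\log_2(s_+)$-fold union bound, which explains the $(\log_2(s_+)+1)^2$ factor and the separate $2^{M+1}$-term in the final probability bound.
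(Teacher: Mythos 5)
Your proposal is correct and takes essentially the same approach as the paper's proof in Section~\ref{proof:thm.joint_adapt}: bound the proxy $\widehat\sigma_{(2^{M+1})}$ using $C_3 w_3(2s_+)\le 1/2$, show with high probability that the stopping index satisfies $\widetilde m \le m^*$ (the paper's $m_0$) by reducing Lepski-test failures to deviations of the sub-estimators --- and your observation that hypothesis~\eqref{eq:general_sup_2s} is needed precisely at the boundary index $2^{m^*-1}$ matches the paper's treatment exactly --- and then telescope the error using the Lepski thresholds together with properties~(2),~(3) of~$w$. Your single-good-event formulation of Step~1 is only a cosmetic variant of the paper's separate probability bounds (and in fact gives a tighter single union bound rather than the paper's double sum); like the paper's own argument it delivers only $\widetilde s \le 2^{m^*}$, which may exceed $s^*$ for non-dyadic $s^*$, but this slack is already present in the original proof of the second display.
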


We adapt the proof given in \cite[Section 7.3.1]{derumigny2018improved} to this new setting where the adaptation is done on both estimators simultaneously. Proof of Theorem~\ref{thm.joint_adapt} is given in Section~\ref{proof:thm.joint_adapt}.

\subsection{Proof of Theorem~\ref{thm.MOM_sqrt_lasso_adaptive}}
\label{sec.proofs_thm_lasso_adaptive}

To prove Theorem~\ref{thm.MOM_sqrt_lasso_adaptive}, we will apply Theorem~\ref{thm.joint_adapt}. We first check that its assumption are satisfied.
We choose the functions $w_1(s) = s \sqrt{(1/n) \log(ed/s)}$, $w_2(s) = w_3(s) = w_1(s) = s^{1/2} \sqrt{(1/n) \log(ed/s)}$. By Lemma 4.4 in \cite{derumigny2018improved}, $w_1$, $w_2$ and $w_3$ satisfy the 3 conditions in Definition~\ref{defi:function_w}.

It remains to check that the following bounds in probability \eqref{eq:general_sup_s} and \eqref{eq:general_sup_2s} hold for all $s^*=1,\ldots,s_{+}$.
Applying Theorem~\ref{thm.rates_AMOM_sigmaplus} gives
\begin{align*}
    \inf_{\bbeta^*\in \mF_{s^*}, \, \sigma^* > 0}
    P_{\beta^*,P_{\bX,\zeta}}^{\otimes n}
    \Bigg(
    \sup_{ \mD' \in \mD(\widetilde c_3 \frakr_\mO) }
    \bigg\{ \frakr_2^{-1}
    \big| \widehat\sigma (\mD') - \sigma^* \big|
    &\vee \hspace{-0.1cm}
    \sup_{p \in [1,2]} \frakr_p^{-1}
    \big| \widehat\bbeta (\mD') - \bbeta^* \big|_p \bigg\}
    \leq \widetilde c_4 \sigma_+ \Bigg)
    \geq 1 - 4 \Big( \frac{s^*}{ed} \Big)^{\widetilde c_5 s^*},
\end{align*}
proving that the bound \eqref{eq:general_sup_s} is satisfied.

Furthermore, we have
\begin{align*}
    K_{2s} &= \left\lceil \widetilde c_2 2s^* \log\left(\frac{ed}{2s^*}\right) \right\rceil
    = \left\lceil \widetilde c_2 2s^* \left(\log\left(\frac{ed}{s^*}\right) + \log(2) \right) \right\rceil =  \gamma(2s^*) K_{s^*}, \\ 
    \mu_{2s^*} &= \widetilde c_\mu \sqrt{\frac{1}{n} \log\left(\frac{ed}{2s^*} \right)} = \widetilde c_\mu \sqrt{\frac{1}{n} \log\left(\frac{ed}{s^*} \right) - \frac{\log(2)}{n}} = \widetilde\gamma(2s^*)\mu_s,
\end{align*}
with some $\gamma(2s^*), \widetilde\gamma(2s^*) \in [1/2, 2]^2.$
This gives $\widehat\bbeta_{K_{2s^*} / \gamma(2s^*),
\, \mu_{2s^*} / \widetilde\gamma(2s^*)} = \widehat\bbeta_{K_{s^*},\mu_{s^*}}$
and, applying Theorem~\ref{thm.rates_AMOM_sigmaplus} with $2s^*$ instead of $s^*,$ yields
\begin{align*}
    \inf_{\bbeta^*\in \mF_{2s^*}, \, \sigma^* > 0}
    P_{\beta^*,P_{\bX,\zeta}}^{\otimes n}
    \Bigg(& \forall \mD' \in \mD(\widetilde c_3 \frakr_\mO) , \,
    \bigg\{ 
    \big| \widehat\sigma (\mD') - \sigma^* \big|
    \leq \widetilde c_4 \sigma_+ \sqrt{\frac{2s^*}{n} \log \left( \frac{ed}{2s^*} \right)} \\
    &\text{ and } \forall p \in [1,2], \,
    \big| \widehat\bbeta (\mD') - \bbeta^* \big|_1
    \leq \widetilde c_4 \sigma_+ (2s^*)^{1/p}
    \sqrt{\frac{1}{n} \log \left( \frac{ed}{2s^*} \right)} \Bigg)
    \geq 1 - 4 \Big( \frac{2s^*}{ed} \Big)^{\widetilde c_5 2s^*},
\end{align*}
proving that the bound \eqref{eq:general_sup_2s} is satisfied with $\widetilde c_4$ multiplied by $4$.

\subsection{Proof of Theorem~\ref{thm.joint_adapt}}
\label{proof:thm.joint_adapt}

We choose $s \in [1, s_+]$ and assume that $\bbeta^* \in \mF_s$. Define $\P := \P_{\beta^*,\sigma^*}$ and $m_0 := \lfloor \log_2(s) \rfloor + 1$.
For $p=1,2$, define
$\widehat \theta_{(s)}^{(p)} := \widehat \bbeta_{(s)}$,
$\widetilde \theta^{(p)} := \widetilde \bbeta$,
$\theta^{(p),*} := \bbeta^*$ and $d_p$ be the distance on $\R$ induced by the norm $|\cdot|_p$.
Define $\widehat \theta_{(s)}^{(3)} = \widehat \sigma_{(s)}$,
$\widetilde \theta^{(3)} := \widetilde \sigma$,
$\theta^{(3),*} := \sigma^*$ and $d_3$ be the distance on $\R$ induced by the absolute value.

\textbf{Bound on $\widehat \sigma$ with high probability.}
Combining the definition $\widehat\sigma = \widehat\sigma_{2s_+}$ with the assumptions that $C_3 w_3(2s_+) \leq 1/2$ and that $\widehat \sigma_{(s)}$ robustly converges to $\sigma^*$ in $|\cdot|$-norm with bound $C_3 \sigma^* w_3(s)$, we get
\begin{align}
    \P\left(\forall \mD' \in \mD(N), \sigma^*/2 \leq \widehat\sigma \leq (3/2) \sigma^* \right)
    \geq 1 - \widetilde c_6 \left(\frac{2^{M+1}}{d} \right)^{\widetilde c_5 2^{M+1}} - u_n
    \label{eq:bound_hatSigma_s+}
\end{align}

\textbf{Bound on the probability $\P(\exists \mD' \in \mD(N), \, \tilde m \geq m_0 + 1)$.}
We have
\begin{align*}
    &\P(\exists \mD' \in \mD(N), \, \widetilde m \geq m_0 + 1)
    \leq \sum_{m = m_0+1}^M
    \P(\exists \mD' \in \mD(N), \tilde m = m_0 + 1) \\
    &\leq \sum_{m = m_0+1}^M \sum_{k=m}^M
    \P \bigg( \exists \mD' \in \mD(N), 
    |\widehat\bbeta_{(2^{k-1})} - \widehat\bbeta_{(2^k)}|_1
    > 4 C_1 \widehat \sigma w_1(2^k) \\
    &\hspace{8em}
    \text{ or }
    |\widehat\bbeta_{(2^{k-1})} - \widehat\bbeta_{(2^k)}|_2
    > 4 C_2 \widehat \sigma w_2(2^k)
    \text{ or }
    |\widehat\sigma_{(2^{k-1})} - \widehat\sigma_{(2^k)}|
    > 4 C_3 \widehat \sigma w_3(2^k)
    \bigg) \displaybreak[0] \\
    &\leq \sum_{m = m_0+1}^M \sum_{k=m}^M
    \P \bigg( \exists \mD' \in \mD(N), \exists p \in [3], d_p \big( 
    \widehat \theta_{(2^{k-1})}^{(p)}, \widehat \theta_{(2^{k})}^{(p)} \big)
    > 4 C_p \widehat \sigma w_p(2^k)
    \bigg) \displaybreak[0] \\
    &\leq \sum_{p=1}^3 \sum_{m = m_0+1}^M \sum_{k=m}^M
    \P \bigg( \exists \mD' \in \mD(N), 
    d_p \big( 
    \widehat \theta_{(2^{k-1})}^{(p)}, \widehat \theta_{(2^{k})}^{(p)} \big)
    > 4 C_p \widehat \sigma w_p(2^k)
    \bigg) \displaybreak[0] \\
    &\leq \sum_{p=1}^3 \sum_{m = m_0+1}^M \sum_{k=m}^M
    \P \bigg( \exists \mD' \in \mD(N), 
    d_p \big( \widehat \theta_{(2^{k-1})}^{(p)}, \theta^{(p),*} \big)
    > 4 C_p \widehat \sigma w_p(2^k)
    \bigg) \\
    & \hspace{3em} + \P \bigg( \exists \mD' \in \mD(N), 
    d_p \big( \widehat \theta_{(2^{k})}^{(p)}, \theta^{(p),*} \big)
    > 4 C_p \widehat \sigma w_p(2^k)
    \bigg) \displaybreak[0] \\
    &\leq 2 \sum_{p=1}^3 \sum_{m = m_0+1}^M \sum_{k=m-1}^M
    \P \bigg( \exists \mD' \in \mD(N), 
    d_p \big( \widehat \theta_{(2^{k-1})}^{(p)}, \theta^{(p),*} \big)
    > 4 C_p \widehat \sigma w_p(2^k)
    \bigg) \displaybreak[0] \\
    &\leq 2 \sum_{p=1}^3 \sum_{m = m_0+1}^M \sum_{k=m-1}^M
    \P \bigg( \exists \mD' \in \mD(N), 
    d_p \big( \widehat \theta_{(2^{k-1})}^{(p)}, \theta^{(p),*} \big)
    > 4 C_p \widehat \sigma w_p(2^k) ,
    \widehat \sigma \geq \frac{\sigma}{2} \bigg) \\
    & \hspace{3em}
    + 6 \, \P \bigg( \exists \mD' \in \mD(N), \hat \sigma < \frac{\sigma}{2} \bigg).
\end{align*}
Combining the previous equation with Equation~(\ref{eq:bound_hatSigma_s+}), and then with the assumption on the bound on the estimator $\widehat \theta_{(2^{k-1})}^{(p)}$ for the distance $d_p$, we get
\begin{align}
    \P(&\exists \mD' \in \mD(N), \, \widetilde m \geq m_0 + 1) \nonumber \\
    &\leq 2 \sum_{p=1}^3 \sum_{m = m_0+1}^M \sum_{k=m-1}^M
    \P \bigg( \exists \mD' \in \mD(N), d_p \big( \widehat \theta_{(2^{k-1})}^{(p)}, \theta^{(p),*} \big)
    > 2 C_p \widehat \sigma w_p(2^k) \bigg) \nonumber \\
    & \hspace{3cm} - 6 \widetilde c_6 \left(\frac{2^{M+1}}{d} \right)^{\widetilde c_5 2^{M+1}}
    - 6 u_n \nonumber \displaybreak[0] \\
    &\leq 6 M^2 \widetilde c_6 \left( \left( \frac{2s}{p} \right)^{2 \widetilde c_5 s} + u_n \right)
    - 6 \widetilde c_6 \left(\frac{2^{M+1}}{d} \right)^{2^{M+1} \widetilde c_5 } - 6 u_n \nonumber \displaybreak[0] \\
    &\leq 6 (\log_2(s_+) + 1)^2 \widetilde c_6 \left( \left( \frac{2s}{p} \right)^{2 \widetilde c_6  s} + u_n \right)
    - 6 \widetilde c_6 \left(\frac{2^{M+1}}{d} \right)^{\widetilde c_5 2^{M+1}} - 6 u_n.
    \label{eq:bound_tilde_m_m0}
\end{align}
This gives the bound on $\tilde s$ as claimed.

\medskip

\textbf{Bound on the deviation probability of $\widetilde \theta^{(p)}$.}
For any $a > 0$, we have 
\begin{align}
    \P \big(\exists \mD' \in \mD(N),  d_p(\widetilde \theta^{(p)}, \theta^{(p),*}) \geq a \big) 
    &\leq \P \big(\exists \mD' \in \mD(N),  d_p(\widetilde \theta^{(p)}, \theta^{(p),*}) \geq a, \widetilde m \leq m_0 \big) \nonumber \\
    &+ \P(\exists \mD' \in \mD(N),  \widetilde m \geq m_0 + 1).
    \label{eq:proba_decomp_risk_tilde_beta}
\end{align}
On the event $\{ \widetilde m \leq m_0 \}$, we have the decomposition 
\begin{equation}
    d_p(\widetilde \theta^{(p)}, \theta^{(p),*})
    \leq \sum_{k=\widetilde m + 1}^{m_0}
    d_p \left(\widehat \theta_{(2^{k-1})}^{(p)} , \widehat \theta_{(2^{k})}^{(p)} \right)
    + d_p(\widehat \theta_{(2^{m_0})}^{(p)}, \theta^{(p),*}).
    \label{eq:proba_decomp_distance_beta_tilde_beta}
\end{equation}
Using the assumption on the function $w_p$, we get that,
\begin{align}
    \sum_{k=\widetilde m + 1}^{m_0}
    d_p \left(\widehat \theta_{(2^{k-1})}^{(p)} , \widehat \theta_{(2^{k})}^{(p)} \right)
    &\leq \sum_{k=\widetilde m + 1}^{m_0} 4 \hat \sigma C_0 w(2^k) \nonumber \\
    &\leq 4 \hat \sigma C_p C' w_p(2^{m_0})
    \leq 4 \hat \sigma C_p C' C'' w_p(s).
    \label{eq:bound_sum_distance_hat_beta}
\end{align}
We have $2^{m_0} \leq 2s$, therefore applying Assumption (\ref{eq:general_sup_2s}), we have with $\P_{\beta^*,\,\sigma^*}$-probability at least
$1 - \widetilde c_5 \left( 2s/p \right)^{2 \widetilde c_6 s} - u_n$, for all $\mD' \in \mD(N)$,
\begin{align}
    d_p(\widehat \theta_{(2^{m_0})}^{(p)}, \theta^{(p),*})
    \leq C_p \widehat \sigma w(2s)
    \leq C_p C'' \widehat \sigma w(s).
    \label{eq:bound_distance_beta_hat_beta}
\end{align}
Combining Equations (\ref{eq:proba_decomp_distance_beta_tilde_beta}), (\ref{eq:bound_sum_distance_hat_beta}), (\ref{eq:bound_distance_beta_hat_beta}) and (\ref{eq:bound_hatSigma_s+}), we get with $\P_{\beta^*}$-probability
at least $1 - \widetilde c_5 (2s/p)^{2 \widetilde c_6 s} -
\widetilde c_5 (2^{M+1}/p)^{\widetilde c_6 2^{M+1}}  - 2u_n$, for all $\mD' \in \mD(N)$,
\begin{align}
    d_p(\widetilde \theta^{(p)}, \theta^{(p),*})
    \leq \left( 4 C_p C' C'' + (3/2) C_p C'' \right) \sigma w(s).
    \label{eq:bound_tilde_beta_true_beta}
\end{align}

\medskip

Combining Equation~(\ref{eq:bound_tilde_m_m0}) with Equations (\ref{eq:proba_decomp_risk_tilde_beta}) and (\ref{eq:bound_tilde_beta_true_beta}), we finally get that
\begin{align*}
    &\P \left(\exists \mD' \in \mD(N), d_p(\widetilde \theta^{(p)}, \theta^{(p),*})
    \geq \left( 4 C_p C' C'' + (3/2) C_p C'' \right) \sigma w_p(s) \right) \\
    & \hspace{1.5cm} \leq 7 (\log_2(s_+) + 1)^2 \left( \widetilde c_6 \left( \frac{2s}{p} \right)^{2 \widetilde c_5 s}
    + u_n \right) - 7 \, \widetilde c_6 \left(\frac{2^{M+1}}{d} \right)^{\widetilde c_5 2^{M+1}} - 7 u_n.
\end{align*}
By a union bound, we then obtain
\begin{align*}
    \P_{\bbeta^*, \, \sigma^*} & \left(\forall \mD' \in \mD(N),
    \forall p=1,2,3, \,
    d_p(\widetilde \theta^{(p)}, \theta^{(p),*})
    \geq \left( 4 C' C'' + (3/2) C'' \right) C_p \sigma w_p(s)  \right) \\
    &\geq 1 - 21 (\log_2(s_{+})+1)^2 \Bigg(\widetilde c_6 \left(\frac{2s}{d} \right)^{2 \widetilde c_5 s} + u_n\Bigg)
    - 21 \, \widetilde c_6 \left(\frac{2^{M+1}}{d} \right)^{\widetilde c_5 2^{M+1}} - 21 u_n.
\end{align*}
as claimed.

\section{Auxiliary results}\label{sec.proofs_aux}

In this section we give auxiliary results that are used in the proofs of the main results.

\begin{lem}[Lemma 6 in~\cite{lecue2020robustML}] \label{lemma.6}
    Let $\rho \geq 0$, $\Gamma_{f^*}(\rho) := \bigcup_{f \in \mF : \, \|f-f^*\|\leq \rho/20} \big( \partial ||\cdot|| \big)_f.$ For all $g \in \mF,$ we have
    \begin{align*}
        \|f^*\| - \|g\| \leq \frac{\rho}{10} 
        - \sup_{z^* \in \Gamma_{f^*}(\rho)} z^* (g - f^*).
    \end{align*}
\end{lem}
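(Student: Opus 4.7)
\textbf{Proof plan for Lemma~\ref{lemma.6}.} The idea is to use the defining property of subdifferentials, combined with the fact that any subgradient of a norm has dual norm bounded by $1$, and then to move the base point of the subdifferential from a nearby $f^{**}$ to $f^*$ at the cost of an error $\rho/20$.

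First I would fix $f^{**} \in \mF$ with $\|f^{**} - f^*\| \leq \rho/20$ and an arbitrary $z^* \in (\partial \|\cdot\|)_{f^{**}}$. By definition of the subdifferential applied with increment $h = g - f^{**}$, one has $\|g\| \geq \|f^{**}\| + z^*(g - f^{**})$, which rearranges to
\begin{align*}
    \|f^*\| - \|g\|
    \leq \bigl(\|f^*\| - \|f^{**}\|\bigr) - z^*(g - f^{**})
    = \bigl(\|f^*\| - \|f^{**}\|\bigr) - z^*(g - f^*) + z^*(f^{**} - f^*).
\end{align*}
The first parenthesis is at most $\|f^* - f^{**}\| \leq \rho/20$ by the triangle inequality.

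Next I would bound the cross term $z^*(f^{**} - f^*)$. The key auxiliary fact is that every element $z^*$ of the subdifferential of a norm satisfies $\|z^*\|_* \leq 1$: for any $v$ with $\|v\| \leq 1$ and $t > 0$, the defining inequality combined with the triangle inequality gives $\|f^{**}\| + t\|v\| \geq \|f^{**} + tv\| \geq \|f^{**}\| + t z^*(v)$, so $z^*(v) \leq 1$. Consequently $z^*(f^{**} - f^*) \leq \|f^{**} - f^*\| \leq \rho/20$, and substituting yields
\begin{align*}
    \|f^*\| - \|g\| + z^*(g - f^*) \leq \frac{\rho}{20} + \frac{\rho}{20} = \frac{\rho}{10}.
\end{align*}

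Since this bound holds for every $f^{**} \in \mF$ with $\|f^{**} - f^*\| \leq \rho/20$ and every $z^* \in (\partial \|\cdot\|)_{f^{**}}$, i.e.\ for every $z^* \in \Gamma_{f^*}(\rho)$, I would conclude by taking the supremum over $z^* \in \Gamma_{f^*}(\rho)$ on the left-hand side, which rearranges to the stated inequality. There is no real obstacle here: the whole argument rests on two standard facts (definition of the subdifferential and $\|z^*\|_* \leq 1$ for subgradients of a norm), so the proof is essentially a two-line bookkeeping exercise once those facts are invoked.
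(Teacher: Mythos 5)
Your proof is correct. The paper does not prove Lemma~\ref{lemma.6} itself (it is imported from Lecu\'e--Lerasle), but the informal derivation sketched in Section~\ref{sec.main_sparsity} — picking $f^{**}$ with $\|f^* - f^{**}\| \leq \rho/20$, invoking the subdifferential inequality at $f^{**}$, and absorbing the shift from $f^{**}$ to $f^*$ using the triangle inequality and the fact that subgradients of a norm have dual norm at most $1$ — is exactly the argument you spelled out, so your approach matches the paper's.
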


We recall here the definition of quantiles we used in Section \ref{sec.estimator}. For any $K\in\N,$ set $[K]=\{1,\ldots,K\}.$ For all $\alpha\in(0,1)$ the $\alpha-$quantile of a vector $\bx=(x_1,\ldots,x_K)\in\R^K$ is any element $Q_\alpha[\bx]$ of the set
\begin{align*}
    \mQ_\alpha[\bx] := \Big\{ u\in\R:\ \big|\{k\in[K]:x_k\geq u\} \big| \geq (1-\alpha)K,\  \big|\{k\in[K]:x_k\leq u\} \big| \geq \alpha K \Big\}.
\end{align*}
For all $t\in\R,$ we write $Q_\alpha[\bx]\geq t$ when there exists $J\subset[K]$ such that $|J|\geq(1-\alpha)K$ and, for all $j\in J,$ $x_j\geq t.$ We write $Q_\alpha[\bx]\leq t$ if there exists $J\subset[K]$ such that $|J|\geq \alpha K$ and, for all $j\in J,$ $x_j\leq t.$

\begin{lem}\label{lemma.quantile_prop}
    We have the following properties.
    \begin{enumerate}[label=\arabic*., ref=\arabic*]
        \item \textbf{Monotonicity} \\
        For all $\alpha\in(0,1),$ $\beta\in(0,\alpha]$ and $\bx\in\R^K,$ $Q_{\beta}[\bx] \leq Q_{\alpha}[\bx].$ \label{quant_monotonicity}
        \item \textbf{Opposite} \\
        For all $\alpha\in(0,1)$ and $\bx\in\R^K,$ $Q_{\alpha}[\bx] \geq -Q_{1-\alpha}[-\bx].$ \label{quant_opposite}
        \item \textbf{Linearity} \\
        For all $\alpha\in(0,1),$ $\bx\in\R^K$ and $a,b\in\R,$ $Q_{\alpha}[a\bx + b] = |a|Q_{\alpha}[\sgn(a)\bx] + b.$ \label{quant_linearity}
        \item \textbf{Difference} \\
        For all $\alpha,\beta \in (0, 1)$ and $\bx,\by\in\R^K,$ $Q_{\alpha}[\bx-\by]\leq Q_{\alpha+\beta}[\bx] - Q_{\beta}[\by].$ \label{quant_difference}
        \item \textbf{Triangular} \\
        For all $\alpha,\beta \in (0, 1)$ and $\bx,\by\in\R^K,$ $Q_{\alpha}[\bx+\by]\leq Q_{\alpha+\beta}[\bx] + Q_{1-\beta}[\by].$ \label{quant_triangular}
    \end{enumerate}
\end{lem}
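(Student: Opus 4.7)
The plan is to work directly from the definition of $\mQ_\alpha[\bx]$ and the interpretation of the inequality symbols given in the text: $Q_\alpha[\bx]\leq t$ means that a witness set $J$ of size at least $\alpha K$ exists on which $x_k\leq t$, and dually for $\geq t$. Observe that any $u\in\mQ_\alpha[\bx]$ is automatically a witness value in both senses (taking $J=\{k: x_k\leq u\}$ and $J'=\{k: x_k\geq u\}$), so the natural route is to read off the conclusions from combinatorial properties of such subsets $J$.

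For property \ref{quant_monotonicity}, pick any $u\in\mQ_\alpha[\bx]$; since $|\{k: x_k\leq u\}|\geq \alpha K\geq \beta K$, the set $J=\{k: x_k\leq u\}$ certifies $Q_\beta[\bx]\leq u$. For property \ref{quant_opposite}, unfolding the definitions gives the set identity $\mQ_{1-\alpha}[-\bx]=-\mQ_\alpha[\bx]$: indeed $|\{k:-x_k\leq u\}|\geq (1-\alpha)K$ and $|\{k:-x_k\geq u\}|\geq \alpha K$ are equivalent to $|\{k: x_k\geq -u\}|\geq (1-\alpha)K$ and $|\{k: x_k\leq -u\}|\geq \alpha K$, which is $-u\in\mQ_\alpha[\bx]$. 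In particular $-Q_{1-\alpha}[-\bx]\in \mQ_\alpha[\bx]$, which is exactly the bound claimed. Property \ref{quant_linearity} reduces by the definition to the two elementary substitutions: for $a>0$, $a x_k+b\leq u\iff x_k\leq (u-b)/a$, giving $\mQ_\alpha[a\bx+b]=a\,\mQ_\alpha[\bx]+b$; for $a<0$, write $a\bx+b=|a|(-\bx)+b$ and apply the case $a>0$ combined with $\sgn(a)\bx=-\bx$.

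The heart of the argument is property \ref{quant_triangular}, from which \ref{quant_difference} is an easy consequence. Fix $u\in\mQ_{\alpha+\beta}[\bx]$ and $v\in\mQ_{1-\beta}[\by]$; by definition,
\begin{equation*}
|J_1|:=|\{k: x_k\leq u\}|\geq (\alpha+\beta)K,\qquad |J_2|:=|\{k: y_k\leq v\}|\geq (1-\beta)K.
\end{equation*}
An inclusion--exclusion (or complement) bound gives $|J_1\cap J_2|\geq |J_1|+|J_2|-K\geq \alpha K$, and for every $k\in J_1\cap J_2$ we have $x_k+y_k\leq u+v$, so $J=J_1\cap J_2$ witnesses $Q_\alpha[\bx+\by]\leq u+v$, which is \ref{quant_triangular}. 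Finally, for \ref{quant_difference} apply \ref{quant_triangular} to $\bx$ and $-\by$ to get $Q_\alpha[\bx-\by]\leq Q_{\alpha+\beta}[\bx]+Q_{1-\beta}[-\by]$, then use the set identity from \ref{quant_opposite} to replace $Q_{1-\beta}[-\by]$ by $-Q_\beta[\by]$.

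There is no real obstacle here: the only subtle point is keeping track of which quantile-value witnesses which subset, and the $\alpha K+(1-\beta)K+\beta K=(1+\alpha)K$ accounting in the triangular step. Once property \ref{quant_triangular} is written cleanly, the remaining items are either direct readings of the definitions or one-line consequences of \ref{quant_triangular} and \ref{quant_opposite}, so the whole lemma can be dispatched concisely.
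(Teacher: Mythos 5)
Your proof is correct and uses the same underlying counting argument as the paper; the only organizational difference is that you prove the Triangular property directly (via the intersection bound $|J_1\cap J_2|\geq |J_1|+|J_2|-K\geq\alpha K$ on the two witness sets) and then derive Difference from it using Opposite, whereas the paper proves Difference directly by counting failures and then derives Triangular. The two orderings are equivalent, and your set identity $\mQ_{1-\alpha}[-\bx]=-\mQ_\alpha[\bx]$ is a mild strengthening of the one-sided inequality the paper establishes for Opposite, which is exactly what the swapped derivation of Difference requires.
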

\begin{proof}[Proof of Lemma \ref{lemma.quantile_prop}]
    We prove property \ref{quant_monotonicity}.
    Write $\bx=(x_j)_{j\in[K]}.$ The property $Q_{\beta}[\bx] \leq Q_{\alpha}[\bx]$ is true by construction, because $Q_{\alpha}[\bx] \leq u$ implies that there are at least $\alpha K \geq \beta K$ components such that $x_j \leq u.$
    
    We prove property \ref{quant_opposite}.
    Write $\bx=(x_j)_{j\in[K]}$ and $Q_{\alpha}[\bx] = u,$ then there are at least $(1-\alpha)K$ components such that $x_j\geq u$ and at least $\alpha K$ components such that $x_j\leq u.$ We now show that $u \geq -Q_{1-\alpha}[-\bx].$ This is equivalent to $Q_{1-\alpha}[-\bx] \geq -u,$ which requires at least $\alpha K$ components such that $-x_j \geq -u,$ that is, $x_j \leq u.$ The latter is true by construction.
   
    We prove property \ref{quant_linearity}.
    Write $\bx=(x_j)_{j\in[K]}.$ The property $Q_{\alpha}[a\bx + b] = Q_{\alpha}[a\bx] + b$ follows from the definition, that is, if $Q_{\alpha}[a\bx] = u$ then there are at least $(1-\alpha)K$ components such that $a x_j \geq u$ and at least $\alpha K$ components such that $a x_j \leq u.$ Thus, the same components also satisfy $a x_j + b \geq u + b$ or $a x_j + b \leq u + b.$ It remains to show that $Q_{\alpha}[a\bx] = |a|Q_{\alpha}[\sgn(a)\bx].$ Let $Q_{\alpha}[a\bx] = u.$ We show that we have at least $(1-\alpha)K$ components $\sgn(a)x_j \geq u/|a|$ and at least $\alpha K$ components $\sgn(a)x_j \leq u/|a|.$ The latter conditions are equivalent to $|a|\sgn(a)x_j \geq u$ and $|a|\sgn(a)x_j \leq u.$ This is enough to conclude since $a = \sgn(a)|a|$ and $Q_{\alpha}[a\bx] = u.$
   
    We prove property \ref{quant_difference}.
    Write $\bx=(x_j)_{j\in[K]},$ $\by=(y_i)_{i\in[K]}$ and $Q_{\alpha+\beta}[\bx] = u,$ $Q_{\beta}[\by]=l.$ By construction:
    \begin{itemize}
        \item there are at least $(1 - \alpha - \beta)K$ components $x_j \geq u;$
        \item there are at least $(\alpha + \beta)K$ components $x_j \leq u;$
        \item there are at least $(1 - \beta)K$ components $y_i \geq l;$
        \item there are at least $\beta K$ components $y_i \leq l.$
    \end{itemize}
    With $(\bx - \by) = (x_k - y_k)_{k\in[K]},$ we want to show that $Q_{\alpha}[\bx-\by] \leq u - l,$ which means there are $\alpha K$ components $x_k - y_k \leq u - l.$ We now count how many times this inequality fails. In order for a component to be $x_k - y_k \geq u - l,$ it is necessary that either $x_k\geq u,$ which can happen at most $(1 - \alpha - \beta)K$ times, or $y_k\leq l,$ which can happen at most $\beta K$ times. Therefore, the inequality $x_k - y_k \geq u - l$ is satisfied by at most $(1 - \alpha - \beta)K + \beta K = (1 - \alpha)K$ components, leaving at least $\alpha K$ components where $x_k - y_k \leq u - l.$ This is enough to conclude.
    
    We prove property \ref{quant_triangular} as a consequence of property \ref{quant_difference} and property \ref{quant_opposite}.
\end{proof}

In the following, we use the notation $[K] = \{1,\ldots,K\}$ and $[K]_I := \{ k \in [K] : B_k \subset \mI \}.$ We denote by $K_I$ the cardinality of $[K]_I.$

\begin{lem}\label{lemma.bound_card_Z}
    Let $Z = Z(\bX,Y)$ be a real-valued random variable. Let $\eta\in(0,1)$ and $\gamma, \delta_{K,n}, x > 0$ such that $ \gamma ( 1 - K Var(Z) / (n \delta_{K,n}^2) - x) \geq \max\{\eta, 1 - \eta\}.$ Let $K \in [|\mO|/(1-\gamma), n].$ There exists an event $\Omega = \Omega(Z, K)$ with $\P(\Omega)\geq 1 - \exp(- K \gamma x^2 / 2)$ such that, on this event
    \begin{equation*}
        \big| \{ k \in [K] : |\P_{B_k}(Z) - \E[Z]| \leq \delta_{K,n} \} \big| \geq \max\{\eta, 1 - \eta\} K,
    \end{equation*}
    thus the quantiles $Q_{\eta}[Z],Q_{1-\eta}[Z]$ belong to the interval $[\E[Z] - \delta_{K,n}, \E[Z] + \delta_{K,n}].$
\end{lem}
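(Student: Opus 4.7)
\textbf{Proof plan for Lemma~\ref{lemma.bound_card_Z}.} The plan is to separate the blocks into "informative'' and "corrupted'' ones, then control the corrupted ones deterministically (by a counting argument) and the informative ones probabilistically (by Chebyshev plus a concentration inequality for sums of independent Bernoullis). Recall the notation $[K]_I := \{k \in [K] : B_k \subset \mI\}$ and write $K_I = |[K]_I|$. Since every outlier index lies in exactly one block, at most $|\mO|$ blocks fail to be informative, so $K_I \geq K - |\mO|$. The hypothesis $K \geq |\mO|/(1-\gamma)$ rewrites as $|\mO| \leq (1-\gamma)K$, which gives the key deterministic bound $K_I \geq \gamma K$.

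Next, for each $k \in [K]_I$, $\P_{B_k}(Z)$ is the empirical mean of $|B_k| = n/K$ i.i.d.\ copies of $Z$. By Chebyshev's inequality,
\begin{equation*}
    \P\Big(\big|\P_{B_k}(Z) - \E[Z]\big| > \delta_{K,n}\Big)
    \leq \frac{\Var(Z)}{|B_k|\,\delta_{K,n}^2}
    = \frac{K\,\Var(Z)}{n\,\delta_{K,n}^2} =: p.
\end{equation*}
Define the Bernoulli variables $W_k := \Ind\{|\P_{B_k}(Z)-\E[Z]| > \delta_{K,n}\}$ for $k \in [K]_I$. These are independent (because the blocks are disjoint and all indices lie in $\mI$), each bounded in $[0,1]$ with mean at most $p$. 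Applying Hoeffding's inequality to their sum gives
\begin{equation*}
    \P\!\left(\sum_{k \in [K]_I} W_k \geq (p+x)\,K_I\right)
    \leq \exp\!\left(-2 K_I x^2\right)
    \leq \exp\!\left(-2\gamma K x^2\right)
    \leq \exp\!\left(-\tfrac{1}{2}\gamma K x^2\right).
\end{equation*}

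Let $\Omega(Z,K)$ denote the complement of the event above. On $\Omega(Z,K)$, the number of informative blocks $k \in [K]_I$ satisfying $|\P_{B_k}(Z) - \E[Z]| \leq \delta_{K,n}$ is at least
\begin{equation*}
    K_I - (p+x)K_I = (1 - p - x) K_I \geq \gamma(1 - p - x) K \geq \max\{\eta,\,1-\eta\}\, K,
\end{equation*}
where the last inequality uses precisely the hypothesis $\gamma(1 - K\Var(Z)/(n\delta_{K,n}^2) - x) \geq \max\{\eta,1-\eta\}$. Translating this count into quantile statements: since at least $\max\{\eta,1-\eta\}K$ block means $\P_{B_k}(Z)$ lie in $[\E[Z]-\delta_{K,n},\E[Z]+\delta_{K,n}]$, both thresholds $\E[Z]+\delta_{K,n}$ and $\E[Z]-\delta_{K,n}$ satisfy the defining conditions of $Q_{\eta}$ and $Q_{1-\eta}$ from \eqref{def.quantile_alpha}, so $Q_\eta[Z],Q_{1-\eta}[Z] \in [\E[Z]-\delta_{K,n},\E[Z]+\delta_{K,n}]$.

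The only mildly delicate step is the bookkeeping that transforms the count on $[K]_I$ into a count on all of $[K]$: we must absorb the at most $(1-\gamma)K$ corrupted blocks into the slack provided by the inequality $\gamma(1-p-x) \geq \max\{\eta,1-\eta\}$. Nothing else is subtle; the suboptimal constant $1/2$ in the exponent of the final probability is just a convenient relaxation of the sharper Hoeffding constant $2$.
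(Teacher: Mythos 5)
Your argument is correct and mirrors the paper's proof step by step: the deterministic bound $K_I \geq \gamma K$ from $|\mO| \leq (1-\gamma)K$, Chebyshev on each informative block to get failure probability $p = K\Var(Z)/(n\delta_{K,n}^2)$, Hoeffding on the independent Bernoulli indicators $W_k$ to control the deviation of the empirical failure count, and then the chain $(1-p-x)K_I \geq \gamma(1-p-x)K \geq \max\{\eta,1-\eta\}K$ finishing with the quantile translation. The only cosmetic difference is that you invoke the sharper one-sided Hoeffding exponent $\exp(-2K_Ix^2)$ before relaxing to $\exp(-\gamma K x^2/2)$, whereas the paper directly cites the weaker bound $\exp(-x^2 K_I/2)$; both land on the stated probability.
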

\begin{proof}[Proof of Lemma~\ref{lemma.bound_card_Z}]
    We have
    \begin{align*}
        | \{ k \in [K] &: |\P_{B_k} (Z) - \E[Z]| \leq \delta_{K,n} \} | \geq \sum_{k \in [K]_I} \Ind \{ |\P_{B_k} (Z) - \E[Z]| \leq \delta_{K,n} \} \\
        &= K_I - \sum_{k \in [K]_I} \P_{\bX} \{ |\P_{B_k} (Z) - \E[Z]| \geq \delta_{K,n} \} \\
        &\quad - \sum_{k \in [K]_I} \Big( \Ind \{ |\P_{B_k} (Z) - \E[Z]| \geq \delta_{K,n} \}
        - \P_{\bX} \{ |\P_{B_k} (Z) - \E[Z]| \geq \delta_{K,n} \} \Big).
    \end{align*}
    We bound the second term using Chebychev's inequality
    \begin{align*}
        \sum_{k \in [K]_I} \P_{\bX} \{ |\P_{B_k} (Z) - \E[Z]| \geq \delta_{K,n} \}
        \leq K_I \frac{Var[P_{B_k} (Z) - \E[Z]]}{\delta_{K,n}^2}
        = K_I \frac{Var[Z]}{|B_k| \delta_{K,n}^2}
        = K_I \frac{K Var[Z]}{n \delta_{K,n}^2}.
    \end{align*}
    We bound the last term using Hoeffding's inequality
    \begin{align*}
        \sum_{k \in [K]_I} \Big( \Ind \{ |\P_{B_k} (Z) - \E[Z]| \geq \delta_{K,n} \}
        - \P_{\bX} \{ |\P_{B_k} (Z) - \E[Z]| \geq \delta_{K,n} \} \Big)
        \leq x K_I,
    \end{align*}
    on an event $\Omega(Z, K)$ of probability greater than $1 - \exp(- x^2 K_I / 2).$
    Combining the previous inequalities, we get that on $\Omega(Z, K),$
    \begin{align*}
        | \{ k \in [K]_I : |\P_{B_k} (Z) - \E[Z]| \leq \delta_{K,n} \} |
        &\geq K_I \left( 1 - \frac{K Var[Z]}{n \delta_{K,n}^2} - x \right)
        \geq K \gamma \left( 1 - \frac{K Var[Z]}{n \delta_{K,n}^2} - x \right),
    \end{align*}
    and the last term is bigger than $\max\{\eta, 1-\eta\}K$ by assumption. By definition, this also means that the quantiles $Q_{\eta}[Z],Q_{1-\eta}[Z]$ belong to the interval $[\E[Z] - \delta_{K,n}, \E[Z] + \delta_{K,n}].$
\end{proof}

\begin{lem}\label{lemma.Q1/8_zeta2}
    Let $K \in [16 |\mO|, n].$ On an event $\Omega(K)$ with probability $\P(\Omega(K))\geq 1 - \exp(- K / 4320),$ the quantiles $Q_{1/8,K}[\zeta^2], Q_{7/8,K}[\zeta^2]$ belong to the interval $[\sigma^{*2} - \delta_{K,n}, \sigma^{*2} + \delta_{K,n}],$ with $\delta_{K,n}$ defined in~\eqref{def.alpha_2k_delta_Kn}.
\end{lem}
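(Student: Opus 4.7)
The plan is to apply Lemma~\ref{lemma.bound_card_Z} directly with $Z := \zeta^2$. By Assumption~\ref{ass.zeta_moments} we have $\E[Z] = \sigma^{*2}$ and $\Var[Z] = \E[\zeta^4] - \sigma^{*4} \leq \frakm^{*4}$. Since by~\eqref{def.alpha_2k_delta_Kn} we have $\delta_{K,n}^2 = 25 \frakm^{*4} K/n$, this yields the clean bound
\begin{align*}
    \frac{K \Var[Z]}{n \delta_{K,n}^2} \leq \frac{K \frakm^{*4}}{n \cdot 25 \frakm^{*4} K/n} = \frac{1}{25}.
\end{align*}

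Next I would specify the parameters in Lemma~\ref{lemma.bound_card_Z}. I would take $\eta = 1/8$, so that $\max\{\eta, 1-\eta\} = 7/8$, covering both quantiles $Q_{1/8,K}[\zeta^2]$ and $Q_{7/8,K}[\zeta^2]$ at once. I would pick $\gamma = 15/16$ and $x = 1/45$. The sufficient condition of the lemma reduces to
\begin{align*}
    \gamma \Big( 1 - \frac{K\Var[Z]}{n\delta_{K,n}^2} - x \Big)
    \geq \frac{15}{16}\Big(1 - \frac{1}{25} - \frac{1}{45}\Big)
    = \frac{15}{16} \cdot \frac{211}{225}
    = \frac{211}{240} \geq \frac{210}{240} = \frac{7}{8},
\end{align*}
which is satisfied. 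Moreover $|\mO|/(1-\gamma) = 16 |\mO| \leq K$ by assumption, so $K \in [|\mO|/(1-\gamma), n]$ as required.

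Applying Lemma~\ref{lemma.bound_card_Z} then yields the conclusion on an event $\Omega(K)$ of probability at least
\begin{align*}
    1 - \exp\Big( - \frac{K \gamma x^2}{2} \Big)
    = 1 - \exp\Big( - \frac{K}{2} \cdot \frac{15}{16} \cdot \frac{1}{2025} \Big)
    = 1 - \exp\Big( - \frac{K}{4320} \Big),
\end{align*}
which matches the probability bound claimed in the statement. No real obstacle is expected here: the proof is a bookkeeping application of Lemma~\ref{lemma.bound_card_Z}, the only non-trivial step being the choice of $(\gamma, x)$ so that both the feasibility condition $\gamma(1 - 1/25 - x) \geq 7/8$ and the target probability $1 - \exp(-K/4320)$ are simultaneously met, which the above values deliver exactly.
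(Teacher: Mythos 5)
Your proof is correct and follows exactly the same route as the paper's: apply Lemma~\ref{lemma.bound_card_Z} to $Z=\zeta^2$ with $\eta=1/8$, $\gamma=15/16$, $x=1/45$, bound $K\Var(Z)/(n\delta_{K,n}^2)\leq 1/25$ using $\delta_{K,n}^2 = 25\frakm^{*4}K/n$, verify $\gamma(1-1/25-1/45)=211/240\geq 7/8$, and read off the probability $1-\exp(-K\gamma x^2/2)=1-\exp(-K/4320)$. The only cosmetic difference is that the paper writes $\Var(Z)=\sigma^{*4}(\kappa^*-1)$ where you use the slightly coarser bound $\Var(Z)\leq\frakm^{*4}$, which is equivalent for the purpose at hand.
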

\begin{proof}[Proof of Lemma~\ref{lemma.Q1/8_zeta2}]
    We use Lemma~\ref{lemma.bound_card_Z} with $\eta=1/8,$ $Z=\zeta^2,$ $\Var(Z) = \E[\zeta^4]-\E[\zeta^2]^2 = \sigma^{*4}(\kappa^*-1),$ $\eta=1/8,$ $\gamma = 15/16,$ $x=1/45,$ and $\delta_{K,n}^2 \geq 25(K/n)\Var(Z).$ Then, 
    \begin{align*}
        \gamma \left( 1 - x - \frac{K Var(Z)}{n \delta_{K,n}^2} \right) \geq \frac{15}{16}\left( 1 - \frac{1}{45} - \frac{1}{25} \right) = \frac{15}{16} - \frac{7}{120} > \frac{7}{8} = 1 - \eta.
    \end{align*}
    The probability of the corresponding event is $\P(\Omega(K))\geq 1 - \exp(- K \gamma x^2/ 2) = 1 - \exp(- K / 4320).$
\end{proof}

\begin{lem}[Lemma 3 in~\cite{lecue2020robustML}] \label{lemma.3} 
    Grant Assumption~\ref{ass.main}. Fix $\eta \in (0,1)$ and $\rho\in(0,+\infty].$ Let $\alpha,\gamma,\gamma_P,x$ be positive real numbers such that $\gamma(1-\alpha-x-16\gamma_P\theta_0) \geq 1-\eta.$ Assume that $K$ is an integer in $[|\mO|/(1-\gamma),n\alpha/4\theta_0^2].$
    Then, there exists an event $\Omega_Q(K)$ with probability $\P(\Omega_Q(K)) \geq 1 - 4 \exp(- K\gamma x^2 / 2)$ and, on this event: for all $f \in \mF$ with $\|f - f^*\| \leq \rho,$ if $\|f - f^*\|_{2,\bX} \geq r_P(\rho, \gamma_P)$ then
    \begin{align*}
        \left|\left\{k\in[K] : \P_{B_k}(f-f^*)^2 \geq (4 \theta_0)^{-2} \|f - f^*\|_{2,\bX}^2 \right\} \right| \geq (1-\eta)K
    \end{align*}
    In particular, $Q_{\eta, K}[ (f - f^*)^2] \geq (4 \theta_0)^{-2} \|f - f^*\|_{2,\bX}^2.$
\end{lem}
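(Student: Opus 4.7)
The strategy is the small-ball method of Mendelson adapted to the MOM setting: a small-ball lower bound, obtained from the $L^1$--$L^2$ norm equivalence in Assumption~\ref{ass.main}(1), is converted into a block-wise lower bound on $\P_{B_k}(f-f^*)^2$ by controlling a Lipschitz-surrogate empirical process via the complexity parameter $r_P$.

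First I would reduce to the boundary sphere $\|f-f^*\|_{2,\bX}=r_P(\rho,\gamma_P)$. By convexity of $\mF$, the rescaled function $\bar f := f^*+\lambda(f-f^*)$ with $\lambda := r_P/\|f-f^*\|_{2,\bX}\in(0,1]$ lies in $\mF$ (as the convex combination $\lambda f+(1-\lambda)f^*$), satisfies $\|\bar f-f^*\|_{2,\bX}=r_P$ and $\|\bar f-f^*\|\leq\rho$. The target inequality $\P_{B_k}(f-f^*)^2\geq(4\theta_0)^{-2}\|f-f^*\|_{2,\bX}^2$ is equivalent, after dividing both sides by $\lambda^2$, to its analogue for $\bar f$, so it suffices to prove the statement for $f$ with $\|f-f^*\|\leq\rho$ and $\|f-f^*\|_{2,\bX}=r_P$. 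I would then introduce a piecewise-linear cutoff $\varphi:\R_+\to[0,1]$ vanishing on $[0,1/(4\theta_0)]$ and equal to $1$ on $[1/(2\theta_0),\infty)$ (hence $4\theta_0$--Lipschitz), and set $\phi_f(\bx):=\varphi(|f-f^*|(\bx)/r_P)$. Paley--Zygmund applied to the nonnegative random variable $|f-f^*|(\bX)$, combined with $\E[|f-f^*|]\geq\theta_0^{-1}r_P$ from Assumption~\ref{ass.main}(1), yields $\E[\phi_f]\geq c_0(\theta_0)>0$, while pointwise $(f-f^*)^2\geq(4\theta_0)^{-2}r_P^2\,\phi_f$.

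Since $\phi_f$ is a Lipschitz function of $f-f^*$ with constant of order $\theta_0/r_P$, symmetrization together with the Ledoux--Talagrand contraction principle yields, for every $J\subset\mI$ with $|J|\geq n/2$,
\[
\E\sup_{f\in\B(f^*,\rho,r_P)}\bigg|\frac{1}{|J|}\sum_{i\in J}\bigl(\phi_f(\bX_i)-\E[\phi_f]\bigr)\bigg|\lesssim\frac{\theta_0}{r_P}\cdot\gamma_P\, r_P=\theta_0\gamma_P,
\]
by the very definition of $r_P(\rho,\gamma_P)$. A Markov bound, followed by a Chebyshev-type bound on the within-block variance (controlled by the hypothesis $K\leq n\alpha/(4\theta_0^2)$), then shows that on a single inlier block the probability of $\sup_f|\P_{B_k}\phi_f-\E[\phi_f]|\geq\alpha+16\theta_0\gamma_P$ is small, and a two-sided Hoeffding bound on the resulting $\{0,1\}$--indicators over the $K_\mI\geq\gamma K$ inlier blocks concentrates their count within $x$ of its expectation; the factor $4$ in the probability stems from applying Hoeffding separately to each tail of each of the two Markov steps. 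Accounting for the at most $(1-\gamma)K$ outlier-contaminated blocks, this leaves at least $\gamma(1-\alpha-x-16\gamma_P\theta_0)K\geq(1-\eta)K$ inlier blocks on which $\P_{B_k}\phi_f\geq c_0/2$, and hence, after tuning $\varphi$ so that $(4\theta_0)^{-2}\cdot(c_0/2)$ collapses into $(4\theta_0)^{-2}$ in the target constant, $\P_{B_k}(f-f^*)^2\geq(4\theta_0)^{-2}\|f-f^*\|_{2,\bX}^2$ on all those blocks. The quantile assertion $Q_{\eta,K}[(f-f^*)^2]\geq(4\theta_0)^{-2}\|f-f^*\|_{2,\bX}^2$ is then immediate from the definition of $Q_{\eta,K}$.

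The main obstacle is the simultaneous tuning of (i) the Paley--Zygmund lower bound on $\E[\phi_f]$, (ii) the Lipschitz slope of $\phi_f$, chosen precisely so that the contraction principle cancels the factor $r_P$ that would otherwise appear in the complexity bound, and (iii) the final pointwise constant $(4\theta_0)^{-2}$. This delicate interplay between the cutoff threshold and the small-ball probability is what forces the precise numerical coefficients both in the hypothesis $\gamma(1-\alpha-x-16\gamma_P\theta_0)\geq1-\eta$ and in the block-size upper bound $K\leq n\alpha/(4\theta_0^2)$, and it is the reason why the reduction to the sphere $\|f-f^*\|_{2,\bX}=r_P$ is indispensable: outside this sphere the Lipschitz constant of $\phi_f$ would blow up and the contraction inequality would lose its scale-invariance.
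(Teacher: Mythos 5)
The paper itself does not prove Lemma~\ref{lemma.3}; it is cited verbatim from \cite{lecue2020robustML}. Your sketch has to be judged against the argument in that reference, which is indeed a Mendelson-style small-ball argument, and your high-level plan (scaling reduction to the sphere, Lipschitz surrogate, contraction against $r_P$, block counting via Markov/Chebyshev plus Hoeffding) matches the overall shape. The scaling reduction to $\|f-f^*\|_{2,\bX}=r_P$ is correct and is exactly how one should localize.

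However, there is a genuine gap in the way you convert the surrogate bound into the target inequality, and the ``tuning'' you invoke at the end cannot fix it. With your cutoff $\varphi$ vanishing on $[0,1/(4\theta_0)]$ and equal to $1$ on $[1/(2\theta_0),\infty)$, the pointwise bound $(f-f^*)^2 \geq (4\theta_0)^{-2}r_P^2\,\phi_f$ averages to
\[
\P_{B_k}(f-f^*)^2 \;\geq\; (4\theta_0)^{-2}r_P^2\,\P_{B_k}\phi_f ,
\]
and $\phi_f\leq 1$, so this can match the target $(4\theta_0)^{-2}r_P^2$ only if $\P_{B_k}\phi_f\geq 1$, i.e.\ every point of the block lies above the threshold. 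Paley--Zygmund gives $\E[\phi_f]\gtrsim \theta_0^{-2}$, and concentration can only keep $\P_{B_k}\phi_f$ near this value; it cannot push it up to $1$. So the threshold-indicator route produces a constant of order $\theta_0^{-4}$, not $(4\theta_0)^{-2}$. There is no choice of ramp that makes $(4\theta_0)^{-2}\cdot(c_0/2)$ ``collapse into'' $(4\theta_0)^{-2}$; the constant you are trying to recover is larger than what the method delivers.

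The missing idea is that the $\theta_0^{-2}$ factor should come from Jensen's inequality on the block average, not from a small-ball probability. Concretely: since $\P_{B_k}(f-f^*)^2 \geq \bigl(\P_{B_k}|f-f^*|\bigr)^2$, it suffices to show $\P_{B_k}|f-f^*|\geq r_P/(4\theta_0)$ on most blocks. For this, one truncates the $L^1$-process rather than thresholding it: set $z_f := r_P\,\psi\bigl(|f-f^*|/r_P\bigr)$ with $\psi(t)=\min(t,b)$ for a suitable $b\gtrsim\theta_0$, so that $z_f\leq|f-f^*|$, $z_f$ is $1$-Lipschitz in $f-f^*$, bounded, and $\E[z_f]\gtrsim r_P/\theta_0$ (using $\E|f-f^*|\geq r_P/\theta_0$ from Assumption~\ref{ass.main}(1) and a Cauchy--Schwarz tail estimate to control the truncation error). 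Concentration of $\P_{B_k}z_f$ around $\E[z_f]$, with the block-level variance controlled by $K\leq n\alpha/(4\theta_0^2)$ and the uniform fluctuation controlled via contraction against $r_P(\rho,\gamma_P)$, then gives $\P_{B_k}|f-f^*|\geq\P_{B_k}z_f\geq r_P/(4\theta_0)$ on at least $(1-\eta)K$ blocks, and Jensen finishes. Your remark that the factor $4$ in the failure probability comes from ``each tail of each of the two Markov steps'' is also unsubstantiated and does not match how this bound is used elsewhere in the paper (Lemma~\ref{lemma.useful_bounds_lecue} cites the event with probability $1-\exp(-K\gamma x^2/2)$); only a one-sided deviation of a single block count is needed.
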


\begin{lem}[Lemma 4 in~\cite{lecue2020robustML}] \label{lemma.4} 
    Grant Assumption~\ref{ass.main}. Fix $\eta \in (0,1)$ and $\rho\in(0,+\infty].$ Let $\alpha,\gamma,\gamma_M,x$ be positive real numbers such that $\gamma(1-\alpha-x-8\gamma_M/\eps) \geq 1-\eta.$ Assume that $K$ is an integer in $[|\mO|/(1-\gamma),n].$
    Then, there exists an event $\Omega_M(K)$ with probability $\P(\Omega_M(K)) \geq 1 - \exp(- K\gamma x^2 / 2)$ and, on this event: for all $f \in \mF$ with $\|f - f^*\| \leq \rho,$
    \begin{align*}
        \left|\left\{k\in[K] : |(\P_{B_k} - \E)(2\zeta(f-f^*)| \leq \alpha_M^2 \right\} \right| \geq (1-\eta)K,
    \end{align*}
    with 
    \begin{align*}
        \alpha_M^2 := \eps \max\left(\frac{16\theta_m^2}{\eps^2 \alpha} \frac{K}{n},\ r_M^2(\rho,\gamma_M),\ \|f-f^*\|_{2,\bX}^2 \right).
    \end{align*}
\end{lem}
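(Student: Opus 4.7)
The plan is to follow the blueprint of Lemma~\ref{lemma.3} (proved in~\cite{lecue2020robustML}), adapted from the quadratic process to the multiplier empirical process $f \mapsto (\P_{B_k} - \E)(2\zeta(f-f^*))$. First, I would restrict to the good blocks $[K]_\mI := \{k \in [K] : B_k \subset \mI\}$: since each outlier contaminates at most one block, $|[K]_\mI| \geq K - |\mO| \geq \gamma K$ by the hypothesis $K \geq |\mO|/(1-\gamma)$, and the at most $(1-\gamma) K$ bad blocks will be absorbed into the target count via the standing condition on $\gamma$.

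Second, for each good block $B_k$ and each $r \geq r_M(\rho, \gamma_M)$, a standard symmetrization argument combined with the defining property of $r_M$ upper-bounds the localized expected supremum:
\[
\E\Big[\sup_{g \in \B(f^*, \rho, r)} \big|(\P_{B_k} - \E)(2\zeta(g-f^*))\big|\Big]
\leq 4\, \E\Big[\sup_{g \in \B(f^*, \rho, r)} \big|\P_{B_k}(\xi \zeta(g-f^*))\big|\Big]
\leq C \gamma_M r^2,
\]
where the star-shapedness of $\mF - f^*$ promotes the defining inequality at $r_M$ to every $r \geq r_M$. A technical wrinkle is that each block has size $n/K$, typically below the $n/2$ threshold in the definition of $r_M$; this is handled by comparing $B_k$ to a size-$\lceil n/2 \rceil$ superset $J \supset B_k$ via the triangle inequality together with star-shaped rescaling of the class $\mF - f^*$.

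Third, Markov's inequality converts the expected-supremum bound into a per-block deviation bound. Since $\alpha_M^2 \geq \eps\, \max(r_M, \|f-f^*\|_{2,\bX})^2$ for any given $f$, the per-block failure probability at the corresponding scale is at most $C\gamma_M/\eps$. The first entry $\eps \cdot 16\theta_m^2 K/(\eps^2 \alpha n)$ in the max defining $\alpha_M^2$ is included precisely to absorb the variance-dominated regime: when $\|f-f^*\|_{2,\bX}$ is too small for the symmetrization bound to be useful, Chebyshev applied to $\Var(\P_{B_k}(2\zeta(f-f^*))) \leq 4\theta_m^2 \|f-f^*\|_{2,\bX}^2 K / n$ contributes an additional failure probability of at most $\alpha/4$. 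A dyadic peeling over shells $\{f : 2^{j-1} r_M \leq \|f-f^*\|_{2,\bX} \leq 2^j r_M\}$ extends these bounds uniformly in $f$. Finally, since the indicators $(\Ind_{\text{bad}_k})_{k \in [K]_\mI}$ are independent Bernoullis with mean at most $\alpha + C\gamma_M/\eps$, Hoeffding's inequality yields, with probability at least $1 - \exp(-K\gamma x^2/2)$, at most $(\alpha + C\gamma_M/\eps + x)\gamma K$ failing good blocks; adding the bad blocks produces a total of at most $(1 - \gamma(1 - \alpha - x - 8\gamma_M/\eps))K \leq \eta K$ failures by hypothesis.

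The main obstacle will be achieving uniformity in $f$ with the claimed probability: the dyadic peeling over shells of $\|f-f^*\|_{2,\bX}$ introduces a logarithmic factor in the union bound, which must either be absorbed into the exponent $\exp(-K\gamma x^2/2)$ (by strengthening the tolerance $x$) or removed via a direct generic chaining argument. At the same time, the three regimes (complexity, variance, and $L^2$-norm) encoded by the max in $\alpha_M^2$ must cohere across all scales of the peeling, so that a single good event simultaneously controls every $f$.
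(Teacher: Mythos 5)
This lemma is cited from Lecu\'e and Lerasle without proof in the present paper, but the quadratic analogue, Lemma~\ref{lemma.l2_quantile} in Appendix~\ref{sec.proofs_aux}, is proved in full and supplies the template; comparing your sketch to that proof shows where your route runs into trouble.

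Your skeleton (restrict to good blocks, per-block Chebyshev, symmetrization against $r_M$, concentrate the failure count) is the right one, but you assemble the pieces incorrectly, and it is exactly this misassembly that forces you into the dyadic peeling you then flag as an obstacle. You treat $\alpha + C\gamma_M/\eps$ as a per-block Bernoulli mean and hope Hoeffding plus a union bound over shells delivers the count. In fact the two contributions live at different levels. For a fixed $f$, Chebyshev alone gives a per-block failure probability of order $\alpha$ with no $\gamma_M$ at all: the product of the first and third entries of the $\max$ defining $\alpha_M^2(f)$ is precisely $16\theta_m^2 K\|f-f^*\|_{2,\bX}^2/(\alpha n)$, which cancels the block variance $4\theta_m^2 (K/n)\|f-f^*\|_{2,\bX}^2$. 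The $\gamma_M/\eps$ term only enters when you take $\sup_f$ of the centered count. The correct route, as in Lemma~\ref{lemma.l2_quantile}, replaces the indicator by a Lipschitz surrogate $\psi$ with $\Ind(t\geq 1)\leq\psi(t)\leq\Ind(t\geq 1/2)$, controls $\sup_f\sum_{k\in[K]_I}\psi\big(|(\P_{B_k}-\E)(2\zeta(f-f^*))|/\alpha_M^2(f)\big)$ by its expectation plus a bounded-differences deviation (this is what delivers the $\exp(-K\gamma x^2/2)$ tail \emph{uniformly over $f$ in a single stroke}, no union bound, no peeling), and then bounds the expectation by $\alpha|[K]_I|$ plus the symmetrized process $\E\sup_f\big|\sum_{k\in[K]_I}\xi_k(\P_{B_k}-\E)(2\zeta(f-f^*))/\alpha_M^2(f)\big|$. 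The step you are missing is that the $f$-dependent normalization $\alpha_M^2(f)\geq\eps\max\{r_M^2(\rho,\gamma_M),\ \|f-f^*\|_{2,\bX}^2\}$ together with star-shapedness of $\mF-f^*$ collapses this last supremum directly onto the bounded set $\{f:\|f-f^*\|_{2,\bX}\leq r_M(\rho,\gamma_M)\}$, because any $f$ outside that ball can be rescaled into it and the scale cancels against the normalization (this is what the factor $2/(\eps r_Q^2(\rho))$ accomplishes in the displayed step of the proof of Lemma~\ref{lemma.l2_quantile}); the defining property of $r_M$ then caps the symmetrized term by $C\gamma_M|[K]_I|/\eps$. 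No peeling, no logarithmic loss, and no ``independent Bernoullis'' argument is needed or, indeed, available, since the failure events at different $f$ are entangled and the uniformity has to come from the empirical-process control, not from a union bound.
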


\begin{lem}\label{lemma.useful_bounds_lecue}
    Let $K\in \left[32|\mO|,\ n/(372\theta_0^2) \right].$ There exists an event $\Omega(K)$ of probability bigger than $1 - 2 \exp(- K / 8928)$ such that, for all $\rho \in \{ \rho_K, 2\rho_K \}$, and all $f \in \mF$ such that $\|f - f^*\| \leq \rho$, we have
    \begin{enumerate}
        \item if $\|f - f^*\|_{2,\bX} \geq r_P(\rho, \gamma_P)$, then $Q_{1/16, K} \big( (f - f^*)^2 \big)
        \geq (4 \theta_0)^{-2} \|f - f^*\|_{2,\bX}^2;$
         \item $Q_{15/16, K} \big[-2 \zeta(f-f^*) \big] \leq \E[-2\zeta(f-f^*)(\bX)] + \alpha_M^2,$
         \item $Q_{1/16,K}[-2 \zeta (f - f^*)] \geq \E[-2 \zeta (f - f^*)(\bX)] - \alpha_M^2.$
        \item $Q_{15/16, K} \big[2 \zeta(f-f^*) \big] \leq \alpha_M^2,$
    \end{enumerate}
    with
    \begin{align*}
        \alpha_M^2 := \eps \max\left(\frac{1488 \theta_m^2}{\eps^2} \frac{K}{n},\ r_M^2(\rho,\gamma_M),\ \|f-f^*\|_{2,\bX}^2 \right), \quad \theta_m = \theta_1 \frakm^*.
    \end{align*}
    Furthermore, for $r(\cdot)$ as in Theorem~\ref{thm.main_theorem} and $\|f - f^*\|_{2,\bX} \leq r(\rho),$ we find $\alpha_M^2 \leq 4\eps r^2(\rho).$
\end{lem}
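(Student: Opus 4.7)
The four claims are all quantile statements on empirical means computed block by block. Claim~(1) is directly a consequence of Lemma~\ref{lemma.3} applied with $\eta = 1/16$, claims~(2) and~(3) are the two sides of the same bound furnished by Lemma~\ref{lemma.4}, and claim~(4) will be derived from~(2) using the first-order optimality of $f^*$ in the convex class $\mF$. The parameter choices in~\eqref{def.parameters} are precisely tuned so that the sufficient conditions of the two auxiliary lemmas are met at $\eta = 1/16$: with $\gamma = 31/32$, $\alpha = x = 1/93$, $\gamma_P = 1/(1488\theta_0^2)$ and $\gamma_M = \eps/744$, one has $16\gamma_P\theta_0 = 1/93$, $8\gamma_M/\eps = 1/93$, and the collapse $\gamma(1 - \alpha - x - 1/93) = (31/32)(90/93) = 15/16 = 1-\eta$. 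The hypothesis $K \leq n/(372\theta_0^2)$ matches the constraint $K \leq n\alpha/(4\theta_0^2)$ required in Lemma~\ref{lemma.3}.

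For claim~(1), Lemma~\ref{lemma.3} applied at a fixed $\rho$ produces an event on which, for every $f\in\mF$ with $\|f-f^*\|\leq\rho$ and $\|f-f^*\|_{2,\bX}\geq r_P(\rho,\gamma_P)$, at least $(15/16)K$ blocks satisfy $\P_{B_k}(f-f^*)^2 \geq (4\theta_0)^{-2}\|f-f^*\|_{2,\bX}^2$, which is exactly $Q_{1/16,K}[(f-f^*)^2] \geq (4\theta_0)^{-2}\|f-f^*\|_{2,\bX}^2$ by the quantile definition. For claims~(2) and~(3), Lemma~\ref{lemma.4} at the same $\rho$ delivers an event on which at least $(15/16)K$ blocks satisfy $|(\P_{B_k}-\E)(2\zeta(f-f^*))| \leq \alpha_M^2$ uniformly in $f$ with $\|f-f^*\|\leq\rho$; this simultaneously yields $Q_{15/16,K}[-2\zeta(f-f^*)] \leq \E[-2\zeta(f-f^*)(\bX)] + \alpha_M^2$ and $Q_{1/16,K}[-2\zeta(f-f^*)] \geq \E[-2\zeta(f-f^*)(\bX)] - \alpha_M^2$.

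For claim~(4), since $\mF$ is convex and $f^*\in\argmin_{g\in\mF}\E[(Y-g(\bX))^2]$, expanding $\E[(Y-(1-t)f^*(\bX)-tf(\bX))^2]$ for $t\in[0,1]$, dividing by $t$ and letting $t\to 0^+$ gives $\E[\zeta(f-f^*)(\bX)]\leq 0$ for every $f\in\mF$. Combined with claim~(2), this produces
\begin{align*}
    Q_{15/16,K}[2\zeta(f-f^*)]
    \leq \E[2\zeta(f-f^*)(\bX)] + \alpha_M^2
    \leq \alpha_M^2,
\end{align*}
which is the remaining inequality.

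To conclude, I intersect the events furnished by Lemma~\ref{lemma.3} and Lemma~\ref{lemma.4} applied at each of $\rho\in\{\rho_K,2\rho_K\}$ via a union bound; each such event carries a probability estimate of the form $1 - c\exp(-K\gamma x^2/2)$, and aggregating them yields a single event $\Omega(K)$ on which all four claims hold at both values of $\rho$, with the advertised lower bound $1 - 2\exp(-K/8928)$. For the last statement, when $\|f-f^*\|_{2,\bX}\leq r(\rho)$ with $\rho\in\{\rho_K,2\rho_K\}$, each term entering the maximum defining $\alpha_M^2$ is bounded by $r^2(\rho)$: the third term by assumption, the second term by $r_M^2(\rho,\gamma_M)\leq r^2(\rho)$ (definition of $r(\cdot)$ in~\eqref{def.r_complexity}), and the first term by $1488\theta_m^2 K/(\eps^2 n) \leq (1488/384)\, r^2(\rho_K) < 4\, r^2(\rho)$, using the implicit equation~\eqref{eq.rhoK} that defines $\rho_K$. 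The main source of tedium is the probability bookkeeping across the two values of $\rho$; the only substantive step is the convexity argument for~(4).
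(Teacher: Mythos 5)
Your proof is correct and follows essentially the same route as the paper: claim~(1) via Lemma~\ref{lemma.3}, claims~(2)--(4) via Lemma~\ref{lemma.4}, the sign fact $\E[\zeta(f-f^*)(\bX)]\le 0$ via convexity of $\mF$ and the nearest-point/first-order argument, and the terminal bound $\alpha_M^2 \le 4\eps\, r^2(\rho)$ via the implicit equation~\eqref{eq.rhoK}. One small precision remark: in claim~(4) the inequality $Q_{15/16,K}[2\zeta(f-f^*)] \le \E[2\zeta(f-f^*)(\bX)] + \alpha_M^2$ does not follow from claim~(2) as stated (which concerns $-2\zeta(f-f^*)$), but rather from the two-sided bound $|(\P_{B_k}-\E)(2\zeta(f-f^*))|\le\alpha_M^2$ of Lemma~\ref{lemma.4} that also underlies (2) and (3); with that rewording, the argument is airtight.
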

\begin{proof}[Proof of Lemma~\ref{lemma.useful_bounds_lecue}] 
    The first property follows from applying Lemma~\ref{lemma.3} with $\eta=1/16,$ $\rho\in\{\rho_K, 2\rho_K\},$ $\alpha = x = 1/93,$ $\gamma = 31/32,$ $\gamma_P = 1 / (1488 \theta_0)$ and checking that $\gamma(1-\alpha-x-16\gamma_P\theta_0) \geq 1-\eta.$ With our choices, we find
    \begin{align*}
        \frac{31}{32}\left(1 - \frac{1}{93} - \frac{1}{93} - \frac{16}{1488} \right) = \frac{31}{32} \left( 1 - \frac{1}{31} \right) = \frac{30}{32} = \frac{15}{16}.
    \end{align*}
    The corresponding event $\Omega_1$ has probability at least $1 - \exp(-K\gamma x^2/2) = 1 - \exp(- K / 8928).$
    
    The second and third properties follow from applying Lemma~\ref{lemma.4} with $\eta=1/16,$ $\rho\in{\rho_K, 2\rho_K},$ $\alpha = x = 1/93,$ $\gamma = 31/32,$ $\gamma_M = \eps/744$ and checking that $\gamma(1-\alpha-x-8\gamma_M/\eps) \geq 1-\eta.$ With our choices, we find
    \begin{align*}
        \frac{31}{32}\left(1 - \frac{1}{93} - \frac{1}{93} - \frac{8}{744} \right) = \frac{31}{32} \left( 1 - \frac{1}{31} \right) = \frac{30}{32} = \frac{15}{16}.
    \end{align*}
    The corresponding event $\Omega_2$ has probability at least $1 - \exp(-K\gamma x^2/2) = 1 - \exp(- K / 8928).$
    
    The fourth property holds on the same event $\Omega_2$ given above, and is a consequence of the nearest point theorem and the convexity of the function class $\mF,$ which guarantee that $\E[2\zeta(f-f^*)(\bX)]\leq 0.$
    
    Given all the above, the probability of the event $\Omega(K) = \Omega_1 \cap \Omega_2$ is at least $1 - \P(\Omega_1) - \P(\Omega_1) = 1 - 2\exp(- K / 8928).$
    
    We finally bound, with $r^2(\rho_K) = 384\theta_m^2 K/(n\eps^2),$
    \begin{align*}
        \frac{\alpha_M^2}{r^2(2\rho_K)} \leq \frac{\alpha_M^2}{r^2(\rho_K)} = \eps \max\left(\frac{1488 \theta_m^2}{\eps^2} \frac{K}{n} \frac{1}{r^2(\rho_K)},\ 1 \right) = \eps \frac{1488}{384} < 4\eps.
    \end{align*}
\end{proof}

\begin{lem}\label{lemma.l2_quantile}
    Let $K \in [32|\mO|, n/(372\theta_0^2)].$ There exists an event $\Omega_Q(K)$ of probability bigger than $1 - \exp(- K / 8928)$ such that, for all $\rho \in \{ \rho_K, 2\rho_K \},$ and all $f \in \mF$ such that $\|f - f^*\| \leq \rho,$ we have 
    \begin{align*}
        Q_{15/16,K}\big[(f-f^*)^2 \big] \leq \|f-f^*\|_{2,\bX}^2 + \alpha_Q^2,
    \end{align*}
    with
    \begin{align*}
        \alpha_Q^2 := \eps \max\bigg(\|f-f^*\|_{2,\bX}^2 \frac{1488 \theta_1^4}{\eps^2}\frac{K}{n},\ r_Q^2(\rho,\gamma_Q),\ \|f-f^*\|_{2,\bX}^2\bigg).
    \end{align*}
\end{lem}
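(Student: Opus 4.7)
The argument will mirror that of Lemma~\ref{lemma.4} (Lemma~4 in \cite{lecue2020robustML}), with the centered multiplier process $-2\zeta(f-f^*)-\E[-2\zeta(f-f^*)(\bX)]$ replaced by the centered quadratic process $\phi_f := (f-f^*)^2 - \E[(f-f^*)^2(\bX)]$. The target is to construct a single event $\Omega_Q(K)$ of probability at least $1-\exp(-K/8928)$ on which, for every admissible $f$, the number of blocks $B_k$ with $\P_{B_k}\phi_f > \alpha_Q^2$ is at most $K/16$; since $\E[(f-f^*)^2(\bX)]=\|f-f^*\|_{2,\bX}^2$, this directly yields the stated quantile bound $Q_{15/16,K}[(f-f^*)^2]\leq\|f-f^*\|_{2,\bX}^2+\alpha_Q^2$.

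The first step is a homogeneity reduction that handles uniformity over $f$. Fix $\rho\in\{\rho_K,2\rho_K\}$ and $f\in\mF$ with $\|f-f^*\|\leq\rho$. By convexity of $\mF$, for any $\lambda\in(0,1]$ the function $f_\lambda:=f^*+\lambda(f-f^*)$ belongs to $\mF$ and satisfies $(f_\lambda-f^*)^2=\lambda^2(f-f^*)^2$, hence $\phi_{f_\lambda}=\lambda^2\phi_f$. Choosing $\lambda_f:=r_Q(\rho,\gamma_Q)/\max(r_Q(\rho,\gamma_Q),\|f-f^*\|_{2,\bX})\in(0,1]$ places $f_{\lambda_f}$ in $\B(f^*,\rho,r_Q(\rho,\gamma_Q))$; consequently a uniform bound $\sup_{g\in\B(f^*,\rho,r_Q(\rho,\gamma_Q))}|\P_{B_k}\phi_g|\leq C$ translates into $|\P_{B_k}\phi_f|\leq C\cdot\max(1,\|f-f^*\|_{2,\bX}^2/r_Q^2(\rho,\gamma_Q))$ for every admissible $f$.

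Next, the bound $\sup_{g\in\B(f^*,\rho,r_Q(\rho,\gamma_Q))}|\P_{B_k}\phi_g|\leq\alpha_Q^2$ for a large fraction of inlier blocks is obtained from two ingredients. Assumption~\ref{ass.main}(2) gives $\Var[(g-f^*)^2(\bX)]\leq\theta_1^4\|g-f^*\|_{2,\bX}^4$, so $\Var[\P_{B_k}\phi_g]\leq\theta_1^4 r_Q^4(\rho,\gamma_Q)K/n$ for a single inlier block, and Chebyshev yields per-block tail control. Gin\'e-Zinn symmetrization combined with the defining inequality of $r_Q(\rho,\gamma_Q)$ (applicable because $K_I\geq (31/32)K$ forces $|\cup_{k\in[K]_I}B_k|\geq(31/32)n\geq n/2$) then delivers uniformity in $g$ over the ball. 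The key algebraic observation is that $\alpha_Q^2\geq\eps\|f-f^*\|_{2,\bX}^2$ and $\alpha_Q^2\geq 1488\theta_1^4\|f-f^*\|_{2,\bX}^2 K/(n\eps)$ imply, via $\alpha_Q^2\geq\sqrt{ab}\geq\min(a,b)$ applied to these two lower bounds, that $\alpha_Q^2\geq\sqrt{1488}\,\theta_1^2\|f-f^*\|_{2,\bX}^2\sqrt{K/n}$, which is precisely the threshold needed for Chebyshev to force the per-block failure probability below $1/93$.

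Set $Z_k:=\Ind\bigl[\sup_{g\in\B(f^*,\rho,r)}|\P_{B_k}\phi_g|>\alpha_Q^2\bigr]$ for $k\in[K]_I$ with $r:=\max(r_Q(\rho,\gamma_Q),\|f-f^*\|_{2,\bX})$. Steps one and two yield $\E Z_k\leq 1/93$. Hoeffding's inequality for $\sum_{k\in[K]_I}Z_k$, applied with $\gamma=31/32$ and $x=1/93$ (exactly as in Lemma~\ref{lemma.4} and~\ref{lemma.useful_bounds_lecue}), shows that the number of bad inlier blocks is at most $(1/93+1/93)K_I\leq K/32$ except on an event of probability $\exp(-K/8928)$; combined with at most $|\mO|\leq K/32$ outlier blocks, at least $(15/16)K$ blocks satisfy $\P_{B_k}(f-f^*)^2\leq\|f-f^*\|_{2,\bX}^2+\alpha_Q^2$. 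The main obstacle is establishing the uniform-in-$f$ version on a single event; this is handled by the homogeneity/rescaling argument of step one, which exploits the exact quadratic scaling of $\phi_f$ and lets the per-$f$ Chebyshev bound be transferred to all admissible $f$ with only an $\|f-f^*\|_{2,\bX}^2/r_Q^2$ inflation that is precisely absorbed by the first term of $\alpha_Q^2$.
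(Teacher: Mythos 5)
Your overall plan is the correct one and follows the paper closely: mirror Lemma~\ref{lemma.4} with the quadratic process in place of the multiplier process, use $\Var[(f-f^*)^2(\bX)] \leq \theta_1^4\|f-f^*\|_{2,\bX}^4$ from Assumption~\ref{ass.main}(2) for per-block Chebyshev/Markov control, exploit the exact quadratic homogeneity of $(f-f^*)^2$ to reduce the supremum to the ball $\B(f^*,\rho,r_Q(\rho,\gamma_Q))$, then symmetrize against the $r_Q$ definition and finish with Hoeffding using $\eta = 1/16$, $\gamma = 31/32$, $\alpha = x = 1/93$, $\gamma_Q = \eps/372$. The geometric-mean observation that multiplying the two lower bounds on $\alpha_Q^2$ gives $\alpha_Q^4 \gtrsim \theta_1^4\|f-f^*\|_{2,\bX}^4 K/n$ is exactly the mechanism the paper uses to close the per-block Markov bound. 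So the approach is essentially the same.

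However, there is a genuine gap in the accounting step. You define $Z_k := \Ind[\sup_{g\in\B(f^*,\rho,r)}|\P_{B_k}\phi_g| > \alpha_Q^2]$ and claim $\E Z_k \leq 1/93$ ``from steps one and two.'' This per-block bound on the supremum is not available from the ingredients you list. Chebyshev gives a per-$g$, per-block tail bound. The $r_Q$-based symmetrization controls the Rademacher complexity only over index sets $J$ with $|J|\geq n/2$, hence over $\bigcup_{k\in[K]_I}B_k$, not over a single block $B_k$ of size $n/K$, whose Rademacher complexity could be much larger; there is no contraction-type reduction from the large set to a single block. Consequently you cannot bound $\E Z_k$ directly, and your two-term tally $(1/93 + 1/93)K_I$ does not close. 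The paper's proof avoids this by never bounding a per-block supremum probability: it introduces a Lipschitz surrogate $\psi$ for the indicator, bounds $\E[\psi(|g_f(\mD_k)|/\alpha_Q^2(f))] \leq \alpha$ per fixed $f$ via Markov, handles uniformity in $f$ by symmetrizing the sum $\sum_{k\in[K]_I}\psi(\cdot)$ against the $r_Q$ definition (which contributes a separate $4\gamma_Q/\eps = 1/93$ term), and only then applies Hoeffding (contributing $x = 1/93$). The resulting count is $K_I(1-\alpha-x-4\gamma_Q/\eps) \geq (15/16)K$, three separate contributions rather than two. You should replace the $\E Z_k$ step with this $\psi$-surrogate-and-summed-symmetrization argument, as in the proof of Lemma~\ref{lemma.4}, to make the count accounting rigorous.
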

\begin{proof}[Proof of Lemma~\ref{lemma.l2_quantile}]
    Take $\eta = 1/16,$ $\gamma = 31/32,$ $\alpha = x = 1/93$ and $\gamma_Q = \eps / 372.$ We follow the steps of the proof of Lemma 4 in~\cite{lecue2020robustML}. For all $f \in \mF$ and $\rho > 0,$ set $\B(f, \rho) = \{ g \in \mF: \|g - f \| \leq \rho\}.$ For all $k\in [K],$ set $\mD_k=(\bX_i,Y_i)_{i\in B_k}$ and
    \begin{align*}
        g_f(\mD_k) &:= (\P_{B_k} - \E)[(f-f^*)^2], \\
        \alpha_Q^2(f) &:= \eps \max\bigg(\|f-f^*\|_{2,\bX}^2 \frac{4\theta_1^4}{\eps^2\alpha}\cdot\frac{K}{n}, r_Q^2(\rho,\gamma_Q), \|f-f^*\|_{2,\bX}^2 \bigg).
    \end{align*}
    Let $[K]_I = \{k\in[K]:B_k\subset\mI\}$ and consider any $k\in[K]_I.$
    An application of Markov inequality gives
    \begin{align*}
        \P\big(2 |g_f(\mD_k)| &\geq \alpha_Q^2(f) \big) \leq \frac{4 \E\Big[|g_f(\mD_k)|^2\Big]}{\alpha_Q^2(f)\cdot \alpha_Q^2(f)}.
    \end{align*}
    The denominator of the last term in the previous display can be bounded below using both $\alpha_Q^2(f) \geq \eps \|f-f^*\|_{2,\bX}^2$ and $\alpha_Q^2(f) \geq \|f-f^*\|_{2,\bX}^2 4\theta_1^4 K / (\eps\alpha n).$ This gives
    \begin{align*}
        \P\big(2 |g_f(\mD_k)| \geq \alpha_Q^2(f) \big) &\leq \frac{4 \E\Big[ \big((\P_{B_k}-\P_{\bX})(f-f^*)^2\big)^2\Big]}{\|f-f^*\|_{2,\bX}^2 \frac{ 4\theta_1^4}{\alpha} \frac{K}{n} \|f-f^*\|_{2,\bX}^2} \\
        &\leq \frac{\sum_{i\in B_k} \Var\big( (f-f^*)^2(\bX_i) \big)}{|B_k|^2 \frac{\theta_1^4}{\alpha} \frac{K}{n} \|f-f^*\|_{2,\bX}^4} \\
        &\leq \frac{\E[(f-f^*)^4(\bX)]}{|B_k| \frac{\theta_1^4}{\alpha} \frac{K}{n} \|f-f^*\|_{2,\bX}^4} \\
        &\leq \frac{\alpha \|f-f^*\|_{4,\bX}^4}{\theta_1^4 \|f-f^*\|_{2,\bX}^4} \\
        &\leq \alpha,
    \end{align*}
    since $\|f-f^*\|_{4,\bX} \leq \theta_1 \|f-f^*\|_{2,\bX}$ by Assumption~\ref{ass.main}. The following bound follows exactly from the proof of Lemma 4 in~\cite{lecue2020robustML}. Take $J = \cup_{k\in[K]_I} B_k$ and write $r_Q(\rho) = r_Q(\rho,\gamma_Q).$ Take $\B(f^*,\rho,r_Q(\rho))$ the set of functions $f\in\B(f^*,\rho)$ such that $\|f-f^*\|_{2,\bX} \leq r_Q(\rho).$ We have
    \begin{align*}
        \E \bigg[ \sup_{f\in \B(f^*,\rho)} \sum_{k\in [K]_I} \xi_k \frac{g_f(\mD_k)}{\alpha_Q^2(f)}\bigg] \leq \frac{2}{\eps r_Q^2(\rho)} \E \bigg[\sup_{f\in \B(f^*,\rho,r_Q(\rho))} \Big|\sum_{k\in [K]_I} \xi_k (\P_{B_k}-\E)(f-f^*)^2 \Big| \bigg].
    \end{align*}
    Furthermore, we can apply the symmetrization argument in the proof of Lemma 4 in~\cite{lecue2020robustML}. Together with the definition of $r_Q(\cdot),$ we find
    \begin{align*}
        \E \bigg[\sup_{f\in \B(f^*,\rho)} \sum_{k\in [K]_I} \xi_k \frac{g_f(\mD_k)}{\alpha_Q^2(f)}\bigg] \leq \frac{4K}{\eps n} \gamma_Q |[K]_I| \frac{n}{K} = \frac{4\gamma_Q}{\eps}|[K]_I|.
    \end{align*}
    Now we utilize the function $\psi$ found in the proof of Lemma 4 in~\cite{lecue2020robustML}. On an event $\Omega(K)$ with probability at least $1 - \exp(- K\gamma x^2 / 2) = 1 - \exp(- K/8928),$
    \begin{align*}
        \sum_{k\in [K]_I} &\Ind\big(|g_f(\mD_k)| < \alpha_Q^2(f)\big) \\
        &\geq (1-\alpha) |[K]_I| - 2\E \bigg[\sup_{f\in \B(f^*,\rho)} \sum_{k\in [K]_I} \psi\bigg( \frac{|g_f(\mD_k)|}{\alpha_Q^2(f)}\bigg)\bigg] + |[K]_I| x\\
        &\geq (1-\alpha) |[K]_I| - 2\E \bigg[\sup_{f\in \B(f^*,\rho)} \sum_{k\in [K]_I} \xi_k \frac{|g_f(\mD_k)|}{\alpha_Q^2(f)}\bigg] - |[K]_I| x \\
        &\geq |[K]_I|\bigg(1 - \alpha - x - \frac{4\gamma_Q}{\eps}\bigg) \\
        &\geq \gamma K \bigg(1 - \alpha - x - \frac{4\gamma_Q}{\eps}\bigg).
    \end{align*}
    We now check that the latter is bigger than $(1-\eta)K.$ With our choices, this gives
    \begin{align*}
        \frac{31}{32}\left(1 - \frac{1}{93} - \frac{1}{93} - \frac{4}{372} \right) = \frac{31}{32} \left( 1 - \frac{1}{31} \right) = \frac{30}{32} = \frac{15}{16},
    \end{align*}
    which is what we want. As a consequence, $Q_{15/16,K}[(f-f^*)^2] \leq \|f-f^*\|_{2,\bX}^2 + \alpha_Q^2(f).$
\end{proof}

In the next result we use the event $\Omega(K) := \Omega_1(K) \, \cap \, \Omega_2(K) \, \cap \, \Omega_3(K)$ with $\Omega_1(K),\Omega_2(K)$ and $\Omega_3(K)$ respectively defined as the events in Lemma~\ref{lemma.Q1/8_zeta2}, Lemma~\ref{lemma.useful_bounds_lecue} and Lemma~\ref{lemma.l2_quantile}. The event $\Omega(K)$ has probability at least $1 - 4\exp(-K/8920).$ We also denote by $r(\cdot)$ any function satisfying $r(\rho) \geq \max\{r_P(\rho,\gamma_P), r_M(\rho,\gamma_M) \}.$ For any integer $K$ and $c_\rho\in\{1,2\},$ we will use the notation $\alpha_{K,c_\rho} := c_\alpha r(c_\rho \rho)$ and  $\delta_{K,n}^2 := 25 \frakm^{*4} K/n.$

\begin{lem} \label{lemma.bound_P2zeta_f_fstar}
    Let $C^2 = 384 \theta_1^2 c_r^2 c_\alpha^2 \kappa_{+}^{1/2}$ and
    \begin{align*}
        K \in \left[32|\mO|, \frac{n}{372\theta_0^2} \wedge \frac{n}{25 \kappa_{+}} \wedge \frac{n\eps^2}{C^2} \right].
    \end{align*}
    On the event $\Omega(K)$ defined above, for all $f \in \mF$ such that
    $\|f - f^*\| \leq c_\rho \rho_K,$ $\|f-f^*\|_{2,\bX} \leq r(c_\rho \rho_K)$ and $|\sigma - \sigma^*| \leq \alpha_{K,c_\rho},$
    \begin{align*}
        \E[-2 &\zeta (f - f^*)(\bX)] \leq \frac{2\sigma^*+\alpha_{K,c_\rho}}{2c} T_{K,\mu}(f^*,\sigma^*,f,\sigma) + \frac{2\sigma^*+\alpha_{K,c_\rho}}{2c} \mu\rho + \alpha_M^2 \\
        &\quad + \frac{8 (2\sigma^* + \alpha_{K,c_\rho})}{c \sigma^* (2\sigma^*-\alpha_{K,c_\rho})^2} \delta_{K,n}^2 + \frac{\alpha_{K,c_\rho}}{c (2\sigma^*-\alpha_{K,c_\rho})} \left( 2 \sigma^* r(c_\rho \rho_K) + r^2(c_\rho \rho_K) + \alpha_Q^2 + \alpha_M^2 \right) ,
    \end{align*}
    where $\alpha_M^2, \alpha_Q^2$ are given in Lemma~\ref{lemma.useful_bounds_lecue} and Lemma~\ref{lemma.l2_quantile}.
\end{lem}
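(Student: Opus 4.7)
The plan is to chain together several carefully-chosen quantile inequalities. First, start from Lemma~\ref{lemma.useful_bounds_lecue}(3), which gives $\E[-2\zeta(f-f^*)(\bX)] \le Q_{1/16,K}[-2\zeta(f-f^*)] + \alpha_M^2$; by monotonicity of quantiles (Lemma~\ref{lemma.quantile_prop}(1)) this also yields $\E[-2\zeta(f-f^*)(\bX)] \le Q_{1/4,K}[-2\zeta(f-f^*)] + \alpha_M^2$. Next, from the pointwise identity $\ell_f-\ell_{f^*}=(f-f^*)^2-2\zeta(f-f^*)$ and nonnegativity of $(f-f^*)^2$, we have $-2\zeta(f-f^*) \le \ell_f-\ell_{f^*}$, hence $Q_{1/4,K}[-2\zeta(f-f^*)] \le Q_{1/4,K}[\ell_f-\ell_{f^*}]$.

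The crucial algebraic step is to invert the definition~\eqref{def.R_functional} with $g=f^*$ and $\chi=\sigma^*$, yielding
\begin{align*}
\ell_f-\ell_{f^*} = \frac{\sigma+\sigma^*}{2c}R_c(\ell_{f^*},\sigma^*,\ell_f,\sigma) - \frac{\sigma^2-\sigma^{*2}}{2c} + \frac{(\sigma-\sigma^*)(\ell_f+\ell_{f^*})}{c(\sigma+\sigma^*)}.
\end{align*}
Applying the triangular quantile inequality (Lemma~\ref{lemma.quantile_prop}(5)) with $\alpha=1/4$ and $\beta=1/4$ splits the $Q_{1/4,K}$ into $\frac{\sigma+\sigma^*}{2c}\,MOM_K[R_c(\ell_{f^*},\sigma^*,\ell_f,\sigma)]$ plus a $Q_{3/4,K}$ on the remainder. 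For the first piece, the definition of $T_{K,\mu}$ combined with $\|f\|-\|f^*\| \ge -\rho$ yields $MOM_K[R_c(\ldots)] \le T_{K,\mu}(f^*,\sigma^*,f,\sigma) + \mu\rho$; bounding the positive coefficient by $\sigma+\sigma^* \le 2\sigma^*+\alpha_{K,c_\rho}$ produces the first two summands of the target bound.

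It remains to estimate the $Q_{3/4,K}$-contribution from $-\frac{\sigma^2-\sigma^{*2}}{2c}+\frac{(\sigma-\sigma^*)(\ell_f+\ell_{f^*})}{c(\sigma+\sigma^*)}$. Expanding $\ell_f+\ell_{f^*}=2\ell_{f^*}+(f-f^*)^2-2\zeta(f-f^*)$ and applying the triangular quantile inequality once more, Lemma~\ref{lemma.Q1/8_zeta2} controls the $\ell_{f^*}$ part by $\sigma^{*2}\pm\delta_{K,n}$, Lemma~\ref{lemma.l2_quantile} gives $Q_{15/16,K}[(f-f^*)^2]\le r^2(c_\rho\rho_K)+\alpha_Q^2$, and Lemma~\ref{lemma.useful_bounds_lecue}(2) together with the Cauchy--Schwarz bound $|\E[-2\zeta(f-f^*)]| \le 2\sigma^*\|f-f^*\|_{2,\bX}\le 2\sigma^* r(c_\rho\rho_K)$ yields $Q_{15/16,K}[-2\zeta(f-f^*)]\le 2\sigma^* r(c_\rho\rho_K)+\alpha_M^2$. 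Bounding $|\sigma-\sigma^*|/(c(\sigma+\sigma^*)) \le \alpha_{K,c_\rho}/(c(2\sigma^*-\alpha_{K,c_\rho}))$ and factoring out the $(f-f^*)^2$, $-2\zeta(f-f^*)$ pieces produces the last parenthesized term of the claim. The contribution of the leading $2\sigma^{*2}$ from $Q[2\ell_{f^*}]$ combines with the constant $-(\sigma^2-\sigma^{*2})/(2c)$ through a completion-of-the-square optimization, using $\alpha_{K,c_\rho} > 2\delta_{K,n}/\sigma^*$ from Lemma~\ref{lemma.conditions}, yielding the $\delta_{K,n}^2$ term with the stated coefficient---exactly the calculation referenced as the bound on $Q_{3/4,K}[R_c(\ell_{f^*},\chi,\ell_{f^*},\sigma^*)]$ in the proofs of Lemmas~\ref{lemma.bound_supg_Fkappa_1}--\ref{lemma.bound_supg_Fkappa_9}.

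The main technical obstacle is the sign analysis on $\sigma-\sigma^*$: the linearity property of quantiles (Lemma~\ref{lemma.quantile_prop}(3)) flips $Q_{3/4}$ to $Q_{1/4}$ when the factor $\sigma-\sigma^*$ is negative, so one must choose intermediate quantile levels compatible with both cases so that the same upper bound emerges (the denominators $2\sigma^*-\alpha_{K,c_\rho}$ in the final statement reflect the worst case $\sigma<\sigma^*$). The completion-of-the-square step converting the linear-in-$\delta_{K,n}$ deviation into the quadratic $\delta_{K,n}^2$ is the other delicate manipulation; both have essentially been worked out in the ancillary lemmas on bounding $T_{K,\mu}(g,\chi,f^*,\sigma^*)$ on the slices $\mF_i^{(c_\rho)}$, and can be transplanted with $(g,\chi)\leftrightarrow(f,\sigma)$ swapped via the anti-symmetry property~\ref{prop.antysymmetry} of $R_c$.
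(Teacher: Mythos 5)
Your proof follows the same decomposition and chain of quantile inequalities as the paper's proof: start from Lemma~\ref{lemma.useful_bounds_lecue} and monotonicity of quantiles to pass from $\E[-2\zeta(f-f^*)]$ to $Q_{1/4,K}[\ell_f - \ell_{f^*}] + \alpha_M^2$, invert the definition of $R_c$ with $(g,\chi)=(f^*,\sigma^*)$ to expose $MOM_K[R_c(\ell_{f^*},\sigma^*,\ell_f,\sigma)]$, convert that to $T_{K,\mu}(f^*,\sigma^*,f,\sigma)+\mu\rho$ via the penalty term, then treat the $(\sigma-\sigma^*)(1-2(\ell_f+\ell_{f^*})/(\sigma+\sigma^*)^2)$ remainder at level $7/8$ and $15/16$ by Lemma~\ref{lemma.Q1/8_zeta2}, Lemma~\ref{lemma.l2_quantile} and Lemma~\ref{lemma.useful_bounds_lecue}, finishing with the Cauchy--Schwarz step and the sign/optimization analysis that produces the $\delta_{K,n}^2/(\sigma^*(2\sigma^*-\alpha_{K,c_\rho})^2)$ term. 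The only cosmetic difference is that you invoke the triangular quantile property throughout, where the paper alternates between the difference and opposite properties (Lemma~\ref{lemma.quantile_prop}(2),(4)), but these are interchangeable and lead to the same quantile levels $1/2$, $3/4$, $7/8$, $15/16$. Correct, and essentially the paper's argument.
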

\begin{proof}[Proof of Lemma~\ref{lemma.bound_P2zeta_f_fstar}.]
   We start by applying Lemma~\ref{lemma.useful_bounds_lecue}, which gives
   \begin{align*}
        \E[-2 \zeta (f - f^*)(\bX)] \leq Q_{1/4,K}[-2 \zeta (f - f^*)] + \alpha_M^2 \leq Q_{1/4,K}[(f-f^*)^2 - 2 \zeta (f - f^*)] + \alpha_M^2,
    \end{align*}
    the second inequality follows from the fact that $(f-f^*)^2$ is positive. Using the definition of $T_{K,\mu}(f^*,\sigma^*,f,\sigma)$ in~\eqref{def.T_functional} and the quantile properties in Lemma~\ref{lemma.quantile_prop}, we can rewrite 
    \begin{align*}
        \E[-2 \zeta &(f - f^*)(\bX)] \\
        &\leq Q_{1/4,K}[(f-f^*)^2 - 2 \zeta (f - f^*)] + \alpha_M^2 \\
        &= \frac{\sigma+\sigma^*}{2c} Q_{1/4,K} \bigg[ 2c\frac{\ell_{f} - \ell_{f^*}}{\sigma+\sigma^*} \bigg] + \alpha_M^2 \\
        &= \frac{\sigma+\sigma^*}{2c} Q_{1/4,K} \bigg[ R_c(\ell_{f^*},\sigma^*,\ell_f,\sigma) - (\sigma-\sigma^*) \bigg( 1 - 2 \frac{\ell_f+\ell_{f^*}}{(\sigma+\sigma^*)^2} \bigg)\bigg] + \alpha_M^2 \\
        &\leq \frac{\sigma+\sigma^*}{2c} \left( Q_{1/2,K} \Big[ R_c(\ell_{f^*},\sigma^*,\ell_f,\sigma) \Big] - Q_{1/4,K} \bigg[ (\sigma-\sigma^*) \bigg( 1 - 2 \frac{\ell_f+\ell_{f^*}}{(\sigma+\sigma^*)^2} \bigg)\bigg] \right) + \alpha_M^2 \\
        &\leq \frac{\sigma+\sigma^*}{2c} \bigg( Q_{1/2,K} \Big[ R_c(\ell_{f^*},\sigma^*,\ell_f,\sigma) \Big] + \mu(\|f\|-\|f^*\|) \bigg) + \frac{\sigma+\sigma^*}{2c}\mu\rho + \alpha_M^2 \\
        &\quad  - \frac{\sigma+\sigma^*}{2c} Q_{1/4,K} \bigg[(\sigma-\sigma^*) \bigg( 1 - 2 \frac{\ell_f+\ell_{f^*}}{(\sigma+\sigma^*)^2} \bigg)\bigg] \\
        &= \frac{\sigma+\sigma^*}{2c} T_{K,\mu}(f^*,\sigma^*,f,\sigma) + \frac{\sigma+\sigma^*}{2c} \Big( \mu\rho - Q_{1/4,K}\bigg[(\sigma-\sigma^*) \bigg( 1 - 2 \frac{\ell_f+\ell_{f^*}}{(\sigma+\sigma^*)^2} \bigg)\bigg] \Big) + \alpha_M^2.
    \end{align*}
    Since $\sigma+\sigma^* \leq 2\sigma^* + \alpha_{K,c_\rho},$ it remains to show that
    \begin{align}\label{eq.lem_emp_risk_toshow}
        - \frac{\sigma+\sigma^*}{2c} &Q_{1/4,K} \bigg[ (\sigma-\sigma^*) \bigg( 1 - 2 \frac{\ell_f+\ell_{f^*}}{(\sigma+\sigma^*)^2} \bigg)\bigg] \\
        &\leq \frac{8 (2\sigma^* + \alpha_{K,c_\rho})}{c \sigma^* (2\sigma^*-\alpha_{K,c_\rho})^2} \delta_{K,n}^2 + \frac{\alpha_{K,c_\rho}}{c (2\sigma^*-\alpha_{K,c_\rho})} \left( 2 \sigma^* r(c_\rho \rho_K) + r^2(c_\rho \rho_K) + \alpha_Q^2 + \alpha_M^2 \right). \nonumber
    \end{align}
    
    \medskip
    
    First, by the quantile properties in Lemma~\ref{lemma.quantile_prop}, we have
    \begin{align*}
        - \frac{\sigma+\sigma^*}{2c} Q_{1/4,K} \bigg[ (\sigma-\sigma^*) \bigg( 1 - 2 \frac{\ell_f+\ell_{f^*}}{(\sigma+\sigma^*)^2} \bigg)\bigg] &\leq \frac{\sigma+\sigma^*}{2c} Q_{3/4,K} \bigg[ (\sigma-\sigma^*) \bigg( 2 \frac{\ell_f+\ell_{f^*}}{(\sigma+\sigma^*)^2} -1 \bigg)\bigg].
    \end{align*}
    By expanding $\ell_f=\ell_{f^*}+\ell_f-\ell_{f^*},$ we get
    \begin{align*}
        \frac{\sigma+\sigma^*}{2c} Q_{3/4,K} &\bigg[ (\sigma-\sigma^*) \bigg( 2 \frac{\ell_f+\ell_{f^*}}{(\sigma+\sigma^*)^2} -1 \bigg)\bigg] \\
        &= \frac{\sigma+\sigma^*}{2c} Q_{3/4,K}\Bigg[ (\sigma-\sigma^*) \bigg( \frac{4 \ell_{f^*}}{(\sigma+\sigma^*)^2} - 1\bigg) + (\sigma-\sigma^*) \frac{2(\ell_f-\ell_{f^*})}{(\sigma+\sigma^*)^2} \Bigg] \\
        &\leq \frac{\sigma+\sigma^*}{2c} Q_{7/8,K}\Bigg[ (\sigma-\sigma^*) \bigg( \frac{4 \ell_{f^*}}{(\sigma+\sigma^*)^2} - 1\bigg) \Bigg] + \frac{Q_{7/8,K}\left[(\sigma-\sigma^*) (\ell_f-\ell_{f^*}) \right]}{c (\sigma+\sigma^*)} .
    \end{align*}
    Since the term $(\sigma-\sigma^*)$ has different signs for $\sigma<\sigma^*$ and $\sigma>\sigma^*,$ we need to account for this in the bounds. We focus first on the term
    \begin{align*}
        &Q_{7/8,K}\Bigg[ (\sigma-\sigma^*) \bigg( \frac{4 \ell_{f^*}}{(\sigma+\sigma^*)^2} - 1\bigg) \Bigg] \\
        &\quad \leq \max \bigg\{ \sup_{\sigma\in(\sigma^*,\sigma^*+\alpha_{K,c_\rho}]} (\sigma-\sigma^*) \bigg( \frac{4 Q_{7/8,K}[\ell_{f^*}]}{(\sigma+\sigma^*)^2} - 1\bigg),\  \sup_{\sigma\in[\sigma^*-\alpha_{K,c_\rho}, \sigma^*)} (\sigma^*-\sigma) \bigg(1 - \frac{4 Q_{7/8,K}[\ell_{f^*}]}{(\sigma+\sigma^*)^2} \bigg) \bigg\}.
    \end{align*}
    Thanks to Lemma~\ref{lemma.Q1/8_zeta2}, the quantile $Q_{7/8,K}[\ell_{f^*}] = Q_{7/8,K}[\zeta^2]$ is in the interval $[\sigma^{*2}-\delta_{K,n},\sigma^{*2}+\delta_{K,n}],$ therefore
    \begin{align}
    \begin{split}\label{eq.Q7/8_control}
        &Q_{7/8,K}\Bigg[ (\sigma-\sigma^*) \bigg( \frac{4 \ell_{f^*}}{(\sigma+\sigma^*)^2} - 1\bigg) \Bigg] \\
        &\leq \max \bigg\{ \sup_{\sigma\in(\sigma^*,\sigma^*+\alpha_{K,c_\rho}]} (\sigma-\sigma^*) \bigg( \frac{4 (\sigma^{*2}+\delta_{K,n})}{(\sigma+\sigma^*)^2} - 1\bigg),\  \sup_{\sigma\in[\sigma^*-\alpha_{K,c_\rho}, \sigma^*)} (\sigma^*-\sigma) \bigg(1 - \frac{4 (\sigma^{*2}-\delta_{K,n})}{(\sigma+\sigma^*)^2} \bigg) \bigg\}.
    \end{split}
    \end{align}
    We denote $a_{+}^2 = \sigma^{*2}+\delta_{K,n}$ and $a_{-}^2 = \sigma^{*2}-\delta_{K,n}.$ The first function in the latter display is positive (or zero) for $\sigma \in [\sigma^*, 2a_{+} - \sigma^*].$ Let $\sigma_{a_{+}}$ be the point achieving the maximum, then $\sigma_{a_{+}}$ belongs to the same interval and $|\sigma_{a_{+}} - \sigma^*| \leq 2a_{+} - 2\sigma^* = 2\sigma^*(\sqrt{1+\delta_{K,n}/\sigma^{*2}} - 1).$ By construction, the quantity $\delta_{K,n}/\sigma^{*2}$ is smaller than one, since
    \begin{align*}
        \frac{\delta_{K,n}^2}{\sigma^{*4}} = \frac{25 \mu^{*4} K}{\sigma^{*4} n} = \frac{25 \kappa^* K}{n} \leq  \frac{25 \kappa_{+} K}{n} \leq 1
    \end{align*}
    and $K \leq n/(25\kappa_{+}).$ For all $x\in(0,1),$ the inequality $\sqrt{1 + x} \leq 1 + x$ holds, so that 
    \begin{align*}
        |\sigma_{a_{+}} - \sigma^*| \leq 2\sigma^*\left(\sqrt{1+\frac{\delta_{K,n}}{\sigma^{*2}} } - 1\right) \leq 2\sigma^*\left(1+\frac{\delta_{K,n}}{\sigma^{*2}} - 1\right) = \frac{2\delta_{K,n}}{\sigma^*}.
    \end{align*}
    Now we repeat the same argument for the second function in \eqref{eq.Q7/8_control}, using $\sqrt{1 - x} \geq 1 - x$ for all $x\in(0,1),$ thus getting a point $\sigma_{a_{-}}$ achieving the maximum such that $|\sigma_{a_{-}} - \sigma^*| \leq 2\delta_{K,n}/\sigma^*.$ By Lemma~\ref{lemma.conditions}, we have $2\delta_{K,n}/\sigma^* < \alpha_{K,c_\rho} < \sigma^*.$ 
    With $\delta_a = 2\delta_{K,n}/\sigma^*,$ this yields
    \begin{align*}
        Q_{7/8,K}&\Bigg[ (\sigma-\sigma^*) \bigg( \frac{4 \ell_{f^*}}{(\sigma+\sigma^*)^2} - 1\bigg) \Bigg] \\
        &\leq \max \bigg\{ (\sigma^* - \sigma_{a_{-}}) \bigg( 1 - \frac{4 a_{-}^2}{(\sigma_{a_{-}}+\sigma^*)^2} \bigg),\ (\sigma_{a_{+}}-\sigma^*) \bigg( \frac{4 a_{+}^2}{(\sigma_{a_{+}}+\sigma^*)^2} - 1\bigg) \bigg\} \\
        &\leq \frac{2\delta_{K,n}}{\sigma^*} \max \bigg\{ 1 - \frac{4 \sigma^{*2}- 4\delta_{K,n}}{(2\sigma^*-\delta_a)^2},\ \frac{4\sigma^{*2} + 4\delta_{K,n}}{(2\sigma^*+ \delta_a)^2} - 1 \bigg\} \\
        &= \frac{2\delta_{K,n}}{\sigma^*} \max \bigg\{ \frac{ 4\sigma^*\delta_a + \delta_a^2 +  4\delta_{K,n}}{(2\sigma^*-\delta_a)^2},\ \frac{4\delta_{K,n} - 4\sigma^*\delta_a - \delta_a^2}{(2\sigma^*+ \delta_a)^2} \bigg\} \\
        &\leq \frac{16 \delta_{K,n}^2}{\sigma^*(2\sigma^*-\delta_a)^2} \\
        &\leq \frac{16 \delta_{K,n}^2}{\sigma^*(2\sigma^*-\alpha_{K,c_\rho})^2}.
    \end{align*}
    
    One last term needs to be bounded in order to obtain~\eqref{eq.lem_emp_risk_toshow}. We only consider the case when $\sigma \in [\sigma^*,\sigma^*+\alpha_{K,c_\rho}],$ the case $\sigma \in [\sigma^*-\alpha_{K,c_\rho},\sigma^*]$ follows the same steps. With $\ell_{f^*}-\ell_f = 2\zeta(f-f^*) - (f-f^*)^2,$ we get
    \begin{align*}
        \frac{1}{c (\sigma+\sigma^*)} &Q_{7/8,K}\left[(\sigma-\sigma^*) (\ell_f-\ell_{f^*}) \right] = \frac{(\sigma-\sigma^*)}{c (\sigma+\sigma^*)} Q_{7/8,K}\left[(f-f^*)^2 - 2\zeta(f-f^*) \right] \\
        &\leq \frac{\alpha_{K,c_\rho}}{c (2\sigma^*-\alpha_{K,c_\rho})} \left( Q_{15/16,K}\left[(f-f^*)^2\right] + Q_{15/16,K}\left[-2\zeta(f-f^*) \right] \right) \\
        &\leq \frac{\alpha_{K,c_\rho}}{c (2\sigma^*-\alpha_{K,c_\rho})} \left( \|f-f^*\|_{2,\bX}^2 + \alpha_Q^2 + \E[-2\zeta(f-f^*)(\bX)] + \alpha_M^2 \right),
    \end{align*}
    the last inequality follows from Lemma~\ref{lemma.useful_bounds_lecue} and Lemma~\ref{lemma.l2_quantile}. By the Cauchy-Schwarz inequality, $\E[-2\zeta(f-f^*)(\bX)] \leq 2 \sigma^* \|f-f^*\|_{2,\bX} \leq 2 \sigma^* r(c_\rho \rho_K).$ By putting everything together, we conclude
    \begin{align*}
        \E[-2 &\zeta (f - f^*)(\bX)] \leq \frac{2\sigma^*+\alpha_{K,c_\rho}}{2c} T_{K,\mu}(f^*,\sigma^*,f,\sigma) + \frac{2\sigma^*+\alpha_{K,c_\rho}}{2c} \mu\rho + \alpha_M^2 \\
        &\quad + \frac{8 (2\sigma^* + \alpha_{K,c_\rho})}{c \sigma^* (2\sigma^*-\alpha_{K,c_\rho})^2} \delta_{K,n}^2 + \frac{\alpha_{K,c_\rho}}{c (2\sigma^*-\alpha_{K,c_\rho})} \left( 2 \sigma^* r(c_\rho \rho_K) + r^2(c_\rho \rho_K) + \alpha_Q^2 + \alpha_M^2 \right),
    \end{align*}
    which gives the claim.
\end{proof}

\end{appendices}

\bibliographystyle{acm}       
\bibliography{bibo}          

\end{document}